\newcommand\BS{\operatorname{BS}}
\DeclarePairedDelimiter{\form}{\langle}{\rangle}
\newcommand\ba{\begin{align*}}
\newcommand\ea{\end{align*}}
\newcommand\be{\begin{enumerate}}
\newcommand\ee{\end{enumerate}}
\newcommand\bp{\begin{proof}}
\newcommand\ep{\end{proof}}
\newcommand\bpp{\begin{prop}}
\newcommand\epp{\end{prop}}
\newcommand\bpb{\begin{prob}}
\newcommand\epb{\end{prob}}
\newcommand\bd{\begin{defn}}
\newcommand\ed{\end{defn}}
\newcommand\bh{\begin{hint}}
\newcommand\eh{\end{hint}}
\newcommand\fform[1]{\langle\!\langle #1\rangle\!\rangle}
\newcommand\bC{\mathbb{C}}
\newcommand\bN{\mathbb{N}}
\newcommand\bR{\mathbb{R}}
\newcommand\bQ{\mathbb{Q}}
\newcommand\bZ{\mathbb{Z}}
\newcommand\BB{\mathcal{B}}
\newcommand\FF{\mathcal{F}}
\newcommand\GG{\mathcal{G}}
\newcommand\UU{\mathcal{U}}
\newcommand\VV{\mathcal{V}}
\newcommand\gap{\operatorname{gap}}
\newcommand\bag{\operatorname{bag}}
\renewcommand\Re{\operatorname{Re}}
\newcommand\Bag{\operatorname{Bag}}
\newcommand\Ball{\operatorname{Ball}}
\newcommand\cl{\operatorname{CovLen}}
\newcommand\cd{\operatorname{CovDist}}
\newcommand\Hom{\operatorname{Hom}}
\newcommand\supp{\operatorname{supp}}
\newcommand\Id{\operatorname{Id}}
\newcommand\Diffb{\operatorname{Diff}_+^{1+\mathrm{bv}}}
\newcommand\diam{\operatorname{diam}}
\newcommand\PSL{\operatorname{PSL}}
\DeclareMathOperator\Homeo{Homeo}
\newcommand\yt{\widetilde}
\newcommand\sse{\subseteq}
\newcommand\co{\colon}
\DeclareMathOperator\Fix{Fix}
\DeclareMathOperator\Diff{Diff}
\newcommand\rot{\operatorname{rot}}
\newcommand\bba{\mathrm{a}}
\newcommand\bbb{\mathrm{b}}
\newcommand\bbc{\mathrm{c}}
\newcommand\bbd{\mathrm{d}}
\newcommand\bbe{\mathrm{e}}
\renewcommand{\MR}[1]
{\href{http://www.ams.org/mathscinet-getitem?mr=#1}{MR#1}}
\def\thetitle{{Diffeomorphism groups of critical regularity}}
\def\theauthors{{Sang-hyun Kim and Thomas Koberda}}
\theoremstyle{theorem}
\newtheorem{thm}{Theorem}[section]
\newtheorem{lem}[thm]{Lemma}
\newtheorem{cor}[thm]{Corollary}
\newtheorem{prop}[thm]{Proposition}
\newtheorem{que}[thm]{Question}
\newtheorem*{claim*}{Claim}
\newtheorem{claim}{Claim}
\newtheorem*{mainthm}{\bf Main Theorem}
\theoremstyle{remark}
\newtheorem{exmp}[thm]{Example}
\newtheorem{rem}[thm]{Remark}
\theoremstyle{definition}
\newtheorem{defn}[thm]{Definition}
\newtheorem{prob}{Problem}[section]
\newtheorem{setting}[thm]{Setting}
\newtheorem{notation}[thm]{Notation}
\begin{document}
\title\thetitle
\date{\today}
\keywords{right-angled Artin group; algebraic smoothing; smoothability; H\"older continuity; infinite simple group}
\subjclass[2010]{Primary: 57M60; Secondary: 20F36, 37C05, 37C85, 57S05}

% author information
\author[S. Kim]{Sang-hyun Kim}
\address{School of Mathematics, Korea Institute for Advanced Study, Seoul, Korea}
\email{skim.math@gmail.com}
\urladdr{http://cayley.kr}

% author information
\author[T. Koberda]{Thomas Koberda}
\address{Department of Mathematics, University of Virginia, Charlottesville, VA 22904-4137, USA}
\email{thomas.koberda@gmail.com}
\urladdr{http://faculty.virginia.edu/Koberda}

\begin{abstract}
Let $M$ be a circle or a compact interval, and let $\alpha=k+\tau\ge1$ be a real number such that $k=\lfloor \alpha\rfloor$. 
We write $\Diff_+^{\alpha}(M)$ for the  
group of orientation preserving $C^k$ diffeomorphisms of $M$ whose $k^{th}$ derivatives are H\"older continuous with exponent $\tau$.
We prove that there exists a continuum of isomorphism types of finitely generated subgroups $G\le\Diff_+^\alpha(M)$  with the property that $G$ admits no 
 injective homomorphisms into
$\bigcup_{\beta>\alpha}\Diff_+^\beta(M)$.
We also show the dual result:
 there exists a continuum of isomorphism types of finitely generated subgroups $G$ of $\bigcap_{\beta<\alpha}\Diff_+^\beta(M)$
 with the property that
$G$ admits no injective homomorphisms into
$\Diff_+^\alpha(M)$.
%We can further require that the same properties  are inherited by all finite index subgroups and by the commutator subgroups of $G_\alpha$ and of $H_\alpha$. 
The groups $G$ are constructed so that their commutator groups are simple.
%We further obtain a continuum of isomorphism types of finitely generated nonabelian subgroups of $\Diff_+^\alpha(M)$ whose images under arbitrary homomorphisms to $\bigcup_{\beta>\alpha}\Diff_+^{\beta}(M)$ are abelian.
We give some applications to smoothability of codimension one foliations and to homomorphisms between certain continuous groups of diffeomorphisms. For example, we show that if $\alpha\ge1$ is a real number not equal to $2$, then there is no nontrivial homomorphism $\Diff_+^\alpha(S^1)\to\bigcup_{\beta>\alpha}\Diff_+^{\beta}(S^1)$.
Finally, we obtain an independent result that the class of finitely generated subgroups of $\Diff_+^1(M)$ is not closed under taking finite free products.
\end{abstract}

\maketitle
\setcounter{tocdepth}{1}
\tableofcontents

\section{Introduction}\label{sec:intro}
Let $M$ be the circle $S^1=\bR/\bZ$
or a compact interval $I$.
%Throughout this paper, we will let $I$ denote a compact interval.
A function $f\colon M\to \bR$ is \emph{H\"older continuous with exponent $\tau$} if there is a constant $C$ such that \[|f(x)-f(y)|\leq C|x-y|^{\tau}\] for all $x,y\in M$. In the case where $M= S^1$, we implicitly define $|x-y|$ to be the usual angular distance between $x$ and $y$. 

For an integer $k\ge1$ and for a smooth manifold $M$, we write $\Diff_+^{k+\tau}(M)$ for the group of orientation preserving $C^k$ diffeomorphisms of $M$ whose $k^{th}$ derivatives are H\"older continuous with exponent $\tau\in[0,1)$. 
For compactness of notation,
we will write $\Diff_+^{\alpha}(M)$ for $\Diff_+^{k+\tau}(M)$, where $k=\lfloor \alpha\rfloor$ and $\tau=\alpha-k$.
By convention, we will write $\Diff_+^0(M)=\Homeo_+(M)$.

The purpose of this paper is to study the algebraic structure of finitely generated groups in $\Diff_+^{\alpha}(M)$, as $\alpha$ varies.
We note that the isomorphism types of finitely generated subgroups in $\Diff_+^\alpha(I)$ coincide with those in $\Diff_c^\alpha(\bR)$, the group of compactly supported $C^\alpha$ diffeomorphisms on $\bR$; see Theorem~\ref{thm:embeddability}.

Let us denote by $\GG^{\alpha}(M)$ the class of countable subgroups of $\Diff_+^\alpha(M)$, considered up to isomorphism. 
It is clear from the definition that if $\alpha\leq\beta$ then $\GG^{\beta}(M)\subseteq\GG^{\alpha}(M)$. In general, it is difficult to determine whether a given element $G\in\GG^{\alpha}(M)$ also belongs to $\GG^{\beta}(M)$. 
A motivating question is the following:
\begin{que}\label{que0} Let $k\ge0$ be an integer. 
\be \item Does $\GG^k(M)\setminus\GG^{k+1}(M)$ contain a finitely generated group?
\item Does $\GG^k(M)\setminus\GG^{k+1}(M)$ contain a countable simple group? \ee\end{que}

The answer to the above question is previously known only for $k\le1$ in part (1), and only for $k=0$ in part (2). A first obstruction for the $C^1$--regularity comes from the Thurston Stability~\cite{Thurston1974Top}, which asserts that every finitely generated subgroup of $\Diff_+^1(I)$ is locally indicable. An affirmative answer to part (1) of Question~\ref{que0} follows for $k=0$ and $M=I$; that is, $\GG^0(I)\setminus\GG^1(I)$ contains a finitely generated group.
Using Thurston Stability, Calegari proved that $\GG^0(S^1)\setminus\GG^1(S^1)$ contains a finitely generated group; see~\cite{CalegariForcing} for the proof and also for a general strategy of ``forcing'' dynamics from group presentations. Navas~\cite{Navas2010} produced an example of a locally indicable group in $\GG^0(M)\setminus\GG^1(M)$; see also \cite{Calegari2008AGT}. 

A different $C^1$--obstruction can be found in the result of
Ghys~\cite{Ghys1999} and of Burger--Monod~\cite{BM1999}.
That is, if $G$ is a lattice in a higher rank simple Lie group then $G\not\in\GG^1(S^1)$. This result was built on work of Witte~\cite{Witte1994}.
More generally, Navas~\cite{Navas2002ASENS} showed that every countably infinite group $G$ with property (T) satisfies $G\not\in\GG^1(I)$ and $G\not\in\GG^{1.5+\epsilon}(S^1)$ for all $\epsilon>0$; it turns out that  $G\not\in\GG^{1.5}(S^1)$ by a result of Bader--Furman--Gelander--Monod~\cite{BFGM2007AM}.
The exact optimal bound for the regularity of property (T) groups is currently unknown.

Plante and Thurston~\cite{PT1976} proved that if $N$ is a nonabelian nilpotent group, then $N\notin\GG^2(M)$.
By Farb--Franks~\cite{FF2003} and Jorquera~\cite{Jorquera}, every finitely generated residually torsion-free nilpotent group belongs to $\GG^1(M)$.
For instance, the integral Heisenberg group belongs to $\GG^1(M)\setminus\GG^2(M)$. So, part (1) of Question~\ref{que0} also has an affirmative answer for the case $k=1$.

Another $C^2$--obstruction comes from the classification of right-angled Artin groups in $\GG^2(M)$~\cite{BKK2016,KKFreeProd2017}.
In particular, Baik and the authors proved that except for finitely many sporadic surfaces, no finite index subgroups of mapping class groups of surfaces belong to $\GG^2(M)$ for all compact one--manifolds $M$~\cite{BKK2016}; see also~\cite{FF2001,Parwani2008}. Mapping class groups of once-punctured hyperbolic surfaces belong to $\GG^0(S^1)$; see~\cite{Nielsen1927,HT1985,BowditchSakuma}.

Simplicity of subgroups often plays a crucial role in the study of  group actions~\cite{Epstein1970, Thurston1974BAMS,BM1997,JM2013}.
Examples of countable simple groups in  $\GG^0(I)\setminus\GG^1(I)$ turn out to be abundant in isomorphism types.
For us, a \emph{continuum} means a set that has the cardinality of $\bR$. In joint work of the authors with Lodha~\cite{KKL2017} and in joint work of the second author with Lodha~\cite{KL2017}, the existence of a continuum of isomorphism types of finitely generated groups and of countable simple groups in $\GG^0(I)\setminus\GG^1(I)$ is established. These results relied on work of Bonatti--Lodha--Triestino~\cite{BLT2017}.
In particular, part (2) of Question~\ref{que0} has an affirmative answer for $k=0$ and $M=I$.

\subsection{Summary of results}
Recall that $M\in\{I,S^1\}$. 
In this article, we give the first construction of finitely generated groups and simple groups in $\GG^{\alpha}(M)\setminus\GG^{\beta}(M)$.
%For two groups $H$ and $G$, we will write $H\not\hookrightarrow G$ if there does not exist an injective group homomorphism from $H$ into $G$.

\begin{mainthm} For all $\alpha\in [1,\infty)$, each of the sets \[\GG^\alpha(M)\setminus\bigcup_{\beta>\alpha}\GG^\beta(M), \quad \bigcap_{\beta<\alpha}\GG^\beta(M)\setminus \GG^\alpha(M)\] contains a continuum of finitely generated groups,
and also contains a continuum of countable simple groups.
\end{mainthm}
The Main Theorem gives an affirmative answer to Question~\ref{que0}.

%In particular, each of the following sets contains finitely generated groups: \[\GG^\alpha(M) \setminus \left(\bigcup_{\beta>\alpha}\GG^\beta(M)\right),\quad \left(\bigcap_{\beta<\alpha}\GG^\beta(M)\right)\setminus \GG^\alpha(M).\] 

\begin{rem}\label{rem:not a group} One has to be slightly careful interpreting the Main Theorem when $\alpha=1$. This is because the set $\Diff_+^\beta(M)$ is not a group for $\beta<1$.  
Using~\cite{DKN2007}, we will prove a stronger fact that  
$\GG^{\mathrm{Lip}}(M)\setminus\GG^1(M)$
contains the desired continua.
Here, $\GG^{\mathrm{Lip}}(M)$ denotes the set of isomorphism types of countable subgroups of $\Diff_+^{\mathrm{Lip}}(M)$,
the \emph{group} of bi-Lipschitz homeomorphisms.\end{rem}

\begin{rem}
It is interesting to note that in the case of $M=I$, the simple groups guaranteed by the Main Theorem for $\alpha>1$ are locally indicable, as follows easily from Thurston Stability. Thus, we obtain a continuum of countable, simple, locally indicable groups. The commutator subgroup of Thompson's group $F$ is one such example.
%To the authors' knowledge, the only previously known example of such a group was $[F,F]$, the commutator subgroup of Thompson's group $F$.
\end{rem}

If $G\le \Diff_+^{\alpha}(M)$ and if $\beta>\alpha$, an injective homomorphism $G\to\Diff_+^{\beta}(M)$ is called an \emph{algebraic smoothing} of $G$. 
%The following is now immediate.
The Main Theorem implies that for each $\alpha\ge 1$, there exists a finitely generated subgroup  $G\le\Diff^\alpha_+(M)$ that admits no algebraic smoothings beyond $\alpha$. 
Moreover, the finitely generated groups in the continua of the Main Theorem can always be chosen to be non-finitely-presented
as there are only countably many finitely presented groups up to isomorphism.

%The construction of our groups in the Main Theorem implies the following.
%\begin{cor}\label{cor:intro-ab} For each real number $\alpha\ge1$, the following hold. \be \item There exists a continuum of isomorphism types of finitely generated groups $A\le\Diff_+^\alpha(M)$ such that every homomorphism $A\to\bigcup_{\beta>\alpha}\Diff_+^{\beta}(M)$ has abelian image. \item There exists a continuum of isomorphism types of countable simple groups $B\le\Diff_+^\alpha(M)$  such that every homomorphism $B\to\bigcup_{\beta>\alpha}\Diff_+^{\beta}(M)$ has trivial image. \ee \end{cor}

%We can also dualize the previous result.

%\begin{cor}\label{cor:intro-ab2} For each $\alpha\ge1$, the following hold. \be \item There exists a continuum of isomorphism types of finitely generated groups $A\le\bigcap_{\beta<\alpha}\Diff_+^\beta(M)$ such that every homomorphism $A\to\Diff_+^\alpha(M)$ has abelian image. \item There exists a continuum of isomorphism types of countable simple groups $B\le\bigcap_{\beta<\alpha}\Diff_+^\beta(M)$ such that for every homomorphism $B\to\Diff_+^\alpha(M)$ has trivial image. \ee \end{cor}

%This result will be further strengthened in Corollaries~\ref{cor:intro-ab} and~\ref{cor:intro-ab2}.

%\begin{cor}\label{intro-smoothing} For two real numbers $\alpha,\beta$ satisfying $1\le \alpha < \beta$, there exists a continuum of isomorphism types of finitely generated subgroups (and of countable simple subgroups) in $\Diff_+^\alpha(M)$ with no algebraic smoothing to $\Diff_+^\beta(M)$. \end{cor}

In Section~\ref{ss:modulus} we give the definition of concave moduli (of continuity), a strict partial order $\ll$ between them, and the symbol $\succ_k 0$. For instance, $\omega_\tau(x)=x^\tau$ is a concave modulus satisfying $\omega_\tau\succ_k0$ for each $\tau\in(0,1]$ and $k\in\bN$.
For a concave modulus $\omega$, we let $\Diff_+^{k,\omega}(M)$ denote the group of $C^k$--diffeomorphisms on $M$ whose $k^{th}$ derivatives are $\omega$-continuous. We also write $\Diff_+^{k,0}(M):=\Diff_+^{k}(M)$.
We denote by $\Diff_+^{k,\mathrm{bv}}(I)$ the group of diffeomorphisms $f\in\Diff_+^k(I)$ such that 
$f$ has bounded total variation.
Note that $\Diff_+^{k,\mathrm{bv}}(I)$ contains $\Diff_+^{k,\mathrm{Lip}}(I)$, the group of $C^k$--diffeomorphisms whose $k^{th}$ derivatives are Lipschitz.

For a concave modulus $\omega$ or for $\omega\in\{0,\mathrm{bv}\}$, 
the set of all countable subgroups of $\Diff_+^{k,\omega}(M)$ is denoted as $\GG^{k,\omega}(M)$.
%We denote by $\Diff_0^{k,\omega}(I)$ the group of $C^{k,\omega}$ diffeomorphisms of $\bR$ supported in $I$, which can also be naturally identified with a subgroup of $\Diff_+^{k,\omega}(S^1)$.
We will deduce the Main Theorem from a stronger, unified result as can be found below.

\begin{thm}\label{thm:main0}
For each $k\in\bN$,
and for each concave modulus $\mu\gg\omega_1$, there exists a finitely generated group  
$Q=Q(k,\mu)\le\Diff_+^{k,\mu}(I)$ such that the following hold.
\be[(i)]
\item
 $[Q,Q]$ is simple
 and every proper quotient of $Q$ is abelian;
\item
if $\omega=\mathrm{bv}$, or if $\omega$ is a concave modulus satisfying $\mu\gg \omega\succ_k 0$, then
\[
[Q,Q]\not\in
\GG^{k,\omega}(I)\cup \GG^{k,\omega}(S^1).
\]
\ee
\end{thm}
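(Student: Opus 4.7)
The plan is to construct $Q$ explicitly as a finitely generated subgroup of $\Diff_+^{k,\mu}(I)$ generated by a small, adapted family of diffeomorphisms. Building on the authors' chain-group philosophy, I would take $Q = \langle a, b, t\rangle$, where $a$ and $b$ are supported on overlapping intervals $J_a, J_b \subseteq I$ whose union forms a two-chain, and where $t$ is a Thompson/shift-like generator that permutes prescribed copies of $J_a \cup J_b$ inside $I$. The explicit profiles for $a, b, t$ are to be chosen so as to taper off at the endpoints of their supports so that the $k^{th}$ derivatives become $\mu$-continuous with the modulus $\mu$ saturated at the critical fixed points; the hypothesis $\mu \gg \omega_1$ guarantees that such profiles exist and are genuinely smoother than Lipschitz. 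Since $\Diff_+^{k,\mu}(I)$ is a group (a straightforward chain-rule computation using concavity of $\mu$), it suffices to check the regularity condition one generator at a time.

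For part~(i), I would arrange the construction so that only $t$ contributes to the abelianization $Q^{\mathrm{ab}}$ — with $a$ and $b$ explicitly conjugate in $Q$ to commutators — and identify $Q^{\mathrm{ab}}$ with a concrete abelian quotient (for example, a rotation-number or displacement invariant of $t$). Then $[Q,Q]$ is precisely the kernel of abelianization, every proper quotient of $Q$ factors through $Q^{\mathrm{ab}}$ and is hence abelian, and simplicity of $[Q,Q]$ should follow from an Epstein-type double commutator argument: any proper normal subgroup $N \triangleleft [Q,Q]$ is shown to be trivial by combining transitivity of the $[Q,Q]$-action on a suitable dense orbit with the fragmentation identities inherited from the Thompson-like structure of $t$.

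The central difficulty, and the main obstacle, is part~(ii). Suppose $\rho \colon [Q,Q] \to \Diff_+^{k,\omega}(N)$ is injective, for $N \in \{I, S^1\}$ and $\omega$ either $\mathrm{bv}$ or a concave modulus with $\mu \gg \omega \succ_k 0$. The plan is to extract from $[Q,Q]$ a specific combinatorial witness — a pair of elements whose supports, under any faithful action, must be arranged in a forced chain pattern compatible with the abstract group structure — and then apply a quantitative Kopell/Denjoy-type obstruction calibrated to the modulus $\omega$. The hypothesis $\omega \succ_k 0$ should provide precisely the integrability/decay input needed to estimate iterated commutators of $C^{k,\omega}$ diffeomorphisms and force either a common fixed point ruled out by the combinatorics, or a $C^0$-collapse contradicting simplicity of $[Q,Q]$. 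Balancing this modulus-sensitive commutator estimate against the combinatorial complexity of the generating set of $Q$ — so that the obstruction triggers at $\omega$ but not at $\mu$ itself — is the technical heart of the argument, and is where I expect the bulk of the work to lie.
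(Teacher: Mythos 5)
Your proposal is a reasonable high-level outline of what a construction \emph{might} look like, but it does not contain a proof, and the place where you yourself flag the difficulty — part (ii) — is exactly where the argument has no concrete mechanism. The paper's proof of part (ii) does not rest on a ``quantitative Kopell/Denjoy-type obstruction calibrated to $\omega$.'' Kopell's Lemma and Denjoy's theorem live at $C^{1+\mathrm{bv}}$ regularity; they do not directly yield obstructions that distinguish $C^{k,\mu}$ from $C^{k,\omega}$ for two nearby concave moduli $\omega \ll \mu$. The actual engine is the Slow Progress Lemma (Lemma~\ref{lem:stutter}), a probabilistic statement about covering distance for \emph{arbitrary} finitely generated $C^{k,\omega}$ actions with $\omega\succ_k 0$, whose proof combines a precise analytic estimate (Lemma~\ref{lem:est-ck}: fast or expansive behavior on a $k$-fixed interval $J$ forces $[f^{(k)}]_\omega\cdot|J|^{k-1}\omega(|J|)$ to be bounded below) with natural-density counting via \v{S}al\'at's theorem. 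Nothing in your sketch supplies a replacement for that machinery, and the phrase ``$C^0$-collapse contradicting simplicity'' is not something the modulus hypothesis gives you for free.

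A second, more structural gap: you propose to build $Q=\langle a,b,t\rangle$ directly with tapered profiles. But once $Q$ is defined, part~(ii) asks you to rule out \emph{every} faithful representation $\rho\colon[Q,Q]\to\Diff_+^{k,\omega}(N)$, about which you a priori know nothing — there is no reason $\rho$ should respect your choice of supports, chain pattern, or profiles. The paper solves this by baking algebraic rigidity into the group: the $\BS(1,2)$ factor forces (Lemma~\ref{lem:BMNR}) disjointness of supports in any $C^1$ action, which via the $abt$-lemma (Lemma~\ref{lem:abt}) produces a nontrivial element $u^\dagger$ whose image has compactly contained support under \emph{every} $C^1$ representation. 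That is the starting point without which the covering-length induction (Theorem~\ref{thm:psi}) cannot even begin. Your chain-group generators $\langle a,b,t\rangle$ carry no such rigidity, so the ``forced chain pattern'' you hope for is not forced. Finally, the paper obtains part~(i) not by an ad hoc Epstein argument on a three-generator group, but by the Chain Group Trick (Lemma~\ref{lem:chain-smooth} and Theorem~\ref{thm:chain-group}), preceded by the Rank Trick (Lemma~\ref{lem:rank-trick}) to control abelianizations — these are needed precisely to embed the rigid group $\phi(G^\dagger)$ into a group with simple commutator subgroup while preserving the unsmoothability. Without them, arranging simultaneously that $[Q,Q]$ is simple \emph{and} contains a subgroup with the required rigidity is an unsolved problem in your sketch.
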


Theorem~\ref{thm:main0} will imply the Main Theorem after making suitable choices of $\mu$ above.
%For example, let us sketch the case for $\alpha=k+\tau$ where $k\in\bN$ and $\tau\in(0,1)$. In Section~\ref{ss:modulus}, we will define a concave modulus $\omega_z$ for each complex number $0<_\bC z\le_\bC 1$. Here, $<_\bC$ denotes the lexicographic order on $\bC$. Whenever $z,w\in\bC$ satisfy $0<_\bC z<_\bC w\le_\bC 1$, these concave moduli will satisfy that $\omega_z\gg\omega_w$. We put $\mu=\omega_\tau$ for the construction of $X_\alpha$ in the Main Theorem, and $\mu=\omega_{\tau-\sqrt{-1}}$ for $Y_\alpha$. Then $Q=Q(k,\mu)$ is a desired group of the Main Theorem.
See Section~\ref{ss:continua} for details.

We let $F_n$ denotes a rank--$n$ free group.
Let $\BS(1,2)$ denote the solvable Baumslag--Solitar group of type $(1,2)$; see Section~\ref{sec:background}.
In the case when $M=I$,
our construction for Theorem~\ref{thm:main0} builds on a certain quotient of the group 
\[
G^\dagger = (\bZ\times\BS(1,2))\ast F_2.\]
Let us describe our construction more precisely.
\begin{thm}\label{thm:main}
Let $k\in\bN$, and let $\mu$ be a concave modulus  such that $\mu\gg\omega_1$.
Then there exists a representation 
\[\phi_{k,\mu}\co G^\dagger\to \Diff_+^{k,\mu}(I)\]
such that the following hold.
\be[(i)]
\item\label{p:main-ker}
If $\omega=\mathrm{bv}$, or if $\omega$ is a concave modulus satisfying $\mu\gg \omega\succ_k 0$, then
for all representations
\[\psi\co G^\dagger\to \Diff_+^{k,\omega}(I)\] 
we have that 
\[
\ker\psi\setminus\ker\phi_{k,\mu}\ne\varnothing.\]
\item \label{p:cinf}
Every diffeomorphism $f\in\phi_{k,\mu}(G^\dagger)$ is  $C^\infty$ on $I\setminus\partial I$.
%, and $C^k$--tangent to the identity at $\partial I$.
\ee
\end{thm}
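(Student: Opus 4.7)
The plan is to construct $\phi_{k,\mu}$ by an explicit block construction on $I$ in which the $C^{k,\mu}$--regularity is saturated only at the boundary fixed points while the interior is $C^\infty$, and then to exhibit a single commutator word in $G^\dagger$ whose image under any $C^{k,\omega}$ or $C^{k,\mathrm{bv}}$ representation is forced to the identity by a generalized Kopell-type lemma. Writing $\BS(1,2)=\langle a,b\mid bab^{-1}=a^2\rangle$ and the central $\bZ$--factor as $\langle c\rangle$, I would first realize $a,b$ as a $\BS(1,2)$--pair with common fixed point $0$, taking $b$ to be a hyperbolic contraction at $0$ with multiplier $1/2$ and $a$ to be supported on a single fundamental domain for $b$. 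The generator $c$ is supported on a disjoint adjacent region, so that it commutes with both $a$ and $b$ for free. The free generators $x,y\in F_2$ are then used as translators that conjugate orbits of the $\BS\times\bZ$--subgroup onto one another, producing a prescribed nested overlap pattern. Each diffeomorphism is assembled from $C^\infty$ bumps in the interior together with an analytic tail $t\mapsto t+p(t)$ near each boundary fixed point, where $p$ is chosen so that $D^kp$ has modulus of continuity precisely $\mu$. This simultaneously yields the representation and conclusion~(ii).

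For conclusion~(i), fix any $\psi\co G^\dagger\to\Diff_+^{k,\omega}(I)$ with $\omega=\mathrm{bv}$ or $\mu\gg\omega\succ_k 0$. The $\BS$--relation makes $b^{-n}ab^n$ behave like $a^{2^n}$, so iterates of $a$ under $b$--conjugation grow exponentially fast, and the central generator $c$ supplies a commuting transverse direction. The heart of the argument is a Kopell-type rigidity statement at regularity $(k,\omega)$: if two $C^{k,\omega}$ diffeomorphisms have nested supports that are carried into each other at the $\BS$--exponential rate, the hypothesis $\omega\succ_k 0$ (or bounded variation) forces a specific commutator involving an $F_2$--conjugator to be trivial. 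Reading this off the combinatorial overlap pattern constructed above, I would identify a word $w\in G^\dagger$, of the form of an iterated commutator of $c$, $a$, $b$ and their conjugates by words in $x,y$, such that $\psi(w)=1$ for every admissible $\psi$. That $\phi_{k,\mu}(w)\ne1$ is then a direct calculation from the explicit construction, using that the modulus $\mu$ sits on the divergence threshold of the derivative-cocycle that appears in the Kopell estimate.

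The main obstacle is this generalized Kopell-type rigidity. Classical Kopell requires $C^2$ and strict commutation, so the task is to extend it to arbitrary $C^{k,\omega}$--regularity while absorbing the non-commutation introduced by the $F_2$--conjugators. I would approach this by estimating a derivative-cocycle along the $\BS(1,2)$--orbit: summing the $\omega$--modulus of $D^k$ against the exponential contraction schedule of $b^n$, and showing that the resulting series diverges for every concave modulus $\omega\succ_k 0$ and in the bounded variation case, but converges at the borderline $\mu$. Calibrating the bump-and-tail construction of $\phi_{k,\mu}$ to exactly this threshold then guarantees $w\in\ker\psi\setminus\ker\phi_{k,\mu}$, yielding part~(i).
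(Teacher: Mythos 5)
Your plan hinges on finding a \emph{single} explicit word $w\in G^\dagger$ (an iterated commutator of $\bbc,\bba,\bbe$ and conjugates by $F_2$) with $\psi(w)=1$ for every admissible $\psi$ but $\phi_{k,\mu}(w)\ne 1$. This cannot work: Lemma~\ref{lem:res} shows that for any finite set $S\sse G^\dagger\setminus\{1\}$ there is a compactly supported $C^\infty$ (in particular, $C^{k,\omega}$ for every $\omega$) representation of $G^\dagger$ under which every element of $S$ acts nontrivially. So no finite collection of candidate kernel words, let alone a single one, can cover all $\psi$. The paper is explicit about this obstruction in the ``Kernel structures'' remark following the Main Theorem. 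Any correct proof must produce a kernel element that \emph{depends on $\psi$}, and the paper does this by iterating the Brin--Squier commutator trick through the components of $\supp\psi(u^\dagger)$ (Theorem~\ref{thm:psi}); the number of iterations and the choice of conjugators are governed by $\psi$.

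A second, related gap is in the analytic mechanism you propose. You want a ``Kopell-type'' estimate along the exponential $\BS(1,2)$-orbit of a single point, with convergence or divergence of a derivative-cocycle series deciding whether the commutator is trivial. The paper's actual use of $\BS(1,2)$ is quite different and more modest: the relation, via Lemma~\ref{lem:BMNR} (from \cite{BMNR2017MZ}), forces $\supp\psi(\bbc)\cap\supp\psi(\bbe)=\varnothing$ for any $C^1$ representation, and then the $abt$--lemma (Lemma~\ref{lem:abt}) and Kopell give that $\supp\psi(u^\dagger)$ is compactly contained in $\supp\psi$. The regularity comparison is not a pointwise cocycle estimate at a fixed point at all; it is the Slow Progress Lemma (Lemma~\ref{lem:stutter}), which rests on a natural-density-zero bound for the set of indices $i$ on which $f^{N_i}$ can be $\delta$-fast or $\lambda$-expansive on $J_i$ (Theorem~\ref{thm:est}), applied to the covering-distance dynamics along the prescribed sequence $\{L_i^\pm\}$. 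Your construction in which the $\mu$-regularity is ``saturated only at boundary fixed points'' also does not match how the paper calibrates regularity: it is the \emph{sequence of bump heights/lengths $\ell_i$ tending to a limit point in the interior of $I$} (Theorem~\ref{thm:optimal}) that sets the critical modulus, with the boundary of $I$ playing no distinguished role. You would also need the Thompson's group $F$ chain-group ingredient (Ghys--Sergiescu and Corollary~\ref{cor:chain-smooth}) to carry out the ``stretching'' step of Lemma~\ref{lem:stretch-interval}, which your $F_2$-as-translators description does not supply.
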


We deduce that the group
\[\phi_{k,\mu}(G^\dagger)\]
 admits no injective homomorphisms into $\Diff_+^{k,\omega}(I)$.
We will then bootstrap this construction to produce simple groups  in Section~\ref{sec:consequences}.

%%%

We define the \emph{critical regularity on $M$} of an arbitrary group $G$ as
\[\operatorname{CritReg}_M(G):=\sup\{\alpha\mid G\in\GG^{\alpha}(M)\}.\]
Here, we adopt the convention $\sup\varnothing=-\infty$.
The \emph{critical regularity spectrum of $M$} that is defined as
\[
\mathcal{C}_M:=
\left\{\operatorname{CritReg}_M(G)\mid
G\text{ is a finitely generated group }\right\}\]
Another consequence of the Main Theorem is the following.
\begin{cor}\label{cor:crit-reg}
The \emph{critical regularity spectrum of $M$}, which is defined as
\[
\mathcal{C}_M:=
\left\{\operatorname{CritReg}_M(G)\mid
G\text{ is a finitely generated group }\right\},\]
coincides with $\{-\infty\}\cup[1,\infty]$.\end{cor}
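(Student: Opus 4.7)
The plan has two directions. For the containment $\mathcal{C}_M\subseteq\{-\infty\}\cup[1,\infty]$, I would simply unpack the definition: the class $\GG^\alpha(M)$ is defined for $\alpha\in[1,\infty)$ (the range where $\Diff_+^\alpha(M)$ is genuinely a group), and since $\Diff_+^\beta(M)\subseteq\Diff_+^\alpha(M)$ whenever $\alpha\le\beta$, the set $\{\alpha\mid G\in\GG^\alpha(M)\}$ is a downward-closed subset of $[1,\infty)$. Its supremum is therefore either $-\infty$ (when the set is empty, by the convention $\sup\varnothing=-\infty$) or an element of $[1,\infty]$, which gives this inclusion.

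For the reverse inclusion, I would exhibit a finitely generated witness realizing each value in $\{-\infty\}\cup[1,\infty]$. The value $\infty$ is realized by the trivial group, or by $\bZ$ acting via a smooth rotation on $S^1$ or a smooth compactly supported translation on $I$; either lies in $\GG^\alpha(M)$ for every $\alpha\ge 1$, so $\operatorname{CritReg}_M$ equals $\infty$.

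The value $-\infty$ requires a finitely generated group admitting no faithful $C^1$ action on $M$. For $M=I$, any nontrivial finite group, such as $\bZ/2\bZ$, suffices: the only orientation-preserving homeomorphism of $I$ of finite order is the identity, so no nontrivial finite group embeds into $\Homeo_+(I)\supseteq\Diff_+^\alpha(I)$ for any $\alpha\ge 1$. For $M=S^1$, one instead invokes a lattice $\Gamma$ in a higher-rank simple Lie group, for example $\SL_3(\bZ)$; by the theorem of Ghys and Burger--Monod recalled in the introduction, $\Gamma\notin\GG^1(S^1)$, hence $\operatorname{CritReg}_{S^1}(\Gamma)=-\infty$.

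Finally, for each $\alpha\in[1,\infty)$, I would apply the Main Theorem to obtain a finitely generated group $G\in\GG^\alpha(M)\setminus\bigcup_{\beta>\alpha}\GG^\beta(M)$; the first containment gives $\operatorname{CritReg}_M(G)\ge\alpha$, and the second gives $\operatorname{CritReg}_M(G)\le\alpha$, so the critical regularity equals $\alpha$ exactly. No serious obstacle remains at this point: all of the real difficulty has already been packaged into the Main Theorem and the cited classical obstructions, and the corollary is essentially a direct book-keeping of those results against the definition of $\operatorname{CritReg}_M$.
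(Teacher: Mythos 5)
Your proof is correct, and since the paper treats this corollary as an immediate consequence of the Main Theorem without writing out an argument, you are simply supplying the book-keeping the authors omit. The one place where your reading diverges from the paper's (somewhat informal) treatment is the handling of regularities in $[0,1)$: the paper does define $\GG^0(M)$ (as the isomorphism classes of countable subgroups of $\Homeo_+(M)$), so under a literal reading of $\operatorname{CritReg}_M(G)=\sup\{\alpha\mid G\in\GG^\alpha(M)\}$, a finitely generated group that acts faithfully by homeomorphisms but admits no faithful $C^1$ action would be assigned critical regularity $0$ rather than $-\infty$, and $0$ would then lie in $\mathcal{C}_M$. The paper anticipates exactly this: it appeals to Deroin--Kleptsyn--Navas (every countable subgroup of $\Homeo_+(M)$ is conjugate into the bi-Lipschitz group, hence $\GG^0(M)=\GG^{\mathrm{Lip}}(M)$) to argue that no honest regularity in $[0,1)$ can be distinguished from the Lipschitz class, and so it is ``reasonable to say'' that $[0,1)$ is absent from the spectrum. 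You instead restrict the supremum to $\alpha\ge 1$ from the outset, which is a cleaner convention and renders DKN unnecessary for this corollary; you should flag that this is a convention being imposed rather than a consequence of the paper's stated definition. Apart from this, your witnesses are fine: any nontrivial finite group gives $-\infty$ on $I$ since orientation-preserving homeomorphisms of $I$ are torsion-free; your higher-rank lattice handles $S^1$ (a non-cyclic finite group would also do, failing to embed even in $\Homeo_+(S^1)$ by H\"older's theorem); and the Main Theorem supplies exactly what is needed to pin down each value in $[1,\infty)$.
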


Theorem~\ref{thm:main} gives the first examples of groups whose critical regularities are determined (and realizable) and belong to $(1,\infty)$. 
To the authors' knowledge, 
the critical regularities of the following three groups are previously known and finite. 
First, Navas proved that 
Grigorchuk--Machi group $\bar H$ of intermediate growth has critical regularity $1$, and that the critical regularity of $\bar H$ can be realized~\cite{Navas2008GAFA}.
Second, Castro--Jorquera--Navas proved (\cite{CJN2014}, combined with~\cite{PT1976}) that the integral Heisenberg group has critical regularity $2$ and this critical regularity cannot be attained.
Thirdly, Jorquera, Navas and Rivas~\cite{JNR2018} proved that the nilpotent group $N_4$ of $4\times 4$ integral lower triangular matrices with ones on the diagonal satisfies
\[\operatorname{CritReg}_I(N_4)=3/2.\]
It is not known whether or not the critical regularity $3/2$ of $N_4$ is realizable.
%that is, whether or not $N_4$ embeds into $\Diff_+^{1+1/2}(I)$.

The case $G\in\GG^1(M)\setminus\GG^0(M)$ requires a suitable interpretation the critical regularity.
As we have mentioned in Remark~\ref{rem:not a group}, it is proved by Deroin, Kleptsyn and Navas that every countable subgroup $G$ of $\Homeo_+(M)$ is topologically conjugate to a group of bi--Lipschitz homeomorphisms~\cite{DKN2007}.
Thus, it is reasonable to say that $[0,1)$ is missing from from the critical regularity spectrum.

%The simplicity in the Main Theorem comes from the properties of commutator groups of \emph{chain groups}, as were defined by Lodha~\cite{KKL2017} together with the authors. 

%Using chain groups, we will prove the abundance of groups in the Main Theorem: 

The authors proved in \cite{KKFreeProd2017} that 
for each integer $2\le k\le\infty$, 
the class of finitely generated group in $\GG^k(M)$ is not closed under taking finite free products.
From~\cite{BMNR2017MZ} and from the consideration of $\BS(1,2)$ actions in the current paper, we deduce the following augmentation for $k=1$.
We are grateful to A.~Navas for pointing us to the reference \cite{BMNR2017MZ} and telling us the proof of the following corollary for $M=I$. See Section~\ref{ss:univ} for details.

\begin{cor}\label{cor:c1-freeprod}
The group $(\bZ\times\BS(1,2))\ast\bZ$ does not embed into $\Diff_+^1(M)$.
In particular, the class of finitely generated subgroups of $\Diff_+^1(M)$ is not closed under taking finite free products.
\end{cor}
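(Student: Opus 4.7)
The plan is to first observe that the second assertion reduces to the first, since $\bZ\times\BS(1,2)\in\GG^1(M)$ (one realizes $\BS(1,2)$ faithfully by $C^1$ diffeomorphisms on a subinterval $J\subset M$ and lets the central $\bZ$ factor act by a diffeomorphism commuting with it, for instance a nontrivial one supported off $J$) and $\bZ\in\GG^1(M)$ trivially. Thus proving that $(\bZ\times\BS(1,2))\ast\bZ$ fails to embed into $\Diff_+^1(M)$ yields a concrete free product of two members of $\GG^1(M)$ which is not a member.

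Suppose for contradiction that $\phi\colon G=(\bZ\times\BS(1,2))\ast\bZ\hookrightarrow\Diff_+^1(M)$ is an embedding. Write $\BS(1,2)=\langle a,b\mid bab^{-1}=a^2\rangle$, and let $t,s$ generate the central and free cyclic factors respectively. The first step is to invoke the rigidity theorem of \cite{BMNR2017MZ} for faithful $C^1$ actions of $\BS(1,2)$ on a compact one-manifold: on each connected component of $\supp\phi(a)$, the relation $b^n a b^{-n}=a^{2^n}$ together with $C^1$ continuity of $\phi(a)'$ (via the chain rule identity $\phi(a^{2^n})'(p)=\phi(a)'(p)^{2^n}$ at a hypothetical interior fixed point $p$ of $\phi(a)$) forces a highly constrained fixed-point/support structure for the $\BS(1,2)$-action, and consequently heavily restricts the centralizer of $\phi(\BS(1,2))$ inside $\Diff_+^1$ when restricted to any such component.

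The second step uses the commutations $[t,a]=[t,b]=1$ to conclude that $\phi(t)$ preserves each connected component of $\supp\phi(a)$ and restricts on each to an element of the small centralizer described above; thus $\phi(t)$ is essentially determined by the $\BS(1,2)$-action up to finitely many real parameters. The third step applies the same rigidity analysis to the conjugate subgroup $s\BS(1,2)s^{-1}$ — which is free of the original $\BS(1,2)$ in $G$ by the normal form for free products — and uses the interaction between the two copies, mediated by $\phi(t)$ (whose action is rigidly pinned to each), to produce an explicit reduced word $w$ in $a,b,t,s$ that must satisfy $\phi(w)=\Id$ in $\Diff_+^1(M)$ while $w\ne 1$ in $G$, contradicting injectivity of $\phi$.

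The main obstacle is step three: the per-copy rigidity from \cite{BMNR2017MZ} is a constraint on one $\BS(1,2)$-action at a time, and algebraic independence in the free product is flexible enough that producing the identity $\phi(w)=\Id$ requires carefully combining the support combinatorics of $\phi(a)$ and $\phi(sas^{-1})$ with the refined dynamical analysis of $\BS(1,2)$-actions developed earlier in the paper. The expected mechanism is that a suitable commutator of $\phi(t)$ with a word involving both copies of $\BS(1,2)$ is forced by the rigidity to fix a common point with derivative equal to $1$, and iterating along the $\phi(b)$- and $\phi(sbs^{-1})$-orbits collapses it to the identity while the free product structure keeps the algebraic element nontrivial.
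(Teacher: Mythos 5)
Your reduction of the second assertion to the first is fine, and your first step correctly identifies the rigidity result of \cite{BMNR2017MZ} as a key input. However, you misstate its consequence in step 2: the relevant conclusion is not that $\phi(t)$ is ``determined up to finitely many parameters,'' but the much cleaner disjointness $\supp\phi(t)\cap\supp\phi(a)=\varnothing$ (Lemma~\ref{lem:BMNR}(2): the support of the $C^1$--centralizer of a faithful $\BS(1,2)$ is disjoint from $\supp\phi(a)$; for $M=S^1$, one first passes to a power of $\phi(t)$ via Lemma~\ref{lem:c1-denjoy}(2)).

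The genuine gap is step 3. You acknowledge this yourself, and the ``expected mechanism'' you sketch --- conjugating the $\BS(1,2)$ by $s$ and hoping a commutator collapses --- is both a different route from the paper's and not actually worked out. The paper does not examine the conjugate copy $s\BS(1,2)s^{-1}$ at all. Once disjointness is in hand, the paper quotes the $abt$--lemma (Lemma~\ref{lem:abt}, from~\cite{KKFreeProd2017}): if $\alpha,\beta,t\in\Diff_+^1(M)$ with $\supp\alpha\cap\supp\beta=\varnothing$, then $\langle\alpha,\beta,t\rangle\not\cong\bZ^2\ast\bZ$. Applying this with $\alpha=\phi(t)$ (the central $\bZ$), $\beta=\phi(a)$ (the $\bbe$ generator of $\BS(1,2)$), and $t=\phi(s)$ (the free factor) immediately contradicts faithfulness, because in the abstract group one has $\langle t,a,s\rangle\cong\bZ^2\ast\bZ$. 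That lemma is itself a nontrivial result (built on the element $u=[[\alpha^t,\beta\beta^t\beta^{-1}],\alpha]$ with compactly contained support and an appeal to Kopell's Lemma), and without it your step 3 has no mechanism for producing a kernel element. Until you either invoke the $abt$--lemma or reconstruct an argument of comparable strength, the proof is incomplete.
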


Though we concentrate primarily on countable groups, our results have applications to continuous groups. 
For a smooth manifold $X$ and for an $\alpha\ge1$, we let $\Diff_c^{\alpha}(X)_0$ denote the group of $C^\alpha$ diffeomorphisms of $X$ isotopic to the identity through a compactly supported $C^\alpha$ isotopy.
If $1\le \alpha<\beta$, then there is a natural embedding $\Diff_c^\beta(X)_0\to\Diff_c^{\alpha}(X)_0$ defined simply by the  inclusion.
%For certain continuous groups of diffeomorphisms in one dimension, there are very few algebraic homomorphisms from $\Diff_c^{\alpha}(X)_0\to\Diff_c^{\beta}(X)_0$, with no assumptions of continuity.
The main result (and its proof) of~\cite{Mann2015} by Mann implies that if $X\in\{S^1,\bR\}$, and if $2<\alpha<\beta$ are real numbers, then there exists no injective homomorphisms $\Diff_c^{\alpha}(X)_0\to\Diff_c^{\beta}(X)_0$. We generalize this to all real numbers $1\le \alpha<\beta$.

\begin{cor}\label{cor:intro-continuous}
Let $X=\{S^1,\bR\}$.
Then arbitrary homomorphisms of the following types have abelian images:
\be
\item $\Diff_c^\alpha(X)_0\to\bigcup_{\beta>\alpha}\Diff_c^\beta(X)_0$, where $\alpha\ge1$;
\item $\Diff_c^\alpha(X)_0\to\Diff_c^{\lfloor \alpha\rfloor,\mathrm{bv}}(X)_0$, where $\alpha\ge1$;
\item $\bigcap_{\beta<\alpha}\Diff_c^\beta(X)_0\to \Diff_c^\alpha(X)_0$, where $\alpha>1$.
\ee
In addition, if $\alpha\ne2$ in parts (1) and (2), and if $\alpha>3$ in part (3), then all the above homomorphisms have trivial images.
\end{cor}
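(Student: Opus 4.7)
The plan is to combine Theorem~\ref{thm:main0} and the dual half of the Main Theorem with the classical simplicity theorems of Epstein, Mather, and Thurston for commutator subgroups of diffeomorphism groups. In each of (1)--(3), an arbitrary homomorphism $\Phi$ will be shown to annihilate the commutator subgroup of its source, producing an abelian image; under the stronger hypotheses, perfectness of the source then upgrades this to triviality.

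\textbf{Abelian image.} Let $\Phi\co G\to H$ be a homomorphism in any of the three cases. By Epstein's criteria~\cite{Epstein1970} combined with Mather--Thurston fragmentation, the commutator subgroup $[G,G]$ is simple whenever $G=\Diff_c^\alpha(X)_0$ with $\alpha\ge1$; the analogous statement for $G=\bigcap_{\beta<\alpha}\Diff_c^\beta(X)_0$ follows by verifying Epstein's transitivity and support axioms for the intersection, which is automatic since $\Diff_c^\infty(X)_0\le G$. Because $\ker\Phi\cap[G,G]$ is a normal subgroup of a simple group, $\Phi|_{[G,G]}$ is either trivial or injective. To rule out injectivity I apply Theorem~\ref{thm:main0} with $k=\lfloor\alpha\rfloor$ and concave moduli $\mu,\omega$ tailored to the statement: in cases (1) and (2) this produces a finitely generated $Q\le\Diff_+^\alpha(I)$ whose commutator $S=[Q,Q]$ is simple and fails to lie in $\GG^\beta(I)\cup\GG^\beta(S^1)$, respectively in $\GG^{k,\mathrm{bv}}(I)$; in case (3) the dual side of the Main Theorem yields a simple $S\le\bigcap_{\beta<\alpha}\Diff_+^\beta(I)$ with $S\notin\GG^\alpha(I)$. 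Arranging each generator of $Q$ to be supported in a compact subinterval of the interior of $I$, one extends trivially through $I\hookrightarrow X$ to embed $S$ into $[G,G]$. An injective $\Phi|_{[G,G]}$ would therefore embed $S$ into $H$, contradicting the defining property of $S$. Hence $\Phi([G,G])=1$, and the image of $\Phi$ is abelian.

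\textbf{Trivial image.} To promote abelian image to trivial image, one uses that the source is perfect. In cases (1) and (2), Mather's theorem gives $\Diff_c^\alpha(X)_0=[\Diff_c^\alpha(X)_0,\Diff_c^\alpha(X)_0]$ for every real $\alpha\ge1$ with $\alpha\ne2$, and the conclusion follows. In case (3) under $\alpha>3$, perfectness of $G=\bigcap_{\beta<\alpha}\Diff_c^\beta(X)_0$ is established by a fragmentation argument executed at a fixed regularity $\gamma\in(2,\alpha)$ kept uniformly away from both $2$ and $\alpha$: every $f\in G$ factors as a product of diffeomorphisms supported on short intervals, each of which decomposes into commutators via Mather's construction in $\Diff_c^\gamma(X)_0$, and because this construction preserves the regularity of the input at every level $\beta<\alpha$ the commutators remain in $G$. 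Thus $G=[G,G]$ and triviality follows.

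\textbf{Principal obstacle.} The trickiest point is placing the obstructing simple subgroup $S$ inside the correct source group in each case while ensuring $S$ does not embed into the target. In case (3) this requires careful selection of concave moduli $\mu\gg\omega\succ_k 0$ in the dual half of the Main Theorem so that $S$ belongs to $\Diff_+^\beta(I)$ for every $\beta<\alpha$ yet not to $\Diff_+^\alpha(I)$; this is a bookkeeping exercise relying on the constructions of Section~\ref{ss:continua}. A secondary delicate point is the perfectness claim for $G$ in case (3), where the gap $\alpha>3$ is needed precisely so that Mather's commutator construction can be executed at a regularity $\gamma\in(2,\alpha)$ bounded away from both $2$ and $\alpha$, keeping the resulting commutators inside $\bigcap_{\beta<\alpha}\Diff_c^\beta(X)_0$.
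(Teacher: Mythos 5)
Your proposal follows essentially the same route as the paper's own (very compressed) proof: use the Main Theorem to produce a non-embeddable subgroup in the source, invoke simplicity of the commutator subgroup via Epstein--Ling and fragmentation to force the map to kill $[G,G]$, and upgrade to a trivial image by perfectness when the relevant Mather hypotheses hold. The paper packages the simplicity and perfectness facts into Theorem~\ref{thm:continuous} (proved in the Appendix as Theorem~\ref{thm:continuous2}); you re-derive them inline. Your extra step of embedding the obstructing simple group $S$ into $[G,G]$ (rather than simply noting that proper quotients of $G$ are abelian) is a minor repackaging and works because $S$ is nonabelian simple, hence perfect.

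There is, however, a genuine imprecision in your handling of $G=\bigcap_{\beta<\alpha}\Diff_c^\beta(X)_0$. First, the fragmentation property for $G$ does not follow merely from the inclusion $\Diff_c^\infty(X)_0\le G$: one needs to fragment an arbitrary element $f\in G$, not an element of $\Diff_c^\infty$, and a priori the pieces produced at regularity $\gamma$ could be only $C^\gamma$. Second, and more seriously, your perfectness argument for $\alpha>3$ fixes a single $\gamma\in(2,\alpha)$ and asserts that Mather's commutator construction ``preserves the regularity of the input at every level $\beta<\alpha$.'' As stated this is unjustified: Mather's construction at regularity $\gamma$ outputs diffeomorphisms that are $C^\gamma$, with no guarantee of higher regularity, so the resulting commutators need not lie in $G$. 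The mechanism the paper actually uses (Claim~\ref{cla:cap} in the Appendix) is to express $G$ as a union $\bigcup_{\mu\in\FF(\alpha)}\Diff_c^{\form{\alpha},\mu}(X)_0$ over a family of concave moduli attached to the \emph{optimal} modulus of continuity of each element, then apply Lemma~\ref{lem:frag} and Mather's Theorem at the level $(\form{\alpha},\mu)$ for each $f$; since $\form{\alpha}\ge 3$ when $\alpha>3$, the sup/sub-tame hypotheses of Mather's Theorem are vacuous and every $\Diff_c^{\form{\alpha},\mu}(X)_0$ is simple, hence perfect, and stays inside $G$. Replacing your ``fixed $\gamma$'' with ``the modulus adapted to $f$ itself'' closes the gap and recovers the paper's argument.
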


The Main Theorem has the following implication on the existence of unsmoothable foliations on 3--manifolds. This extends a previous result of Tsuboi~\cite{Tsuboi1987} and of Cantwell--Conlon~\cite{CC1988}, that is originally proved for integer regularities.
\begin{cor}\label{cor:foliation} Let $\alpha\ge1$ be a real number.
Then for every closed orientable $3$--manifold $Y$ satisfying $H_2(Y,\bZ)\ne0$, there exists a codimension--one $C^{\alpha}$ foliation $(Y,\FF)$ which is not homeomorphic to a $\bigcup_{\beta>\alpha}C^{\beta}$ foliation. \end{cor}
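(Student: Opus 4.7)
The plan is to reduce the corollary to the Main Theorem by realizing a finitely generated group of critical regularity exactly $\alpha$ as a subgroup of the holonomy pseudogroup of a concrete $C^\alpha$ foliation on $Y$; any leaf-preserving homeomorphism to a higher regularity foliation would then produce a forbidden algebraic smoothing of that group. This upgrades the integer-regularity construction of Tsuboi and Cantwell--Conlon, in which the obstruction came from nilpotent groups via Plante--Thurston, to the full real-regularity range.

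First, I would invoke the Main Theorem to fix a finitely generated group
\[
G\in \GG^\alpha(I)\setminus\bigcup_{\beta>\alpha}\GG^\beta(I),
\]
together with an injective representation $\rho\colon G\hookrightarrow\Diff_+^\alpha(I)$ whose image consists of diffeomorphisms that are $C^\infty$ on the interior of $I$, as supplied by Theorem~\ref{thm:main}. Next I would carry out the Tsuboi / Cantwell--Conlon construction in class $C^\alpha$. Using $H_2(Y;\bZ)\ne0$ to start from a $C^\infty$ codimension-one foliation $\FF_0$ on $Y$, I would choose pairwise disjoint embedded solid tori transverse to $\FF_0$, one for each generator of $G$, and modify $\FF_0$ near each torus by turbulization so that the prescribed holonomy along the corresponding core circle is the image under $\rho$ of the associated generator. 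The result is a $C^\alpha$ foliation $\FF$ on $Y$, agreeing with $\FF_0$ off the insertion loci, whose total holonomy pseudogroup on a transversal arc contains a subgroup $C^\alpha$--conjugate to $\rho(G)$.

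Finally, suppose for contradiction that there is a leaf-preserving homeomorphism $h\colon(Y,\FF)\to(Y,\FF')$ with $\FF'$ a codimension-one $C^\beta$ foliation for some $\beta>\alpha$. Restricted to suitably chosen transversals, $h$ induces an abstract isomorphism between the germ holonomy pseudogroups of $\FF$ and of $\FF'$. Since $\FF'$ is $C^\beta$, its holonomy pseudogroup embeds into $\Diff_+^\beta(I)$, and composing with the realization of $G$ in the holonomy of $\FF$ yields an injective homomorphism
\[
G\hookrightarrow \Diff_+^\beta(I),
\]
contradicting the defining property of $G$. The main obstacle will be verifying that the turbulization of Tsuboi and Cantwell--Conlon, which in their papers is stated with $C^k$ bump functions and $C^k$ inserted holonomy, passes to class $C^\alpha$ without any loss of regularity; since $\Diff_+^\alpha(I)$ is closed under composition and inversion and $C^\alpha$ bump functions are readily available, I expect this transfer to require only notational changes from the integer case, while being careful that composing inserted $C^\alpha$ germs with the ambient $C^\infty$ data does not accidentally simplify the holonomy pseudogroup.
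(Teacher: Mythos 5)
Your approach differs substantially from the paper's, and the difference introduces a genuine gap at the core of the construction.

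The paper follows Cantwell--Conlon and Tsuboi and works with \emph{suspensions} (foliated bundles): it takes a surjection $\pi_1(S_g)\twoheadrightarrow G^\dagger$, composes with the representation $\phi_{k,\mu}$ of Theorem~\ref{thm:main} to get $\psi_\alpha\in\Hom(\pi_1(S_g),\Diff_0^\alpha(I))$, and forms the suspension $E(\psi_\alpha)$. In a foliated bundle the total holonomy is, by definition, the chosen representation acting globally on the fiber $I$, and a cited theorem of Camacho--Lins Neto says a homeomorphism of foliated bundles is precisely a topological conjugacy of the holonomy representations. So a homeomorphism to a $C^\beta$ foliated bundle directly produces an isomorphic copy of $\phi_{k,\mu}(G^\dagger)$ inside $\Diff_+^\beta(I)$, which is the desired contradiction. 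The extension to general $Y$ is then done by Goodman's theorem: embed $S_g\times I$ in $Y$, use the foliated bundle there, and smoothly foliate the complement.

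Your turbulization plan does not reproduce this. If you turbulize along pairwise \emph{disjoint} solid tori, assigning one generator of $G$ to each, the resulting holonomy germs are supported in pairwise disjoint pieces of a transversal, so the group they generate satisfies the relation that any two generators coming from different tori commute. For a group $G$ realizing a nontrivial critical regularity as produced by the Main Theorem (e.g.\ an eleven-generator chain group containing $\phi(G^\dagger)$), this is not a quotient through which $G$ factors faithfully, so you have not placed a copy of $G$ in the holonomy pseudogroup of $\FF$ at all. The relations of $G$ are exactly what the construction needs to encode, and disjoint turbulization cannot encode them; this is why the standard approach goes through suspensions over a surface, where $\pi_1(S_g)$ supplies the relations for free. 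There is also a secondary gap you should be aware of even if the first were fixed: a leaf-preserving homeomorphism induces an isomorphism of \emph{pseudogroups of germs}, not a priori an injection of $G$ into $\Diff_+^\beta(I)$ by global diffeomorphisms of a closed interval, so passing from ``isomorphic holonomy pseudogroup'' to ``algebraic smoothing of $G$'' requires an argument; the foliated-bundle framework avoids this because there the holonomy representation is genuinely a homomorphism $\pi_1(S_g)\to\Diff_+^\beta(I)$ and the Camacho--Lins Neto theorem upgrades the homeomorphism of foliations to a topological conjugacy of these global representations.
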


Here, a homeomorphism of foliations is a homeomorphism of the underlying foliated manifolds which respects the foliated structures.

\subsection{Notes and references}
\subsubsection{Automatic continuity}
K. Mann proved that if $X$ is a compact manifold then the group $\Homeo_0(X)$ of homeomorphisms isotopic to the identity has \emph{automatic continuity}, so that every homomorphism from $\Homeo_0(X)$ into a separable group is continuous~\cite{Mann2016GT}. She uses this fact to prove that $\Homeo_0(X)$ has critical regularity $0$ and hence has no algebraic smoothings. For discussions of a similar ilk, the reader may consult~\cite{MannETDS15} and~\cite{HurtadoGT15}.
The Main Theorem implies that the critical regularity of $\Diff_+^\alpha(M)$ is $\alpha$, for $M\in\{I,S^1\}$ and for  $\alpha\ge1$.

\subsubsection{Superrigidity}

Recall that Margulis Superrigidity says that under suitable hypotheses, a representation of a lattice $\Gamma$ in a higher rank Lie group $G$ is actually given by the restriction of a representation of $G$ to $\Gamma$ (see~\cite{MargulisBook1991}). For the continuous groups $\Diff_+^{\alpha}(M)$ which we consider here, there is no particularly clear analogue of a lattice. Nevertheless, some of the results proved in this paper are reminiscent of similar themes. Particularly, Corollary~\ref{cor:intro-continuous} is established by showing that all of the maps in question contain a countable simple group (perhaps a suitable analogue of a lattice) in their kernel, thus precluding the existence of a nontrivial homomorphism between the corresponding continuous groups.

\subsubsection{Topological versus algebraic smoothability}

The smoothability issues that we consider in this paper center around algebraic smoothability of group actions. There is a stronger notion of smoothability called \emph{topological smoothability}. 
A topological smoothing of a representation
\[
\phi\co G\to  \Diff_+^{\alpha}(M)\]
is a topological conjugacy of $\phi$ into $\Diff_+^{\beta}(M)$ for some $\beta>\alpha$; that is, the conjugation 
$h\phi h^{-1}$ of $\phi$ by some homeomorphism $h$ on $M$ such that we have $h\phi(G)h^{-1}\le \Diff_+^\beta(M)$.
A topological smoothing of a subgroup is obviously an algebraic smoothing, but not conversely; compare \cite{CJN2014} and \cite{JNR2018}. 
By a result of Tsuboi~\cite{Tsuboi1987}, there exists a two--generator solvable group $G$ and a faithful action $\varphi_k$ of $G$ on the interval such that $\varphi_k(G)\le \Diff_+^k(I)$ but such that $\varphi_k(G)$ is not topologically conjugate into $\Diff_+^{k+1}(I)$. Since $\varphi_k$ is injective, these actions are algebraically smoothable. See Section~\ref{ss:fol} regarding implications for foliations.

\subsubsection{Disconnected manifolds}

It is natural to wonder whether or not the results of this paper generalize to compact one--manifolds which are not necessarily connected; these manifolds are disjoint unions of finitely many intervals and circles (cf.~\cite{BKK2016,KKFreeProd2017}). It is not difficult to see that the results generalize. Indeed, if $G$ is a group of homeomorphisms of a compact disconnected one--manifold $M$, then a finite index subgroup of $G$ stabilizes all the components of $M$. We build a finitely generated group $G$ whose commutator subgroup $[G,G]$ is simple, and such that $[G,G]$ has the critical regularity  exactly $\alpha$ with respect to faithful actions on the interval or the circle. Some finite index subgroup of $G$ stabilizes each component of $M$, and since $[G,G]$ is infinite and simple, $[G,G]$ stabilizes each component of $M$. 
It follows that 
$G$ has critical regularity $\alpha$ with respect to faithful actions on $M$.

\subsubsection{Kernel structures}

In Theorem~\ref{thm:main}, let us fix $\epsilon\in(0,1)$ such that $\omega_\epsilon\ll\mu$.
It will be impossible to find a finite set $S\sse G^\dagger\setminus\ker\phi$ such that for all  $\psi\in\Hom(G^\dagger,\Diff_+^{k+\epsilon}(I))$ we have $S\cap\ker\psi\ne\varnothing$.
Indeed,  Lemma~\ref{lem:res} implies that for all finite set $S\sse G^\dagger\setminus\{1\}$
there exists a $C^{\infty}$ action of $G^\dagger$ on $\bR$ with a compact support such that $S$ does not intersect the kernel of this action.
So, one must consider an infinite set of candidates that could be a kernel element of such a $\psi$.

\subsection{Outline of the proof of Theorem~\ref{thm:main}}\label{subsec:outline}
Given a concave modulus  $\mu$,
we build a certain representation $\phi$ of the group $G^\dagger$ into $\Diff_+^{k,\mu}(I)$. 
For $\epsilon\in(0,1]$ satisfying  $\omega:=\omega_\epsilon\ll\mu$,
we also show that the group $\phi(G^\dagger)$ admits no algebraic smoothing into $\Diff_+^{k,\omega}(I)$. 
We remark that $\Diff_+^{k+1}(I)\le\Diff_+^{k,\omega}(I)$.

To study maps into $\Diff_+^{k,\omega}(I)$,
we use a measure of complexity of a diffeomorphism $f$, which is roughly the number of components of supports of generators of $G^\dagger$ needed to cover the support of $f$. 
We prove a key technical result governing this complexity;
this result is called the Slow Progress Lemma 
and applies to an action of an arbitrary finitely generated group on $I$.
To have a starting diffeomorphism with finite complexity, we build an element $1\neq u\in G^\dagger$ such that if $\psi\colon G^\dagger\to\Diff_+^1(I)$ is an arbitrary representation then the support of $\psi(u)$ is compactly contained in the support of $\psi(G^\dagger)$.

Next, we build an action $\phi$ of $G^\dagger$ so that certain judiciously chosen conjugates $w_juw_j^{-1}$ of $u$, which depend strongly on the regularity $(k,\mu)$, result in a sequence of diffeomorphisms $\phi(w_juw_j^{-1})$ whose complexity grows linearly in $j$. We show that under an arbitrary representation $\psi\colon G^\dagger\to\Diff_+^{k,\omega}(I)$, the complexity of $\psi(w_juw_j^{-1})$ grows more slowly than that of $\phi(w_juw_j^{-1})$, a statement which follows from the Slow Progress Lemma.
Thus for each $\psi$, we find an element $g\in G^\dagger$ which survives under $\phi$ but dies under $\psi$. In particular, $\phi(G^\dagger)$ cannot be realized as a subgroup of $\Diff_+^{k,\omega}(I)$.

\subsection{Outline of the paper}\label{subsec:outline2}

We strive to make this article as self--contained as possible. In Section~\ref{sec:analytic}, we build up the analytic tools we need. Section~\ref{sec:background} summarizes the dynamical background used in the sequel, and proves Corollary~\ref{cor:c1-freeprod}. Section~\ref{sec:psi} establishes the Slow Progress Lemma for a general finitely generated group action on intervals. In Section~\ref{sec:phi}, we fix a concave modulus  $\mu$, and construct a representation $\phi$ of the group $G^\dagger$ into $\Diff_+^{k,\mu}(I)$ with desirable dynamical properties and prove Theorem~\ref{thm:main}. In Section~\ref{sec:consequences}, we complete the proof of the Main Theorem and gather the various consequences of the main results.

\section{Probabilistic dynamical behavior}\label{sec:analytic}
Throughout this section and for the rest of the paper, we will let $I$ denote a nonempty compact subinterval of $\bR$. 
\emph{All homeomorphisms considered in this paper are assumed to be orientation preserving}.
We continue to let  $M=I$ or $M=S^1$.

We wish to develop the concepts of \emph{fast} and \emph{expansive} homeomorphisms (Definition~\ref{defn:fast-exp}). These concepts establish a useful relationship between the dynamical behavior of a diffeomorphism supported on $I$ and its analytic behavior, which is to say its regularity.

\subsection{Moduli of continuity}\label{ss:modulus}
We will use the following notion in order to guarantee the convergence of certain sequences of diffeomorphisms.

\bd\label{defn:mod}
\be
\item
A \emph{concave modulus of continuity} (or \emph{concave modulus}, for short)
means
a homeomorphism $\omega\co [0,\infty)\to[0,\infty)$
which is concave.
\item
Let $\omega$ be a concave modulus .
For $U\sse \bR$ or $U\sse S^1$,
we define the \emph{$\omega$--norm} of a map $f\co U\to \bR$ as
\[
[f]_{\omega}=\sup \left\{
\frac{|f(x)-f(y)|}{\omega(|x-y|)}
\co 
x,y\in U\text{ and }x\ne y
\right\}
.\]
We say $f$ is \emph{$\omega$-continuous} if $f$ has a bounded $\omega$--norm.
\ee
\ed

The notion of $\omega$--continuity depends only on the germs of $\omega$ for bounded functions,
as can be seen from 
the following easy observation.

\begin{lem}\label{lem:omega-cont}
Let $\omega$ be a concave modulus ,
and let $f\co U\to\bR$ be a bounded function for some $U\sse\bR$.
If there exist constants $K,\delta>0$
such that
\[
|f(x)-f(y)| \le K\cdot \omega(|x-y|)\]
for all $0<|x-y|\le \delta$,
then we have $[f]_\omega<\infty$.
%\[ [f]_\omega \le K + {2\|f\|_\infty}/{\omega(S)}<\infty.\]
\end{lem}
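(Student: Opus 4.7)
The plan is to split the ratio $|f(x)-f(y)|/\omega(|x-y|)$ into two regimes, the short-range $|x-y|\le \delta$ and the long-range $|x-y|>\delta$, and bound each separately.

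First, on the short-range regime $0<|x-y|\le \delta$, the hypothesis immediately gives
\[
\frac{|f(x)-f(y)|}{\omega(|x-y|)} \le K.
\]
So this part of the supremum in the definition of $[f]_\omega$ is at most $K$.

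On the long-range regime, I would exploit two facts about $\omega$: since $\omega\co[0,\infty)\to[0,\infty)$ is a homeomorphism with $\omega(0)=0$, concavity forces $\omega$ to be strictly increasing, so $\omega(|x-y|)\ge \omega(\delta)>0$ whenever $|x-y|\ge \delta$. Combined with boundedness of $f$, this yields
\[
\frac{|f(x)-f(y)|}{\omega(|x-y|)} \le \frac{2\sup_U |f|}{\omega(\delta)}.
\]
Putting the two regimes together,
\[
[f]_\omega \le \max\!\left\{K,\ \frac{2\sup_U|f|}{\omega(\delta)}\right\}<\infty,
\]
which is the claim. There is no real obstacle; the only thing to record carefully is that ``homeomorphism of $[0,\infty)$'' already encodes $\omega(0)=0$ and strict monotonicity, which is what prevents $\omega(\delta)$ from vanishing and rescues the long-range estimate.
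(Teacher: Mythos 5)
Your proof is correct, and since the paper states this as an ``easy observation'' without supplying a proof, there is nothing to compare against: the two-regime split (hypothesis for $|x-y|\le\delta$, boundedness of $f$ together with $\omega(|x-y|)\ge\omega(\delta)>0$ for $|x-y|>\delta$) is the intended argument.
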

%\bp For all $x,y$ satisfying $|x-y|\ge S$, it suffices to note that \[ \frac{|f(x)-f(y)|}{\omega(|x-y|)} \le \frac{|f(x)-f(y)|}{\omega(S)} \le \frac{2\|f\|_\infty}{\omega(S)}.\qedhere \]\ep

\begin{rem}
It is often assumed in the literature that a concave modulus $\omega(x)$ is defined only locally at $x=0$, namely on $[0,\delta]$ for some $\delta>0$~\cite{Mather1,Mather2}. This restriction does not alter the definition of $\omega$--continuity for compactly supported functions. The reason goes as follows. Suppose $\omega\co [0,\delta]\to[0,\omega(\delta)]$ is a strictly increasing concave homeomorphism. By an argument in the proof of Lemma~\ref{lem:conc-mod}, we can find a concave modulus $\mu\co [0,\infty)\to[0,\infty)$ such that \[\omega(s)\le \mu(s)\le \left(2+\delta/\omega(\delta)\right)\omega(s)\] for all $s\in[0,\delta]$.
%We will later see from  Lemma~\ref{lem:omega} that $s\le \omega(s)\delta/\omega(\delta)$ for such an $s$.
By Lemma~\ref{lem:omega-cont}, we conclude that the $\omega$--continuity coincides with the $\mu$--continuity for a compactly supported function.
\iffalse
By the above lemma, we may assume that $\omega$ is defined only locally at $x=0$ when dealing with compactly supported maps.
For most of practical applications, we can linearly extend a given locally defined concave modulus  to $[0,\infty)$ so that the concavity is preserved. 
%This extension sometimes eases the computation for us.
For example, suppose $\omega\co [0,\delta]\to[0,\omega(\delta)]$ is a given homeomorphism which is $C^2$ on $(0,\delta]$ 
and which satisfies $\omega'>0$ and $\omega''\le0$.
Then for $x\ge \delta$, we simply extend $\omega$ by the rule
\[
\omega(x) = \omega'(\delta)(x-\delta)+\omega(\delta).\]
\fi
\end{rem}

The complex plane $\bC$ has a natural lexicographic order $<_\bC$;
that is, we write $z<_\bC w$  in $\bC$ if 
$\operatorname{Re}z<\operatorname{Re}w$,
or if
$\operatorname{Re}z=\operatorname{Re}w$ and 
$\operatorname{Im}z<\operatorname{Im}w$. 
For two complex numbers $a,b\in\bC$, we let 
\[(a,b]_\bC:=\{z\in\bC \mid a<_\bC z\le_\bC b\}.\]
In particular, we have that
\[
(0,1]_\bC := \{s\sqrt{-1}\mid s>0\}
\cup\{ \tau+s\sqrt{-1}\mid \tau\in(0,1), s\in\bR\}
\cup\{1+s\sqrt{-1}\mid s\le 0\}.\]
We similarly define $(a,b)_\bC$, together with the other types of intervals.

\begin{exmp}\label{ex:mod}
Let $z=\tau + s\sqrt{-1}\in\bC$ satisfy $z\in(0,1]_\bC$.
We set
\[\omega_z(x):= x^\tau\cdot \exp\left(-s \log (1/x) / \log\log (1/x)\right).\]
Then $\omega_z$ is a small perturbation of $\omega_\tau(x)=x^\tau=\exp(-\tau\log(1/x))$.
%We define  \[ \AA=\{z\in\bC \in \operatorname{Re}z\in(0,1)\} \cup\{si \mid s>0\} \cup\{1+si\mid s\le 0\}.\]
By simple computations of the derivatives, one sees that $\omega_z$ is a concave modulus defined for all small $x\ge0$.
See Figure~\ref{fig:om} for the graphs of $\omega_z$.
\end{exmp}

We will use the notation in Example~\ref{ex:mod} for the rest of the paper. 
The H\"older continuity of exponent $\tau\in(0,1)$ is equivalent to the $\omega_\tau$--continuity.

\begin{figure}[b!]
\subfloat[(a) $y=\omega_{0.05+0.2\sqrt{-1}}(x)$]
{\includegraphics[width=.4\textwidth]{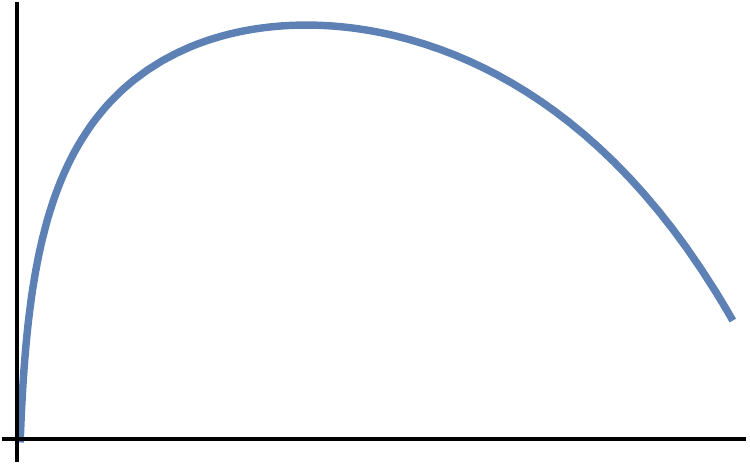}}
$\qquad\qquad$
\subfloat[(b) $y=\omega_{0.05-0.05\sqrt{-1}}(x)$]
{\includegraphics[width=.4\textwidth]{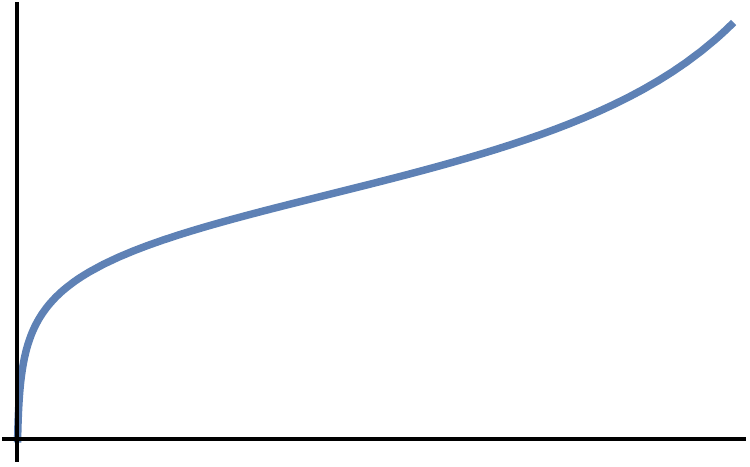}}
\caption{The graphs of $\omega_z$ along with their extrapolations (not drawn in scale). Note we only consider concave and strictly increasing portions $[0,\delta]$ of the above graphs.}
\label{fig:om}
\end{figure}

\begin{notation}\label{notation:eps}
\be
\item
Let $k\in\bN$, and let $\omega$ be a concave modulus.
We write
\[\omega\succ_k 0\]
if the following holds for some $\delta>0$:
\[
\lim_{t\to+0}\sup_{0<x<\delta}{t^{k-1}\omega(tx)}/{\omega(x)}=0.\]
\item
For two positive real sequences  $\{a_j\}$ and $\{b_j\}$,
we will write $\{a_j\} \precsim \{b_j\}$ if  $\{a_j / b_j\}$ is bounded.
\ee
\end{notation}

In particular, the expression $\omega\succ_k 0$ is vacuously true for $k>1$. Compare this condition to Mather's Theorem (Definition~\ref{defn:sup-sub} and Theorem~\ref{thm:mather}).

\begin{lem}\label{lem:omega}
The following hold for $k\in\bN$
and
for a concave modulus  $\omega$.
\be
\item
The function $x/\omega(x)$ is monotone increasing
on $[0,\infty)$.
\item
For all $C>0$ and $x\ge 0$, we have
$\omega(Cx)\le  (C+1)\omega(x)$.
\item\label{p:aibi} 
Assume that we have positive sequences $\{a_j\}$ and $\{b_j\}$ 
 such that 
 \[\{a_j^{k-1}\omega(a_j)\}\precsim\{b_j^{k-1}\omega(b_j)\}.\]
If $\omega\succ_k 0$,
then we have $\{a_j\}\precsim\{b_j\}$.
\ee
\end{lem}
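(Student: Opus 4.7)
For part (1), I would invoke the standard fact that a concave function with $\omega(0)=0$ has $\omega(x)/x$ nonincreasing on $(0,\infty)$. For $0<y\le x$, writing $y=(y/x)\cdot x+(1-y/x)\cdot 0$ and applying concavity together with $\omega(0)=0$ gives $\omega(y)\ge(y/x)\omega(x)$, i.e.\ $\omega(y)/y\ge\omega(x)/x$; inverting yields monotonicity of $x\mapsto x/\omega(x)$ (with the convention $0/\omega(0)=0$).

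Part (2) is then a one-line consequence of part (1). If $C\ge 1$, applying the monotonicity of $x/\omega(x)$ to the pair $(x,Cx)$ yields $\omega(Cx)\le C\omega(x)\le(C+1)\omega(x)$; if $0<C\le 1$, the monotonicity of $\omega$ alone gives $\omega(Cx)\le\omega(x)\le(C+1)\omega(x)$. The case $x=0$ is trivial.

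Part (3) is where the hypothesis $\omega\succ_k 0$ enters. I would argue by contradiction: assume $\{a_j/b_j\}$ is unbounded, pass to a subsequence with $a_j/b_j\to\infty$, and set $t_j:=b_j/a_j\to 0^+$. The assumed bound $a_j^{k-1}\omega(a_j)\le K b_j^{k-1}\omega(b_j)=K(t_ja_j)^{k-1}\omega(t_ja_j)$ simplifies after dividing by $a_j^{k-1}$ to
\[\frac{t_j^{k-1}\omega(t_ja_j)}{\omega(a_j)}\ge\frac{1}{K}.\]
Fix $\delta>0$ as in the definition of $\omega\succ_k 0$. When $a_j\in(0,\delta)$, the left-hand side is bounded above by $\sup_{0<x<\delta}t_j^{k-1}\omega(t_jx)/\omega(x)$, which by hypothesis tends to $0$, contradicting the lower bound $1/K$. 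To accommodate $a_j$'s that may lie beyond $\delta$ (but remain bounded, which is the regime relevant to the paper's applications), I would use part (2) to enlarge the admissible window: for any $\delta'>\delta$ and $x\in[\delta,\delta']$, part (2) gives $\omega(t_jx)=\omega\bigl((x/\delta)\cdot t_j\delta\bigr)\le(\delta'/\delta+1)\,\omega(t_j\delta)$, while $\omega(x)\ge\omega(\delta)$. Hence $\sup_{0<x<\delta'}t^{k-1}\omega(tx)/\omega(x)\to 0$ as $t\to 0^+$ as well, and the same contradiction goes through.

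The main obstacle is this final scale-transport step in part (3): the condition $\omega\succ_k 0$ is phrased ``for some $\delta$,'' so one must verify that shrinking or enlarging $\delta$ does not destroy the vanishing of the supremum at the scale where the sequences $\{a_j\}$ actually live. Part (2) provides exactly the concavity-based comparison needed to transport the supremum between scales; the rest of the argument is routine manipulation of the hypothesis together with the observation that, for $k\ge 2$, even the crude bound $\omega(t_ja_j)\le\omega(a_j)$ combined with $t_j^{k-1}\to 0$ already closes the contradiction, so the content of $\omega\succ_k 0$ is really needed only when $k=1$.
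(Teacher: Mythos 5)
Your proof follows the same route as the paper's. Parts (1) and (2) are proved just as you describe; for part (3) the paper likewise passes to a subsequence with $t_j=b_j/a_j\to 0$ and derives a contradiction from the display $t_j^{k-1}\omega(t_ja_j)/\omega(a_j)\to 0$. Where you go further than the paper is in noting that the supremum defining $\omega\succ_k 0$ only ranges over $x\in(0,\delta)$, so this convergence needs justification when $a_j$ might exceed $\delta$, and in supplying that justification via the window-enlargement based on part (2). Your caveat that this handles ``$a_j$'s that may lie beyond $\delta$ (but remain bounded)'' is exactly right and not merely defensive: for $k=1$, part (3) of the lemma is actually false without a boundedness hypothesis on $\{a_j\}$. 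Take $\omega(x)=\log(x+1)$, which is a concave modulus with $\omega\succ_1 0$ (one checks $\omega(tx)/\omega(x)\le t/(1-x/2)$ for $x<\delta<2$), and set $a_j=e^j$, $b_j=e^{j/2}$; then $\omega(a_j)/\omega(b_j)\to 2$, so the hypothesis of (3) holds, yet $a_j/b_j=e^{j/2}$ is unbounded. The paper's one-line proof implicitly assumes $a_j$ is eventually small (in every application one has $a_j=1/i\le 1$, so nothing downstream is affected), whereas your proof makes the needed restriction explicit. Your final observation, that for $k\ge 2$ the crude bound $\omega(t_ja_j)\le\omega(a_j)$ together with $t_j^{k-1}\to 0$ already suffices without any boundedness, is also correct and consistent with the paper's remark that $\omega\succ_k 0$ is vacuous for $k>1$.
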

\bp
Proofs of (1) and (2) are obvious from monotonicity and concavity.
%From the concavity of $\omega$, we see that \[ \frac1K\omega(x)=\frac1K\omega(x) + \frac{K-1}K\omega(0) \le \omega\left(\frac xK\right).\] We also see $\omega(x)=\omega(Kx/K)\ge \omega(Kx)/K$. (2) Note from part (1) that $\omega(Cx)\le \omega((C+1)x)\le (C+1)\omega(x)$. (3) For $x> 0$ and $t\in(0,1]$, we have \[ \frac{tx}{\omega(tx)}\le \frac{tx}{t\omega(x)}=\frac{x}{\omega(x)}.\]
Assume (3) does not hold.
Passing to a subsequence, we may assume  $\{t_j:=b_j/a_j\}$ converges to $0$. Then we have a contradiction because
\[
\frac{b_j^{k-1}\omega(b_j)}{ a_j^{k-1}\omega( a_j)}
=
t_j^{k-1}
\cdot
\frac{\omega(t_ja_j)}{\omega(a_j)}
\to 0\text{ as }j\to\infty.\qedhere\]
\ep

Suppose $\omega$ and $\mu$ are concave moduli. 
We define a strict partial order $\omega\ll\mu$ if 
\[
\lim_{x\to+0}\frac{\omega(x)\log^K(1/x)}{\mu(x)}=0\]
for all $K>0$.
Here, we use the notation 
\[
\log^K t = (\log t)^K.\]
%\begin{rem} When we simply write in this paper that \[ 0\prec_k \omega\ll\mu,\] we (often tacitly) assume that  $k\in\bN$ and that $\omega$ and $\mu$ are concave moduli.\end{rem}

\begin{lem}\label{lem:order}
If $z,w\in(0,1]_\bC$ satisfy $z<_\bC w$,
then $\omega_z\gg\omega_w$.
\end{lem}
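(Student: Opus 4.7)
The plan is to compute the ratio $\omega_w(x)/\omega_z(x)$ explicitly from the definition in Example~\ref{ex:mod} and then verify directly that $\omega_w(x)\log^K(1/x)/\omega_z(x)\to 0$ as $x\to+0$ for every $K>0$. Write $z=\tau_1+s_1\sqrt{-1}$ and $w=\tau_2+s_2\sqrt{-1}$. The hypothesis $z<_\bC w$ forces one of two cases:
\[
\text{(I)}\ \tau_1<\tau_2,\qquad\text{or}\qquad \text{(II)}\ \tau_1=\tau_2\text{ and }s_1<s_2.
\]
From the definition,
\[
\frac{\omega_w(x)}{\omega_z(x)}=x^{\tau_2-\tau_1}\exp\!\left(\frac{(s_1-s_2)\log(1/x)}{\log\log(1/x)}\right).
\]

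The natural change of variable is $L=\log(1/x)$, so that $x=e^{-L}$ and $L\to\infty$ as $x\to+0$. Then the quantity to be driven to zero becomes
\[
\frac{\omega_w(x)\log^K(1/x)}{\omega_z(x)}=L^K\exp\!\left(-(\tau_2-\tau_1)L+\frac{(s_1-s_2)L}{\log L}\right).
\]
In Case (I) the exponent is dominated by the linear term $-(\tau_2-\tau_1)L$, since the correction term is bounded by $|s_1-s_2|\cdot L/\log L=o(L)$. The whole expression therefore behaves like $L^K e^{-cL}$ for some $c>0$, which tends to $0$; this handles Case (I) regardless of the signs of $s_1,s_2$.

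In Case (II) the polynomial-in-$x$ factor disappears and we must show that $L^K\exp(-(s_2-s_1)L/\log L)\to 0$. Taking logarithms,
\[
K\log L-(s_2-s_1)\frac{L}{\log L}\longrightarrow-\infty,
\]
because $L/\log L$ grows faster than any multiple of $\log L$; this is the only genuinely delicate step, and it is precisely the reason the moduli $\omega_z$ were perturbed by the factor $\exp(-s\log(1/x)/\log\log(1/x))$—it produces decay that is slower than any $x^\epsilon$ but still faster than any power of $\log(1/x)$, which is exactly what is needed to separate moduli with the same H\"older exponent in the order $\ll$. After these two case verifications, the required limit is established for every $K>0$, giving $\omega_w\ll\omega_z$, i.e.\ $\omega_z\gg\omega_w$.
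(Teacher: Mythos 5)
Your proof is correct and takes essentially the same approach as the paper: take logarithms, substitute $L=\log(1/x)$, and show the resulting exponent goes to $-\infty$. The paper compresses your explicit two-case analysis into a single statement (``From $z<_\bC w$, we see that the above limit equals $-\infty$''), but the underlying computation is identical.
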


\bp
Let $z={\sigma+s\sqrt{-1}}$ and $w={\tau+t\sqrt{-1}}$.
Then we have
\begin{align*}
&\lim_{x\to+0}\log\left(\omega_w(x)\log^K(1/x)/\omega_z(x)\right)\\
&=\lim_{x\to+0}
(\sigma-\tau)\log(1/x)-(t-s)\log(1/x)/\log\log(1/x)+K\log\log(1/x)\\
&=\lim_{y\to\infty}
(\sigma-\tau)y-(t-s)y/\log y+K\log y.
\end{align*}
From $z<_\bC w$, we see that the above limit equals $-\infty$. This is as desired.
\ep

Let $k\in\bN$ and let $\omega$ be a concave modulus.
A \emph{$C^{k,\omega}$--diffeomorphism} on $M$ is defined as a diffeomorphism $f$ of $M$ such that $f^{(k)}$ is $\omega$-continuous. We say the pair $(k,\omega)$ is a \emph{regularity} of $f$. If $\omega=\omega_\tau$ for some $\tau\in(0,1)$ then 
a $C^{k,\omega}$--diffeomorphism means a  $C^{k+\tau}$--diffeomorphism.
We have $C^{k,\omega_1}=C^{k,\mathrm{Lip}}$.

Let $f\co I=[p,q]\to\bR$ be a map. Recall that the \emph{(total) variation} of $f$ is given by \[\operatorname{Var}(f,I)=\sup_{p=x_1<\cdots<x_n=q} \sum_i |f(x_i)-f(x_{i-1})|,\] where the supremum is taken over all possible finite partitions of $I$. 
A function has bounded variation on $I$ if $\operatorname{Var}(f,I)$ is finite on $I$. 
If $M=S^1$, we use the same definition for  $\operatorname{Var}(f,I)$ with $p=q$.
We say that a diffeomorphism $f\co M\to M$ is $C^{k,\mathrm{bv}}$ if $f$ is $C^k$ and
if in addition we have $f^{(k)}$ has bounded variation.

Let $\omega$ be a concave modulus, or let $\omega=\mathrm{bv}$.
We write  for 
The set of all $C^{k,\omega}$ diffeomorphisms of $M$
is denoted as
\[\Diff^{k,\omega}_+(M),\]
which turns out to be a group for $k\in\bN$ (Proposition~\ref{prop:group}). 
We define $\GG^{k,\omega}(M)$ to be the set of the isomorphism classes of countable subgroups of $\Diff_+^{k,\omega}(M)$.

Note that 
\[\Diff^{k+\tau}_+(M)=\Diff^{k,\omega_\tau}_+(M).\]
We have that
\[\Diff^{k+1}_+(M)\le \Diff^{k,\omega_1}_+(M)=\Diff^{k,\mathrm{Lip}}_+(M)\le  \Diff^{k,\mathrm{bv}}_+(M)\le\Diff^{k}_+(M).\]
If we have two concave of moduli $\omega\ll\mu$, then we have
\[\Diff^{k,\omega}_+(M)\le\Diff^{k,\mu}_+(M).\]
In particular, if $z,w\in(0,1]_\bC$ satisfy $z<_\bC w$, then we see from Lemma~\ref{lem:order} that
\[\Diff^{k,\omega_{z}}_+(M)\ge\Diff^{k,\omega_w}_+(M).\]

\subsection{Fast and expansive homeomorphisms}\label{ss:fast}
From now on until Section~\ref{sec:consequences}, we will be mostly concerned with the case $M=I$.
For a measurable set $J\sse \bR$, we denote by $|J|$ its Lebesgue measure. 
We write $J'$ for the \emph{derived set} of $J$, which is to say the set of the accumulation points of $J$.
If $X$ is a set, we let $\# X$ denote its cardinality.

Let $f\co X\to X$ be a map on a space $X$.
We use the standard notations
\begin{align*}
\Fix f &=\{x\in X \mid f(x)=x\},\\
\supp f &=\{x\in X\mid f(x)\neq x\}=X\setminus\Fix f.
\end{align*}
The set $\supp f$ is also called the \emph{(open) support} of $f$.
We note the identity map $\Id\co \bR\to\bR$ satisfies
$\Id^{(j)}(x)=\delta_{1j}$ for $j\ge1$.

\bd\label{defn:fast-exp}
Let $f\co I\to I$ be a homeomorphism,
and let $J\sse I$ be a compact interval
such that $f(J)=J$.
We let $k\in\bN$.
\be
\item
We say $f$ is \emph{$k$--fixed on $J$} if one of the following holds:
\begin{itemize} \item $J\cap (\Fix f)'\ne\varnothing$, or \item $\#({J}\cap\Fix f)> k$. \end{itemize}
\item
We say $f$ is \emph{$\delta$--fast on $J$} for some $\delta>0$ if  \[\sup_{y\in J}\frac{|f(y)-y|}{|J|}\ge\delta.\]
\item
We say $f$ is \emph{$\lambda$--expansive on $J$} for some $\lambda>0$ if  \[\sup_{y\in J}\frac{|f(y)-y|}{d(\{y,f(y)\},\partial J)}\ge\lambda.\]
\ee
\ed

We note that $f$ has one of the above three properties if and only if so does $f^{-1}$.
Note also that $f$ is $\lambda$--expansive on $J=[p,q]$ if and only if
there exists some $y\in J$ satisfying one of the following (possibly overlapping) alternatives:
\begin{enumerate}[(i)]
\item[(E1)]
$p<y<f(y)<q$ and $f(y)-y \ge \lambda(y-p)$;
\item[(E2)]
$p<y<f(y)<q$ and $f(y)-y \ge \lambda(q-f(y))$;
\item[(E3)]
$p<f(y)<y<q$ and $y-f(y) \ge \lambda(f(y)-p)$;
\item[(E4)]
$p<f(y)<y<q$ and $y-f(y) \ge \lambda(q-y)$.
\end{enumerate}

For a set $A\sse\bN$, 
we define its \emph{natural density} 
as 
\[d_\bN(A)=\lim_{N\to\infty}\#(A\cap[1,N])/N,\]
if the limit exists. 
A crucial analytic tool of this paper is 
the following probabilisitic description of fast and expansive homeomorphisms.

\begin{thm}\label{thm:est}
Let $k\in\bN$, and let $\omega\succ_k 0$ be a concave modulus.
Suppose we have
\be[(i)]
\item a diffeomorphism
$f\in\Diff_+^{k,\omega}(I)\cup\Diff_+^{k,\mathrm{bv}}(I)$;
\item\label{sup1} a sequence $\{N_i\}\sse\bN$
such that $\sup_{i\in\bN}N_i  (1/i)^{k-1}\omega(1/i)<\infty$;
\item\label{sup2} a sequence of compact intervals $\{J_i\}$ in $I$
such that
$f$ is $k$--fixed on each $J_i$
and such that 
 $\sup_{i\in\bN}\#\{j\in\bN\mid J_i\cap J_j\ne\varnothing\}<\infty$.
\ee
Then for each $\delta>0$ and $\lambda>0$, the following set has the natural density zero:
\[
A_{\delta,\lambda}
=\left\{
i\in\bN \mid
f^{N_i}\text{ is }\delta\text{--fast or }\lambda\text{--expansive on }J_i
\right\}.\]
\end{thm}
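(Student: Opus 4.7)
The plan is to combine the $k$-fixed condition with the regularity of $f$ to obtain strong pointwise bounds on $g := f - \Id$ and on $g' = f' - 1$ over $J_i$, contradict the fast/expansive lower bounds unless $|J_i|$ is comparable to $1/i$, and then use the bounded overlap of $\{J_i\}$ to force $A_{\delta,\lambda}$ to have natural density zero.

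For the analytic estimate, note that since $f(J_i) = J_i$ and $f$ is $k$-fixed on $J_i$, the map $g$ has at least $k+1$ zeros in $J_i$ or an accumulation of zeros there; repeated application of Rolle's theorem then produces a zero of $g^{(j)}$ in $J_i$ for every $0 \le j \le k$. Combined with either the $\omega$-continuity or the bounded variation of $f^{(k)} = g^{(k)} + \delta_{k,1}$, a backwards induction on $j$ (using $g^{(j)}(x) = \int_\xi^x g^{(j+1)}(t)\,dt$ for an appropriate zero $\xi$ of $g^{(j)}$) yields
\[
\sup_{x \in J_i}|g^{(j)}(x)| \le C_f \cdot \Phi_i \cdot |J_i|^{k-j}, \qquad 0 \le j \le k,
\]
where $\Phi_i = \omega(|J_i|)$ (with $C_f = [f^{(k)}]_\omega$) in the $C^{k,\omega}$ case and $\Phi_i = V_i := \operatorname{Var}(f^{(k)}, J_i)$ (with $C_f = 1$) in the $C^{k,\mathrm{bv}}$ case.

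Fix $i \in A_{\delta,\lambda}$. In the $\delta$-fast subcase, telescoping along the $f$-orbit of the witness $y$ (which stays in $J_i$ by invariance) gives $\delta|J_i| \le |f^{N_i}(y)-y| \le N_i \sup_{J_i}|g| \le C_f N_i \Phi_i |J_i|^k$. In the $\lambda$-expansive subcase, after reducing (E3)--(E4) to (E1)--(E2) via $f^{-1}$ (which shares the same fixed points, invariant intervals, and regularity class as $f$), the mean value theorem applied to $f^{N_i}$ on either $[p_i, y]$ in case (E1) or $[y, q_i]$ in case (E2) --- using $f^{N_i}(p_i) = p_i$ and $f^{N_i}(q_i) = q_i$ --- yields $\xi \in J_i$ with $(f^{N_i})'(\xi)$ either $\ge 1+\lambda$ or $\le 1/(1+\lambda)$. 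Writing $(f^{N_i})'(\xi) = \prod_{j=0}^{N_i-1}f'(f^j\xi)$, taking logarithms, and applying the pigeonhole principle produces $\eta \in J_i$ with $|f'(\eta)-1| \ge c(\lambda)/N_i$, after which the $j=1$ estimate above gives $c(\lambda)/N_i \le C_f \Phi_i |J_i|^{k-1}$. In either subcase,
\[
N_i \cdot \Phi_i \cdot |J_i|^{k-1} \ge C(\delta,\lambda,f) > 0.
\]

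To conclude, hypothesis~(ii) gives $N_i \le C_2/((1/i)^{k-1}\omega(1/i))$, so for every $i \in A_{\delta,\lambda}$,
\[
\Phi_i \cdot |J_i|^{k-1} \ge C'\,(1/i)^{k-1}\omega(1/i).
\]
In the $C^{k,\omega}$ case, Lemma~\ref{lem:omega}(3) (whose hypothesis $\omega \succ_k 0$ is part of the setup) immediately gives $|J_i| \ge C''/i$ on $A_{\delta,\lambda}$. The overlap hypothesis allows $\{J_i\}$ to be partitioned into finitely many classes of pairwise disjoint intervals, so $\sum_i |J_i| \le M|I| < \infty$, hence $\sum_{i\in A_{\delta,\lambda}} 1/i < \infty$, and any subset of $\bN$ with this property has natural density zero. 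In the $C^{k,\mathrm{bv}}$ case, the same partitioning gives $\sum_i V_i \le M\operatorname{Var}(f^{(k)},I) < \infty$; I split $A_{\delta,\lambda}$ into $\{V_i < \omega(1/i)\}$ (where the display above forces $|J_i| \ge C''/i$) and $\{V_i \ge \omega(1/i)\}$ (where $V_i \ge \omega(1)/i$ by concavity of $\omega$, so positive density of this subset would contradict $\sum V_i < \infty$). The main obstacle I expect is organizing the four expansive subcases uniformly: verifying that $f^{-1}$ inherits both the $\omega$-continuity and the bounded-variation bounds of $f^{(k)}$ with controlled constants, and handling case (E2) where the slope is $\le 1/(1+\lambda)$ rather than $\ge 1+\lambda$ so that the logarithmic MVT argument yields a clean one-sided bound on $f'$ near $1$.
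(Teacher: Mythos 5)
Your proof is essentially correct and recovers the same analytic content as the paper's argument, but it is organized rather differently, and there is one step that fails for $k=1$.

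On the organizational differences: the paper first transfers fastness/expansiveness from $f^{N_i}$ back to $f$ (their Lemma~\ref{lem:exp-N}, which shows $f^N$ $\delta$-fast implies $f$ $(\delta/N)$-fast, and $f^N$ $\lambda$-expansive implies $f$ $((\lambda+1)^{1/N}-1)$-expansive), and then applies a single estimate (Lemma~\ref{lem:est-ck}) to $f$ itself. You instead work directly with $f^{N_i}$ via telescoping and a logarithmic mean-value/pigeonhole argument; the content is the same but the bookkeeping is shifted. Both routes correctly reduce (E3)/(E4) to the other alternatives via $f^{-1}$ (which does lie in the same regularity class by Proposition~\ref{prop:group}), so that concern in your last paragraph is easily resolved. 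The more substantive difference is in the $C^{k,\mathrm{bv}}$ conclusion: the paper applies H\"older's inequality with exponents $k/(k-1)$ and $k$ to $\sum |J_i|^{(k-1)/k}\,|f^{(k)}(x_i)-\delta_{1k}|^{1/k}$ uniformly in $k$, whereas you use a case split on whether $V_i = \operatorname{Var}(f^{(k)},J_i)$ exceeds $\omega(1/i)$. Your split is a genuinely more elementary alternative to H\"older and works cleanly for $k\ge 2$.

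However, there is a concrete gap at $k=1$ in the $C^{k,\mathrm{bv}}$ case. Your display $\Phi_i\,|J_i|^{k-1}\ge C'\,(1/i)^{k-1}\omega(1/i)$ reads $V_i\ge C'\omega(1/i)$ when $k=1$, and in the subcase $V_i<\omega(1/i)$ this gives no lower bound on $|J_i|$ at all; the claim \emph{the display above forces $|J_i|\ge C''/i$} is simply false there. The conclusion survives by a different and easier argument: when $k=1$, $V_i\ge C'\omega(1/i)\ge C'\omega(1)/i$ already holds on all of $A_{\delta,\lambda}$, so $\sum_{A_{\delta,\lambda}}1/i\le \tfrac{1}{C'\omega(1)}\sum V_i<\infty$ and no case split is needed. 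You should either add this as a separate $k=1$ branch or reframe the case split so it is invoked only when $k\ge 2$. The paper's H\"older argument sidesteps this because for $k=1$ the exponent $(k-1)/k$ is $0$ and H\"older degenerates to the same trivial bound, so it handles all $k\ge1$ uniformly.

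One small point worth making explicit: in the expansive subcase you rely on $f^{N_i}(p_i)=p_i$ and $f^{N_i}(q_i)=q_i$, which does hold because $f$ is orientation-preserving and $f(J_i)=J_i$, but it is worth saying this, since it is where the endpoints of $J_i$ enter the MVT application. Also, when you invoke the pigeonhole on $\log(f^{N_i})'(\xi)=\sum_j\log f'(f^j\xi)$, you should note $f^j\xi\in J_i$ by invariance, which is why the resulting $\eta$ lands where your pointwise bound on $|f'-1|$ applies.
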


%For instance, the above theorem holds for $k=1$ and  $\omega=\omega_\epsilon$ for $\epsilon\in(0,1]$.
The proof of the theorem is given in Section~\ref{ss:proof-thmest}.

\subsection{Proof of Theorem~\ref{thm:est}}\label{ss:proof-thmest}
Let $k$ and $\omega$ be as in Theorem~\ref{thm:est}.
We first note a classical result in number theory.
\begin{lem}\label{lem:Salat}
For sets $A,B\sse \bN$, the following hold.
\be
\item
If $d_\bN(A)=1$ for some $A\sse \bN$ and if $i\in\bN$, then $d_\bN\left((A-i)\cap\bN\right)=1$.
\item
If $d_\bN(A)=d_\bN(B)=1$ for some $A,B\sse\bN$, then
 $d_\bN(A\cap B)=1$.
\item (\cite{Salat1964MZ,Moser1958})
If $\sum_{i\in A}1/i$ is convergent, then $d_\bN(A)=0$.
\ee
\end{lem}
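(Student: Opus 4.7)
The plan is to handle the two parts independently, both being classical facts about natural density.

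For part (1), the approach is an inclusion--exclusion bound on the counting function. Using the trivial estimate $\#((A\cup B)\cap[1,N])\le N$, we obtain
\[
\#((A\cap B)\cap[1,N])\ge \#(A\cap[1,N])+\#(B\cap[1,N])-N.
\]
Dividing by $N$ and passing to the limit, the hypothesis $d_\bN(A)=d_\bN(B)=1$ gives $\liminf_{N\to\infty}\#((A\cap B)\cap[1,N])/N\ge 1$, while the reverse inequality is automatic. This handles (1) with no real effort.

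For part (2), the cleanest option is simply to invoke the references \cite{Salat1964MZ,Moser1958} cited in the statement. If a self--contained argument is desired, the plan is to relate the partial sums $\sigma_N:=\sum_{i\in A,\,i\le N}1/i$ to the counting function $A(N):=\#(A\cap[1,N])$ via Abel summation:
\[
\sigma_N=\frac{A(N)}{N}+\sum_{i=1}^{N-1}\frac{A(i)}{i(i+1)}.
\]
Since the left--hand side converges as $N\to\infty$ and every term on the right is non--negative, the series $\sum_i A(i)/(i(i+1))$ converges.

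The remaining and only nontrivial step is to deduce $A(N)/N\to 0$ from convergence of $\sum_i A(i)/(i(i+1))$. The plan is a contradiction argument via disjoint windows: if $\limsup_N A(N)/N\ge c>0$, choose $N_1<N_2<\cdots$ with $N_{k+1}>2N_k$ and $A(N_k)\ge cN_k/2$. Monotonicity of $A$ then yields $A(i)\ge A(N_k)\ge ci/4$ for all $i\in[N_k,2N_k]$, so each window $[N_k,2N_k)$ contributes at least roughly $(c/4)\log 2$ to the series $\sum_i A(i)/(i(i+1))$. Since the windows are disjoint, summing over $k$ forces divergence, contradicting what was already established. This windowing estimate is the only potential obstacle, and it is entirely standard, so we expect the write--up to be essentially a reference to the cited literature together with the short inclusion--exclusion argument in (1).
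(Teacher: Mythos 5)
Your proof is correct. For part (1), the inclusion--exclusion count $\#((A\cap B)\cap[1,N])\ge \#(A\cap[1,N])+\#(B\cap[1,N])-N$ is the same triviality as the paper's (commented-out) complement-subadditivity bound $d_\bN(\bN\setminus(A\cap B))\le d_\bN(\bN\setminus A)+d_\bN(\bN\setminus B)$. For part (2), the paper in fact only cites \cite{Salat1964MZ,Moser1958} without writing a proof, so your plan to ``simply invoke the references'' matches what appears in print; but the suppressed proof the authors drafted is Moser's Ces\`aro argument: with $R(N)=\sum_{i\in A,\,i\le N}1/i\to L$, Abel summation gives $\#(A\cap[1,N])/N = R(N)-\frac1N\sum_{j<N}R(j)\to L-L=0$. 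Your self-contained argument performs the same Abel-summation step but then finishes with a windowing contradiction. That works, but note it is slightly longer than necessary: your own identity $\sigma_N = A(N)/N + \sum_{i<N}\tfrac{A(i)}{i(i+1)}$ shows that both the left side and the second right-hand term converge, hence $\lim_N A(N)/N$ exists; if that limit were $c>0$, then eventually $\tfrac{A(i)}{i(i+1)}\ge \tfrac{c}{2(i+1)}$, contradicting convergence of $\sum_i\tfrac{A(i)}{i(i+1)}$ with no need to set up disjoint dyadic windows. Either finish is fine; yours has the mild advantage of only using $\limsup A(N)/N>0$ rather than existence of the limit, while Moser's Ces\`aro version is shorter still.
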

\iffalse
\bp
Part (1) follows from
\[
d_\bN(\bN\setminus(A\cap B))\le
d_\bN(\bN\setminus A)
+d_\bN(\bN\setminus B)=0.\]
For part (2), we describe Moser's proof from~\cite{Moser1958} for readers' convenience.
Put $R(N)=\sum_{i\in A\cap[1,N]} 1/i$
and $\lim_{N\to\infty} R(N)=L<\infty$. Then we have
\begin{align*}
\lim_{N\to\infty}\frac{\#(A\cap[1,N])}{N}
&= \lim_{N\to\infty}\sum_{1\le i\le N} \frac{i(R(i)-R(i-1))}N\\
&= \lim_{N\to\infty}  \left(R(N) - \frac{\sum_{1\le i\le N-1} R(i)}N\right)
=L - L = 0.\qedhere\end{align*}
\ep\fi

Fastness and expansiveness constants of ``roots'' of a diffeomorphism behave like arithmetic and geometric means, respectively:
\begin{lem}\label{lem:exp-N}
Let $f\in\Homeo_+(J)$ for some compact interval $J$,
and let $N\in\bN$.
\be
\item
If $f^N$ is $\delta$--fast for some $\delta>0$,
then $f$ is $(\delta/N)$--fast.
\item
If $f^N$ is $\lambda$--expansive for some $\lambda>0$,
then $f$ is $((\lambda+1)^{1/N}-1)$--expansive.
\ee
\end{lem}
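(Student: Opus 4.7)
The plan is to treat the two parts independently, using an additive telescoping for (1) and a multiplicative telescoping for (2). Throughout, write $J=[p,q]$ and note that since $f\in\Homeo_+(J)$ is orientation preserving we have $f(p)=p$ and $f(q)=q$, so the full $f$-orbit of any $y\in J$ stays in $J$ and any $y_i:=f^i(y)$ satisfying $y_i\in(p,q)$ forces $y_j\in(p,q)$ for all $j$ (otherwise the orbit would collide with a fixed endpoint and be trapped there, contradicting whatever strict inequality we started from).

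For part (1), suppose $y\in J$ witnesses $\delta$-fastness of $f^N$, so that $|f^N(y)-y|\ge\delta|J|$. The telescoping identity
\[
f^N(y)-y=\sum_{i=0}^{N-1}\bigl(f(f^i(y))-f^i(y)\bigr)
\]
together with the triangle inequality produces some index $i$ with $|f(f^i(y))-f^i(y)|\ge(\delta/N)|J|$. Since $z:=f^i(y)\in J$, the point $z$ witnesses that $f$ is $(\delta/N)$-fast on $J$, which is the claim.

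For part (2), fix $y\in J$ witnessing $\lambda$-expansiveness of $f^N$, and let $y_i=f^i(y)$. I would distribute the argument over the four alternatives (E1)--(E4) enumerated after Definition~\ref{defn:fast-exp}. In case (E1) we have $p<y<f^N(y)<q$ and $f^N(y)-p\ge(1+\lambda)(y-p)$, so the product of ratios $r_i:=(y_{i+1}-p)/(y_i-p)$ satisfies $\prod_{i=0}^{N-1}r_i\ge 1+\lambda$, and by the pigeonhole principle some $r_i\ge(1+\lambda)^{1/N}$. By the remark in the opening paragraph, each $y_i$ lies in $(p,q)$, so this index yields
\[
p<y_i<y_{i+1}<q,\qquad y_{i+1}-y_i\ge\bigl((1+\lambda)^{1/N}-1\bigr)(y_i-p),
\]
which is exactly (E1) for $f$ with constant $(1+\lambda)^{1/N}-1$. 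Case (E2) is handled symmetrically by telescoping the ratios $s_i:=(q-y_{i+1})/(q-y_i)$, whose product is $\le(1+\lambda)^{-1}$, so some $s_i\le(1+\lambda)^{-1/N}$; rearranging $(1+\lambda)^{1/N}(q-y_{i+1})\le q-y_i=(q-y_{i+1})+(y_{i+1}-y_i)$ gives the required (E2) inequality $y_{i+1}-y_i\ge((1+\lambda)^{1/N}-1)(q-y_{i+1})$.

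Cases (E3) and (E4) are the analogues with the orbit moving leftward; I would handle them by running the identical argument for $f^{-1}$, which is $\lambda$-expansive (in its own matching alternative) exactly when $f^N$ is, and satisfies $(f^{-1})^N=f^{-N}$. No step is a serious obstacle: the mild care is only in checking that no $y_i$ coincides with an endpoint (handled by the orientation-preserving remark) and in converting the ratio estimate in (E2) into the (E2)-form displacement bound, which is the algebraic rearrangement shown above.
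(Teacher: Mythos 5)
Your proof is correct and follows essentially the same approach as the paper: additive telescoping across $y, f(y), \dots, f^N(y)$ together with the triangle inequality for part (1), and multiplicative telescoping of the ratios $\frac{f^{i+1}(y)-p}{f^i(y)-p}$ (resp.\ with $q$) together with the pigeonhole principle for part (2). The only difference is one of exposition: where the paper treats alternative (E1) and dismisses (E2)--(E4) with ``the other alternatives are similar,'' you spell out the endpoint case (E2) by a direct right-endpoint telescope and reduce (E3)--(E4) to (E1)--(E2) via $f^{-1}$, also recording explicitly that the orbit of an interior witness stays interior since $f$ fixes $\partial J$ — details the paper leaves implicit.
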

\bp
Let us write $J=[p,q]$.

(1) For some $y\in J$ we have
\[\delta |J| \le |f^N(y)-y|\le\sum_{i=0}^{N-1} |f^{i+1}(y)-f^{i}y|.\]
Hence there exists some $y'=f^i(y)$ such that $|f(y')-y'|\ge \frac\delta{N} |J|$.

(2)
Assume the alternative (E1) holds as described after Definition~\ref{defn:fast-exp}.
That is, 
\[p<y<f(y)<q\]
for some $y\in J$ such that $ f^N(y)-y\ge \lambda (y-p)$. Note that
\[
\lambda +1 \le \frac{f^N(y)-p}{y-p} 
=\prod_{i=0}^{N-1} \frac{f^{i+1}(y)-p}{f^{i}(y)-p}.\]
So, for some $y'=f^i(y)$, we have
\[(\lambda+1)^{1/N}\le \frac{f(y')-p}{y'-p}=\frac{f(y')-y'}{y'-p}+1.\]
This is the desired inequality. The other alternatives are similar.\ep

\begin{lem}\label{lem:kfix-root}
For a $C^{k}$--map $f\co I\to \bR$, the following hold.
\be
\item\label{part:acc}
If $x\in(\Fix f)'$
and 
$j=0,1,\ldots,k$, then we have:
\[f^{(j)}(x)=\Id^{(j)}(x).\]
\item
If $f$ is $k$--fixed on a compact interval $J\sse I$, then $(f-\Id)^{(j)}$ has a root in $J$ for each $j=0,1,\ldots,k$.
\ee
\end{lem}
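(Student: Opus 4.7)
The plan is to set $g = f - \Id$, which is a $C^k$ function on $I$, and to observe that both parts of the lemma become statements about the zero set of $g$ and its derivatives. Part (1) asks that $g^{(j)}(x)=0$ for $j=0,1,\ldots,k$ whenever $x\in(\Fix f)' = (g^{-1}(0))'$, and part (2) asserts that each $g^{(j)}$ vanishes somewhere on $J$ under the $k$-fixed hypothesis.

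For part (1), the $j=0$ case is immediate from continuity of $g$: if $x_n\to x$ with $g(x_n)=0$, then $g(x)=0$. For the inductive step, the tool is Rolle's theorem. If $g^{(j-1)}$ vanishes on an infinite set $S_{j-1}\subseteq I$ accumulating at $x$, choose a sequence of distinct points $y_n\in S_{j-1}$ with $y_n\to x$; between consecutive pairs $y_n,y_{n+1}$ Rolle's theorem (applicable because $g^{(j-1)}$ is differentiable for $j\le k$) produces a zero of $g^{(j)}$, and these zeros also accumulate at $x$. Continuity of $g^{(j)}$ then forces $g^{(j)}(x)=0$. Running this inductively for $j=1,\ldots,k$ yields $g^{(j)}(x)=0$ for all $0\le j\le k$, which is equivalent to $f^{(j)}(x)=\Id^{(j)}(x)$ (noting $\Id^{(j)}(x)=\delta_{1j}$ for $j\ge1$).

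For part (2), I split on the two alternatives in the definition of $k$-fixed. If $J\cap(\Fix f)'\neq\varnothing$, pick $x$ in this intersection and apply part (1) to conclude that $g^{(j)}(x)=0$ for all $j=0,1,\ldots,k$, giving a root of $(f-\Id)^{(j)}$ in $J$. Otherwise $\#(J\cap\Fix f)>k$, so $g$ has at least $k+1$ distinct zeros in $J$. A single application of Rolle's theorem between consecutive zeros of $g$ produces at least $k$ zeros of $g'$ in $J$; iterating $j$ times produces at least $k+1-j$ zeros of $g^{(j)}$ in $J$, which is at least one whenever $j\le k$.

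The argument is essentially an unwinding of classical calculus, so no step constitutes a real obstacle; the only mild care needed is in the inductive step of part (1), where one must verify that $g^{(j-1)}$ is actually differentiable (so that Rolle applies) and that $g^{(j)}$ is continuous (so that the limit argument closes), both of which hold throughout the range $1\le j\le k$ by the $C^k$ hypothesis on $f$.
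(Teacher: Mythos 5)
Your proof is correct and follows essentially the same route as the paper's: an induction on $j$ where Rolle's theorem (the paper phrases it as the Mean Value Theorem applied to $f^{(j)}$) converts accumulating zeros of $(f-\Id)^{(j-1)}$ into accumulating zeros of $(f-\Id)^{(j)}$, and continuity closes the step; part (2) is the same two-case split. The only cosmetic difference is that the paper applies Rolle between each $x_i$ and the limit point $x$ itself (which lies in the closed set $S_j$), automatically producing Rolle points on the same side of $x$ as $x_i$ and hence distinct from $x$, whereas your version applies Rolle between consecutive $y_n, y_{n+1}$ and should (to ensure the new zeros are $\neq x$ so the induction hypothesis is literally preserved) pass to a one-sided subsequence first; this is a trivial repair.
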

\bp 
For each $j\in\{0,1,\ldots,k\}$, we define
\[S_j\co =S_j(f)=\{x\in I\mid f^{(j)}(x)=\Id^{(j)}(x)\}.\]
(1) We have $S_j'\sse S_j$.
It now suffices for us to show the following:
\[
S_0'=(\Fix f)'\sse S_1'\sse\cdots \sse S_k'.\]
Let us assume $x\in S_j'$ for some $0\le j<k$.
Then there exists a sequence $\{x_i\}\sse S_j\setminus\{x\}$ converging to $x$. There exists $y_i$ between $x_i$ and $x$ such that
\[
f^{(j+1)}(y_i)=\frac{f^{(j)}(x_i)-f^{(j)}(x)}{x_i-x}
=\frac{\Id^{(j)}(x_i)-\Id^{(j)}(x)}{x_i-x}=\delta_{0j}=\Id^{(j+1)}(y_i).\]
Since $y_i\in S_{j+1}$ converges to $x$, we see that $x\in S_{j+1}'$.
This proves $S_j'\sse S_{j+1}'$.

(2)
By part (1), it suffices to consider the case that $\#(J\cap \Fix f)\ge k+1$.
We inductively observe 
that $(f-\Id)^{(j)}$ has at least $(k+1-j)$ roots for each $j=0,1,\ldots,k$
 by the Mean Value Theorem.
\ep

\begin{lem}\label{lem:est-ck}
Let $J\sse I$ be a compact interval, and let $\delta,\lambda>0$.
Suppose $f\in\Diff_+^k(I)$ is $k$--fixed on $J$.
\be
\item
If $f$ is $\delta$--fast on $J$, then 
\[
 \sup_J|f^{(k)}-\Id^{(k)}| \cdot|J|^{k-1}\ge\delta.\]
If, furthermore, $f$ is $C^{k,\omega}$ then we have
\[   \left[f^{(k)}\right]_{\omega}\cdot|J|^{k-1}\omega(|J|)\ge\delta.\]
\item
If $f$ is  $\lambda$--expansive on $J$, then
\[
\max\left( \sup_J |f^{(k)}-\Id^{(k)}| ,\sup_J|(f^{-1})^{(k)}-\Id^{(k)}| \right) \cdot |J|^{k-1}\ge \lambda.\]
If, furthermore, $f$ is $C^{k,\omega}$ then we have
\[
\max\left(\left[f^{(k)}\right]_{\omega},\left[({f^{-1}})^{(k)}\right]_{\omega}\right) 
\cdot |J|^{k-1}\omega(|J|)\ge \lambda.\]\ee
\end{lem}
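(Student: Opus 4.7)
The approach is to reduce both parts to a single Taylor-like estimate obtained by iterating the Fundamental Theorem of Calculus against the roots guaranteed by Lemma~\ref{lem:kfix-root}. Writing $g := f - \Id$, since $f$ is $k$--fixed on $J$, for each $j = 0, 1, \ldots, k$ there exists $x_j \in J$ with $g^{(j)}(x_j) = 0$. For any $y \in J$ I therefore nest $k$ integrals,
\[
g(y) \;=\; \int_{x_0}^{y}\!\int_{x_1}^{t_1}\!\!\cdots\!\int_{x_{k-1}}^{t_{k-1}} g^{(k)}(t_k)\, dt_k \cdots dt_1,
\]
and each integration is over a subinterval of $J$ of length at most $|J|$, yielding $|g(y)| \le \sup_J |g^{(k)}| \cdot |J|^k$. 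The analogous nesting started one level deeper gives $|g'(c)| \le \sup_J |g^{(k)}| \cdot |J|^{k-1}$ for any $c \in J$. Because $\Id^{(k)}$ is a constant function, $g^{(k)} = f^{(k)} - \Id^{(k)}$ and $[g^{(k)}]_\omega = [f^{(k)}]_\omega$; combined with $g^{(k)}(x_k) = 0$ this upgrades $\sup_J |g^{(k)}|$ to $[f^{(k)}]_\omega \cdot \omega(|J|)$, and the $C^{k,\omega}$ refinements in both parts follow automatically from the $C^k$ versions.

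Part (1) is then immediate: $\delta$--fastness produces $y \in J$ with $|g(y)| \ge \delta |J|$, so the first estimate gives $\delta |J| \le \sup_J |f^{(k)} - \Id^{(k)}| \cdot |J|^k$, and dividing by $|J|$ completes the proof.

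For part (2), since $f$ is orientation preserving with $f(J) = J$, the endpoints $p, q$ of $J = [p,q]$ lie in $\Fix f$. If the expansive point $y$ satisfies $y < f(y)$, alternative (E1) reads $g(y) \ge \lambda(y - p)$, and the Mean Value Theorem applied to $g$ on $[p, y]$ produces $c \in (p, y)$ with $g'(c) \ge \lambda$; the second nested estimate then yields $\lambda \le \sup_J |f^{(k)} - \Id^{(k)}| \cdot |J|^{k-1}$. If instead $f(y) < y$, alternative (E3) at $y$ rewrites as alternative (E1) at $z := f(y)$ for the map $f^{-1}$, which is itself $k$--fixed on $J$ because $\Fix f^{-1} = \Fix f$ and $f^{-1}(J) = J$; applying the same argument to $f^{-1}$ yields the bound for $(f^{-1})^{(k)}$. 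Taking the maximum covers both situations.

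I do not foresee a serious obstacle. The one subtle point is that routing through alternatives (E2) or (E4) directly would produce the suboptimal constant $\lambda/(1+\lambda)$ rather than $\lambda$, because the Mean Value Theorem denominator in those cases exceeds $(y - p)$ or $(q - y)$ by the full displacement $|f(y) - y|$. This is precisely why the statement takes the maximum over $f$ and $f^{-1}$: doing so always lets us work with the clean alternative (E1), either for $f$ or for $f^{-1}$. Everything else is bookkeeping on the nested integration.
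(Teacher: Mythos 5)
Your proof of part~(1) is essentially the paper's argument: both nest $k$ integrals against the roots supplied by Lemma~\ref{lem:kfix-root} and divide by~$|J|$. For part~(2) you take a slightly different route — you first apply the Mean Value Theorem to produce a point $c$ with $|g'(c)|\ge\lambda$, and then bound $|g'(c)|$ by $(k-1)$ nested integrals; the paper instead nests $k$ integrals for the displacement itself, choosing the starting point $s_0$ to be the relevant endpoint $p$ or $q$ so that the factor $|y_0-s_0|$ cancels the expansiveness denominator directly. Both are sound estimates.

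However, your case analysis for part~(2) has a genuine gap. You handle only alternatives (E1) (for $f$) and (E3) (as (E1) for $f^{-1}$), and you justify omitting (E2) and (E4) by claiming that ``taking the maximum over $f$ and $f^{-1}$ \ldots always lets us work with the clean alternative (E1), either for $f$ or for $f^{-1}$.'' That claim is false: under the substitution $z:=f(y)$, passing to $f^{-1}$ exchanges (E1)$\leftrightarrow$(E3) but also exchanges (E2)$\leftrightarrow$(E4), so (E2) for $f$ never becomes (E1) for $f^{-1}$. Your diagnosis of (E4) is also off: applying the Mean Value Theorem to $g=f-\Id$ on the interval $[y,q]$ gives $g'(c)=(y-f(y))/(q-y)\ge\lambda$, with no loss — the MVT denominator $q-y$ is exactly the denominator appearing in (E4), not ``$(q-y)$ plus the displacement.'' So (E4) needs no detour through $f^{-1}$ at all. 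The case that actually requires $f^{-1}$ besides (E3) is (E2), and it becomes (E4) — not (E1) — for $f^{-1}$ at $z=f(y)$, whereupon MVT for $f^{-1}-\Id$ on $[z,q]$ gives the bound. In short: (E1), (E4) are handled with $f$ directly (MVT on $[p,y]$ and $[y,q]$, respectively), and (E3), (E2) are handled with $f^{-1}$ (becoming (E1), (E4) for $f^{-1}$). Once you make this correction the argument closes up.
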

\bp
For each $j\le k$, Lemma~\ref{lem:kfix-root} implies that there exists $s_j\in J$ satisfying
\[
f^{(j)}(s_j)=\Id^{(j)}(s_j).\]
Let $y_0\in J$ be arbitrary.
We see (cf. Lemma~\ref{lem:ckw}) that
\begin{align*}
\left|f(y_0)-y_0\right|&
=\left|\int_{t_1=s_0}^{y_0}
\int_{t_2=s_1}^{t_1}
\cdots
\int_{t_k=s_{k-1}}^{t_{k-1}} \left(
f^{(k)}(t_k)-f^{(k)}(s_k)\right)dt_k\;dt_{k-1}\cdots dt_1\right|
\\
&
\le
\sup_J|f^{(k)}-\Id^{(k)}|
\cdot |y_0-s_0|\cdot |J|^{k-1}.\end{align*}

(1) Pick $y_0\in J$ such that $|f(y_0)-y_0|\ge\delta|J|$.
We see
\[\delta|J|\le |f(y_0)-y_0|\le   \sup_J|f^{(k)}-\Id^{(k)}| \cdot |J|^{k}.\]
If $f$ is $C^{k,\omega}$, then we further deduce that
\[
\delta|J|
\le
\sup_{t\in J}|f^{(k)}(t)-f^{(k)}(s_k)|\cdot |J|^{k}
\le 
 \left[f^{(k)}\right]_{\omega}\cdot|J|^{k}\omega(|J|).\]

(2)
Write $J=[p,q]$. Assume the alternative (E1) holds for $y_0\in J$;
that is,
\[
\lambda(y_0-p) \le f(y_0)-y_0.\]
By applying the same estimate for $s_0=p$, 
we see that
\[\lambda\le \frac{f(y_0)-y_0}{y_0-p}
\le
  \sup_J|f^{(k)}-\Id^{(k)}|
\cdot    |J|^{k-1}    .\]
If $f$ is $C^{k,\omega}$, we further have
\[
\lambda\le
 \sup_{t\in J}|f^{(k)}(t)-f^{(k)}(s_k)|
 \cdot
 |J|^{k-1}
 \le
 \left[f^{(k)}\right]_{\omega}
\cdot   |J|^{k-1}\omega(|J|).\]
The other alternatives can be handled in the same manner; in particular, we use the diffeomorphism $g=f^{-1}$ for (E2) and (E3).
\ep

\bp[Proof of Theorem~\ref{thm:est}: $C^{k,\omega}$ case]
We assume $f\co I\to I$ is a $C^{k,\omega}$--diffeomorphism.
Let $\delta,\lambda>0$, and define
\begin{align*}
L&=
\max\left( \left[f^{(k)}\right]_{\omega}, \left[(f^{-1})^{(k)}\right]_{\omega}\right),\\
A_\delta  &=\{ i\in\bN\mid f_i^{N_i}\text{ is }\delta\text{--fast on }J_i\},\\
B_\lambda  &=\{ i\in\bN\mid f_i^{N_i}\text{ is }\lambda\text{--expansive on }J_i\}.\end{align*}
We let $K>0$ be the larger value of the suprema in
the conditions (\ref{sup1}) and (\ref{sup2}).
The following claim is obvious from (\ref{sup2}) and from a maximality argument. 
\begin{claim}\label{cla:disj}
The sequence of intervals $\{J_i\}$ can be partitioned into at most $K$ collections such that each collection consists of disjoint intervals.
In particular, we have
 \[\sum_i |J_i| \le K|I|.\]
\end{claim}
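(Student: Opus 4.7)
The claim is really a proper-coloring statement for the intersection graph of the family $\{J_i\}$, and I would prove it by a greedy argument ordered by left endpoints, exactly as the author's hint ``maximality argument'' suggests. By hypothesis~(iii) each $J_i$ meets at most $K$ members of the family (counting itself), hence at most $K-1$ others. The plan is to assign to each $J_i$ one of $K$ labels so that same-labeled intervals are pairwise disjoint; the two conclusions then follow in order.

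Reindex the $\{J_i\}$ so that the left endpoints are non-decreasing, and process the intervals in this order. When we arrive at $J_i$, any previously processed $J_j$ that meets $J_i$ has its left endpoint at most that of $J_i$, so $J_j$ must contain the left endpoint of $J_i$. Hence $J_i$ together with all earlier $J_j$ intersecting it form a family of intervals sharing a common point; in particular they pairwise intersect and thus form a clique in the intersection graph. By hypothesis~(iii) this clique has size at most $K$, so $J_i$ conflicts with at most $K-1$ already-labeled intervals, and at least one of the $K$ labels remains available. This produces the desired partition into at most $K$ collections of pairwise disjoint intervals.

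The length estimate is then immediate: each collection of pairwise disjoint compact subintervals of $I$ has total length at most $|I|$, and summing over the $K$ collections yields $\sum_i |J_i| \le K|I|$. The only real content in the whole argument is the clique observation in the previous paragraph, which simply uses that a family of intervals sharing a common point is automatically pairwise intersecting; this is really the main (though very minor) obstacle, and once it is in place the greedy coloring and the length bound are routine.
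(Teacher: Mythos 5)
Your reindexing step is not always possible: for a countably infinite family $\{J_i\}$ there need not exist a bijection $\sigma\colon\bN\to\bN$ with $\inf J_{\sigma(1)}\le\inf J_{\sigma(2)}\le\cdots$. For instance, if $\inf J_i=1/i$ the left endpoints accumulate at $0$ from the right, no interval has the smallest left endpoint, and no such enumeration exists. Families of this kind actually arise in the paper's applications (the shrinking intervals of Corollary~\ref{cor:optimal}, and the intervals $L_i$, $R_i$ used in the proof of the Slow Progress Lemma), so the greedy process anchored on this ordering does not get started, and this is a genuine gap.

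The fix is simple and also shows your Helly-type clique observation is superfluous. The constant $K$ bounds $\#\{j: J_i\cap J_j\ne\varnothing\}$ for every $i$, and this set contains $i$ itself, so the intersection graph of $\{J_i\}$ has maximum degree at most $K-1$. Greedy coloring in the \emph{original} enumeration $J_1,J_2,\ldots$ therefore always has a free color for $J_i$, since at most $K-1$ of its neighbors (earlier or not) exist at all. Alternatively (this is presumably the ``maximality argument'' the paper has in mind), iteratively extract maximal pairwise-disjoint subfamilies $\mathcal{J}_1,\mathcal{J}_2,\ldots$ from what remains at each stage; if some $J_m$ survived $K$ rounds, maximality would force $J_m$ to meet an interval from each $\mathcal{J}_\ell$, and together with $J_m$ itself this gives $K+1$ members of the family meeting $J_m$, contradicting the bound. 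Either route avoids the ordering issue, uses nothing about intervals beyond the degree bound, and your concluding length estimate then goes through unchanged.
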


It now suffices for us to establish the two claims below.
\begin{claim}\label{cla:adelta}
$d_\bN(A_\delta)=0$.
\end{claim}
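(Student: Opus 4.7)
The plan is to show $\sum_{i\in A_\delta} 1/i<\infty$, from which Lemma~\ref{lem:Salat}(2) gives $d_\bN(A_\delta)=0$. The strategy is to extract a lower bound $|J_i|\gtrsim 1/i$ for every $i\in A_\delta$, and then use the bounded multiplicity of the intervals $\{J_i\}$ from Claim~\ref{cla:disj} to conclude that the tail sum is finite.

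First, I would peel off the iterate: if $f^{N_i}$ is $\delta$-fast on $J_i$, then Lemma~\ref{lem:exp-N}(1) gives that $f$ itself is $(\delta/N_i)$-fast on $J_i$. Since $f$ is also assumed $k$-fixed on $J_i$, the $C^{k,\omega}$ estimate of Lemma~\ref{lem:est-ck}(1) yields
\[
[f^{(k)}]_\omega\cdot |J_i|^{k-1}\omega(|J_i|)\ \ge\ \delta/N_i.
\]
Combining with hypothesis~(ii), namely $N_i\le C\,i^{k-1}/\omega(1/i)$ for some constant $C$, I obtain a constant $c=c(\delta,C,[f^{(k)}]_\omega)>0$ such that for every $i\in A_\delta$,
\[
|J_i|^{k-1}\omega(|J_i|)\ \ge\ c\cdot (1/i)^{k-1}\omega(1/i).
\]

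Now I invoke the crucial monotonicity lemma. Since $\omega\succ_k 0$, Lemma~\ref{lem:omega}(\ref{p:aibi}), applied to the sequences $a_i=1/i$ and $b_i=|J_i|$ (for $i\in A_\delta$), gives a constant $c'>0$ with
\[
|J_i|\ \ge\ c'/i\qquad\text{for all }i\in A_\delta.
\]
Finally, Claim~\ref{cla:disj} bounds $\sum_i|J_i|\le K|I|$, so
\[
\sum_{i\in A_\delta}\frac{1}{i}\ \le\ \frac{1}{c'}\sum_{i\in A_\delta}|J_i|\ \le\ \frac{K|I|}{c'}<\infty,
\]
and Lemma~\ref{lem:Salat}(2) completes the argument.

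The only step with any real subtlety is the passage from $|J_i|^{k-1}\omega(|J_i|)\gtrsim (1/i)^{k-1}\omega(1/i)$ to $|J_i|\gtrsim 1/i$. For $k=1$ this is monotonicity of $\omega$ alone, but for $k\ge 2$ the function $x\mapsto x^{k-1}\omega(x)$ is increasing for any concave $\omega$, so one could reverse the inequality only up to a factor that a priori need not be bounded. The hypothesis $\omega\succ_k 0$ is exactly the quantitative input that forces the ratio $|J_i|/(1/i)$ to stay bounded below, through the subsequence argument in the proof of Lemma~\ref{lem:omega}(\ref{p:aibi}). This is the one place where the regularity condition $\omega\succ_k 0$ is genuinely used, so it is where I would expect the proof of the companion Claim for $B_\lambda$ (and later the $C^{k,\mathrm{bv}}$ case) to demand a parallel but somewhat separate argument.
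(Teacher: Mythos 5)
Your argument is correct and follows the paper's own proof step for step: peel off the $N_i$-th power via Lemma~\ref{lem:exp-N}(1), apply the $C^{k,\omega}$ estimate of Lemma~\ref{lem:est-ck}(1), feed in hypothesis~(ii) to get $\{(1/i)^{k-1}\omega(1/i)\}\precsim\{|J_i|^{k-1}\omega(|J_i|)\}$ on $A_\delta$, invoke Lemma~\ref{lem:omega}(\ref{p:aibi}) to deduce $1/i\precsim|J_i|$, and conclude via Claim~\ref{cla:disj} and Lemma~\ref{lem:Salat}(2). Your closing remark correctly identifies where $\omega\succ_k 0$ is the essential input.
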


By Lemmas~\ref{lem:exp-N} and~\ref{lem:est-ck}, we have that
\[\{(1/i)^{k-1}\omega(1/i)\co i\in A_\delta\}\precsim\{  1/{N_i} \co i\in A_\delta\}\precsim\{|J_i|^{k-1}\omega(|J_i|) \co i\in A_\delta\}.\]
By Lemma~\ref{lem:omega} (\ref{p:aibi}),
there exists $L'>0$ such that
 $1/i\le L' |J_i|$ for $i\in A_\delta$.
So,
\[
\sum_{i\in A_\delta}1/i
\le
\sum_{i\in A_\delta}L'|J_i|\le L'K|I|<\infty.\]
Lemma~\ref{lem:Salat} now implies the claim.

\begin{claim}\label{cla:bdelta}
$d_\bN(B_\lambda)=0$.
\end{claim}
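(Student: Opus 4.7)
The plan is to mirror the proof of Claim~\ref{cla:adelta}, replacing the fast estimates with their expansive counterparts, namely part~(2) of Lemma~\ref{lem:exp-N} and part~(2) of Lemma~\ref{lem:est-ck}. The whole point of Claim~\ref{cla:adelta} was to obtain $1/i \precsim |J_i|$ on the relevant subset of $\bN$, so that disjointness (Claim~\ref{cla:disj}) together with Lemma~\ref{lem:Salat}(2) forces density zero; the same skeleton should work here, once we convert the expansiveness constant of $f$ into something of order $1/N_i$.

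For $i \in B_\lambda$, Lemma~\ref{lem:exp-N}(2) gives that $f$ is $\eta_i$--expansive on $J_i$ with $\eta_i = (\lambda+1)^{1/N_i} - 1$. The elementary inequality $e^x \ge 1 + x$ for $x \ge 0$ yields
\[
\eta_i \;=\; e^{\log(\lambda+1)/N_i} - 1 \;\ge\; \frac{\log(\lambda+1)}{N_i},
\]
so $\eta_i \succsim 1/N_i$. Since $f$ is $k$--fixed on $J_i$ and $C^{k,\omega}$, Lemma~\ref{lem:est-ck}(2) then gives
\[
L \cdot |J_i|^{k-1}\omega(|J_i|) \;\ge\; \eta_i \;\succsim\; \frac{1}{N_i},
\]
where $L = \max([f^{(k)}]_\omega,\,[(f^{-1})^{(k)}]_\omega)$ is finite by hypothesis.

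Combining with assumption~(\ref{sup1}), which reads $(1/i)^{k-1}\omega(1/i) \precsim 1/N_i$, we obtain on $B_\lambda$ the chain
\[
(1/i)^{k-1}\omega(1/i) \;\precsim\; 1/N_i \;\precsim\; |J_i|^{k-1}\omega(|J_i|).
\]
Since $\omega \succ_k 0$, Lemma~\ref{lem:omega}(\ref{p:aibi}) upgrades this to $1/i \precsim |J_i|$ for $i \in B_\lambda$. Invoking Claim~\ref{cla:disj}, we conclude
\[
\sum_{i \in B_\lambda} \frac{1}{i} \;\precsim\; \sum_{i \in B_\lambda} |J_i| \;\le\; K|I| \;<\; \infty,
\]
and Lemma~\ref{lem:Salat}(2) finishes the argument, giving $d_\bN(B_\lambda) = 0$.

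The only structural subtlety I anticipate is the need for the elementary bound on $(\lambda+1)^{1/N_i}-1$: without it one might worry that for $i$ where $N_i$ is small, the expansiveness estimate could be too strong to control by our length data. The inequality $e^x \ge 1+x$ handles this uniformly in $N_i$, so no case split is required. The rest is a faithful re-run of the Claim~\ref{cla:adelta} argument, and (as in the fast case) the parallel treatment of the bounded-variation hypothesis would be carried out separately in the $C^{k,\mathrm{bv}}$ portion of the proof, using the bounded variation analog of the estimate in Lemma~\ref{lem:est-ck}(2) in place of the $\omega$--norm bound.
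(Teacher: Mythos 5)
Your proof is correct and follows essentially the same route as the paper: both reduce to the chain $(1/i)^{k-1}\omega(1/i)\precsim(\lambda+1)^{1/N_i}-1\precsim|J_i|^{k-1}\omega(|J_i|)$ on $B_\lambda$ via Lemmas~\ref{lem:exp-N}(2) and~\ref{lem:est-ck}(2), and both use the inequality $e^x\ge 1+x$ (the paper states it as $\log(1+x)\le x$) to compare $(\lambda+1)^{1/N_i}-1$ with $1/N_i$. The only cosmetic difference is that you bound $\eta_i$ from below directly by $\log(\lambda+1)/N_i$ rather than introducing the intermediate constant $K_0$ the paper uses, which is a stylistic simplification, not a different argument.
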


There is a constant $K_0>0$ such that 
\[ 
\log\left(1+K_0 (1/i)^{k-1}\omega(1/i)\right)
\le
 K_0 (1/i)^{k-1}\omega(1/i) \le
 {\log(\lambda+1)}/{N_i} 
.\]
Hence,
 Lemmas~\ref{lem:exp-N} and~\ref{lem:est-ck} imply that
\[
\{ (1/ i)^{k-1}\omega(1/i) \co i\in B_\lambda\}
\precsim
\{
(\lambda+1)^{1/N_i}-1
\co i\in B_\lambda\}
\precsim
\{
|J_i|^{k-1}\omega(|J_i|)
\co i\in B_\lambda\}.
\]
As in Claim~\ref{cla:adelta}, we have $\sum_{B_\lambda} 1/i<\infty$ and $d_\bN(B_\lambda)=0$.
\ep

\bp[Proof of Theorem~\ref{thm:est}: $C^{k,\mathrm{bv}}$ case]
We now assume $f$ is a $C^{k,\mathrm{bv}}$--diffeomorphism.
Let us closely follow the proof of $C^{k,\omega}$ case, using the same notation.
In particular, we define the same sets $A_\delta$ and $B_\lambda$.

For each $i\in \bN$, we pick $x_i,y_i,z_i\in J_i$ such that
\[
|f^{(k)}(x_i)-\delta_{1k}|=\sup_{J_i}|f^{(k)}-\delta_{1k}|,\quad
|(f^{-1})^{(k)}(y_i)-\delta_{1k}|=\sup_{J_i}|(f^{-1})^{(k)}-\delta_{1k}|.\]
and 
$f^{(k)}(z_i)=\Id^{(k)}(z_i)=\delta_{1k}$.
Again, it suffices to prove the following two claims.
\setcounter{claim}{3}
\begin{claim}
$d_\bN(A_\delta)=0$.
\end{claim}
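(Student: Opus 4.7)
The plan is to imitate the $C^{k,\omega}$ argument just completed, substituting the total variation $\operatorname{Var}(f^{(k)},J_i)$ for $[f^{(k)}]_{\omega}\omega(|J_i|)$ throughout. The starting observation is that, since $f$ is $k$-fixed on $J_i$, Lemma~\ref{lem:kfix-root}(2) supplies a root $z_i\in J_i$ of $f^{(k)}-\Id^{(k)}$, so
\[
\sup_{J_i}|f^{(k)}-\Id^{(k)}|=|f^{(k)}(x_i)-f^{(k)}(z_i)|\le \operatorname{Var}(f^{(k)},J_i).
\]
By Claim~\ref{cla:disj}, the family $\{J_i\}$ decomposes into at most $K$ disjoint subcollections, so $\sum_i \operatorname{Var}(f^{(k)},J_i)\le K\cdot\operatorname{Var}(f^{(k)},I)<\infty$.

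For each $i\in A_\delta$, Lemmas~\ref{lem:exp-N}(1) and~\ref{lem:est-ck}(1) combined with the above estimate yield
\[
\operatorname{Var}(f^{(k)},J_i)\cdot|J_i|^{k-1}\ge \delta/N_i,
\]
and hypothesis (\ref{sup1}) gives $\delta/N_i\ge (\delta/K)(1/i)^{k-1}\omega(1/i)$. I would then split $A_\delta=A_\delta^{(1)}\sqcup A_\delta^{(2)}$ with $A_\delta^{(1)}=\{i\in A_\delta:|J_i|\ge 1/i\}$. On $A_\delta^{(1)}$, $\sum 1/i\le \sum_i|J_i|\le K|I|<\infty$. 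On $A_\delta^{(2)}$, the bound $|J_i|^{k-1}<(1/i)^{k-1}$ combined with the displayed inequality forces $\operatorname{Var}(f^{(k)},J_i)\ge(\delta/K)\omega(1/i)$; and since $x/\omega(x)$ is monotone increasing (Lemma~\ref{lem:omega}(1)), $\omega(1/i)\ge\omega(1)/i$, so $\sum_{A_\delta^{(2)}}1/i$ is dominated by a constant multiple of $\sum_i \operatorname{Var}(f^{(k)},J_i)<\infty$. In both cases, Lemma~\ref{lem:Salat}(2) yields density zero, so $d_\bN(A_\delta)=0$.

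The main obstacle, compared with the $C^{k,\omega}$ case, is that $\operatorname{Var}(f^{(k)},J_i)$ does not scale like $\omega(|J_i|)$ as $|J_i|$ shrinks, so the direct application of Lemma~\ref{lem:omega}(\ref{p:aibi}) to compare $|J_i|$ with $1/i$ is unavailable. The dichotomy above bypasses this: either $|J_i|$ already dominates $1/i$, or else the variation inequality converts into a lower bound on $\operatorname{Var}(f^{(k)},J_i)$ alone. The treatment of the expansive set $B_\lambda$ will proceed in parallel, using Lemmas~\ref{lem:exp-N}(2) and~\ref{lem:est-ck}(2) in place of their part (1) counterparts, together with the fact that $f^{-1}$ is also $C^{k,\mathrm{bv}}$ (as $\Diff_+^{k,\mathrm{bv}}(I)$ is a group), which ensures $\operatorname{Var}((f^{-1})^{(k)},I)<\infty$ and allows both terms in the $\max$ of Lemma~\ref{lem:est-ck}(2) to be handled identically.
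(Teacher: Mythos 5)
Your proof is correct but closes the argument by a different final step than the paper. Both proofs share the same preparation: from $k$-fixedness and Lemma~\ref{lem:kfix-root} one has $\sup_{J_i}|f^{(k)}-\Id^{(k)}|\le\operatorname{Var}(f^{(k)},J_i)$; Claim~\ref{cla:disj} yields $\sum_i\operatorname{Var}(f^{(k)},J_i)\le K\operatorname{Var}(f^{(k)},I)<\infty$; and Lemmas~\ref{lem:exp-N} and~\ref{lem:est-ck}, together with hypothesis~(\ref{sup1}), give $\operatorname{Var}(f^{(k)},J_i)\cdot|J_i|^{k-1}\ge(\delta/K)\,(1/i)^{k-1}\omega(1/i)$ for $i\in A_\delta$. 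The paper then uses $\omega(1/i)\ge\omega(1)/i$ to write $1/i\le K_1|J_i|^{(k-1)/k}\operatorname{Var}(f^{(k)},J_i)^{1/k}$ and applies the discrete H\"older inequality with dual exponents $k/(k-1)$ and $k$, obtaining $\sum_{i\in A_\delta}1/i\le K_1\left(\sum_i|J_i|\right)^{(k-1)/k}\left(\sum_i\operatorname{Var}(f^{(k)},J_i)\right)^{1/k}<\infty$. You instead split $A_\delta$ according to whether $|J_i|\ge 1/i$, bounding the two pieces by comparison with $\sum_i|J_i|$ and with $\sum_i\operatorname{Var}(f^{(k)},J_i)$ respectively. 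Your dichotomy is more elementary and avoids H\"older entirely, handling all $k\ge1$ uniformly; you are also right that the $C^{k,\omega}$ route through Lemma~\ref{lem:omega}(\ref{p:aibi}) is unavailable here because $\operatorname{Var}$ carries no controlled modulus of continuity, and the paper sidesteps the same difficulty but packages the conclusion via H\"older rather than a case split. Both arguments are sound, and your planned parallel treatment of $B_\lambda$ using $\operatorname{Var}((f^{-1})^{(k)},I)<\infty$ would indeed go through the same way.
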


By Lemmas~\ref{lem:exp-N} and~\ref{lem:est-ck}, we have that
\[\{(1/i)^{k-1}\omega(1/i)\co i\in A_\delta\}
\precsim\{  1/{N_i} \co i\in A_\delta\}
\precsim\{
\sup_{J_i}|f^{(k)}-\delta_{1k}|\cdot |J_i|^{k-1}
\co i\in A_\delta\}.\]
By Claim~\ref{cla:disj}, we see
\[
\sum_i |f^{(k)}(x_i)-\delta_{1k}|
=\sum_i |f^{(k)}(x_i)-f^{(k)}(z_i)|
\le K\operatorname{Var}(f^{(k)},I)<\infty.\]
So, for some constant $K_0,K_1>0$
we deduce from H\"older's inequality that
\begin{align*}
\sum_{i\in A_\delta} \frac{1}{i}
&\le
\sum_{i\in A_\delta} 
\left(\frac{K_0\omega(1/i)}{i^{k-1}}\right)^{1/k}
\le
K_1\sum_{i\in\bN} |J_i|^{({k-1})/k} 
\cdot|f^{(k)}(x_i)-\delta_{1k}|^{1/{k}}\\
&\le
K_1\left(\sum_{i\in\bN} |J_i|\right)^{({k-1})/k}
\left(\sum_{i\in\bN} |f^{(k)}(x_i)-\delta_{1k}|\right)^{1/{k}}<\infty.
\end{align*}
We conclude from Lemma~\ref{lem:Salat} that $d_\bN(A_\delta)=0$.

\begin{claim}
$d_\bN(B_\lambda)=0$.
\end{claim}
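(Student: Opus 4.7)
The plan is to mirror the preceding $C^{k,\mathrm{bv}}$ proof of $d_\bN(A_\delta)=0$ (Claim~4), but feeding in Lemma~\ref{lem:est-ck}\emph{(2)} in place of Lemma~\ref{lem:est-ck}\emph{(1)}. The only genuinely new wrinkle is that the estimate for $\lambda$--expansiveness is a max over $f$ and $f^{-1}$, so I would first split
\[
B_\lambda = B_\lambda^+ \sqcup B_\lambda^-,\quad
B_\lambda^+ := \bigl\{i\in B_\lambda\co \sup_{J_i}\vform{f^{(k)}-\delta_{1k}} \ge \sup_{J_i}\vform{(f^{-1})^{(k)}-\delta_{1k}}\bigr\},
\]
with $B_\lambda^- = B_\lambda\setminus B_\lambda^+$. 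Since $\Diff_+^{k,\mathrm{bv}}(I)$ is a group (Proposition~\ref{prop:group}), $f^{-1}$ is itself $C^{k,\mathrm{bv}}$, so the argument for $B_\lambda^+$ applied to $f^{-1}$ (with $y_i$ replacing $x_i$) will dispose of $B_\lambda^-$; it is enough to treat $B_\lambda^+$.

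For $i\in B_\lambda^+$, Lemma~\ref{lem:exp-N}\emph{(2)} combined with the elementary bound $e^x\ge 1+x$ gives $(\lambda+1)^{1/N_i}-1\ge \log(\lambda+1)/N_i$, while hypothesis (\ref{sup1}) yields $1/N_i \succsim (1/i)^{k-1}\omega(1/i)$. Lemma~\ref{lem:est-ck}\emph{(2)} now bounds the left side from above by $|f^{(k)}(x_i)-\delta_{1k}|\cdot|J_i|^{k-1}$, so
\[
(1/i)^{k-1}\omega(1/i) \;\precsim\; |f^{(k)}(x_i)-\delta_{1k}|\cdot|J_i|^{k-1}.
\]
Using the standard concavity estimate $\omega(1/i)\ge \omega(1)/i$ (which follows since $\omega(x)/x$ is decreasing) converts this into
\[
1/i \;\precsim\; |J_i|^{(k-1)/k}\cdot|f^{(k)}(x_i)-\delta_{1k}|^{1/k}.
\]

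Finally, Claim~\ref{cla:disj} gives $\sum_i |J_i|\le K|I|<\infty$, and because $f^{(k)}(z_i)=\delta_{1k}$ we have
\[
\sum_i |f^{(k)}(x_i)-\delta_{1k}| = \sum_i |f^{(k)}(x_i)-f^{(k)}(z_i)| \;\le\; K\operatorname{Var}(f^{(k)},I) < \infty.
\]
Hölder's inequality with exponents $k/(k-1)$ and $k$ then yields
\[
\sum_{i\in B_\lambda^+} 1/i \;\precsim\; \Bigl(\sum_i |J_i|\Bigr)^{(k-1)/k}\Bigl(\sum_i |f^{(k)}(x_i)-\delta_{1k}|\Bigr)^{1/k} <\infty,
\]
and Lemma~\ref{lem:Salat}\emph{(2)} concludes $d_\bN(B_\lambda^+)=0$; running the same argument on $f^{-1}$ gives $d_\bN(B_\lambda^-)=0$.

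I do not anticipate a genuine obstacle: the analytic content is already packaged in Lemmas~\ref{lem:exp-N}, \ref{lem:est-ck}, and \ref{lem:omega}, and the BV step is identical in spirit to Claim~4. The mildest subtlety is justifying that $(f^{-1})^{(k)}$ has bounded variation when $f$ does, but this is exactly the assertion that $\Diff_+^{k,\mathrm{bv}}(I)$ is closed under inversion, which is part of Proposition~\ref{prop:group}.
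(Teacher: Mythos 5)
Your proof is correct and follows essentially the same route as the paper's. The only cosmetic difference is how the $\max$ from Lemma~\ref{lem:est-ck}(2) is handled: you split $B_\lambda$ into $B_\lambda^\pm$ according to which of $\sup_{J_i}|f^{(k)}-\delta_{1k}|$ and $\sup_{J_i}|(f^{-1})^{(k)}-\delta_{1k}|$ dominates and run H\"older on each piece, whereas the paper sets $M_i = |f^{(k)}(x_i)-\delta_{1k}|+|(f^{-1})^{(k)}(y_i)-\delta_{1k}|$ (the sum dominates the max) and runs H\"older once; the remaining ingredients — the bound $(\lambda+1)^{1/N_i}-1\ge\log(\lambda+1)/N_i$ from Lemma~\ref{lem:exp-N}(2), the use of $\omega(1/i)\gtrsim 1/i$, Claim~\ref{cla:disj}, H\"older, and Lemma~\ref{lem:Salat}(2) — are identical.
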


We apply Lemma~\ref{lem:est-ck}
and also the proof of Claim~\ref{cla:bdelta}.
For each $i\in\bN$ we put
\[
M_i = |f^{(k)}(x_i)-\delta_{1k}|+|(f^{(-1)})^{(k)}(y_i)-\delta_{1k}|.\]
We have
\[
\{ (1/ i)^{k-1}\omega(1/i) \co i\in B_\lambda\}
\precsim
\{
 (\lambda+1)^{1/N_i}-1 \co i\in B_\lambda\}
\precsim
\{M_i\cdot |J_i|^{k-1}\co i\in B_\lambda\}.\]
By Proposition~\ref{prop:group}, we have
\[
\sum_i |(f^{-1})^{(k)}(y_i)-\delta_{1k}|
\le K\operatorname{Var}((f^{-1})^{(k)},I)<\infty.\]
We again apply H\"older's inequality.
For some constant $K_0,K_1>0$, we see
\begin{align*}
\sum_{i\in B_\lambda} \frac{1}{i}
&\le
\sum_{i\in B_\lambda} 
\left(\frac{K_0\omega(1/i)}{i^{k-1}}\right)^{1/k}
\le
K_1\sum_{i\in\bN} |J_i|^{({k-1})/k} 
M_i^{1/{k}}\\
&\le
K_1\left(\sum_{i\in\bN} |J_i|\right)^{({k-1})/k}
\left(\sum_{i\in\bN} M_i\right)^{1/{k}}<\infty.
\end{align*}
We obtain $d_\bN(B_\lambda)=0$.
\ep

\subsection{Diffeomorphisms of optimal regularity}\label{ss:optimal}
Let us now describe a method of constructing a fast diffeomorphism of a specified regularity on a given support.
\begin{thm}\label{thm:optimal}
We let $k\in\bN$, let $\delta\in(0,1)$ and let $\mu$ be a concave modulus satisfying $\mu\gg\omega_1$.
Suppose that $\{J_i\}_{i\in\bN}$ is a disjoint collection of compact intervals such that $J_i\sse I\setminus\partial I$,
and that $\{N_i\}_{i\in\bN}\sse\bN$ is a sequence such that 
\[\inf_{i\in\bN}N_i\cdot  |J_i|^{k-1}\mu\left(|J_i|\right)\ge1.\]
Then there exists  $f\in\Diff^{k,\mu}_+(\bR)$ satisfying the following:
\be[(i)]
\item\label{p:supp}
$\displaystyle \supp f=\{x\in\bR\mid f(x)>x\}= \cup_i  (J_i\setminus\partial J_i)$;
\item\label{p:fast} $f^{N_i}$ is $\delta$--fast on $J_i$ for all $i$;
\item\label{p:inf} if an open neighborhood $U$ of $x\in \bR$ intersects only finitely many $J_i$'s, then  $f$ is $C^\infty$ at $x$.
\ee
\end{thm}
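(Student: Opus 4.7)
My plan is to construct $f$ by placing on each $J_i$ a rescaled copy of a fixed smooth model diffeomorphism drawn from a one-parameter flow. Fix once and for all a smooth vector field $V_0$ on $\bR$ supported in $[0,1]$, strictly positive on $(0,1)$, vanishing to infinite order at the endpoints, and scaled (by enlarging $V_0$ if needed, using that $\Exp(tV_0)(y)\to 1$ as $t\to\infty$) so that $\phi_0 := \Exp(V_0)$ satisfies $\phi_0(y_0)-y_0 \geq \delta$ at some $y_0\in(0,1)$. For each $N\in\bN$, the diffeomorphism $g_N := \Exp((1/N)V_0)$ is smooth, equals the identity outside $[0,1]$ with all derivatives matching there, and satisfies $g_N^N = \phi_0$. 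For each $i$, let $\rho_i\co y\mapsto a_i+|J_i|y$ be the affine identification $[0,1]\to J_i$, put $f_i := \rho_i\circ g_{N_i}\circ\rho_i^{-1}$, and define $f(x) := f_i(x)$ when $x\in J_i$ and $f(x):=x$ otherwise. Disjointness of the $\{J_i\}$ makes this well defined, and $f$ agrees with the identity to infinite order at every point of $\partial J_i$.

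Properties (\ref{p:supp})--(\ref{p:inf}) then follow quickly: (\ref{p:supp}) is immediate from $V_0>0$ on $(0,1)$; since $f_i$ preserves $J_i$, we have $f^{N_i}|_{J_i}=f_i^{N_i}=\rho_i\circ\phi_0\circ\rho_i^{-1}$, which displaces $\rho_i(y_0)$ by exactly $|J_i|(\phi_0(y_0)-y_0)\geq \delta|J_i|$, giving (\ref{p:fast}); and (\ref{p:inf}) is clear since near such $x$, $f$ is a finite composition of $C^\infty$ terms.

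The crux is showing $f\in\Diff_+^{k,\mu}(\bR)$. Smooth dependence of flows on parameters yields a uniform expansion $g_N = \Id + (1/N)V_0 + O(1/N^2)$ in every $C^j$, and in particular a bound $\|g_N^{(j)} - \Id^{(j)}\|_{C^0([0,1])} \leq C_j/N$ with $C_j$ depending only on $V_0$ and $j$. After the affine rescaling, for $j\geq 1$ this becomes $\|f_i^{(j)} - \Id^{(j)}\|_{C^0(J_i)} \leq C_j|J_i|^{1-j}/N_i$; applied with $j=k+1$, it produces the Lipschitz estimate
\[
\bigl[f_i^{(k)}\bigr]_{\mathrm{Lip}(J_i)} \leq C_{k+1}\,|J_i|^{-k}/N_i.
\]
The hypothesis rearranges to $|J_i|^{-k}/N_i \leq \mu(|J_i|)/|J_i|$, and since $s/\mu(s)$ is monotone by Lemma~\ref{lem:omega}(1), for $x_1,x_2\in J_i$ we obtain
\[
|f^{(k)}(x_1)-f^{(k)}(x_2)| \leq C_{k+1}\,\frac{\mu(|J_i|)}{|J_i|}\,|x_1-x_2| \leq C_{k+1}\,\mu(|x_1-x_2|).
\]
To promote this to global $\mu$-continuity on $\bR$, I would treat pairs $x_1\in J_i$, $x_2\notin J_i$ by inserting the endpoint $p\in\partial J_i$ lying between them: since $|x_1-p|\leq|x_1-x_2|$ and $f^{(k)}(p)=\Id^{(k)}(p)$, the contribution from the $J_i$-side is absorbed by the same interval Lipschitz bound, and the $x_2$-side either vanishes (if $x_2\notin\bigcup_j J_j$) or reduces to the identical analysis on $J_j$.

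The main difficulty is precisely the matching of analytic and dynamical scales: the hypothesis $N_i\,|J_i|^{k-1}\mu(|J_i|)\geq 1$ is exactly what is needed to convert the natural $|J_i|^{-k}/N_i$ Lipschitz bound on $f_i^{(k)}$ into a uniform $\mu$-modulus, via the monotonicity $\mu(s)/s\searrow$ coming from concavity. Positivity of $f'$ (from $g_N$ being a diffeomorphism) together with the resulting global $C^{k,\mu}$ regularity then gives $f\in\Diff_+^{k,\mu}(\bR)$, completing the proof.
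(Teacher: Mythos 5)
Your proposal takes a genuinely different route from the paper's and is essentially correct. The paper builds the model diffeomorphism on each $J_i$ from an explicit bump function with a long flat plateau (Lemma~\ref{lem:bump2}), where the plateau height $C\ell^k\mu(\ell)$ is tuned so that $N\approx 1/(\ell^{k-1}\mu(\ell))$ iterations push a point across the plateau, and the $\mu$-modulus estimate on $g^{(k)}$ requires a case analysis of where $x,y$ lie relative to the rising, flat, and falling pieces. You instead draw the model from the one-parameter flow $\Exp(tV_0)$ of a fixed smooth compactly supported vector field; the exact identity $g_N^N=\Exp(V_0)$ is what produces the $\delta$-fastness, and the $C^{k,\mu}$ estimate reduces to the smooth-parameter-dependence bound $\|g_N^{(j)}-\Id^{(j)}\|_\infty\le C_j/N$ combined with the affine rescaling $f_i^{(j)} = |J_i|^{1-j}\,g_{N_i}^{(j)}\circ\rho_i^{-1}$; the $\mu$-modulus then enters in the same way as in the paper, via the monotonicity of $s/\mu(s)$. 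Your construction is cleaner in two respects: it replaces a somewhat technical bump-function computation with a single uniform estimate (and makes transparent that the hypothesis $N_i|J_i|^{k-1}\mu(|J_i|)\ge 1$ is exactly the scale-matching condition), and the positivity of $f'$ and bijectivity of $f_i$ come for free because $g_N$ is a flow. What the paper's plateau construction buys is full explicitness of the constants and a one-step (non-iterative) displacement bound, but that is not needed for the statement.

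One step you gloss over: you write down a $\mu$-modulus bound for a candidate $f^{(k)}$ without first establishing that the pointwise-glued $f$ is $C^k$ at accumulation points of $\bigcup_i\partial J_i$. The paper handles this by showing the truncated products $F_n$ form a Cauchy sequence in the $C^k$-norm, using $\|f_i^{(j)}-\Id^{(j)}\|_\infty\le K_0\mu(\ell_i)$, and passing to the uniform $C^k$ limit. Your own estimates give the same thing — combining $\|f_i^{(j)}-\Id^{(j)}\|_\infty\le C_j\,|J_i|^{1-j}/N_i$ with the hypothesis yields $\|f_i^{(j)}-\Id^{(j)}\|_\infty\le C_j\,|J_i|^{k-j}\mu(|J_i|)\to 0$ for $1\le j\le k$ — so it is a routine addition; you should state it, as it is what legitimizes passing from local smooth pieces to a global $C^k$ diffeomorphism before you can speak of $f^{(k)}$.
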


Since $I$ is compact, it is necessary that $\sum_i|J_i|<\infty$.
From the above theorem we will deduce that some $C^{k,\mu}$ diffeomorphism is ``faster" than all $C^{k,\omega}$ diffeomorphisms for $\omega\ll\mu$,
in a precise sense as described in Corollary~\ref{cor:optimal}.

Throughout Section~\ref{ss:optimal}, we will fix the following constants.
\begin{setting}\label{setting:kmu}
Let $k,\delta,\mu$ be as in Theorem~\ref{thm:optimal}.
Pick a constant $\epsilon_0\in(0,1)$ and put
\[C= 1/({1+8\epsilon_0}),
\quad D  =(1-C)/2,\quad \delta_0=(1-\epsilon_0)C.\]
A priori, we will choose $\epsilon_0$ so small that we have estimates
\[D\le 1/10,\quad \delta_0 \ge \max(\delta,9/10).\]
We also pick $\ell_0^*\in(0,\epsilon_0]$ such that $\mu(\ell_0^*)\le\epsilon_0$.
\end{setting}

We will prove Theorem~\ref{thm:optimal} through a series of lemmas. Let us first note
the following standard construction of a bump function $\Psi$; see Figure~\ref{fig:bump} (a).

\begin{lem}\label{lem:bump}
There exists an even, $C^\infty$ map $\Psi\co \bR\to\bR$ such that the following hold:
\begin{itemize}
\item
$\Psi(t)=0$ if $t\le-1$ or $t\ge1$;
\item
$\Psi(0)=1$;
\item
$\Psi'(t)>0$ if $t\in(-1,0)$;
\item
$\int_\bR \Psi=1$.
\end{itemize}
\end{lem}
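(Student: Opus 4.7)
The plan is to find $\Psi$ within a one-parameter family of smooth even bumps whose value at the origin is automatically $1$, and then to select the parameter so that the total integral equals $1$. Concretely, for each $a>0$ I define
\[
\psi_a(t) = \begin{cases}\exp\bigl(a - a/(1-t^2)\bigr), & |t|<1,\\ 0, & |t|\ge 1,\end{cases}
\]
and then pick a suitable $a = a^*$.

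First I would verify that every $\psi_a$ already has three of the four bulleted properties. Evenness is immediate, and $\psi_a(0)=e^{0}=1$. The standard flat-function argument shows $\psi_a$ is $C^\infty$ at $t=\pm 1$, since each derivative of $\exp(-a/(1-t^2))$ is a rational expression in $t$ and $(1-t^2)^{-1}$ multiplied by the exponential, and therefore vanishes as $|t|\to 1^-$. A direct differentiation gives
\[
\psi_a'(t) = -\frac{2at}{(1-t^2)^2}\,\psi_a(t),
\]
which is strictly positive on $(-1,0)$ because $\psi_a>0$ there.

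The remaining task is to arrange $\int_{\bR}\psi_a = 1$. Since $(1-t^2)^{-1}\ge 1$ on $(-1,1)$, we have the uniform bound $\psi_a(t)\le 1$, which allows the dominated convergence theorem to handle both the continuity of $a\mapsto\int\psi_a$ on $(0,\infty)$ and its limits. Pointwise $\psi_a(t)\to 1$ on $(-1,1)$ as $a\to 0^+$, so $\int\psi_a\to 2$; and for each fixed $t\neq 0$ in $(-1,1)$ we have $a - a/(1-t^2) = -at^2/(1-t^2)\to -\infty$ as $a\to\infty$, so $\int\psi_a\to 0$. By the intermediate value theorem there is $a^*\in(0,\infty)$ with $\int\psi_{a^*}=1$, and $\Psi := \psi_{a^*}$ then satisfies every clause of the lemma. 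The one subtle point is the joint normalization $\Psi(0)=1$ and $\int\Psi=1$: a naive argument-rescaling $t\mapsto bt$ of a fixed bump would shrink the support and destroy strict monotonicity on the full interval $(-1,0)$, whereas a shape parameter like $a$ modulates the integral without moving the support, which is exactly what is needed.
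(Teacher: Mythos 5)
The paper treats this as a ``standard construction'' and does not supply a proof, so there is no authorial argument to compare against. Your proof is correct and, in fact, addresses the one genuinely nontrivial point the lemma hides behind the word ``standard'': that the two normalizations $\Psi(0)=1$ and $\int_{\bR}\Psi=1$ must be met simultaneously, and that this cannot be done by rescaling the argument of a fixed bump without either moving the support off $[-1,1]$ or breaking strict monotonicity on all of $(-1,0)$. Introducing the shape parameter $a$ so that $\psi_a(0)=1$ holds identically, checking the derivative formula $\psi_a'(t)=-\tfrac{2at}{(1-t^2)^2}\psi_a(t)$ for positivity on $(-1,0)$, invoking the usual flat-function argument for $C^\infty$ at $\pm1$, and then closing the integral condition by dominated convergence plus the intermediate value theorem (limits $2$ and $0$ at the ends of $(0,\infty)$) is a clean and complete resolution.
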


For $U\sse M$ and for $m\in\bN\cup\{0\}$,
 the \emph{$C^m$--norm} of $f\co U\to \bR$  is defined as 
\[\|f\|_{m,\infty} = \sup_{0\le j\le m}\|f^{(j)}\|_\infty
=\sup\{| f^{(j)}(x) | \co x\in U\text{ and }0\le j\le k\}.\]
Let us introduce a constant 
\[K_0=C\left(\frac{2}{ D}\right)^{k+1}\|\Psi\|_{k,\infty}.\]
The following technical lemma establishes the existence of a bump function with a long flat interval and with a controlled $C^{k}$--norm. See Figure~\ref{fig:bump} (b).

\begin{lem}\label{lem:bump2}
For each $\ell\in(0,\ell_0^*]$, 
there exists a $C^\infty$ map $g\co \bR\to\bR$ such that
\be[(i)]
\item
$g(t)\begin{cases}
=0 &\text{if }\ t\le0\text{ or }t\ge\ell,\\
\textrm{is strictly increasing}&
\text{if }\ 0<t< D\ell,\\
=C\ell^{k}\mu(\ell) &\text{if }\  D\ell\le t\le(1- D)\ell,\\
\text{is strictly decreasing}&
\text{if }\ (1- D)\ell<t<\ell.\\
\end{cases}$
\item\label{p:primehalf} $|g'(t)|\le 1/2$ for all $t\in\bR$.
\item\label{part:gk}
$\|g\|_{k,\infty}\le K_0\mu(\ell)$.
\item\label{part:xytau}
$|g^{(k)}(x)-g^{(k)}(y)|\le K_0\mu(|x-y|)$ for all $x,y\in\bR$.
\ee\end{lem}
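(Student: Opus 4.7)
My plan is to construct $g$ as a rescaled convolution $g = C\ell^k\mu(\ell)\cdot(\chi_{[\eta,\ell-\eta]}*\Psi_\eta)$, where $\eta = D\ell/2$ and $\Psi_\eta(t) = \eta^{-1}\Psi(t/\eta)$ is the standard rescaling of the bump from Lemma~\ref{lem:bump} to scale $\eta$. The $C^\infty$ regularity and the plateau shape demanded in (i) are then immediate from a direct computation of the convolution: the support is $[\eta,\ell-\eta] + [-\eta,\eta] = [0,\ell]$; on $[2\eta,\ell-2\eta] = [D\ell,(1-D)\ell]$ the full mass $\int_{-\eta}^{\eta}\Psi_\eta = 1$ is always captured, giving the constant value $C\ell^k\mu(\ell)$; and on the two ramp intervals one checks via $g' = C\ell^k\mu(\ell)\bigl[\Psi_\eta(\cdot-\eta) - \Psi_\eta(\cdot-(\ell-\eta))\bigr]$ that only one of the two summands is non-zero there, yielding strict monotonicity because $\Psi_\eta > 0$ on $(-\eta,\eta)$ and $\Psi_\eta \ge 0$ everywhere.

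For the quantitative bounds (ii) and (iii) I would iterate the distributional identity $\chi_{[\eta,\ell-\eta]}' = \delta_\eta - \delta_{\ell-\eta}$ once, placing the remaining $j-1$ derivatives on the smooth factor, to obtain
$$g^{(j)}(t) = C\ell^k\mu(\ell)\bigl[\Psi_\eta^{(j-1)}(t-\eta) - \Psi_\eta^{(j-1)}(t-(\ell-\eta))\bigr], \qquad 1 \le j \le k.$$
Since the two summands have disjoint supports $[0,D\ell]$ and $[(1-D)\ell,\ell]$ (using $D < 1/2$), the pointwise bound $\|\Psi_\eta^{(j-1)}\|_\infty = \|\Psi^{(j-1)}\|_\infty/\eta^j$ combined with $\eta = D\ell/2$, $\ell \le 1$ and $j \le k$ gives $|g^{(j)}(t)| \le C\mu(\ell)(2/D)^j\|\Psi\|_{k,\infty} \le K_0\mu(\ell)$, establishing (iii); the $j=0$ case is even easier. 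For (ii), taking $j=1$ together with $\mu(\ell) \le \epsilon_0$ and the relation $D = (1-C)/2 \sim 4\epsilon_0$ from Setting~\ref{setting:kmu} forces $|g'|$ below $1/2$ once $\epsilon_0$ is chosen sufficiently small.

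The substantive step is (iv), and here the disjoint-support structure of the two bumps making up $g^{(k)}$ is precisely what allows the constant $K_0$ to involve only $\|\Psi\|_{k,\infty}$. I would split into two cases. \emph{Case 1:} $x,y$ both lie in $(-\infty,(1-D)\ell]$ or both lie in $[D\ell,\infty)$. On that half-line $g^{(k)}$ reduces to a single translate $\pm C\ell^k\mu(\ell)\,\Psi_\eta^{(k-1)}(\cdot-c)$, whose Lipschitz constant is at most $C\ell^k\mu(\ell)\|\Psi^{(k)}\|_\infty/\eta^{k+1} = K_0\mu(\ell)/\ell$, so
$$|g^{(k)}(x) - g^{(k)}(y)| \le K_0\mu(\ell)|x-y|/\ell \le K_0\mu(|x-y|),$$
the last inequality being the concavity estimate $\mu(\ell)|x-y|/\ell \le \mu(|x-y|)$ for $|x-y| \le \ell$ furnished by Lemma~\ref{lem:omega}(1). \emph{Case 2:} one of $x,y$ lies in $(-\infty,D\ell)$ and the other in $((1-D)\ell,\infty)$, so $|x-y| \ge (1-2D)\ell \ge (4/5)\ell$. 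Then the sup bound $\|g^{(k)}\|_\infty \le (D/2)K_0\mu(\ell)$ from (iii) gives $|g^{(k)}(x) - g^{(k)}(y)| \le DK_0\mu(\ell)$, while concavity of $\mu$ yields $\mu(|x-y|) \ge \mu((4/5)\ell) \ge (4/5)\mu(\ell) \ge D\mu(\ell)$ since $D \le 1/10$, closing the case.

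The main obstacle I anticipate is keeping the constant in (iv) equal to $K_0$, which carries the factor $\|\Psi\|_{k,\infty}$ rather than the larger $\|\Psi\|_{k+1,\infty}$. A naive Lipschitz estimate on the convolution, obtained by placing all $k+1$ derivatives onto $\Psi_\eta$, would introduce a $\|\Psi^{(k+1)}\|_\infty$ term; the device of first moving a single derivative onto $\chi$, so that $g^{(k)}$ splits as two disjointly supported translates of $\Psi_\eta^{(k-1)}$, is precisely what lets both the nearby (Lipschitz) regime and the far-apart (uniform-bound) regime be controlled by the same constant $K_0$.
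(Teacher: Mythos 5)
Your construction coincides with the paper's: the convolution $\chi_{[\eta,\ell-\eta]}*\Psi_\eta$ with $\eta=D\ell/2$ evaluates to $\int_{-\infty}^{2t/(D\ell)-1}\Psi$ for $t\le\ell/2$, so your $g$ is literally the paper's piecewise-defined map, and the tools (scaling of $C^k$-norms, a dichotomy in (iv) between a Lipschitz estimate and a crude sup bound, each closed by concavity and monotonicity of $\mu$) are essentially the same; only the partition into cases is reorganized.

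There is, however, a genuine gap in Case~1 of your argument for (iv). You assert $|g^{(k)}(x)-g^{(k)}(y)|\le K_0\mu(\ell)|x-y|/\ell\le K_0\mu(|x-y|)$, justifying the second inequality only ``for $|x-y|\le\ell$''; but Case~1 also contains pairs with $|x-y|>\ell$ (take $x\in[0,D\ell]$ and $y$ far to the left of $0$, both in $(-\infty,(1-D)\ell]$), and for those the inequality $\mu(\ell)|x-y|/\ell\le\mu(|x-y|)$ is \emph{reversed}, since $t/\mu(t)$ is increasing by Lemma~\ref{lem:omega}(1) -- so the chain breaks. The fix is a clamping step you left implicit: on the relevant half-line $g^{(k)}$ is a single bump supported on $[0,D\ell]$ with $g^{(k)}(0)=g^{(k)}(D\ell)=0$ (because $\Psi^{(k-1)}(\pm 1)=0$), so if $y\notin[0,D\ell]$ one replaces $y$ by the nearer endpoint $y_0\in\{0,D\ell\}$; then $|x-y_0|\le\min(|x-y|,D\ell)<\ell$, and your Lipschitz bound on $(x,y_0)$ together with monotonicity of $\mu$ closes the estimate. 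This is exactly what the paper encodes in its pointwise inequality $|g^{(k)}(x)|\le\frac{K_0\mu(\ell)}{\ell}\min(x,D\ell-x)$. A smaller imprecision: your displayed bound $|g^{(j)}|\le C\mu(\ell)(2/D)^j\|\Psi\|_{k,\infty}$ discards the factor $\ell^{k-j}$, so for (ii) it yields only $|g'|\le\frac{1}{2}\|\Psi\|_{k,\infty}$; to get $1/2$ you must retain $\ell^{k-1}\mu(\ell)\le\epsilon_0^k$ and note that $j=1$ involves only $\|\Psi\|_\infty=1$, giving $|g'|\le\frac{2C}{D}\ell^{k-1}\mu(\ell)\le\frac{1}{2}\epsilon_0^{k-1}\le\frac{1}{2}$, which already holds for the specific $\epsilon_0$ fixed in Setting~\ref{setting:kmu} -- no further shrinking of $\epsilon_0$ is needed or possible, since $\epsilon_0$ is fixed a priori there.
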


\begin{figure}[b!]
  \tikzstyle {a}=[postaction=decorate,decoration={%
    markings,%
    mark=at position .65 with {\arrow{stealth};}}]
\subfloat[(a) $y=\Psi(x)$]
{\includegraphics[width=.3\textwidth]{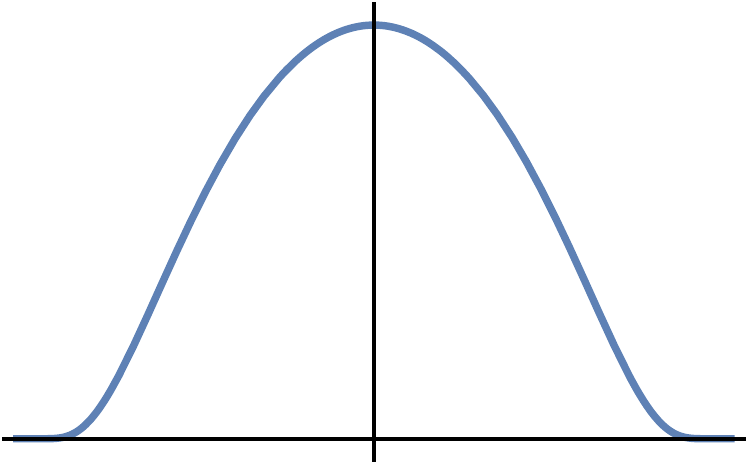}}
$\qquad\qquad$
\subfloat[(b) $y=g(x)$]
{\includegraphics[width=.5\textwidth]{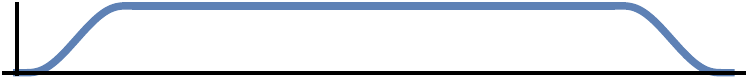}}
\caption{Scaled bump functions.}
\label{fig:bump}
\end{figure}

\bp
There exists a unique  $C^\infty$ map $g$ satisfying the following conditions:
\[g(t)=\begin{cases}
0 &\text{if }t\le0\text{ or }t\ge\ell,\\
C\ell^{k}\mu(\ell)\int_{-\infty}^{2t/( D\ell)-1}\Psi &
\text{if }t\le\ell/2,\\
C\ell^{k}\mu(\ell) &\text{if } D\ell\le t\le(1- D)\ell,\\
C\ell^{k}\mu(\ell)\int_{-\infty}^{2(\ell-t)/( D\ell)-1}\Psi &
\text{if }t\ge\ell/2.
\end{cases}\]
Hence, we have (i).

If $t\in(0,\ell/2)$, then
\[g'(t)=C\ell^{k}\mu(\ell)\left(\frac{2}{ D\ell}\right)\Psi\left(\frac{2t}{ D\ell}-1\right)
\le\frac{2C}{ D}\ell^{k-1}\mu(\ell)
\le\frac{2C}{ D}\epsilon_0^k
=\frac12\epsilon_0^{k-1}
\le
1/2.
\]
It follows that $g'(t)\in[0,1/2]$.
Since we have the symmetry  $g(t)=g(\ell-t)$, we obtain (\ref{p:primehalf}).
We see
$\|g\|_\infty=C\ell^{k}\mu(\ell)\le C\mu(\ell)\le K_0\mu(\ell)$.
If $t\le\ell/2$ and $i\ge1$, then
\[
\|g^{(i)}\|_\infty
\le
C\ell^{k}\mu(\ell)\left(\frac{2}{ D\ell}\right)^i 
\|\Psi^{(i-1)}\|_\infty
\le C\ell^{k}\mu(\ell)\left(\frac{2}{ D\ell}\right)^k\|\Psi\|_{k,\infty}
\le K_0\mu(\ell).
\]
The condition (\ref{part:gk}) follows.

To verify (\ref{part:xytau}), let us estimate $|g^{(k)}(x)- g^{(k)}(y)|$.
We have that $g^{(k)}=0$ on 
\[(-\infty,0)\cup(D\ell,(1-D)\ell)\cup(\ell,\infty).\]
Using the symmetry $|g^{(k)}(x)|=|g^{(k)}(\ell-x)|$,
we may only consider $x\in[0, D\ell]$. 
Note that $\Psi^{(k-1)}(-1)=\Psi^{(k-1)}(1)=0$.
Since $\Psi^{(k-1)}$ is Lipschitz, we have that
\[\left|\Psi^{(k-1)}\left(\frac{2x}{ D\ell}-1\right)\right|
\le
 \|\Psi^{(k)}\|_\infty 
\min\left(
 \frac{2x}{ D\ell},
 \frac{2D\ell-2x}{ D\ell} 
\right).\]
So, we have an inequality
\begin{align*}
|g^{(k)}(x)|
=
C\mu(\ell)\left(\frac{2}{D}\right)^k
\left|\Psi^{(k-1)}\left(\frac{2x}{ D\ell}-1\right)\right|
\le
\frac{K_0\mu(\ell)}{\ell}\min(x,D\ell-x).
\end{align*}
We now have the following three possibilities for $y$.

\emph{Case 1. $y\in (-\infty,0]\cup [D\ell,(1- D)\ell]\cup[\ell,\infty)$.}

Since we have $\min(x,D\ell-x)<\ell$, we see
\[
|g^{(k)}(x)-g^{(k)}(y)|
\le
K_0(\mu(\ell)/\ell) \min(x,D\ell-x)
\le
K_0 \mu(\min(x,D\ell-x))
\le 
K_0 \mu(|x-y|).\]

\emph{Case 2. $y\in[0, D\ell]$.}

We see that
\[
|g^{(k)}(x)-g^{(k)}(y)|
\le
C\mu(\ell)\left(\frac{2}{ D}\right)^k 
\left(\frac{2}{ D\ell}\right)\|\Psi\|_{k,\infty}\cdot |x-y|
\le{K_0} \mu(|x- y|).\]
%& =C\mu(\ell)\left(\frac{2}{ D}\right)^k  \left| \Psi^{(k-1)}\left(\frac{2x}{ D\ell}-1\right)- \Psi^{(k-1)}\left(\frac{2y}{ D\ell}-1\right)\right|\\ &
%=\frac{K}2 \cdot\frac{|x-y|\mu(\ell)}{\ell}\\
%\end{align*} \[ |g^{(k)}(x)-g^{(k)}(y)|& %=C\mu(\ell)\left(\frac{2}{ D}\right)^k  \left| \Psi^{(k-1)}\left(\frac{2x}{ D\ell}-1\right)- \Psi^{(k-1)}\left(\frac{2y}{ D\ell}-1\right)\right|\\ & \le C\mu(\ell)\left(\frac{2}{ D}\right)^k  \left(\frac{2}{ D\ell}\right)\|\Psi\|_{k,\infty}\cdot |x-y| %=\frac{K}2 \cdot\frac{|x-y|\mu(\ell)}{\ell}\\ \le\frac{K}2 \mu(|x- y|).\]
%\end{align*}

\emph{Case 3.
 $y\in((1- D)\ell,\ell)$.}

Since $D\le 1/10$,
we have
$x+\ell-y\le 2D\ell\le \ell-2D\ell\le y-x<\ell$.
We see that
\[
|g^{(k)}(x)-g^{(k)}(y)|
\le
|g^{(k)}(x)|+|g^{(k)}(\ell-y)|
\le
K_0(\mu(\ell)/\ell)(x+\ell-y)
\le K_0\mu(|x-y|).\qedhere
\]
\ep

\begin{lem}\label{lem:construction}
For each compact interval $J\sse \bR$ with $0<\ell:=|J|\le \ell_0^*$,
there exists a diffeomorphism $f\in\Diff^\infty_+(\bR)$ 
satisfying the following:
\begin{enumerate}[(A)]
\item\label{part:f-supp} $\supp f= J\setminus\partial J$;
\item\label{part:fprime} $\inf_\bR f'(x)\ge 1/2$;
\item \label{part:f-ck}
$\|f-\Id\|_{k,\infty}\le K_0 \mu(\ell)$;
\item\label{part:f-Nx} for each $N\ge 1/({\ell^{k-1}\mu(\ell)})$, we have 
\[
\sup_J|f^N-\Id|
\geq \delta_0\ell.\]
\item\label{part:f-kx}  $|f^{(k)}(x)-f^{(k)}(y)|\le K_0\mu(|x-y|)$ for all $x,y\in\bR$.
\end{enumerate}
\end{lem}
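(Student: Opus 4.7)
The plan is to scale and translate the bump function from Lemma~\ref{lem:bump2}. Writing $J = [a, a+\ell]$, I would define
\[ f(x) := x + g(x - a), \]
which is $C^\infty$ because $g$ is. Properties (\ref{part:fprime}), (\ref{part:f-supp}), (\ref{part:f-ck}), and (\ref{part:f-kx}) then follow directly from the corresponding clauses of Lemma~\ref{lem:bump2}: (\ref{part:f-supp}) from the support description of $g$ in Lemma~\ref{lem:bump2}(i); (\ref{part:fprime}) from $|g'|\le 1/2$ in Lemma~\ref{lem:bump2}(\ref{p:primehalf}); (\ref{part:f-ck}) is simply Lemma~\ref{lem:bump2}(\ref{part:gk}); and for (\ref{part:f-kx}) one uses that $\Id^{(k)}$ is constant, so $f^{(k)}(x) - f^{(k)}(y) = g^{(k)}(x - a) - g^{(k)}(y - a)$, to which Lemma~\ref{lem:bump2}(\ref{part:xytau}) applies. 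The entire substance of the lemma thus reduces to the dynamical estimate (\ref{part:f-Nx}).

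For that estimate, the strategy is to exploit the flat plateau $P := [a + D\ell,\, a + (1-D)\ell]$, on which $f(x) - x = g(x - a)$ takes the constant value $C\ell^k\mu(\ell)$ by Lemma~\ref{lem:bump2}(i); hence each iterate of $f$ applied to a point currently in $P$ advances that point by exactly $C\ell^k\mu(\ell)$. The identity $1 - 2D = C$ forced by $D = (1-C)/2$ in Setting~\ref{setting:kmu} makes $|P| = C\ell$, which equals $N^* \cdot C\ell^k\mu(\ell)$ for $N^* := 1/(\ell^{k-1}\mu(\ell))$. I would take $y_0 := a + D\ell$ (the left edge of $P$) as a test point. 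Since $g \ge 0$, the orbit $\{f^j(y_0)\}$ is monotonically non-decreasing and stays in $J$. For any integer $N \ge N^*$, either (i) $f^N(y_0) \ge a + (1-D)\ell$, in which case $f^N(y_0) - y_0 \ge (1-2D)\ell = C\ell$ immediately; or (ii) $f^N(y_0) < a + (1-D)\ell$, in which case monotonicity forces every $f^j(y_0)$ with $j \le N$ to lie in $P$, so the displacement equals $NC\ell^k\mu(\ell) \ge N^*\,C\ell^k\mu(\ell) = C\ell$. Either way $\sup_J|f^N - \Id| \ge C\ell \ge \delta_0\ell$, which is (\ref{part:f-Nx}).

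I do not anticipate any serious obstacle: the analytic estimates are already packaged into Lemma~\ref{lem:bump2}, and the dynamical estimate reduces to this elementary orbit-count. If anything, the subtle point is the arithmetic alignment in Setting~\ref{setting:kmu} that makes the plateau's length agree exactly with the $N^*$-step traversal, so that the two-case dichotomy above is clean rather than leaving residual error terms to absorb; the slack $\delta_0 = (1-\epsilon_0)C$ is comfortable margin that will presumably be used for the aggregation performed in Theorem~\ref{thm:optimal}.
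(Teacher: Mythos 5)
Your proposal is correct and takes essentially the same route as the paper: define $f=\Id+g$ using Lemma~\ref{lem:bump2} and then track the orbit of the left endpoint $a+D\ell$ of the plateau, using the identity $1-2D=C$ so that $N^*$ constant-sized steps exactly traverse the plateau. The only cosmetic difference is in the final arithmetic for (\ref{part:f-Nx}): the paper bounds the traversal count by $\lfloor 1/(\ell^{k-1}\mu(\ell))\rfloor$ and absorbs the floor-function loss using the slack $\delta_0=(1-\epsilon_0)C<C$ (so the slack is there for exactly this rounding, not for the later aggregation as you guessed), whereas your two-case dichotomy avoids the floor and yields the slightly cleaner bound $C\ell\ge\delta_0\ell$ directly.
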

\bp
We may assume $J=[0,\ell]$.
Let $g$ be as in Lemma~\ref{lem:bump2}, and put $f=\Id+g$.
By symmetry and the condition (\ref{p:primehalf}) on $g$, we have
\[
f'(t) = 1+g'(t)\ge 1/2\] for all $t$.
We have (\ref{part:fprime}), and in particular, $f$ is a $C^\infty$ diffeomorphism.

The claims (\ref{part:f-supp}), (\ref{part:f-ck}) and (\ref{part:f-kx}) are immediate from Lemma~\ref{lem:bump2}.
Observe that
  \[\frac{(\ell- D\ell)- D\ell}{C\ell^{k}\mu(\ell)}=\frac{1-2 D}{C\ell^{k-1}\mu(\ell)}=\frac{1}{\ell^{k-1}\mu(\ell)}.\] 
For each $N\geq 1/(\ell^{k-1}\mu(\ell))$,
we have that
\begin{align*}
|f^N( D\ell)- D\ell|
&=\sum_{i=0}^{N-1} |f^{i+1}( D\ell)-f^i( D\ell)|
\ge C\ell^{k}\mu(\ell)\left\lfloor \frac1{\ell^{k-1}\mu(\ell)}\right\rfloor
\\
&
=
C\ell(1 - \ell^{k-1}\mu(\ell))
\ge
C\ell(1 - \epsilon_0)
=
\delta_0\ell.
\end{align*}
This establishes the claim (\ref{part:f-Nx}), and hence the conclusion of the lemma.
\end{proof}

\bp[Proof of Theorem~\ref{thm:optimal}]
Put $\ell_i=|J_i|$.
As $\sum_i \ell_i<\infty$, there exists $i_0$ 
such that $\ell_i\le \ell^*$ for all $i\ge i_0$.
For each $i\ge i_0$, 
we apply Lemma~\ref{lem:construction}
to obtain $f_i\in\Diff^\infty_+(\bR)$ with:
\be[(A)]
\item\label{part:fiji}
$\supp f_i = J_i\setminus\partial J_i$;
\item\label{part:fiprime} $\inf_\bR |f_i'(x)|\ge 1/2$;
\item\label{part:fiid}
$\|f_i-\Id\|_{k,\infty}\le K_0\mu(\ell_i)$;
\item\label{part:fin}
$f_i^N$ is $\delta_0$--fast on $J_i$ 
for all $N\ge 1/(\ell_i^{k-1}\mu(\ell_i))$;
\item\label{part:fitau} $|f_i^{(k)}(x)-f_i^{(k)}(y)|\le K_0\mu(|x-y|)$ for all $x,y\in\bR$.
\ee

For each $n\ge i_0$, consider the composition
\[
F_n=\prod_{i=i_0}^n f_i.\]
For $m\ge n\ge i_0$, we have that
\[\|F_m-F_n\|_{k,\infty}\le \sup\{|f_i^{(j)}(x)-\Id^{(j)}(x)|\co i>n,x\in J_i,0\le j\le k\}\le K_0\sup_{i>n} \mu(\ell_i).\]
Hence $\{F_n\}$ uniformly converges to a $C^k$ map $F\co \bR\to\bR$ in the $C^k$--norm~\cite{Evans-GSM}.

Since $F$ is the composition of infinitely many homeomorphisms with disjoint supports, we see $F$ is also a homeomorphism. In particular, we see $\supp F = \cup_{i\ge i_0} (J_i\setminus\partial J_i)$.
Moreover, $F'(x)=\lim_{n\to\infty} F_n'(x)\ge 1/2$ for all $x\in \bR$. It follows that $F$ is a $C^k$ diffeomorphism.

\begin{claim*}
For all $x,y\in\bR$ we have
\[ |F^{(k)}(x)-F^{(k)}(y)| \le 2K_0\mu(|x-y|).\]
\end{claim*}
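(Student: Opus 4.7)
The plan is to prove the estimate for the finite partial compositions $F_n = \prod_{i=i_0}^n f_i$ uniformly in $n$, and then pass to the limit using the $C^k$--convergence $F_n \to F$ that has already been established. The key structural input is that the supports $J_i \setminus \partial J_i$ are pairwise disjoint compact intervals, so for any finite $n$ the intervals $J_{i_0}, \dots, J_n$ are mutually separated by positive distance. Combined with $\supp f_i = J_i \setminus \partial J_i$ from property (\ref{part:fiji}), this means each point $x \in \bR$ has a neighborhood on which $F_n$ agrees either with some single $f_i$ (if $x \in J_i$ for some $i \le n$) or with $\Id$ (otherwise).

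Consequently, one obtains the pointwise identity
\[F_n^{(k)}(x) - \delta_{1k} = \sum_{i=i_0}^n \left( f_i^{(k)}(x) - \delta_{1k} \right),\]
in which each summand is supported in $J_i$, so at any given $x$ at most one summand is nonzero. For arbitrary $x, y \in \bR$, the triangle inequality yields
\[|F_n^{(k)}(x) - F_n^{(k)}(y)| \le \sum_{i=i_0}^n |f_i^{(k)}(x) - f_i^{(k)}(y)|.\]
The $i$th summand vanishes unless $\{x,y\} \cap J_i \ne \varnothing$; by disjointness of the $J_i$'s the point $x$ lies in at most one $J_i$ and similarly for $y$, so at most two indices contribute. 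Each such contribution is bounded by $K_0\, \mu(|x-y|)$ thanks to property (\ref{part:fitau}), giving $|F_n^{(k)}(x) - F_n^{(k)}(y)| \le 2 K_0\, \mu(|x-y|)$ uniformly in $n$. Passing to the limit $n \to \infty$ via the uniform convergence $F_n^{(k)} \to F^{(k)}$ yields the claim.

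The only mildly delicate point is the pointwise identity for $F_n^{(k)}$ at boundary points of the $J_i$'s, but since the finite collection $J_{i_0}, \dots, J_n$ consists of pairwise disjoint compact intervals (hence separated by positive distance), each such boundary point has a neighborhood meeting exactly one $J_i$, and the identity holds by inspection. Working with finite partial compositions $F_n$ before taking the limit thus neatly sidesteps any accumulation behavior in the full infinite construction, and the factor of $2$ in the bound becomes transparent: it corresponds precisely to the two indices $i$ (one for $x$, one for $y$) that can contribute to the difference.
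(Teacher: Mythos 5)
Your proof is correct. The underlying mechanism is the same as the paper's (disjoint supports force at most two of the $f_i$'s to contribute to $F^{(k)}(x)-F^{(k)}(y)$, each contribution controlled by property (\ref{part:fitau})), but your organization differs in two ways worth noting. First, you work with the finite truncations $F_n$ and the clean additive identity $F_n^{(k)}-\delta_{1k}=\sum_{i=i_0}^n\bigl(f_i^{(k)}-\delta_{1k}\bigr)$, then pass to the limit using the already-established $C^k$ convergence; the paper argues directly about $F$ by observing it coincides with $f_i$ near each point of $J_i$. Either is fine, and the finite-truncation step is a modest safety measure rather than a necessity. Second, and more substantively, in the mixed case $x\in J_i$, $y\in J_j$ with $i\ne j$, you apply (\ref{part:fitau}) directly to the pair $(x,y)$ for each of the two indices, whereas the paper first passes to boundary points $x_0\in\partial J_i$, $y_0\in\partial J_j$ satisfying $|x-y|\ge|x-x_0|+|y-y_0|$ and only then invokes (\ref{part:fitau}) together with monotonicity of $\mu$. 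Your direct application is simpler and avoids the monotonicity step; it works because $f_i^{(k)}(y)=\delta_{1k}=f_i^{(k)}(x_0)$ when $y\notin J_i$, so the two formulations are in fact identical quantities bounded differently. Both routes yield the factor $2$ from exactly the same case, and the bounds coincide.
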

%The conclusion of the theorem follows from this claim.
In order to prove the claim, we may assume $x\in J_i$ for some $i\ge i_0$.
If $y\in J_i$, then the condition~(\ref{part:fitau}) implies the claim.
If $y\not\in\supp F$, then we can find  $x_0\in\partial J_i$
such that $|x-y|\ge |x-x_0|$. So,
\[
|F^{(k)}(x)-F^{(k)}(y)|
=|f_i^{(k)}(x)-f_i^{(k)}(x_0)|
\le K_0 \mu(|x-x_0|)\le K_0\mu(|x-y|).\]
Finally, if $y\in J_j$ for some $i\ne j\ge i_0$, then 
we can find $x_0\in \partial J_i$ and $y_0\in \partial J_j$ such that
$|x-y| \ge |x-x_0|+|y-y_0|$.
As $\mu$ is increasing, we see that
\begin{align*} |F^{(k)}(x)-F^{(k)}(y)|&\le
|f_i^{(k)}(x)-f_i^{(k)}(x_0)|+|f_j^{(k)}(y)-f_j^{(k)}(y_0)|\\
&\le
K_0 \mu(|x-x_0|)+K_0\mu(|y-y_0|)
\le 2K_0\mu(|x-y|).
\end{align*}
Hence, the claim is proved. We have that $F\in\Diff_+^{k,\mu}(\bR)$.

Finally, we can pick $F^{*}\in\Diff_+^\infty(\bR)$ such that:
\begin{itemize}
\item
$\supp F^{*} = \{x\in \bR\mid F^{*}(x)>x\}=\bigcup\{J_i\setminus\partial J_i\mid 1\le i<i_0\}$;
\item
$F^{*}$ is $\delta_0$--fast on $J_i$ for $1\le i<i_0$.
\end{itemize}
Then the diffeomorphism $f= F\circ F^{*}\in\Diff_+^{k,\mu}(I)$ satisfies the conclusions (\ref{p:supp}) and (\ref{p:fast}).
To see the conclusion (\ref{p:inf}), observe from the hypothesis that either 
\begin{itemize}
\item
$x\in J_i\setminus\partial J_i$ for some $i$, or 
\item $f=\Id$ locally at $x$, or
\item $x\in \partial J_i$ for some $i$, and some open neighborhood $U$ of $x$ satisfies $U\cap J_j=\varnothing$ for all $j\ne i$.
\end{itemize}
In all cases, $f$ coincides with some $f_i$ locally at $x$, and hence, is locally $C^\infty$.
\ep

\begin{rem}
In the above proof, the modulus of continuity was used to guarantee a uniform convergence of partially defined diffeomorphisms. 
This idea can be found in the construction of  a \emph{Denjoy counterexample}, which is a $C^{1+\epsilon}$ diffeomorphism $f\co S^1\to S^1$ such that $f$ is not conjugate to a rotation and such that $f$ has an irrational rotation number. Denjoy's Theorem implies that there are no such $C^{1+\mathrm{bv}}$ examples~\cite{Denjoy1932,Navas2011}.
\end{rem}

We note the following consequence of Theorem~\ref{thm:optimal}.

\begin{cor}\label{cor:optimal}
Let $K^*>0$,
and let $\{J_i\}_{i\in\bN}$ be a collection of disjoint compact intervals contained in the interior of $I$ satisfying
% $|J_1|\le\ell_0^*$ and that
%  \[|J_i|=\frac1{(i+K^*)\log^2(i+K^*)}.\]
\[|J_i|=\left({(i+K^*)\log^2(i+K^*)}\right)^{-1}.\]
Then for $k\in\bN$ and for a concave modulus $\mu\gg\omega_1$,
 there exists
\[
f\in \Diff^{k,\mu}_+(\bR)\setminus
\left(
\bigcup_{0\prec_k \omega\ll\mu}
\Diff^{k,\omega}_+(\bR)
\cup
\Diff_+^{k,\mathrm{bv}}(\bR)
\right)
\]
such that
 $\supp f= \cup_i (J_i\setminus\partial J_i)$.
\end{cor}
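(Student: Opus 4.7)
The plan is to manufacture $f$ directly from Theorem~\ref{thm:optimal} with a carefully chosen iteration sequence $\{N_i\}$, and then apply Theorem~\ref{thm:est} in the contrapositive to rule out every regularity class below $\Diff_+^{k,\mu}$ that is mentioned in the conclusion. Throughout, write $\ell_i := |J_i| = 1/((i+K^*)\log^2(i+K^*))$ and set
\[
N_i := \left\lceil 1/\left(\ell_i^{k-1}\mu(\ell_i)\right) \right\rceil.
\]
Since $\sum \ell_i < \infty$ and the intervals lie in the interior of $I$, Theorem~\ref{thm:optimal} (with, say, $\delta=1/2$) produces $f \in \Diff_+^{k,\mu}(\bR)$ whose support equals $\bigcup_i (J_i\setminus\partial J_i)$ and for which $f^{N_i}$ is $(1/2)$-fast on every $J_i$. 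This already yields membership in $\Diff_+^{k,\mu}$ and the support conclusion.

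Next, I would verify the two structural hypotheses of Theorem~\ref{thm:est}. The $J_i$'s are pairwise disjoint, so the overlap count is trivially $1$. Each endpoint $p$ of $J_i$ lies in $(\Fix f)'$: either some $J_j$ with $j\ne i$ accumulate at $p$, in which case the endpoints of those $J_j$'s supply a sequence of fixed points converging to $p$; or a one-sided neighborhood of $p$ disjoint from $\bigcup_{j\ne i} J_j$ and from $J_i^\circ$ lies entirely in $\Fix f$. Either way, $f$ is $k$-fixed on $J_i$.

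The heart of the argument is the summability estimate. Because $1/i$ and $\ell_i \log^2 i$ are comparable up to a bounded factor, Lemma~\ref{lem:omega}(2) gives $\mu(1/i)\le (\log^2 i + 2)\,\mu(\ell_i)$, and hence
\[
N_i\cdot (1/i)^{k-1}\omega(1/i) \;\lesssim\; \frac{(1/i)^{k-1}\omega(1/i)}{\ell_i^{k-1}\mu(\ell_i)} \;\lesssim\; \log^{2k}(i)\cdot\frac{\omega(1/i)}{\mu(1/i)}.
\]
For any concave modulus $\omega\ll\mu$, the relation $\omega(x)\log^{2k+1}(1/x)/\mu(x)\to 0$ forces the right-hand side to tend to zero, so in particular it is bounded.

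Finally, I would rule out the forbidden classes one at a time. If $f\in\Diff_+^{k,\omega}(I)$ for some $\omega\ll\mu$ with $\omega\succ_k 0$, then the structural hypotheses plus the summability just verified let me apply Theorem~\ref{thm:est} with this $\omega$; the resulting set $\{i : f^{N_i}\text{ is }(1/2)\text{-fast on }J_i\}$ has natural density zero, contradicting that Theorem~\ref{thm:optimal} makes it all of $\bN$. For the $C^{k,\mathrm{bv}}$ case, I use the auxiliary modulus $\omega=\omega_1$; it satisfies $\omega_1\ll\mu$ by hypothesis on $\mu$ and $\omega_1\succ_k 0$ (vacuously for $k\ge 2$, by direct inspection for $k=1$), and the summability estimate still applies, producing the same contradiction. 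The principal obstacle is precisely the summability step: the specific scale $\ell_i\asymp 1/(i\log^2 i)$ is calibrated so that the logarithmic gap between $1/i$ and $\ell_i$ is wide enough for the $\ll$ relation to swallow the $\log^{2k}(i)$ factor introduced by concavity of $\mu$.
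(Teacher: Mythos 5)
Your proposal is correct and takes essentially the same route as the paper: you choose $N_i = \lceil 1/(\ell_i^{k-1}\mu(\ell_i))\rceil$, invoke Theorem~\ref{thm:optimal} to get the $\delta$-fast diffeomorphism with the prescribed support, and then apply Theorem~\ref{thm:est} in the contrapositive, using the same logarithmic summability estimate $N_i(1/i)^{k-1}\omega(1/i)\lesssim \log^{2k}(i)\,\omega(1/i)/\mu(1/i)\to 0$ for $\omega\ll\mu$ (and $\omega=\omega_1$ for the $\mathrm{bv}$ case). Your additional verification that $\partial J_i\sse(\Fix f)'$ (which the paper states without proof) and the constant-tracking via Lemma~\ref{lem:omega}(2) are fine; the minor exponent $\log^{2k+1}$ in your penultimate step should read $\log^{2k}$, but this does not affect the argument.
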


\bp
Let us write $\ell_i=|J_i|$ and 
\[
N_i=\left\lceil
1/(\ell_i^{k-1}\mu(\ell_i))
\right\rceil
=
\left\lceil
(i+K^*)^{k-1}\log^{2k-2}(i+K^*)
/\mu(\ell_i))
\right\rceil.
\]
%Choose $K_0$ such that $\ell_i\le\ell_0^*$ for all $i\ge K_0$.
We have $f\in\Diff_+^{k,\mu}(I)$
as given by Theorem~\ref{thm:optimal}
with respect to $\{J_i\}$ and some $\delta\in(0,1)$.
Let us pick $\omega$ such that $0\prec_k \omega\ll\mu$.

\begin{claim*}
$\displaystyle
\lim_{i\to\infty} N_i(1/i)^{k-1}\omega(1/i)=0$.
\end{claim*}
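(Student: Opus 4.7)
The plan is to write $N_i (1/i)^{k-1}\omega(1/i)$ as an explicit expression plus a vanishing ``ceiling'' correction, and then reduce everything to an estimate for $\omega(\ell_i)/\mu(\ell_i)$, which we can control using the definition of $\omega\ll\mu$.

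First I would use the trivial bound $\lceil x\rceil\le x+1$ to get
\[
N_i(1/i)^{k-1}\omega(1/i)\le \left(\frac{i+K^*}{i}\right)^{k-1}\log^{2k-2}(i+K^*)\cdot\frac{\omega(1/i)}{\mu(\ell_i)}+\left(\frac{1}{i}\right)^{k-1}\omega(1/i).
\]
The second summand tends to $0$ as $i\to\infty$ since $\omega$ is continuous with $\omega(0)=0$, and the prefactor $((i+K^*)/i)^{k-1}$ converges to $1$. So it suffices to show
\[
\log^{2k-2}(i)\cdot\frac{\omega(1/i)}{\mu(\ell_i)}\xrightarrow[i\to\infty]{}0.
\]

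Next I would compare $\omega(1/i)$ to $\omega(\ell_i)$. Writing $1/i=\ell_i\cdot(i+K^*)\log^2(i+K^*)/i$, we have a scaling factor $C_i\sim \log^2 i$, so Lemma~\ref{lem:omega}(2) gives
\[
\omega(1/i)\le (C_i+1)\,\omega(\ell_i)=O(\log^2 i)\cdot\omega(\ell_i),
\]
valid for all sufficiently large $i$. Substituting this bound, it remains to prove
\[
\log^{2k}(i)\cdot\frac{\omega(\ell_i)}{\mu(\ell_i)}\xrightarrow[i\to\infty]{}0.
\]

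For the final step I would invoke the hypothesis $\omega\ll\mu$ with exponent $K=2k+1$: by definition,
\[
\omega(\ell_i)\log^{2k+1}(1/\ell_i)/\mu(\ell_i)\to 0.
\]
Since $1/\ell_i=(i+K^*)\log^2(i+K^*)\ge i$ for large $i$, we have $\log^{2k+1}(1/\ell_i)\ge\log^{2k+1}(i)$. Hence
\[
\log^{2k}(i)\cdot\frac{\omega(\ell_i)}{\mu(\ell_i)}
\le \frac{1}{\log(i)}\cdot\frac{\omega(\ell_i)\log^{2k+1}(1/\ell_i)}{\mu(\ell_i)}\to 0,
\]
which completes the argument. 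There is no serious obstacle here: the whole point is that the logarithmic padding $\log^2$ built into $\ell_i$ is ample enough to absorb both the change-of-scale factor from Lemma~\ref{lem:omega}(2) and the polynomial-in-$\log$ factor coming from $N_i$, and the strictness of $\omega\ll\mu$ (which allows arbitrary logarithmic exponents) supplies the extra $1/\log i$ gain needed for convergence.
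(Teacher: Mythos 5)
Your proof is correct and uses the same core ingredients as the paper (the ceiling bound on $N_i$, a rescaling estimate from Lemma~\ref{lem:omega}, and the definition of $\omega\ll\mu$), but with a dual maneuver in the middle step. The paper bounds $\mu(\ell_i)$ from \emph{below} by $\mu(1/i)/(4\log^2 i)$, using the monotonicity of $x/\mu(x)$ from part (1) of Lemma~\ref{lem:omega}; this brings both $\omega$ and $\mu$ to the common scale $1/i$, after which $\omega\ll\mu$ with $K=2k$ finishes. You instead bound $\omega(1/i)$ from \emph{above} by $O(\log^2 i)\,\omega(\ell_i)$ via $\omega(Cx)\le(C+1)\omega(x)$ (part (2) of the same lemma), bringing both to the common scale $\ell_i$, and then invoke $\omega\ll\mu$ with $K=2k+1$. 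The two maneuvers exploit the identical fact that the ratio $1/(i\ell_i)\sim\log^2 i$ between the two scales costs only a polylogarithmic factor, which the strictness of $\omega\ll\mu$ absorbs, so the difference is largely cosmetic; your explicit isolation of the ceiling correction term $(1/i)^{k-1}\omega(1/i)$ arguably makes the bookkeeping a bit more transparent.
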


For all sufficiently large $i$, we have
\[
\mu(\ell_i)\ge\mu
(1/({4i\log^2 i}))
\ge
 \mu(1/i)/({4\log^2 i}).\]
So we see that 
\begin{align*}
&N_i(1/i)^{k-1}\omega(1/i)
\le
\frac{2(i+K^*)^{k-1}\log^{2k-2}(i+K^*)\omega(1/i)}
{i^{k-1}\mu(\ell_i)}
\\
&\le 
\frac{8(i+K^*)^{k-1}\log^{2k-2}(i+K^*)}
{ i^{k-1}\log^{2k-2}i}
\cdot
\frac{\log^{2k} i\cdot \omega(1/i)}
{\mu(1/i)}\to0.
\end{align*}

Note that $\partial J_i$ are accumulated fixed points of $f$. 
Since $f^{N_i}$ is $\delta$--fast on $J_i$ for all $i$, Theorem~\ref{thm:est} implies that
$f$ is not $C^{k,\omega}$.
For $C^{k,\mathrm{bv}}$, we simply set $\omega=\omega_1$
and apply Theorem~\ref{thm:est} again.
\ep

\subsection{More on natural density}
For $N\in\bN$, let us use the notation  \[ [N]^*:=\{0,1,\ldots,N-1\}. \] 
We will need the following properties of density--one sets.
\begin{lem}\label{lem:long}
\be
\item
If $A\sse\bN$ satisfies $d_\bN(A)=1$,
then for each $s\in\bN$ we have 
\[d_\bN\{i\in\bN\co i+[s]^*\sse A\}=1.\]
\item
Let $\beta_0\in\bN$,
and let $X,Y\sse\bN$.
Assume that $d_\bN\left(X\cup\left((Y-\beta)\cap\bN\right)\right)=1$ for each integer $\beta\ge \beta_0$. Then we have $d_\bN(X\cup Y)=1$.
\ee
\end{lem}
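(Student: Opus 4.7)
My plan is as follows. For part (1), I would observe that the complement in $\bN$ of $\{i : i + [s]^* \sse A\}$ is the finite union $\bigcup_{j=0}^{s-1}\{i\in\bN : i+j \notin A\}$, a union of leftward shifts of $A^c = \bN\setminus A$. Each summand has the same natural density as $A^c$, which is $0$, and a finite union of density-zero sets has density zero. Hence the original set has density $1$.

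For part (2), I would first reformulate the hypothesis: translating $X \cup ((Y-\beta)\cap\bN)$ rightward by $\beta$ yields $(X+\beta) \cup (Y \cap [\beta+1,\infty))$, which differs from $(X+\beta) \cup Y$ only by the finite set $Y\cap[1,\beta]$. Since natural density is translation invariant and insensitive to finite modifications, the hypothesis becomes $d_\bN((X+\beta)\cup Y) = 1$ for every $\beta \ge \beta_0$. Writing $X^c = \bN \setminus X$ and $Y^c = \bN \setminus Y$, this says $d_\bN(Y^c \cap (X+\beta)^c) = 0$ for each $\beta \ge \beta_0$.

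Next I would fix $M \in \bN$ and apply finite subadditivity. The union $\bigcup_{\beta=\beta_0}^{\beta_0+M}\bigl(X^c \cap Y^c \cap (X+\beta)^c\bigr)$ has density $0$, so
\[
\bar d(X^c \cap Y^c) \le \bar d\Bigl(X^c \cap \bigcap_{\beta=\beta_0}^{\beta_0+M}(X+\beta)\Bigr).
\]
Now $\bigcap_{\beta=\beta_0}^{\beta_0+M}(X+\beta) = \{n\in\bN : [n-\beta_0-M,\,n-\beta_0]\sse X\}$, so any $n$ in this intersection that also lies in $X^c$ must sit within distance $\beta_0$ past the right endpoint of a maximal run of $X$ of length at least $M+1$.

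The hard part will be to bound this density. Each such run occupies at least $M+1$ consecutive integers, so $[1,N]$ contains at most about $N/(M+1)$ runs of length $\ge M+1$; since each contributes at most $\beta_0$ values of $n \in X^c$, we obtain $\bar d(X^c \cap Y^c) \le \beta_0/(M+1)$. As $M$ was arbitrary, $\bar d(X^c \cap Y^c) = 0$, which is exactly $d_\bN(X \cup Y) = 1$. The main obstacle is the geometric identification of $\bigcap_\beta(X+\beta)$ with a ``long-run'' condition on $X$; once this is in place, the run-counting is immediate.
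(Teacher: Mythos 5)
Your proof is correct, but both parts take a genuinely different route from the paper's. For part~(1), the paper argues directly by counting over the arithmetic progressions $t+1+s[N]^*$ and summing over residues $t$; your argument is shorter and more conceptual, observing that the complement of $\{i : i+[s]^*\sse A\}$ is the finite union $\bigcup_{j=0}^{s-1}(A^c-j)$ of leftward translates of a density-zero set, then invoking translation invariance and finite subadditivity of upper density. For part~(2), the two proofs diverge more substantially. The paper bootstraps from part~(1): it builds density-one sets $S_1^{N,\beta}=\{m : m+[N]^*\sse X\cup(Y-\beta)\}$, intersects over $\beta\in[\beta_0,N-1]$ to get $S_2^N$, shows that any $m\in S_2^N$ has at most $\beta_0$ points of $(X\cup Y)^c$ in its length-$N$ window, and finishes with a count over arithmetic progressions. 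Your proof bypasses part~(1) entirely: you translate the hypothesis by $\beta$ to rewrite it as $\bar d\bigl(Y^c\cap(X+\beta)^c\bigr)=0$, split $X^c\cap Y^c$ according to membership in $\bigcap_{\beta=\beta_0}^{\beta_0+M}(X+\beta)$, discard the density-zero piece, and then bound the remaining piece by a run-counting argument: each surviving $n$ lies within $\beta_0$ of the right endpoint of a maximal $X$-run of length $\ge M+1$, and such runs are disjoint and long, so there are at most $\sim N/(M+1)$ of them in $[1,N]$, giving $\bar d(X^c\cap Y^c)\le\beta_0/(M+1)$ for every $M$. Your version is more elementary and makes part~(2) logically independent of part~(1); the paper's version reuses the density-one machinery from part~(1) systematically. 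Both arguments close the proof cleanly, and the translation-invariance lemma you invoke (that a set of density zero remains density zero under integer shifts) is routine, though it is worth stating explicitly if this is to be written up.
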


\bp[Proof of Lemma~\ref{lem:long}]
(1)
We can rewrite the given set as
\[
\{i\in\bN\co i+[s]^*\sse A\}=A\cap (A-1)\cap \cdots\cap (A-(s-1)).\]
The conclusion follows from the first two parts of Lemma~\ref{lem:Salat}.

(2)
Pick an arbitrary integer $N\ge \beta_0$. For each $\beta\in\bN$, define
\[
S_1^{N,\beta}=
\left\{
m\in\bN\mid
m+[N]^*\sse X\cup(Y-\beta)
\right\}.\]
Part (1) implies that $d_\bN(S_1^{N,\beta})=1$ for each $\beta\ge\beta_0$. 
So, we have a density--one set
\[
S_2^N=
\bigcap_{\beta=\beta_0}^{N-1} S_1^{N,\beta}.\]

Suppose $m\in S_2^N$. If $m\le i<j\le m+N-1$ and $i,j\not\in X\cup Y$,
then $m\not\in S_1^{N,j-i}$. In particular, $j-i\le\beta_0-1$.
We obtain that
\[
\#\{ i\in m+[N]^*\mid i\not\in X\cup Y\}\le \beta_0.\]

Hence, for each $s\in\bN$ and $t\in[N]^*$ we compute
\[
(N-\beta_0) \cdot \#\left( S_2^N\cap (t+N[s]^*)\right)
\le
\#\left\{ m\in t+[Ns-N+1]^* \mid
m\in X\cup Y\right\}.\]
By summing up the above for $t\in[N]^*$, we have
\[
(N-\beta_0) \#\left(S_2^N\cap[Ns]^*\right)
\le
N \#\left\{ m\in [Ns]^* \mid
m\in X\cup Y\right\}.\]
After dividing both sides by $N^2 s$ and sending $s\to\infty$, we see that
\[
1-\frac{\beta_0}N\le \liminf_{s\to\infty}\frac{\#\left((X\cup Y)\cap[Ns]^*\right)}{Ns}.\]
Since $N$ is arbitrary, we have $d_\bN(X\cup Y)=1$.
\ep

\section{Background from one--dimensional dynamics}\label{sec:background}
In this section, we gather the relevant facts regarding one--dimensional dynamics that we require in the sequel.

\subsection{Covering distance and covering length}\label{ss:cdcl}
Throughout Section~\ref{ss:cdcl}, 
we let $G$ be a group with a finite generating set $V$,
and let  $\psi\colon G\to\Homeo_+(I)$ be an action.
We develop some notions of complexity of an element in $\psi(G)$ which will be useful for our purposes.

We use the notation
\[\supp\psi:=\supp\psi(G)=\bigcup_{g\in G}\supp\psi(g)=\bigcup_{v\in V}\supp\psi(v).\] 
Note that $\supp\psi$ may have multiple components.  Define
\[\mathcal{V}:=\bigcup_{v\in V}\pi_0 \supp \psi(v).\]
Then $\mathcal{V}$ is an open cover of $\supp\psi$ consisting of intervals.

For a nonempty subset $A\sse I$,
we define its \emph{$\mathcal{V}$--covering length} as
\[
\cl_\mathcal{V}(A)=\inf\{\ell\in\bN\mid A\sse A_1\cup \cdots \cup A_\ell,\quad\text{each }A_i\text{ is in }\mathcal{V}\}.\]
Here, we use the convention $\inf\varnothing=\infty$.
We also let $\cl_\mathcal{V}(\varnothing)=0$.
We define 
the \emph{$\mathcal{V}$--covering distance} of $x,y\in I$ as
\[
\cd_\mathcal{V}(x,y)=
\begin{cases}
\cl_\mathcal{V}\left([\min\{x,y\},\max\{x,y\}]\right),&\text{if }x\ne y;\\
0.&\text{if }x=y.\end{cases}\]
%For distinct $x,y\in I$, we define \[\cd_\mathcal{V}(x,y):=\cl_\mathcal{V}\left([\min\{x,y\},\max\{x,y\}]\right).\]
That is to say, once a generating set for $G$ has been fixed, $\cd_\VV(x,y)$ is the least number of components of supports of generators of $G$ needed to traverse the interval from $x$ to $y$. 
Also, if $x$ and $y$ lie in different components of $\supp \psi(G)$, then the covering distance between them is necessarily infinite. 
We let $\cd_{\mathcal{V}}(x,x)=0$.

Both covering distance and covering length depend not just on $G$ and $\psi$ but also on a generating set $V$.
When the meaning is clear, we will often omit $\VV$, and write $\cl(A)$ and $\cd(x,y)$.
We will also write $gx:=\psi(g).x$ for $g\in G$ and $x\in I$.

Covering distance behaves well in the sense that it satisfies the triangle inequality:
\begin{lem}\label{lem:triangle}
For $x,y,z\in I$ and for $A,B\sse I$, the following hold.
\be
\item
$\cd(x,y)<\infty$ if and only if $x$ and $y$ are contained in the same component of $\supp\psi$.
\item $\cl(A\cup B)\le \cl(A)+\cl(B)$.
\item $\cd(x,y)\le \cd(x,z)+\cd(z,y)$.
\ee\end{lem}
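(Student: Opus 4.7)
The plan. All three statements follow from the definitions together with two elementary observations: (i) every $A\in\VV$ is a connected subset of $\supp\psi$, since by definition $A$ is a connected component of some $\supp\psi(v)\sse\supp\psi$; and (ii) $\cl_\VV$ is monotone under inclusion, because any $\VV$-cover of a set $B$ restricts to a $\VV$-cover of every subset $A\sse B$.

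Part (2) will be immediate: if either $\cl(A)$ or $\cl(B)$ is infinite the inequality is trivial, and otherwise the concatenation of optimal $\VV$-covers of $A$ and $B$ is a $\VV$-cover of $A\cup B$ of size $\cl(A)+\cl(B)$. For part (3) I set $J=[\min(x,y),\max(x,y)]$; the case $x=y$ is trivial. If $z\in J$, then $J=[\min(x,y),z]\cup[z,\max(x,y)]$ and part (2) gives the desired inequality. Otherwise $z\notin J$ forces $J$ to sit inside one of $[\min(x,z),\max(x,z)]$ or $[\min(y,z),\max(y,z)]$, and observation (ii) finishes the argument.

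For part (1), I restrict to $x\ne y$ and take $x<y$. If $\cd(x,y)<\infty$, pick a finite $\VV$-cover $A_1,\ldots,A_\ell$ of $[x,y]$; then $[x,y]\sse A_1\cup\cdots\cup A_\ell\sse \supp\psi$, and since $[x,y]$ is connected it lies in a single connected component of $\supp\psi$, so in particular $x$ and $y$ do. Conversely, if $x$ and $y$ lie in a common component $C$ of $\supp\psi$, then $C$ is an open interval containing $[x,y]$, so the open family $\VV$ covers the compact set $[x,y]$, and a finite subcover yields $\cd(x,y)<\infty$. The argument is essentially bookkeeping; the one point to watch in part (1) is to use connectedness of $[x,y]$ itself (not of the union of its $\VV$-cover) to conclude that $x$ and $y$ land in a single component of $\supp\psi$.
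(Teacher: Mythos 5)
Your proof is correct and takes essentially the same approach as the paper, which dispatches (1) as ``clear'' and derives (3) from (2) without spelling out the case split; your filling-in (covering $[x,y]$ by concatenating covers, splitting at $z$ when $z\in[x,y]$, monotonicity of $\cl$ otherwise, and compactness/connectedness of $[x,y]$ for (1)) is exactly the intended argument.
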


\bp
Part (1) is clear. 
For part (2), assume 
\[\{U_1,\ldots,U_n\},\{V_1,\ldots,V_m\}\sse\VV\] are open covers of $A$ and $B$ 
which witness the fact that $\cl(A)=n$ and $\cd(B)=m$ respectively.
Then \[\{U_1,\ldots,U_n,V_1,\ldots,V_m\}\] cover the interval $A\cup B$.
Part (3) follows from part (2).
\ep

If $1\neq w\in G$, we define the \emph{syllable length} of $w$, written $||w||$, to be
 \[||w||=\min\{\ell \mid w=v_1^{n_1}v_2^{n_2}\cdots v_\ell^{n_\ell}\},\] where $v_i\in V$ and $n_i\in\bZ$ for each $1\leq i\leq \ell$. 
 The following lemma relates the algebraic structure of the given group $G=\form{V}$ with the dynamical behavior of actions of $G$:

\begin{lem}\label{lem:syll-cover}
For each $x\in I$ and $w\in G$, we have $\cd(x,wx)\leq ||w||$.
\end{lem}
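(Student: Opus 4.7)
The plan is to proceed by induction on $\ell = \|w\|$, using the triangle inequality from Lemma~\ref{lem:triangle}(3) as the engine and the single-syllable case as the base.

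For the base case $\ell = 1$, write $w = v^n$ with $v \in V$ and $n \in \bZ$. If $v^n x = x$ then $\cd(x, wx) = 0 \leq 1$ trivially. Otherwise $x \in \supp \psi(v)$ (since $v$ fixing $x$ forces $v^n$ to fix $x$), so $x$ lies in some component $U \in \pi_0\supp \psi(v) \subseteq \VV$. Because $\psi(v)$ is an orientation-preserving homeomorphism whose support has boundary consisting of fixed points, $\psi(v)$ preserves each component of its support setwise, and hence so does $\psi(v^n)$. Thus $v^n x \in U$, so the interval $[\min(x,v^nx), \max(x,v^nx)] \subseteq U$, which shows $\cd(x, wx) \leq 1$.

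For the inductive step, suppose $\ell \geq 2$ and write $w = v_1^{n_1} w'$ where $w' = v_2^{n_2} \cdots v_\ell^{n_\ell}$ has $\|w'\| \leq \ell - 1$. Set $y = \psi(w').x$. By the inductive hypothesis, $\cd(x, y) \leq \ell - 1$, and by the base case applied to $v_1^{n_1}$ at the point $y$, we have $\cd(y, \psi(v_1^{n_1}).y) \leq 1$. Since $\psi(v_1^{n_1}).y = \psi(w).x = wx$, Lemma~\ref{lem:triangle}(3) gives
\[
\cd(x, wx) \leq \cd(x, y) + \cd(y, wx) \leq (\ell - 1) + 1 = \ell = \|w\|.
\]

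The only substantive point is the base case, and the only subtle observation there is that every component of $\supp \psi(v)$ is invariant under $\psi(v)$ (and hence under all its powers). That fact is precisely what allows us to cover the interval $[\min(x, v^nx), \max(x, v^nx)]$ by a single element of $\VV$. Everything else is a direct induction using the triangle inequality, so I do not anticipate any real obstacle.
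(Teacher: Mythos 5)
Your proof is correct and takes essentially the same approach as the paper: induction on syllable length with the triangle inequality (Lemma~\ref{lem:triangle}(3)) driving the inductive step, and the base case reduced to the observation that a single syllable $v^n$ either fixes $x$ or moves it within a single component of $\supp\psi(v)$. You are slightly more explicit than the paper about why $v^n x$ stays in the same component $U$ (namely that $\psi(v)$, being an orientation-preserving homeomorphism fixing $\partial U$, preserves $U$ setwise), but this is simply filling in a step the paper leaves implicit rather than a different argument.
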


Here we are implicitly measuring the covering distance with respect to the generating set $V$ of $G$.

\begin{proof}[Proof of Lemma~\ref{lem:syll-cover}]
Clearly we may assume that $x\in\supp\psi$, since otherwise there is nothing to prove. We proceed by induction on $||w||$. If $||w||=1$ then $w = v^n$ for some $v\in V$ and $n\in\bZ$.
Then either $x=v^nx$ or $x\in J\in\pi_0 \supp \psi(v)$. 
It follows that $\cd(x,v^nx)$ is $0$ or $1$.
Now assume $||w||=\ell\ge2$.
We can write $w=v^n\cdot w'$, where $||w'||=\ell-1$.
By induction, $\cd(x,w'x)\leq \ell-1$. 
As $\cd(w'x,v^n\cdot w'x)\leq 1$, the estimate follows from Lemma~\ref{lem:triangle}.
\end{proof}

%We will appeal to the notion of a chain of intervals.
Let  $(U_1,\ldots,U_n)$ be a sequence of open intervals in $\bR$ such that $U_i\cap U_j=\varnothing$ whenever $|i-j|\geq 2$, and such that $U_i\cap U_{i+1}$ is a nonempty proper subset of both $U_i$ and $U_{i+1}$ for $1\leq i\leq n-1$. 
Then we say $(U_1,\ldots,U_n)$ is a \emph{chain of intervals} in $\bR$.
Figure~\ref{f:coint} gives an example of a chain of four intervals.

A finite set $\FF$ of intervals is also called a chain of intervals if $\FF$ becomes so after a suitable reordering.
Chains of intervals arise naturally when we consider an open cover of a compact interval. 
The proof of the following lemma is straightforward.
\begin{lem}\label{lem:chain-cover}
If $\mathcal{U}$ is a collection of open intervals such that $I\sse \bigcup\mathcal{U}$,
then a minimal subcover $\mathcal{V}\sse\mathcal{U}$ of $I$ is a chain of finitely many open intervals.
\end{lem}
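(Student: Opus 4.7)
The plan is to exploit the compactness of $I$ together with minimality of the subcover to enforce both finiteness and the chain structure. First, since $I$ is compact, any open cover admits a finite subcover, so any minimal subcover $\mathcal{V} \subseteq \mathcal{U}$ (in the sense that no proper subcollection covers $I$) must itself be finite: extract a finite subcover $\mathcal{V}' \subseteq \mathcal{V}$, and by minimality $\mathcal{V}' = \mathcal{V}$. Write $\mathcal{V} = \{V_1, \ldots, V_n\}$ with $V_i = (a_i, b_i)$, reordered so that $a_1 \le a_2 \le \cdots \le a_n$.

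Next I would show, using minimality, that no interval contains another. If $V_i \subseteq V_j$ for some $i \ne j$, then $\mathcal{V} \setminus \{V_i\}$ still covers $I$, contradicting minimality. Consequently, once the left endpoints are strictly ordered $a_1 < a_2 < \cdots < a_n$, the right endpoints must also satisfy $b_1 < b_2 < \cdots < b_n$ (otherwise an earlier interval would contain a later one, or vice versa). Similarly, each $V_i$ must meet $I$, else it could be discarded.

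The main two conditions defining a chain are then verified separately. For disjointness at distance $\ge 2$: suppose $V_i \cap V_j \ne \varnothing$ with $j \ge i+2$; then $a_j < b_i$, so $V_i \cup V_j = (a_i, b_j)$, and for every $i < s < j$ the inequalities $a_i < a_s$ and $b_s < b_j$ force $V_s \subseteq V_i \cup V_j$. Hence $V_s$ could be removed, contradicting minimality. For consecutive overlap $V_i \cap V_{i+1} \ne \varnothing$: if the intersection were empty, then $b_i \le a_{i+1}$, and no $V_j$ would cover any point $x \in [b_i, a_{i+1}]$ (as $b_j \le b_i$ for $j \le i$ and $a_j \ge a_{i+1}$ for $j \ge i+1$); so $[b_i, a_{i+1}] \cap I = \varnothing$. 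Combined with the fact that both $V_i$ and $V_{i+1}$ meet the connected set $I = [p,q]$, this forces either $a_{i+1} < p$ (making $V_i \cap I = \varnothing$) or $b_i > q$ (making $V_{i+1} \cap I = \varnothing$), a contradiction in either case.

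Finally, properness of $V_i \cap V_{i+1}$ inside both $V_i$ and $V_{i+1}$ is automatic from the first step, since otherwise one interval would contain the other. The whole argument is routine point-set topology; the only mildly subtle point, and the one I would be most careful about, is the consecutive-overlap step, where one must use both minimality (to know each $V_i$ meets $I$) and the connectedness of $I$ to rule out a gap between $V_i$ and $V_{i+1}$.
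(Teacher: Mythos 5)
The paper states this lemma without proof, remarking only that ``the proof is straightforward,'' so there is no paper argument to compare against; your proof supplies the missing details and is correct. The overall strategy is the natural one: use compactness of $I$ together with minimality to get finiteness, order the intervals by left endpoints, observe that minimality rules out nesting (hence both endpoint sequences are strictly increasing), and then verify the two chain axioms — no distant overlap (else an intermediate $V_s$ would be contained in $V_i\cup V_j$ and could be discarded) and nonempty consecutive overlap (else connectedness of $I=[p,q]$ together with the fact that each $V_i$ meets $I$ would be violated). One small streamlining of the consecutive-overlap step: having shown $b_i\notin I$, you already have a point of $V_i\cap I$ below $b_i$ and a point of $V_{i+1}\cap I$ above $b_i$, which contradicts connectedness of $I$ directly without splitting into the cases $a_{i+1}<p$ and $b_i>q$; your version is equally valid, just slightly longer.
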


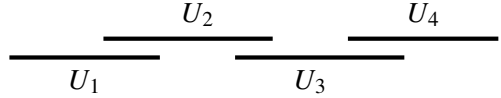
\begin{figure}[h!]
{\begin{tikzpicture}[ultra thick,scale=.5]
\draw (-5,0) -- (-1,0);
\draw (-3,0) node [below] {\small $U_1$}; 
\draw (-2.5,.5) -- (2,.5);
\draw (0,.5) node [above] {\small $U_2$}; 
\draw (1,0) -- (5.5,0);
\draw (3,0) node [below] {\small $U_3$}; 
\draw (4,.5) -- (8,.5);
\draw (6,.5) node [above] {\small $U_4$}; 
\end{tikzpicture}}
\caption{A chain of four intervals.}
\label{f:coint}
\end{figure}

When we discuss a chain of intervals, we assume those intervals are open.
It will be useful for us to be able to move points inside a connected component of $\supp \psi(G)$ efficiently in the following sense, which provides a converse to Lemma~\ref{lem:syll-cover}:

\begin{lem}\label{lem:slide}
Suppose $x<y\in U\in\pi_0 \supp \psi(G)$ satisfy $\cd(x,y)=N\in\bN$. Then there exists an element $g\in G$ such that $gx>y$ and such that $||g||= N$.
\end{lem}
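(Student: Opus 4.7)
The plan is to invoke Lemma~\ref{lem:chain-cover} to replace the hypothesis $\cd(x,y)=N$ with a concrete chain of $N$ generator-support components that covers $[x,y]$, and then to explicitly walk $x$ across the chain by applying one generator at a time. More precisely, since $[x,y]\sse U$ and every component of $\supp\psi(v)$ meeting $U$ must lie in $U$ (as $U$ is a connected component of $\supp\psi$), I will fix a cover of $[x,y]$ by elements of $\VV$ witnessing $\cd(x,y)=N$ and apply Lemma~\ref{lem:chain-cover} to extract a minimal subcover; its minimality forces it to have exactly $N$ members and to form a chain $(U_1,\ldots,U_N)$ ordered left-to-right with $x\in U_1$ and $y\in U_N$. (If $x$ lay in $U_i$ for $i\ge 2$, then $U_1$ could be dropped, contradicting minimality; likewise for $y$.)

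Write $U_i=(a_i,b_i)$, so that $a_i<a_{i+1}<b_i<b_{i+1}$ and $U_i\cap U_{i+1}=(a_{i+1},b_i)$. Each $U_i$ is a component of $\supp\psi(v_i)$ for some $v_i\in V$, so $\psi(v_i)$ fixes $a_i$ and $b_i$ but has no fixed points inside $U_i$. Consequently exactly one of $\psi(v_i)$ and $\psi(v_i^{-1})$ moves points of $U_i$ monotonically toward $b_i$; pick $\epsilon_i\in\{\pm1\}$ accordingly, so that $\{\psi(v_i^{\epsilon_i n})(z)\}_{n\ge0}$ converges monotonically to $b_i$ for any $z\in U_i$. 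I will then construct points $x=x_0,x_1,\ldots,x_N$ and integers $n_1,\ldots,n_N\ge 1$ inductively: for $i<N$, choose $n_i$ so large that $x_i:=\psi(v_i^{\epsilon_i n_i})(x_{i-1})$ lies in $(a_{i+1},b_i)=U_i\cap U_{i+1}\sse U_{i+1}$; for $i=N$, choose $n_N$ so large that $x_N:=\psi(v_N^{\epsilon_N n_N})(x_{N-1})>y$ (possible since $y<b_N$ and iterates of $v_N^{\epsilon_N}$ in $U_N$ converge to $b_N$). Setting $g:=v_N^{\epsilon_N n_N}\cdots v_1^{\epsilon_1 n_1}\in G$ gives $\psi(g).x=x_N>y$ and $\|g\|\le N$.

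Finally, to upgrade $\|g\|\le N$ to equality, I will combine Lemma~\ref{lem:syll-cover} with the monotonicity of covering length under inclusion: since $x<y<gx$, we have $[x,y]\sse[x,gx]$, so any chain covering $[x,gx]$ also covers $[x,y]$, giving $\cd(x,y)\le \cd(x,gx)$. Thus
\[
N=\cd(x,y)\le \cd(x,gx)\le \|g\|\le N,
\]
forcing $\|g\|=N$.

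\textbf{Main obstacle.} The only subtle step is the inductive push, which relies on the dynamical fact that $v_i$ acts without interior fixed points on the component $U_i$ of its own support, so its forward or backward orbit from any point of $U_i$ accumulates at an endpoint. This is what lets me guarantee entry into the overlap $U_i\cap U_{i+1}$, which in turn is a one-sided neighborhood of the endpoint $b_i$ by the chain condition. Everything else is a bookkeeping application of the triangle/monotonicity properties of $\cd$ recorded in Lemma~\ref{lem:triangle} and the syllable bound in Lemma~\ref{lem:syll-cover}.
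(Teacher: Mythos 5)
Your proof is correct and follows essentially the same route as the paper's: extract a chain $(U_1,\ldots,U_N)$ from a witnessing cover via Lemma~\ref{lem:chain-cover}, push $x$ rightward across the overlaps one generator at a time, and deduce $\|g\|=N$ from Lemma~\ref{lem:syll-cover}. You are somewhat more explicit than the paper about why the chain has exactly $N$ members, why one may assume $x\in U_1\setminus U_2$ and $y\in U_N\setminus U_{N-1}$, why a single sign $\epsilon_i$ works for each generator (no interior fixed points in $U_i$), and about the closing inequality chain $N=\cd(x,y)\le\cd(x,gx)\le\|g\|\le N$ that upgrades $\le N$ to equality -- all of which the paper leaves implicit but clearly intends.
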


We remark that ideas in a very similar spirit to Lemma~\ref{lem:slide} were used extensively in~\cite{KKL2017}.

\begin{proof}[Proof of Lemma~\ref{lem:slide}]
Let $\{U_1,\ldots,U_N\}$ be intervals such that $U_i\in\pi_0 \supp\psi(v_i)$ for some $v_i\in V$ for each $i$, and such that these intervals witness the fact that $\cd(x,y)=N$. Lemma~\ref{lem:chain-cover} implies $\{U_i\}$ is a chain.
Renumbering these intervals if necessary, we may assume that $x\in U_1\setminus U_2$, that $y\in U_N\setminus U_{N-1}$, and that \[\inf U_i<\inf U_{i+1}<\sup U_i<\sup U_{i+1}\] for each $i$  (cf. Figure~\ref{f:coint}).
Note that we allow $\sup U_{i-1}=\inf U_{i+1}$.

For a suitable choice of $n_1$, we have $v_1^{n_1}x=x_2\in U_2$. By induction, we have that $v_i^{n_i}x_i=x_{i+1}\in U_{i+1}$ for a suitable choice of $n_i$. Once $v_{N-1}^{n_{N-1}}\cdots v_1^{n_1}x=x_N\in U_N$, we apply a suitable power of $v_N$ to $x_N$ to get $v_N^{n_N}x_N>y$. Then \[g=v_N^{n_N}v_{N-1}^{n_{N-1}}\cdots v_1^{n_1}\] clearly has syllable length at most $N$ and satisfies $gx>y$. Lemma~\ref{lem:syll-cover} implies that $||g||= N$. 
\end{proof}

\subsection{A residual property of free products}\label{ss:res}
For a compact interval $J\sse\bR$,
we let $\Diff_0^\infty(J)$ denote  the group of  $C^{\infty}$--diffeomorphisms of $\bR$ supported in $J$. 
One can identify $\Diff_0^\infty(J)$ with the group of $C^\infty$--diffeomorphisms on $J$ which are $C^\infty$--tangent to the identity at $\partial J$.
For a group $G$ and a subset $S\sse G$, we let $\fform{S}$ denote the normal closure of $S$.

\begin{lem}\label{lem:res}
Suppose $G\le \Diff_0^\infty(I)$ has a connected support,
and suppose
\[
1\ne g\in (G\times\form{s})\ast\form{t}\cong (G\times\bZ)\ast\bZ.\]
Then there exists a representation
\[
\phi_g\co (G\times\form{s})\ast\form{t}\to\Diff_0^\infty(I)\] 
with a connected support such that 
$\phi_g(g)\ne1$
and
such that $\supp\phi_g(G)\cap\supp\phi_g(s)=\varnothing$.
Furthermore, we can require that $\phi_g(G)\cong G$.
\end{lem}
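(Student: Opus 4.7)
The plan is to write $g$ in its reduced normal form
\begin{equation*}
g = u_0 t^{k_1} u_1 \cdots t^{k_n} u_n \in \Gamma := (G \times \langle s \rangle) \ast \langle t \rangle,
\end{equation*}
with $u_i \in H := G \times \langle s \rangle$, all $k_j \ne 0$, and $u_i \ne 1$ for $1 \le i \le n-1$, and then to construct $\phi_g$ in two independent stages. First I would fix $\phi_g|_H$: pick two disjoint compact sub-intervals $J_G, J_s \subset I \setminus \partial I$, conjugate the given $G \le \Diff_0^\infty(I)$ through a $C^\infty$ diffeomorphism $\eta \co I \to J_G$ (which preserves the connected support of $G$, so $\phi_g(G) \cong G$ as required), and let $\phi_g(s)$ be any $C^\infty$ diffeomorphism supported on $J_s$ with no interior fixed points. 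The disjointness $J_G \cap J_s = \varnothing$ forces commutation automatically, so $\phi_g|_H$ is a well-defined embedding with $\supp \phi_g(G) \cap \supp\phi_g(s) = \varnothing$ and with $\phi_g(s)$ of infinite order.

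The crux is to choose $\phi_g(t) \in \Diff_0^\infty(I)$ so that $\supp \phi_g$ is connected and $\phi_g(g) \ne 1$. When $n = 0$, we already have $g \in H \setminus \{1\}$, so $\phi_g|_H(g) \ne 1$, and any $\phi_g(t)$ whose support joins $J_G$ and $J_s$ into a single open interval does the job. When $n \ge 1$, I would take $\phi_g(t)$ to be a $C^\infty$ diffeomorphism with no interior fixed points on some interval $[a, b] \supset J_G \cup J_s$, arranged so that $J_G \cup J_s$ sits inside a single fundamental domain $[x_0, \phi_g(t)(x_0)]$. Then the iterates $\phi_g(t)^m(J_G \cup J_s)$ are pairwise disjoint, so $\phi_g$ factors through the surjection $\pi \co \Gamma \twoheadrightarrow H \wr \bZ = H^{(\bZ)} \rtimes \bZ$. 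A direct semi-direct product calculation gives $\pi(g) = (f_g, K)$ with $K = \sum_j k_j$ and $f_g(m) = \prod_{K_j = m} u_j$, where $K_j := k_1 + \cdots + k_j$. Whenever $(f_g, K) \ne (\underline 0, 0)$, we conclude $\phi_g(g) \ne 1$.

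For the remaining ``bad'' elements $g \in \ker \pi$, the plan is to iterate: inside each fundamental domain $[x_m, x_{m+1}]$ we insert a rescaled, nested copy of the same setup, yielding quotients of $\Gamma$ by deeper iterated wreath products. Combinatorial bookkeeping on the placements of successive $u_i$'s across iteration levels will show that any fixed non-trivial $g$ is detected after finitely many iterations. The main obstacle will be this last step: simultaneously establishing that every non-trivial word is eventually detected by some finite iterate, and verifying that the resulting limit $\phi_g(t)$ is a genuine $C^\infty$ diffeomorphism. The latter requires controlling the sizes of the nested supports so that they shrink rapidly enough to guarantee uniform $C^k$-convergence at every $k$.
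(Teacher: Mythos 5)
Your two-stage split --- first fix a faithful $\phi_g|_H$ supported on $J_G\cup J_s$, then go looking for $\phi_g(t)$ --- is where the argument diverges from the paper's proof, and it is what produces the obstruction you then have to fight. Since $\phi_g|_H$ is fixed on a set lying in a single fundamental domain of the fixed-point-free $\phi_g(t)$, the representation necessarily factors through $H\wr\bZ$, exactly as you observe, and so kills an enormous supply of nontrivial $g$. The proposed remedy --- nest the construction to reach deeper iterated wreath quotients --- is both underspecified (what exactly is nested, given that $\phi_g|_H$ is already fixed on one block?) and, more importantly, hides a substantive claim: that every $1\ne g\in(G\times\bZ)\ast\bZ$ survives into some \emph{finite} iterated wreath quotient of this particular form. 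That is a residuality statement about free products, not routine ``combinatorial bookkeeping,'' and without it the argument does not close; if one instead passes to an infinite limit, the $C^\infty$-convergence issue you flag becomes a genuine obstacle and the support may fail to remain compactly contained. So there is a real gap here, not merely a deferred technicality.

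The paper avoids wreath products entirely by \emph{not} requiring $\phi_g|_{G\times\form{s}}$ to be faithful. After writing $g$ in free-product normal form $g = t^{p_\ell}(g_\ell s^{q_\ell})\cdots t^{p_1}(g_1 s^{q_1})$ with $p_i\ne0$ and $g_is^{q_i}\ne1$, the construction assigns to each syllable its own interval $[2i-1,2i]$, carrying either a faithful $G$-action (when $g_i\ne1$, with $s$ acting trivially there) or a faithful $\form{s}$-action (when $g_i=1$, with $G$ acting trivially there), together with a marked pair $x_{2i-1}<x_{2i}:=\rho_i(g_is^{q_i})(x_{2i-1})$. Then $\phi_g(t)$ is chosen so that $t^{p_i}$ carries $x_{2i}$ to $x_{2i+1}$ in the next block. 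A direct ping-pong gives $\phi_g(g)(x_1)=x_{2\ell+1}\ne x_1$, so $\phi_g(g)\ne1$ for this single fixed $g$ with no iteration at all. The disjointness $\supp\phi_g(G)\cap\supp\phi_g(s)=\varnothing$ is automatic because each interval carries at most one of the two factors, and $\phi_g(G)\cong G$ is arranged by ensuring at least one interval carries a genuine $G$-action (appending one if $g$ happens to involve only $s$ and $t$). The key idea you are missing is to let the restriction of $\phi_g$ to $G\times\form{s}$ depend on the normal form of $g$, rather than fixing it faithfully in advance; once you allow that, the lemma reduces to a standard ping-pong on a finite chain of intervals.
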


\bp[Proof of Lemma~\ref{lem:res}]
We have embeddings
\[
\rho_+\co G\to\Diff^\infty_0[0,1],\quad
\rho'_+\co \form{s}\to\Diff^\infty_0[0,1],\]
with full supports. 
Let $\rho_-$ and $\rho'_-$ denote the ``opposite''
representations of $\rho_+$ and $\rho'_+$, respectively.
That is,
we let
$\rho_-(g)(x) = 1 - \rho_+(g)(1-x)$
and similarly for $\rho_-'$.

After a suitable conjugation, we may assume
\[
g=t^{p_\ell} (g_\ell s^{q_\ell})\cdots t^{p_1}(g_1s^{q_1})\]
for some $\ell\in\bN,g_i\in G$
and $p_i,q_i\in\bZ$. 
For each $i$, we can further require that
$p_i\ne0$, and that either $g_i\ne1$ or $q_i\ne0$.
There exists a representation
\[
\rho_i\co G\times\form{s}\to\Diff_0^\infty[ {2i-1}, {2i}] \]
and a point $x_{2i-1}$
such that \[ {2i-1}< x_{2i-1} < x_{2i} :=\rho_i(g_is^{q_i})(x_{2i-1}) <  {2i} .\]
Here, $\rho_i$ is $C^\infty$--conjugate to $\rho_\pm$ if $g_i\ne1$, 
and to $\rho'_\pm$ otherwise.
In particular, 
we require $\supp\rho_i(G\times\form{s})=( {2i-1}, {2i})$.

We pick $x_{2\ell+1}$ and $z_i$ so that 
\begin{align*}
& 1<x_1<z_1<x_2< 2< 3<x_3<z_2<x_4< 4< 5<\cdots\\
&< {2\ell-1}<x_{2\ell-1}<z_\ell<x_{2\ell}< {2\ell}< {2\ell+1}<x_{2\ell+1}<z_{\ell+1}< {2\ell+2}.\end{align*}
We can find a $C^\infty$--action 
\[
\rho_0\co \form{t}\to \Diff_0^\infty[ 1, {2\ell+2}]\]
 such that  $\supp\rho_0( t)= \cup_{i=1}^{\ell} (z_i,z_{i+1})$
 and such that $\rho_0(t^{p_i})(x_{2i})=x_{2i+1}$.
We put
\[\phi_g:=\prod_{i=1}^\ell \rho_i \ast \rho_0\co G\ast\bZ\to\Diff_0^\infty[ 1, {2\ell+2}].\]
The nontriviality of $\phi_g(g)$ comes from a Ping--Pong argument for free products (cf.~\cite{Koberda2012,BKK2014}); that is, $\phi_g(g)(x_1)=x_{2\ell+1}>x_1$. 
%In particular, we have $\phi_g(g)\ne1$. 
The first conclusion follows from
\[
\supp\phi_g=\supp\rho_0\cup\left(\cup_i \supp\rho_i\right)=( 1,z_{\ell+1}).\]

We may assume $g_i\ne 1$ for at least one $i$.
This is because, the above construction also works for a finite set $A\sse G\setminus\{1\}$ after setting $g$ as a suitable concatenation of the elements in $A$. In particular, $\rho_i\restriction_G$ and $\phi_g\restriction_G$ are faithful. Here, the symbol $\restriction$ denotes the restriction of a representation.
\ep

\subsection{Centralizers of diffeomorphisms}\label{ss:cent}
We recall the following standard result. It was proved for $C^2$ maps by Kopell~\cite{Kopell1970} and generalized later to $C^{1+\mathrm{bv}}$ maps by Navas~\cite{Navas2010} in his thesis.

\begin{thm}[Kopell's Lemma; see~\cite{Kopell1970}]\label{thm:kopell}
Let $f,g\in\Diffb[0,1)$ be nontrivial commuting diffeomorphisms. 
If $\Fix f\cap (0,1)=\varnothing$, then $\Fix g\cap (0,1)=\varnothing$.
\end{thm}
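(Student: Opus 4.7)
The plan is to adapt the classical Denjoy--Schwartz distortion argument, originally due to Kopell in the $C^2$ setting and extended to $\Cb$ by Navas.  Replacing $f$ by $f^{-1}$ if necessary, I would normalize so that $f(x) < x$ for every $x \in (0,1)$; in particular $f^n(x_0) \to 0$ for every $x_0 \in [0,1)$.  Suppose for contradiction that $p \in \Fix g \cap (0,1)$ and $g \neq \Id$; by a symmetric treatment of $[p,1)$ using $f^{-1}$, it suffices to produce a contradiction under the assumption that $g|_{(0,p]} \neq \Id$.

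The commutation $[f,g] = 1$ immediately gives $g(f^n(p)) = f^n(p)$ for every $n$, so each fundamental domain $I_n := [f^{n+1}(p), f^n(p)]$ is $g$-invariant; and since $g \in C^1$ fixes the sequence $f^n(p) \to 0$, passing to the limit in the difference quotient yields $g'(0) = 1$.  The analytic engine of the proof will be the Schwartz--Denjoy distortion estimate for the iterates of $f$: writing $V := \var(\log f', [0,p])$, which is finite because $f \in \Cb$, and using that the intervals $\{I_j\}_{j=0}^{n-1}$ are pairwise disjoint, the telescoping identity $\log(f^n)'(x) = \sum_{j=0}^{n-1} \log f'(f^j(x))$ gives
\[
\sup_{n \geq 0} \sup_{x,y \in I_0} \frac{(f^n)'(x)}{(f^n)'(y)} \leq C := e^V.
\]

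The final step converts this into a contradiction.  Assuming $g|_{I_0} \neq \Id$, differentiating the commutation $g \circ f^n = f^n \circ g$ at any $x \in I_0$ yields
\[
g'(f^n(x)) = \frac{(f^n)'(g(x))}{(f^n)'(x)}\, g'(x) \in \bigl[\, C^{-1}\, g'(x),\; C\, g'(x)\,\bigr].
\]
The main obstacle, and the most delicate step, is arranging a point of $I_0$ at which the derivative falls outside $[C^{-1}, C]$: this is accomplished by replacing $g$ by a sufficiently high iterate $g^k$, which still commutes with $f$ and fixes both endpoints of each $I_n$, and exploiting that the orbit structure of the nontrivial diffeomorphism $g|_{I_0}$ fixing the endpoints of $I_0$ together with the chain rule $(g^k)' = \prod_{i=0}^{k-1} g' \circ g^i$ forces the derivatives of some iterate to leave $[C^{-1}, C]$.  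Applying the distortion bound to $g^k$ in place of $g$ then yields a sequence $y_n = f^n(x_0) \to 0$ along which $(g^k)'(y_n)$ is bounded away from $1$, contradicting $(g^k)'(0) = 1$ (the latter following, as before, from $g^k$ fixing an accumulating sequence of points).  Hence $g|_{I_0} = \Id$; commuting with $f$ propagates triviality to all of $(0, p]$, and the symmetric argument on $[p,1)$ using $f^{-1}$ completes the proof.
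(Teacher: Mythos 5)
The paper states Kopell's Lemma as a cited result (Kopell for $C^2$, Navas for the $C^{1+\mathrm{bv}}$ generalization) and gives no internal proof, so there is no argument of the paper's to compare against; your proposal is the standard distortion proof as found in Navas's book and its overall structure is sound.

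The one step you yourself flag as ``most delicate'' is also where the write-up has a genuine gap. You invoke the chain rule $(g^k)'=\prod_{i=0}^{k-1}g'\circ g^i$ as the mechanism forcing the derivatives of a high iterate out of $[C^{-1},C]$. By itself that identity does not force escape: if $z^*$ is the fixed point toward which the orbit $g^i(z)$ converges and $g'(z^*)=1$ (the parabolic case, which can certainly occur), then the factors $g'(g^i(z))$ all tend to $1$ and the infinite product may stay bounded. What actually drives the escape is the mean value theorem applied to the contraction of image segments: $g^k$ maps the fixed interval $[z,g(z)]$ onto $[g^k(z),g^{k+1}(z)]$, whose length tends to zero, so there exists $\xi_k\in(z,g(z))\subset I_0$ with $(g^k)'(\xi_k)=\bigl|g^{k+1}(z)-g^k(z)\bigr|/\bigl|g(z)-z\bigr|\to 0$. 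This is presumably what you intend by ``orbit structure,'' but as written the chain rule carries the weight and cannot. Once a fixed $k$ is chosen with $(g^k)'(\xi_k)<1/(2C)$, the distortion inequality gives $(g^k)'(f^n(\xi_k))<1/2$ for every $n$, contradicting $(g^k)'(f^n(\xi_k))\to(g^k)'(0)=1$ as $n\to\infty$; this closes the argument. A minor further point: the ``symmetric treatment of $[p,1)$ using $f^{-1}$'' is not really symmetric, since $[0,1)$ has no endpoint at $1$ to anchor a derivative estimate; what handles $[p,1)$ is the propagation you describe in your final sentence, namely that once $g=\Id$ on $I_0=[f(p),p]$, conjugating by $f^{\pm n}$ forces $g=\Id$ on all of $(0,1)$, so no second contradiction argument is needed.
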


We continue to let $M\in\{I,S^1\}$.
We say  $f\in\Homeo_+(M)$ is \emph{grounded} if $\Fix f\neq\varnothing$. In particular, every homeomorphism of $I$ is grounded. 
An important and relatively straightforward corollary of Kopell's Lemma is the following fact:

\begin{lem}[Disjointness Condition; see~\cite{BKK2016}]\label{lem:disjoint}
Let $f,g\in\Diffb(M)$ be commuting grounded diffeomorphisms, where $M\in\{I,S^1\}$, and let $U$ and $V$ be components of $\supp f$ and $\supp g$ respectively. Then either $U\cap V=\varnothing$ or $U=V$.
\end{lem}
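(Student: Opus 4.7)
The plan is to reduce the claim to Kopell's Lemma (Theorem~\ref{thm:kopell}). Suppose, for contradiction, that $U \cap V \neq \varnothing$ but $U \neq V$. Then one of the component intervals must contain an endpoint of the other; by switching the roles of $f$ and $g$ if necessary, I may assume there exists a point $p$ with $p \in \partial V$ and $p \in U$. Since $V$ is a component of the open set $\supp g$, its boundary $\partial V$ is contained in $\Fix g$, so $g(p) = p$; and since $p \in U \subseteq \supp f$, we have $f(p) \neq p$.

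Next I would show that $g$ stabilizes $U$ setwise. Because $f$ and $g$ commute, $g(\Fix f) = \Fix f$, so $g$ permutes the components of $\supp f$. Since $g(p) = p$ lies in $U$, and any two components of $\supp f$ are either equal or disjoint, we must have $g(U) = U$. Writing $U = (a,b)$, orientation-preservation forces $g(a) = a$ and $g(b) = b$; note also that $f(a) = a$ and $f(b) = b$ since $a,b \in \partial U \subseteq \Fix f$.

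The main step is then to apply Kopell's Lemma to the restrictions $f|_{[a,b)}$ and $g|_{[a,b)}$. After identifying $[a,b) \cong [0,1)$ by a $C^\infty$ diffeomorphism, these become commuting, nontrivial $C^{1+\mathrm{bv}}$ self-diffeomorphisms of $[0,1)$. Since $U = (a,b)$ is a component of $\supp f$, we have $\Fix f \cap (a,b) = \varnothing$, so Theorem~\ref{thm:kopell} forces $\Fix g \cap (a,b) = \varnothing$. But $p \in (a,b)$ and $g(p) = p$, which is a contradiction. In the case $M = S^1$, the grounded hypothesis on $f$ and $g$ ensures that every component of $\supp f$ or $\supp g$ is a proper subinterval of $S^1$, which embeds in $\bR$, so the same argument applies verbatim after choosing a chart.

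The main technical obstacle, though fairly routine, is the verification that $g$ preserves the specific component $U$ (rather than permuting it to another component of $\supp f$); once this is secured, the restrictions of $f$ and $g$ to $[a,b)$ produce exactly the setup of Kopell's Lemma, and the proof concludes almost automatically.
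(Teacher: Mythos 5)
Your proof is correct and is precisely the standard argument that the paper has in mind (it cites~\cite{BKK2016} and gives no proof itself): locate a point $p\in \partial V\cap U$ after possibly swapping $f$ and $g$, use commutativity to get $g(U)=U$, and then apply Kopell's Lemma on $[a,b)$ to derive a contradiction from $g(p)=p$. All the small verifications (existence of such a $p$, that $g$ fixes $\partial U$ pointwise, nontriviality of $g$ on $U$, and the reduction of the $S^1$ case to an interval chart using groundedness) are handled correctly.
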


If $\omega$ is a concave modulus or if $\omega\in\{0,\mathrm{bv},\mathrm{Lip}\}$, then
 we define the \emph{$C^{k,\omega}$--centralizer group} of $G\le\Homeo_+(M)$ as
\[
Z^{k,\omega}(G):=\{ h\in \Diff^{k,\omega}_+(M) \co [g,h]=1\text{ for all }g\in G\}.\]
Let $Z^{k,\omega}(g):=Z^{k,\omega}(\form{g})$ for $g\in\Homeo_+(M)$.
We write $\Fix G = \cap_{g\in G}\Fix g$.

Let $\BS(1,m)$ denote the Baumslag--Solitar group of type $(1,m)$, given as below.

\begin{lem}\label{lem:BMNR}
Suppose we have an integer $m>1$ and 
a representation \[\rho\co \BS(1,m)=\form{x,y\mid xyx^{-1}=y^m}\to \Diff_+^1(I).\]
\be
\item\label{p:faithful})
If $\rho(y)\ne1$, then $\rho$ is faithful.
\item\label{p:BMNR} (\cite{BMNR2017MZ})
We have that $\supp Z^1(\rho\form{x,y})\cap\supp \rho(y)=\varnothing$.
\ee
\end{lem}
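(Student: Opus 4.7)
The plan for part (\ref{p:faithful}) is to combine the solvable structure of $\BS(1,m)$ with the fact that $\Homeo_+(I)$ is torsion-free (a short argument: on each component of the complement of $\Fix f$ the orientation-preserving homeomorphism $f$ is strictly above or strictly below the identity, so any power $f^n$ moves points monotonically on that component, precluding $f^n = \Id$). Recall the short exact sequence
\[
1 \to \bZ[1/m] \to \BS(1,m) \to \bZ \to 1,
\]
in which the derived subgroup $\bZ[1/m]$ is the normal closure of $y$ (with $y$ corresponding to $1$) and the quotient $\bZ$ is generated by the image of $x$; hence every element has a unique normal form $g = x^k z$ with $k \in \bZ$ and $z \in \bZ[1/m]$. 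First I would dispatch any $g = x^k z \in \ker\rho$ with $k \ne 0$: since $z$ lies in the abelian derived subgroup it commutes with $y$, and iterating the defining relation $|k|$ times gives $gyg^{-1} = y^{m^k}$ (read as the identity $y = (gyg^{-1})^{m^{|k|}}$ when $k < 0$). Applying $\rho$ and using $\rho(g) = 1$ yields $\rho(y)^{m^{|k|}-1} = 1$, so $\rho(y)$ is torsion, contradicting the hypothesis $\rho(y)\ne 1$.

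Having ruled out that case, $\ker\rho$ is contained in the derived subgroup $\bZ[1/m]$. Normality under conjugation by $x^{\pm 1}$ forces $\ker\rho$ to be closed under multiplication by $m^{\pm 1}$, hence to be a $\bZ[1/m]$-submodule, i.e.\ an ideal of $\bZ[1/m]$. If this ideal were nonzero, clearing denominators in any of its elements would produce a nonzero integer $q \in \ker\rho$, giving $\rho(y)^q = 1$, again a contradiction with torsion-freeness. Therefore $\ker\rho = 1$. The only mildly delicate point in this part is keeping track of signs when $k < 0$; no deeper dynamical input is needed.

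For part (\ref{p:BMNR}), I would invoke the main theorem of Bonatti--Monteverde--Navas--Rivas \cite{BMNR2017MZ} essentially verbatim, after first applying part (\ref{p:faithful}) to reduce to the case when $\rho$ is faithful (the conclusion being vacuous when $\rho(y) = 1$). The content of their result is precisely that the $C^1$-centralizer of a faithful action of $\BS(1,m)$ on the interval fixes pointwise every component of $\supp\rho(y)$. The main obstacle, which is why one cannot give a soft self-contained proof, is that Kopell's Lemma (Theorem~\ref{thm:kopell}) and the Disjointness Condition (Lemma~\ref{lem:disjoint}) are unavailable at regularity $C^1$: Denjoy-type phenomena obstruct them. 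The work of \cite{BMNR2017MZ} substitutes an analysis of the contracting/expanding dynamics forced by $\rho(x)\rho(y)\rho(x)^{-1} = \rho(y)^m$ in place of Kopell, showing that a putative $h \in Z^1(\rho\langle x, y\rangle)$ that moves a point of $\supp\rho(y)$ would, via iterated conjugation by $\rho(x)$, produce orbit data incompatible with $h$ being $C^1$ and commuting with $\rho(y)$. I would therefore cite this as a black box.
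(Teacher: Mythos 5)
Your argument is correct and takes essentially the same route as the paper: part (1) is the normal-form-plus-torsion-freeness argument for the semidirect product $\bZ[1/m]\rtimes\bZ$ (the paper writes the normal form as $y^p x^q$, implicitly after conjugating $g$ by a power of $x$, which is equivalent to your $g = x^k z$ decomposition), and part (2) is, exactly as in the paper, a reduction to the faithful case via part (1) followed by a black-box citation of~\cite{BMNR2017MZ}. The only difference is cosmetic: you handle the $k=0$ case via the $\bZ[1/m]$-module structure of the derived subgroup, whereas the paper absorbs it into the $y^p x^q$ normal form and concludes directly that $\rho(x)$ is torsion in $\Homeo_+(I)$.
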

\bp (1) 
Suppose $g\in\ker\rho\setminus\{1\}$. We may write $g = y^p x^q$ for some $p,q\in\bZ$
so that
\[
x g x^{-1} = (x y x^{-1})^p x^q=y^{2p}x^q\in\ker\rho.\]
It follows that $\rho(y^{p})=1$. Since $\rho(y)\ne1$, 
we see that $p=0$ and $\rho(x)=1$. But then, we have $\rho(y)=\rho(y^m)=1$. This is a contradiction.

(2)
We may assume $\rho$ is faithful by part (1). 
The case $m=2$ precisely coincides with \cite[Proposition~1.8]{BMNR2017MZ}.
The proof for the case $m>2$ is essentially identical.
\ep

If $g\in \Diff_+^{1+\mathrm{bv}}(S^1)$ is an infinite order element having a finite orbit, then every element in $Z^{1+\mathrm{bv}}(g)$ has a finite orbit
and every element in $[Z^{1+\mathrm{bv}}(g),Z^{1+\mathrm{bv}}(g)]$ is grounded;
see~\cite{FF2001} and~\cite{BKK2016}. This is a dynamical consequence of classical theorems of H\"older~\cite{Holder1996} and of Denjoy~\cite{Denjoy1932}, combined with Kopell's Lemma.
In this paper, we will need a $C^1$--analogue of this consequence, as described below.
The role of $\form{g}$ is now played by the group $\BS(1,2)$.

\begin{lem}\label{lem:c1-denjoy}
Suppose we have an isomorphic copy of $\BS(1,2)$ given as
\[B=\form{x,y \mid x y  x ^{-1}=y^2} \le\Diff_+^1(S^1).\]
Then the following hold.
\be
\item
The $C^1$--centralizer group $Z^1(B)$ of $B$ has a finite orbit.
\item\label{p:z0fi}
For some finite index subgroup $Z_0$ of $Z^1(B)$, we have 
 $\supp Z_0\cap \supp y=\varnothing$.
\item
We have  
$\supp [Z^1(B),Z^1(B)]\cap\supp y=\varnothing$.
\ee
\end{lem}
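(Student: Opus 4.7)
My plan is to reduce the circle statement to the interval case (Lemma~\ref{lem:BMNR}) by cutting $S^1$ along a suitable finite invariant subset of $\Fix y$. Applying rotation number to $xyx^{-1}=y^2$ yields $\rot(y)=2\rot(y)\pmod 1$, so $\rot(y)=0$ and $F:=\Fix y$ is nonempty and closed. Since $y$ is orientation preserving with rotation number zero, $\Fix y^n=\Fix y$ for each $n\ge 1$, so $xFx^{-1}=\Fix y^2=F$, and every $z\in Z^1(B)$ also preserves $F$ by centralizing $y$; hence $F$ is $\form{B,Z^1(B)}$-invariant.

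For part (1), I would apply a Cantor--Bendixson analysis to the closed $Z^1(B)$-invariant set $F$: the descending chain of successive derived sets $F\supseteq F^{(1)}\supseteq F^{(2)}\supseteq\cdots$ consists of closed $Z^1(B)$-invariant sets and stabilizes at a perfect closed $Z^1(B)$-invariant $P\subseteq F$. If $P=\varnothing$, the last nonempty derived set is finite and $Z^1(B)$-invariant, giving a finite orbit. Otherwise $P\subsetneq S^1$ is a Cantor set; to rule this out I would combine the scaling identity $y'(x^{-1}p)=y'(p)^2$ at $p\in F$ (obtained by differentiating $xyx^{-1}=y^2$ at a fixed point, and which, by iteration and the boundedness of $y'$ on compact $S^1$, forces $y'\equiv 1$ on $F$) with Lemma~\ref{lem:BMNR} applied on each gap of $P$ whose $x$-orbit is finite: on such a gap $U$ with period $k$, $\form{x^k,y}|_{\bar U}\cong\BS(1,2^k)$, and the centralizer is controlled. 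Ruling out this Cantor case is the main technical obstacle.

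For part (2), let $F_0$ be a finite $Z^1(B)$-orbit inside $F$; we may arrange $F_0$ to also be $B$-invariant, for example by replacing $F_0$ by a minimal finite $\form{B,Z^1(B)}$-invariant subset of $F$. Let $Z_0\le Z^1(B)$ be the pointwise stabilizer of $F_0$, a finite-index subgroup. Each $z\in Z_0$ preserves each component $U$ of $S^1\setminus F_0$, fixing $\partial U$; $y$ preserves each $\bar U$ because $\partial U\subseteq F$ and $y$ is orientation preserving; and the $B$-invariance of $F_0$ means some smallest power $x^{k(U)}$ preserves $\bar U$, so that $\form{x^{k(U)},y}|_{\bar U}\cong\BS(1,2^{k(U)})\le\Diff_+^1(\bar U)$. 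Then $z|_{\bar U}$ lies in the $C^1$ centralizer of this copy of $\BS(1,2^{k(U)})$, and Lemma~\ref{lem:BMNR}(\ref{p:BMNR}) applied on $\bar U$ gives that $z|_{\bar U}$ fixes $\supp y\cap U$ pointwise; ranging over components $U$ yields $\supp Z_0\cap\supp y=\varnothing$.

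Finally, for part (3) the finite quotient $Z^1(B)/Z_0$ acts faithfully on the finite cyclically ordered set $F_0\subseteq S^1$ by orientation-preserving bijections, which form a cyclic group; hence $Z^1(B)/Z_0$ is cyclic and in particular abelian, so $[Z^1(B),Z^1(B)]\le Z_0$, and part (2) then yields $\supp[Z^1(B),Z^1(B)]\cap\supp y=\varnothing$.
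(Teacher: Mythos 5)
Your approach is genuinely different from the paper's, but both gaps (one you acknowledge, one you do not) sit exactly where the paper invokes substantial external input. For part (1), the paper does not run a Cantor--Bendixson argument on $\Fix y$. Instead it quotes Guelman--Liousse \cite[Theorem 1]{GL2011} to produce an integer $m$ for which $B_0:=\form{x^m,y}\cong\BS(1,2^m)$ has a \emph{global} fixed point, then quotes \cite[Theorem 1.7]{BMNR2017MZ} to deduce that the collection $\mathcal{A}$ of components of $\supp B_0$ on which $B_0$ acts non-abelianly is finite; since $Z^1(B)\le Z^1(B_0)$ permutes $\mathcal{A}$, the set $X=\bigcup_{J\in\mathcal{A}}\partial J$ is finite and $Z^1(B)$-invariant. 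Your perfect-kernel case is precisely what these two theorems eliminate: without such input there is no a priori reason $P\subseteq\Fix y$ should be empty, and the observation that $y'\equiv 1$ on $\Fix y$ (correct, via the scaling identity and boundedness of $y'$) does not by itself exclude a $B$-invariant Cantor set of fixed points. You rightly flag this as the main obstacle, but it is the crux of part (1), not a peripheral technicality.

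Part (2) has a further gap that you do not flag: you assert that a finite $Z^1(B)$-orbit $F_0\subseteq\Fix y$ may be replaced by ``a minimal finite $\form{B,Z^1(B)}$-invariant subset of $F$.'' If $\rot(x)$ were irrational, $x$ would have no finite orbit in $\Fix y$ at all, and no such finite $B$-invariant subset exists; the subsequent decomposition into gaps $U$ with a ``smallest power $x^{k(U)}$ preserving $\bar U$'' would then break down. Rationality of $\rot(x)$ (indeed $\rot(x^m)=0$) is exactly another consequence of Guelman--Liousse that you would need to import. The paper avoids the issue by cutting $S^1$ at a global fixed point of $B_0$: since $X\subseteq\Fix B_0$ and $Z_0:=\ker\bigl(Z^1(B)\to\Homeo_+(X)\bigr)$ fixes $X$ pointwise, one may regard $\form{Z_0,B_0}\le\Diff_+^1(I)$ and apply Lemma~\ref{lem:BMNR} once. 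Your argument for part (3) — that $Z^1(B)/Z_0$ embeds in the cyclic group of cyclic-order-preserving bijections of the finite set $F_0$, hence is abelian, so $[Z^1(B),Z^1(B)]\le Z_0$ — is correct given (1) and (2), and is in fact cleaner than the H\"older/rotation-number argument the paper uses there; but it does not repair the earlier gaps.
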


For $g\in\Homeo_+(S^1)$, we consider an arbitrary lift 
$\tilde g\co \bR\to\bR$ and define
the \emph{rotation number} of $g$ as
\[
\rot(g):=\lim_{n\to\infty}\frac{\tilde g^n(0)}n\in\bR/\bZ.\]

\bp[Proof of Lemma~\ref{lem:c1-denjoy}]
For some $m\in\bN$, the group $B_0=\form{ x ^m,y}\cong\BS(1,2^m)$ has a global fixed point;
this is due to~\cite[Theorem 1]{GL2011}.
We have a nonempty collection of open intervals:
\[\mathcal{A}=\{J\in\pi_0\supp B_0\co \text{ the restriction of }B_0\text{ on } J\text{ is nonabelian}\}.\]
We may regard  $ B_0\le \Diff_+^1[0,1]$.
It follows from~\cite[Theorem 1.7]{BMNR2017MZ} that $\mathcal{A}$ is a finite set. 
Since $Z^1(B) \le Z^1(B_0)$,
 the group $Z^1(B)$ permutes $\mathcal{A}$ and has a finite orbit inside $X = \bigcup_{J\in\mathcal{A}}\partial J\sse S^1$.
This proves part (1).

Let $Z_0$ be the kernel of the above homomorphism \[Z^1(B)\to\Homeo^+(X).\]
Since every element of $Z_0$ fixes $\partial J$ for $J\in\mathcal{A}$,  we can regard $\form{Z_0,B_0}\le\Diff_+^1[0,1]$. Lemma~\ref{lem:BMNR} implies part (2).

Part (3) is not essential for the content of this paper, but we include it here for completeness and for its independent interest. 
To see the proof, 
note first that the finite cyclic group action $\rho_0\co Z^1(B)/Z_0\to\Homeo^+(X)$ is free.
By a variation of H\"older's Theorem given in~\cite[Corollary 2.3]{KKFreeProd2017},
there exists a free action $\rho\co Z^1(B)/Z_0\to\Homeo^+(S^1)$  extending $\rho_0$
such that $\rot\circ\rho$ is a monomorphism;  see also~\cite{FF2001}.
We have a commutative diagram as below:
\[
\xymatrix{
&&& \Homeo^+(X)\\
1\ar[r] &
Z_0\ar[r] &
Z^1(B)\ar[r]^p\ar[ru]\ar[d]_{\rot} &
Z^1(B)/Z_0\ar[u]^{\rho_0}_{\mathrm{free}}\ar[r]\ar[d]_\rho^{\mathrm{free}} &
1 \\
& & S^1 & \Homeo^+(S^1)\ar[l]_>>>>>>{\rot}
}\]

Let $g\in[Z^1(B),Z^1(B)]$.
The commutativity of the lower square implies that $\rot$ restricts to a homomorphism on $Z^1(B)$.
In particular, we have that $\rot(g)=0$ and that $g$ is grounded.
Since $g$ centralizes $B$, and since $\Fix B_0\ne\varnothing$, 
we see that $\Fix\form{B_0,g}\ne\varnothing$.
So, we may regard $\form{B_0,g}\le\Diff_+^1(I)$. 
Lemma~\ref{lem:BMNR} implies that $\supp g\cap\supp y=\varnothing$, as desired.
\ep

\subsection{A universal compactly--supported diffeomorphism}\label{ss:univ}
Throughout this paper, we will fix a finite presentation:
\[G^\dagger=(\bZ\times\BS(1,2))\ast F_2=\left(\form{\bbc}\times\form{\bba,\bbe\mid \bba\bbe\bba^{-1}=\bbe^2}\right)\ast\form{\bbb,\bbd}.\] See Figure~\ref{f:gdagger}.
We let $V^\dagger=\{\bba,\bbb,\bbc,\bbd,\bbe\}\sse G^\dagger$.
%Note that we have subgroups \[ \form{\bba,\bbc,\bbd,\bbe}\cong(\bZ\times\BS(1,2))\ast\bZ,\quad \form{\bbc,\bbd,\bbe}\cong(\bZ\times\bZ)\ast\bZ.\]

\begin{figure}[h!]
\tikzstyle {Av}=[red,draw,shape=circle,fill=red,inner sep=1.5pt]
\tikzstyle {Bv}=[blue,draw,shape=circle,fill=blue,inner sep=1.5pt]
\tikzstyle {Cv}=[Maroon,draw,shape=circle,fill=Maroon,inner sep=1.5pt]
\tikzstyle {Dv}=[PineGreen,draw,shape=circle,fill=PineGreen,inner sep=1.5pt]
\tikzstyle {Ev}=[Orange,draw,shape=circle,fill=Orange,inner sep=1.5pt]
\tikzstyle {B}=[postaction=decorate,decoration={%
    markings,%
    mark=at position .55 with {\arrow{stealth};}}]
\begin{tikzpicture}[thick]
\draw [double,B] (0,-1) -- (2,-1);
\draw (0,-1) node [Av] {} node [left]  {\small $\bba$}  (2,-1) node [Ev] {} node [right] {\small $\bbe$} -- (1,0)  node [Cv] {} node  [above]  {\small $\bbc$} -- (0,-1);
\draw (0,0) node [Bv] {} node  [above]  {\small $\bbb$}; 
\draw (2,0) node [Dv] {} node  [above] {\small $\bbd$};
\end{tikzpicture}
\caption{The relators of $G^\dagger$.
The horizontal double edge denotes the relator $\bba\bbe\bba^{-1}=\bbe^2$
and the other two edges denote commutators.}
\label{f:gdagger}
\end{figure}
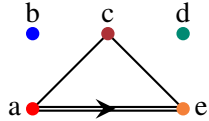

Whenever we have an action $\psi$ of $G^\dagger$ on $I$, 
we will define the covering length and the covering distance by the following open cover of $\supp\psi(G^\dagger)$: \[\VV=\bigcup_{v\in V^\dagger}\pi_0 \supp\psi(v).\]

If $\psi\colon G^\dagger\to\Homeo_+(I)$ is a representation and $f\in\psi(G^\dagger)$, there is little reason to believe that $\cl(\supp f)<\infty$, even if we restrict to a component of $\supp \psi(G^\dagger)$. In order to use the covering length of a support as a meaningful notion of complexity of a diffeomorphism, we need to find an element $1\neq u_0\in G^\dagger$ for which $\cl(\supp \psi(u_0))<\infty$.

We will build such an element $u_0\in  G^\dagger$.
We say a set $A\sse \bR$ is \emph{compactly contained} in a set $B\sse \bR$ if there exists a compact set $C$ such that $A\sse C\sse B$.

\begin{lem}[$abt$--lemma; {\cite[Theorem 3.1]{KKFreeProd2017}}]\label{lem:abt}
Let $M\in\{I,S^1\}$.
Suppose
 $\alpha,\beta,t\in\Diff^1_+(M)$ satisfy that
\[\supp \alpha\cap\supp \beta =\varnothing.\]
\be
\item
Then $\form{\alpha,\beta,t}$ is not isomorphic to $\bZ^2\ast\bZ$.
\item
If $M=I$, then the support of 
\[
u = [ [\alpha^t,\beta \cdot\beta^t \cdot\beta^{-1}],\alpha]\]
is compactly contained in $\supp\form{\alpha,\beta,t}$.
\ee
\end{lem}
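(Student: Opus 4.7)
The plan is to prove part (2) first by tracking supports, and then derive part (1) from it.

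For part (2): set $\gamma := \beta\beta^t\beta^{-1}$ and $c := [\alpha^t,\gamma]$, so that $u = [c,\alpha]$. The elementary identity $\supp[f,g] \subseteq \supp f \cup g(\supp f)$, applied twice, yields
\[
\supp c \subseteq t(\supp\alpha) \cup \gamma(t(\supp\alpha)),\qquad \supp u \subseteq \supp\alpha \cup c(\supp\alpha).
\]
Each of the four sets on the right is the support of a conjugate of $\alpha$ by an element of $G := \langle\alpha,\beta,t\rangle$, so all four lie in $\supp G$. This gives $\supp u\subseteq\supp G$. To upgrade the inclusion to compact containment, I would show that for every common fixed point $p\in I\setminus\supp G$ there is a neighborhood $U$ of $p$ on which $u$ is the identity. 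Continuity of $\beta$ and $t$ at $p$ together with the hypothesis $\supp\alpha\cap\supp\beta=\varnothing$ should produce such a $U$ on which the local components of $\supp\alpha^t$ and of $\supp\gamma$ remain disjoint (the components of $\supp\gamma$ near $p$ are $\beta$-images of components of $\supp\beta^t = t(\supp\beta)$, hence locally near $p$ they are disjoint from $t(\supp\alpha)=\supp\alpha^t$). Disjoint supports make $\alpha^t$ and $\gamma$ commute on $U$, forcing $c|_U=\mathrm{Id}$, and hence $u|_U=\mathrm{Id}$.

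For part (1): suppose for contradiction that $G\cong\bZ^2\ast\bZ$. Since $\supp\alpha\cap\supp\beta=\varnothing$ implies $[\alpha,\beta]=1$, and every abelian subgroup of a nontrivial free product is conjugate into a factor, after conjugating we may assume $\alpha,\beta$ lie in the $\bZ^2$ factor and (if the action is not already abelian) $t$ has nontrivial projection onto the $\bZ$ factor. A direct normal-form computation in $\bZ^2\ast\bZ$ then shows that $u$ reduces to a nontrivial cyclically reduced word containing at least two syllables in the $\bZ$ factor, so $u\ne 1$ in $G$. Combined with part~(2), which confines $\supp u$ to a compact subset of $\supp G$, a Ping-Pong-style argument using the free-product action (on $I$, or for $M=S^1$ on a component of the complement of $\Fix\langle\alpha,\beta\rangle$, which is nonempty since commuting grounded $C^1$ circle diffeomorphisms with disjoint supports share fixed points) then produces a point in $\supp u$ arbitrarily close to $\partial\supp G$, contradicting compact containment.

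The main obstacle is the compact-containment step in part~(2): the disjointness $\supp\alpha\cap\supp\beta=\varnothing$ must propagate to a local disjointness of $\supp\alpha^t$ and $\supp\gamma$ near an arbitrary common fixed point $p$, and the local picture of $\supp G$ near $p$ (possibly an infinite sequence of components of $\supp G$ accumulating at $p$) requires careful analysis. The precise double-commutator shape of $u$ is chosen so that this propagation works.
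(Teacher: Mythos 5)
Your computation of the open inclusion $\supp u\subseteq\supp\langle\alpha,\beta,t\rangle$ via iterated application of $\supp[f,g]\subseteq\supp f\cup g(\supp f)$ is correct and essentially matches the paper's bookkeeping. The fatal gap is exactly where you flag it: the compact containment. Your plan is to show that near a boundary point $p$ of $\supp\langle\alpha,\beta,t\rangle$, the supports of $\alpha^t$ and of $\gamma=\beta\beta^t\beta^{-1}$ are disjoint, so that $c=[\alpha^t,\gamma]$ (hence $u$) dies locally. But this disjointness is false in general. The hypothesis gives $\supp\alpha^t\cap\supp\beta^t=\varnothing$ (also note $\supp\beta^t=t^{-1}(\supp\beta)$, not $t(\supp\beta)$). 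However $\supp\gamma=\beta\bigl(\supp\beta^t\bigr)$, and $\beta$ moves points of $\supp\beta^t\cap\supp\beta$ around inside $\supp\beta$; since $\supp\beta\cap\supp\alpha^t=\supp\beta\cap t^{-1}(\supp\alpha)$ can easily be nonempty, one gets $\supp\gamma\cap\supp\alpha^t\ne\varnothing$, and the intersection need not disappear near $p$. This is precisely why the cited \cite[Lemma 3.10]{KKFreeProd2017}, which the paper invokes for this step, asserts only that $\supp\phi\setminus\supp\beta$ is compactly contained in $\supp\alpha^t\cup\supp\beta^t$: the subtraction of $\supp\beta$, combined afterwards with the hypothesis $\supp\alpha\cap\supp\beta=\varnothing$ applied to the outer commutator $[\phi,\alpha]$, is what closes the argument. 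Your sketch omits this mechanism and replaces it with a disjointness that does not hold.

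The proposed derivation of part (1) is also not a proof as written. You say a Ping-Pong argument ``produces a point in $\supp u$ arbitrarily close to $\partial\supp G$,'' but $u$ is a single fixed element, so $\supp u$ is a fixed set: no amount of ping-ponging with other elements changes it. What actually happens in \cite[Theorem 3.1]{KKFreeProd2017} is that the compact containment of $\supp u$ is used to manufacture, for suitable $g\in\langle\alpha,\beta,t\rangle$, a relation $[u,gug^{-1}]=1$ (or similar) forced by disjointness of supports, which is then checked to fail in $\bZ^2\ast\bZ$. The normal-form nontriviality of $u$ alone, which you do establish, is not where the contradiction lies. Note also that the paper does not reprove part (1) but cites it directly; if you want a self-contained argument, you need the commutation-relation mechanism, not the orbit-closure heuristic you describe.
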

 
\bp
Part (1) is stated as Theorem 3.1 of~\cite{KKFreeProd2017}. 
We summarize the proof of part (2), which is transparent from~\cite{KKFreeProd2017}.
We first consider $\gamma =\alpha^t$, $\delta =\beta^t$ and $\phi= [\gamma ,\beta \delta \beta^{-1}]$.
We have that $\supp \gamma \cap \supp \delta =\varnothing$.
By Lemma 3.10 of~\cite{KKFreeProd2017},
we have
$\supp\phi\setminus\supp \beta $ is compactly contained in $\supp \gamma \cup\supp \delta $.
Since $u=[\phi,\alpha]$, we see that
\begin{align*}
\overline{\supp u}&\sse\supp\phi\cup\supp \alpha\cup\overline{\supp\phi\cap\supp \alpha}\\
&\sse \supp\phi\cup\supp \alpha\cup\overline{\supp\phi\setminus\supp \beta }\\
&\sse \supp \alpha\cup\supp \beta \cup\supp \gamma \cup\supp \delta \sse\supp\form{\alpha,\beta ,t}.\qedhere
\end{align*}
\ep

We can now deduce Corollary~\ref{cor:c1-freeprod} in Section~\ref{sec:intro}.
The authors were told by A. Navas of the following proof for $M=I$.

\bp[Proof of Corollary~\ref{cor:c1-freeprod}]
Suppose we have a faithful representation
\[\psi\co(\form{\bbc}\times\form{\bba,\bbe})\ast\form{\bbd}\cong(\bZ\times\BS(1,2))\ast\bZ \to \Diff_+^1(M).\]
Consider first the case when $M=I$. 
By Lemma~\ref{lem:BMNR}, we see that 
$\supp\psi(\bbc)\cap\supp\psi(\bbe)=\varnothing$.
It follows from Lemma~\ref{lem:abt} that 
\[
\psi\form{\bbc,\bbe,\bbd}\not\cong\bZ^2\ast\bZ\cong \form{\bbc,\bbe,\bbd}.\]
This is a contradiction, for $\psi$ is faithful.

Assume $M=S^1$. By Lemma~\ref{lem:c1-denjoy} (\ref{p:z0fi}), we have some $p\in\bN$ such that 
\[\supp\psi(\bbc^p)\cap\supp\psi(\bbe)=\varnothing.\]
We again deduce a contradiction from Lemma~\ref{lem:abt}, for we have
\[
\psi\form{\bbc^p,\bbe,\bbd}\not\cong\bZ^2\ast\bZ\cong \form{\bbc^p,\bbe,\bbd}.\qedhere\]
\ep

We will apply $abt$--lemma to the triple $(\bbc,\bbe,\bbd)$. For this, we let
\begin{align*}
&\alpha=\bbc,\quad
\beta=\bbe,\quad \gamma=\alpha^\bbd=\bbd^{-1}\bbc \bbd,\quad\delta=\beta^\bbd=\bbd^{-1}\bbe \bbd,\\
&u^\dagger=[[\gamma,\beta \delta \beta^{-1}],\alpha]=\left[\left[\bbc^\bbd,\bbe\cdot\bbe^\bbd\cdot\bbe^{-1}\right],\bbc\right]\in G^\dagger\setminus\{1\}.
\end{align*}

\begin{lem}\label{lem:universal}
Let $u^\dagger\in \form{\bbc,\bbd,\bbe}\le G^\dagger$  be as above. Then for each representation
 \[\psi\colon \form{\bba,\bbc,\bbd,\bbe}\to \Diff^1_+(I),\] 
the set  $\supp \psi(u^\dagger)$ is compactly contained in $\supp\psi\form{\bbc,\bbd,\bbe}$. 
In particular, for each $U\in\pi_0 \supp\psi(u^\dagger)$ we have $\cd(\inf U,\sup U)<\infty$.
\end{lem}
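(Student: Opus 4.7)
The plan is to apply the $abt$-lemma (Lemma~\ref{lem:abt}) with the triple $(\alpha,\beta,t) = (\psi(\bbc), \psi(\bbe), \psi(\bbd))$, which matches the definition of $u^\dagger$ exactly. To do this, the only hypothesis we need to verify is the disjoint supports condition
\[
\supp\psi(\bbc)\cap\supp\psi(\bbe)=\varnothing.
\]

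To establish this disjointness, I would argue as follows. If $\psi(\bbe)=1$, there is nothing to check. Otherwise, the restriction of $\psi$ to $\langle\bba,\bbe\rangle\cong\BS(1,2)$ is faithful by Lemma~\ref{lem:BMNR}(\ref{p:faithful}). From the presentation of $G^\dagger$, the element $\bbc$ commutes with both $\bba$ and $\bbe$; consequently $\psi(\bbc)\in Z^1(\psi\langle\bba,\bbe\rangle)$. Lemma~\ref{lem:BMNR}(\ref{p:BMNR}) then yields
\[
\supp\psi(\bbc)\subseteq \supp Z^1(\psi\langle\bba,\bbe\rangle)\subseteq S^1\setminus\supp\psi(\bbe),
\]
which is the desired disjointness. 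With this in hand, part (2) of Lemma~\ref{lem:abt} immediately gives that $\supp\psi(u^\dagger)$ is compactly contained in $\supp\psi\langle\bbc,\bbd,\bbe\rangle$.

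For the covering distance statement, let $U\in\pi_0\supp\psi(u^\dagger)$. Then $\overline{U}$ is compact and lies inside the open set
\[
\supp\psi\langle\bbc,\bbd,\bbe\rangle=\bigcup_{v\in\{\bbc,\bbd,\bbe\}}\supp\psi(v),
\]
which is covered by the open intervals in $\bigcup_{v\in\{\bbc,\bbd,\bbe\}}\pi_0\supp\psi(v)\subseteq\VV$. By compactness, $\overline{U}$ admits a finite subcover from $\VV$, so $\cl_\VV(\overline U)<\infty$, and therefore $\cd_\VV(\inf U,\sup U)\le\cl_\VV([\inf U,\sup U])<\infty$.

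I do not expect any serious obstacle here, since all the nontrivial work has already been done upstream. The one subtle point is to justify applying Lemma~\ref{lem:BMNR}(\ref{p:BMNR}) with $\psi(\bbc)$ playing the role of the centralizer element: this relies on the BS$(1,2)$ relation in $G^\dagger$ together with the fact that $\bbc$ lies in the commuting $\bZ$-factor, which is exactly the structure built into $G^\dagger$.
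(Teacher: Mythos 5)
Your proof matches the paper's argument: $\psi(\bbc)$ centralizes $\psi\langle\bba,\bbe\rangle$, so Lemma~\ref{lem:BMNR}(\ref{p:BMNR}) gives $\supp\psi(\bbc)\cap\supp\psi(\bbe)=\varnothing$, and then the $abt$-lemma yields compact containment, with the finite-subcover observation settling the covering-distance claim. One cosmetic slip: the containment should read $\supp\psi(\bbc)\subseteq I\setminus\supp\psi(\bbe)$, not $S^1\setminus\supp\psi(\bbe)$, since the ambient manifold here is $I$ (also, the $\psi(\bbe)=1$ case split and the appeal to faithfulness are harmless but unnecessary, as Lemma~\ref{lem:BMNR}(\ref{p:BMNR}) is stated without that hypothesis).
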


\bp
Since $\psi(\bbc)\in Z^1(\form{\bba,\bbe})$, we see from Lemma~\ref{lem:BMNR} (\ref{p:BMNR}) that 
$\supp\psi(\bbc)\cap\supp\psi(\bbe)=\varnothing$.
Lemma~\ref{lem:abt} implies the desired conclusion.
\ep
\subsection{Simplicity and diffeomorphism groups}

We will require some classical results about the simplicity of certain groups of diffeomorphisms of manifolds.
For a manifold $X$, we let  $\Diff_c^{k,\omega}(X)_0$ denote the set of $C^{k,\omega}$ diffeomorphisms isotopic to the identity through compactly supported isotopies; this set is indeed a group~\cite{Mather1}.
Note that
\[
\Diff_c^{k,\omega}(S^1)_0=\Diff_+^{k,\omega}(S^1),\quad
\Diff_c^{k,\omega}(\bR)_0=\Diff_c^{k,\omega}(\bR).\]

\bd\label{defn:sup-sub}
Let $\omega$ be a concave modulus.
\be
\item
We say $\omega$ is \emph{sup-tame} if 
$\lim_{t\to+0}\sup_{0<x<\delta}
{t\omega(x)}/{\omega(tx)}=0$ for some $\delta>0$;
\item
We say $\omega$ is \emph{sub-tame} if 
$
\lim_{t\to+0}\sup_{0<x<\delta}{\omega(tx)}/{\omega(x)}=0$ for some $\delta>0$.
\ee
\ed

Mather~\cite{Mather1,Mather2} proved the simplicity of $\Diff_+^k(X)$, where $X$ is an $n$--manifold and $k\ne n+1$. The following is a straightforward generalization from his argument.
\begin{thm}[Mather's Theorem~\cite{Mather1,Mather2}]\label{thm:mather}
Suppose $X$ is a smooth $n$--manifold without boundary.
Let $k\in\bN$,
and let $\omega$ be a concave modulus
satisfying the following:
\begin{itemize}
\item
if $k=n$, then we further assume $\omega$ is sup-tame;
\item
if $k=n+1$, then we further assume $\omega$ is sub-tame.
\end{itemize}
Then the group $\Diff_c^{k,\omega}(X)_0$ is simple.
\end{thm}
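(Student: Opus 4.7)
The plan is to adapt Mather's proof of simplicity of $\Diff_c^k(X)_0$ to the $C^{k,\omega}$ setting, with the tameness hypotheses on $\omega$ entering only at the two borderline thresholds $k=n$ and $k=n+1$. Away from these thresholds, the chain-rule estimates used in Mather's original argument already carry strict geometric decay and require no additional hypothesis, so the proof is a routine transcription of his.

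First I would reduce simplicity to perfectness via Epstein's abstract theorem: a group $G$ of compactly supported homeomorphisms of a manifold $X$ that acts transitively and admits a suitable fragmentation property has a simple commutator subgroup $[G,G]$, and $G$ itself is simple precisely when $G$ is perfect. For $G := \Diff_c^{k,\omega}(X)_0$, transitivity is clear, and the classical Palais--Mather fragmentation argument goes through verbatim because $C^{k,\omega}$ is closed under composition and under multiplication by smooth bump functions (cf.\ Lemma~\ref{lem:bump}): given $f \in G$, join it to $\Id$ through a compactly supported $C^{k,\omega}$ isotopy, partition time finely enough that each successive factor is $C^0$-close to $\Id$, and express each such near-identity factor as a finite product of elements supported in single coordinate balls via a $C^{k,\omega}$ partition of unity. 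The perfectness problem thus reduces to showing that any $f \in G$ with support in a single coordinate ball $B \subset X$ is a product of commutators in $G$.

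For the local step, identify $B$ with the open unit ball in $\bR^n$. Mather chooses $\phi \in G$ with an attracting fixed point $p \in B$ and a pairwise-disjoint sequence of sub-balls $B_0, B_1, \ldots \subset B$ with $\phi(B_i) = B_{i+1}$ and $\diam B_i \to 0$ at a geometric rate $\lambda \in (0,1)$, and then forms the infinite product
\[
g := \prod_{i \ge 0} \phi^i f \phi^{-i}.
\]
The factors have pairwise disjoint supports, so $g$ is a well-defined homeomorphism of $B$, and a direct telescoping calculation verifies $f = [g, \phi]$. The central technical problem is to show that this infinite product converges in $C^{k,\omega}$, so that $g \in G$. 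A careful chain-rule analysis reduces convergence to summability of a series involving both the geometric rate $\lambda$ and the modulus $\omega$; in Mather's original $C^k$ setting this is automatic for $k \ne n+1$, and in the $C^{k,\omega}$ refinement the $\omega$-seminorm contributes only a bounded factor away from the two borderline dimensions. In the borderline case $k = n$ the $\omega$-seminorm estimate is marginal: the sup-tame hypothesis, which says that $\omega(\lambda x)/\omega(x)$ decays strictly faster than $\lambda$ as $\lambda \to 0$, supplies exactly the extra gain needed to close the estimate. This hypothesis is sharp, because at the Lipschitz modulus $\omega_1$ the regularity $C^{n,\omega_1}$ would push into the open $C^{n+1}$ regime. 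In the critical case $k = n+1$ the sup-norm estimate already diverges, and only the stronger sub-tame decay $\omega(\lambda x)/\omega(x) \to 0$ can produce enough cancellation in the $\omega$-seminorm to dominate the divergent prefactor.

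The main obstacle is this convergence analysis at the two borderline dimensions, where the tameness hypotheses must be matched precisely against the chain-rule estimates for iterated compositions and their $\omega$-moduli. Once $g$ is shown to lie in $G$, applying the same construction to an isotopy $\{f_t\}$ from $\Id$ to $f$ yields an isotopy $\{g_t\}$ from $\Id$ to $g$ inside $G$, so the commutator expression $f = [g, \phi]$ exhibits $f$ as an element of $[G,G]$, completing the proof of perfectness and hence of simplicity.
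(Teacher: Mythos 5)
The paper itself does not prove this theorem: it cites \cite{Mather1,Mather2} and remarks only that the $C^{k,\omega}$ statement is ``a straightforward generalization from his argument.'' So the comparison here is between your sketch and Mather's original proof, not against anything supplied in the paper.

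Your reduction via fragmentation and Epstein's theorem to the local perfectness problem is correct and is indeed the first step of Mather's argument. The gap is in the local step. The infinite-product construction
$g = \prod_{i\ge 0}\phi^i f\phi^{-i}$, $f = [g,\phi]$
is the Alexander/infinite-swindle trick, and it simply does not converge in $C^k$ for any $k\ge 2$, let alone $C^{k,\omega}$. If $\phi$ contracts by a rate $\lambda\in(0,1)$ near its fixed point $p$, then for $j\ge 2$ the $j$-th derivative of $\phi^i f\phi^{-i}-\Id$ scales like $\lambda^{i(1-j)}$, which diverges geometrically as $i\to\infty$; the limit $g$ is not even $C^2$ at the accumulation point $p$. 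This failure has nothing to do with the borderline thresholds $k=n$ or $k=n+1$ --- the dimension $n$ never enters the scaling estimate --- so your claim that convergence ``is automatic for $k\ne n+1$'' in Mather's setting is false. That claim is also the only place in your sketch where the dimension $n$ appears, which is a warning sign: whatever the correct argument is, it must use $n$ substantively, since the conclusion genuinely depends on the relation between $k$ and $n$.

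Mather's actual argument in \cite{Mather1,Mather2} is of a different and considerably more delicate character. After the Epstein/fragmentation reduction he does not pass a fixed $f$ through a single linear contraction; the core of the proof is a functional-analytic fixed-point argument in a Banach space of $C^k$ maps, and the constraint $k\ne n+1$ arises from the estimates needed for that fixed-point theorem to apply. The swindle idea you describe is essentially the proof in the $C^0$ and (with modifications) the $C^\infty$ and $C^1$ settings, where there is no derivative blow-up or where the blow-up can be absorbed; for finite $k\ge 2$ it had to be abandoned, and that replacement is the whole content of Mather's papers. Generalizing from $C^k$ to $C^{k,\omega}$ is indeed routine once one has Mather's machinery, because the estimates are carried out on norms and the modulus $\omega$ enters only through composition and product estimates (cf.\ Lemma~\ref{lem:omega1}); the sup-tame and sub-tame hypotheses are calibrated so that the one inequality in Mather's argument where equality would be attained for the borderline modulus $\omega_1$ remains strict. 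But none of this is recovered by the infinite product. Finally, a smaller point: you have the sup-tame condition backwards. Sup-tame (Definition~\ref{defn:sup-sub}) requires $t\omega(x)/\omega(tx)\to 0$, i.e.\ $\omega(tx)/\omega(x)$ decays strictly \emph{slower} than $t$, not faster --- it is the condition that $\omega$ is strictly weaker than the Lipschitz modulus.
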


In Example~\ref{ex:mod}, we have defined a concave modulus $\omega_z$ for each $z\in(0,1]_\bC$.
\begin{lem}\label{lem:sup-sub}
We have the following.
\be
\item
The concave modulus $\omega_{s\sqrt{-1}}$ is sup-tame for  $s>0$;
\item
The concave modulus $\omega_{1+s\sqrt{-1}}$ is sub-tame for $s\le0$;
\item
The concave modulus $\omega_{\tau+s\sqrt{-1}}$ is sup-and sub-tame for $\tau\in(0,1)$ and $s\in\bR$.
\ee\end{lem}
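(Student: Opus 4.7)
My plan is to reduce all three parts to a single elementary estimate on the auxiliary function $g(w) = w/\log w$ appearing in the exponential factor of $\omega_z$. Writing $z = \tau + s\sqrt{-1}$, and substituting $u = \log(1/x)$, $v = \log(1/t)$, a direct calculation from the definition of $\omega_z$ in Example~\ref{ex:mod} yields
\begin{align*}
\log\frac{\omega_z(tx)}{\omega_z(x)} &= -\tau v \;-\; s\bigl(g(u+v) - g(u)\bigr),\\
\log\frac{t\,\omega_z(x)}{\omega_z(tx)} &= -(1-\tau)v \;+\; s\bigl(g(u+v) - g(u)\bigr).
\end{align*}
Both sup-tameness and sub-tameness reduce to showing that the relevant right-hand side tends to $-\infty$ as $v \to \infty$, uniformly in $x \in (0,\delta)$ for a suitable $\delta$.

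The key lemma I would establish first is a Mean Value Theorem estimate for $g$. Since $g'(w) = 1/\log w - 1/\log^2 w$ is nonnegative and bounded above by $1/\log w$ for $w > e$, for any $u > e$ and $v > 0$,
\[
0 \;\le\; g(u+v) - g(u) \;\le\; \frac{v}{\log u}.
\]
This is the only analytic fact used; the point is that the "error term" coming from the exponential correction in $\omega_z$ is of order $v/\log u$, which is dominated by the linear-in-$v$ part contributed by $x^\tau$ or $tx^{1-\tau}$.

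With this estimate in hand I would then handle the three cases uniformly. For part~(3), with $0 < \tau < 1$, both displayed quantities are bounded above by
\[
-\min(\tau, 1-\tau)\, v + |s|\,\frac{v}{\log u},
\]
which is at most $-\tfrac{1}{2}\min(\tau, 1-\tau)\,v$ whenever $\log u \ge 2|s|/\min(\tau,1-\tau)$; choosing $\delta$ correspondingly gives the required uniform limit. Parts~(1) and~(2) are the boundary cases where one of $\tau$ or $1-\tau$ vanishes, and the sign of $s$ is precisely what makes the remaining bound work: in part~(1), $\tau = 0$ and $s > 0$, so $s(g(u+v)-g(u)) \le sv/\log u$, giving sup-tameness; in part~(2), $\tau = 1$ and $s \le 0$, so $-s(g(u+v)-g(u)) \le |s|v/\log u$, giving sub-tameness.

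The only subtlety, and the step I expect to require the most care, is to verify the asserted sign conventions in the expressions above (coming from the lexicographic definition of $(0,1]_\bC$) so that the term $s(g(u+v)-g(u))$ carries the right sign in the endpoint cases; once that bookkeeping is in place, everything follows from the single MVT estimate on $g$.
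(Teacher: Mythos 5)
Your proposal is correct. For parts~(1) and~(2) it is essentially the paper's own argument: both you and the paper substitute $u=\log(1/x)$, $v=\log(1/t)$, apply the Mean Value Theorem to $g(w)=w/\log w$, and bound $g'$ to make the leading $-v$ term win. The only cosmetic difference is that you use the cruder but cleaner bound $g'(c)\le 1/\log c<1/\log u$, whereas the paper keeps $g'(c)=(\log c-1)/\log^2 c$ and implicitly needs this quantity to be decreasing in $c$ past some threshold ($c>e^2$); your version avoids having to check that monotonicity.

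For part~(3), however, you take a genuinely different route. The paper's proof factors $\omega_z$ as $\omega_z(x)=x^{\tau/2}\mu(x)$ with $\mu=\omega_{\tau/2+s\sqrt{-1}}$ (resp.\ $x^{-(1-\tau)/2}\nu(x)$ with $\nu=\omega_{(1+\tau)/2+s\sqrt{-1}}$), then observes
\[
\frac{\omega_z(tx)}{\omega_z(x)}=t^{\tau/2}\cdot\frac{\mu(tx)}{\mu(x)}\le t^{\tau/2},
\]
since $\mu$ is an increasing concave modulus and $tx<x$; no MVT estimate is needed. Your approach instead reuses the same MVT inequality $0\le g(u+v)-g(u)\le v/\log u$ from parts~(1)--(2) to obtain the bound $-\min(\tau,1-\tau)v+|s|v/\log u$. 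Both are valid; yours has the advantage of handling all three cases uniformly from a single inequality, while the paper's is self-contained for part~(3) and slightly shorter. One small bookkeeping point you should make explicit: your MVT lemma requires $u>e$, so when you choose $\delta$ you should take $\log(1/\delta)\ge\max\bigl(e,\ \exp(2|s|/\min(\tau,1-\tau))\bigr)$ to ensure both that $g'\ge0$ on $(u,u+v)$ and that $|s|/\log u\le\tfrac12\min(\tau,1-\tau)$ hold simultaneously. (And note that, like the paper's proof, your part~(3) only addresses $z$ with $\operatorname{Re}z\in(0,1)$; the endpoint cases $z=s\sqrt{-1}$, $s>0$, and $z=1+s\sqrt{-1}$, $s<0$, which lie in $(0,1)_\bC$ under the paper's literal definition, are not treated by either argument, so you are consistent with the paper's own scope.)
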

\bp
Let $t,x>0$. We  substitute $T = \log (1/t)$ and $X=\log (1/x)$.

(1) Put $\omega=\omega_{s\sqrt{-1}}$ for some $s>0$. 
There exists some $c\in(X,X+T)$ such that
\begin{align*}
\frac{t\omega(x)}{\omega(tx)}
&=t\exp\left(-s\frac{\log (1/x)}{\log\log(1/x)}
+s\frac{\log (1/tx)}{\log\log(1/tx)}\right)\\
&= \exp\left(-T-s\frac{X}{\log X}
+s\frac{T+X}{\log(T+X)}\right)
=\exp\left(-T+sT\frac{\log c - 1}{\log^2 c}\right).
%\le \exp\left(-T+\frac{sT}{4}\right).
\end{align*}
Pick a sufficiently small $\delta>0$ such that $K:=\log(1/\delta)$
satisfies $K>1/e^2$ and $s(\log K-1)/\log^2 K<1/2$.
Since $c>X\ge K$, we have that
\[t\omega(x)/\omega(tx)
\le
%\begin{cases}
\exp(-T+sT(\log K-1)/\log^2K)\le \exp(-T/2).\]
%,&\text{if }X\ge K;\\\exp(-T+s(T+K)/\log (T+K))\le\exp(-T/2),&\text{if }X\le K.\end{cases}\]
%Here, the last inequality holds under the hypothesis that $T$ is sufficiently large for given $s$ and $K$.
It follows that 
 $\sup_{0<x<\delta} t\omega(x)/\omega(tx)\to0$ as $t\to0$.

(2) Put $\omega=\omega_{1+s\sqrt{-1}}$ for some $s\le 0$. 
We again compute
%\begin{align*}
\[
\frac{\omega(tx)}{\omega(x)}
%&=t\exp\left(-s\frac{\log (1/tx)}{\log\log(1/tx)}+s\frac{\log (1/x)}{\log\log(1/x)}\right)\\ &
= \exp\left(-T
-s\frac{T+X}{\log(T+X)}
+
s\frac{X}{\log X}
\right).
\]
%\le \exp\left(-T+\frac{-sT}{4}\right).
%\end{align*}
We then proceed exactly as in (1).
%As above, we see that $\sup_{x>0} \omega(tx)/\omega(x)\to0$ as $t\to0$.

(3) 
Put $\omega=\omega_{\tau+s\sqrt{-1}}$. We define
\[
\mu(x)=x^{-\tau/2}\omega(x)=\omega_{\tau/2+s\sqrt{-1}},
\quad
\nu(x)=x^{(1-\tau)/2}\omega(x)=\omega_{(1+\tau)/2+s\sqrt{-1}}\]
for all small $x>0$.
We see from Lemma~\ref{lem:omega} (1) that
\begin{align*}
{\omega(tx)}/{\omega(x)}&=t^{\tau/2}\cdot{\mu(tx)}/{\mu(x)}\le t^{\tau/2}\to0.\\
{t\omega(x)}/\omega(tx)&=t^{1+(1-\tau)/2}\cdot{\nu(x)}/{\nu(tx)}\le t^{(1-\tau)/2}\to0.\qedhere
\end{align*}
\ep
\begin{cor}\label{cor:mather}
Let $X$ be a smooth $n$--manifold without boundary,
and let $k\in\bN$.
If some $z\in(0,1]_\bC$ satisfies $\operatorname{Re}(k+z)\ne n+1$, 
then  the group $\Diff_c^{k,\omega_z}(X)_0$ is simple.
\end{cor}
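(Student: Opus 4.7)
The plan is to apply Mather's Theorem (Theorem~\ref{thm:mather}) directly to $\omega = \omega_z$, with Lemma~\ref{lem:sup-sub} supplying the required tameness hypotheses. Since Mather's Theorem has no extra hypotheses when $k\notin\{n,n+1\}$, the only work is in the two exceptional dimensions, and in both of these the hypothesis $\operatorname{Re}(k+z)\ne n+1$ will rule out precisely the one value of $\operatorname{Re}(z)$ that would fail the relevant tameness condition.

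I would first dispose of the generic case $k\notin\{n,n+1\}$: since $\omega_z$ is a concave modulus (Example~\ref{ex:mod}), Theorem~\ref{thm:mather} applies with no additional hypothesis and yields the simplicity of $\Diff_c^{k,\omega_z}(X)_0$.

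Next I would split the remaining cases according to which part of $(0,1]_\bC$ the parameter $z$ lies in. Write $z = \tau + s\sqrt{-1}$, so $\operatorname{Re}(z) = \tau \in [0,1]$ and the three strata of $(0,1]_\bC$ correspond to $\tau=0$ (with $s>0$), $\tau\in(0,1)$ (with $s\in\bR$), and $\tau=1$ (with $s\le 0$). Suppose $k=n$; then the Mather hypothesis to verify is sup-tameness of $\omega_z$. The condition $\operatorname{Re}(k+z)\ne n+1$ forces $\tau\ne 1$, so $z$ lies in one of the first two strata, and Lemma~\ref{lem:sup-sub}(1) or (3) gives sup-tameness. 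Suppose instead that $k=n+1$; then Mather requires sub-tameness, and the condition $\operatorname{Re}(k+z)\ne n+1$ now forces $\tau\ne 0$, placing $z$ in one of the last two strata, where Lemma~\ref{lem:sup-sub}(2) or (3) supplies sub-tameness. In each case Theorem~\ref{thm:mather} concludes that $\Diff_c^{k,\omega_z}(X)_0$ is simple.

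There is no real obstacle here; the statement is essentially a bookkeeping corollary packaging the two preceding results. The only point requiring mild care is to match the three strata of $(0,1]_\bC$ under the lexicographic order with the three tameness statements of Lemma~\ref{lem:sup-sub}, and to observe that the hypothesis $\operatorname{Re}(k+z)\ne n+1$ excludes exactly the stratum in which the needed tameness property is not established.
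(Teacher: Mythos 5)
Your proof is correct and uses exactly the same two ingredients as the paper (Lemma~\ref{lem:sup-sub} and Mather's Theorem~\ref{thm:mather}); the only difference is organizational, splitting the case analysis by $k$ first rather than by the stratum of $z$ in $(0,1]_\bC$, which is a trivial repackaging. The observation that $\operatorname{Re}(k+z)\ne n+1$ excludes precisely the one stratum where the needed tameness property is unavailable is the heart of the corollary, and you have it right.
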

\bp
We use Lemma~\ref{lem:sup-sub} and Mather's Theorem.
If $\Re z\in(0,1)$, then $\omega_z$ is sup-and sub-tame, and so, $\Diff_c^{k,\omega_z}(X)_0$ for all $k\in\bN$.
If $z=s\sqrt{-1}$ for some $s<0$, then $\omega_z$ is sup-tame; 
in this case, $\Diff_c^{k,\omega_z}(X)_0$ is simple for all integer $k\ne n+1$.
If $z=1+s\sqrt{-1}$ for some $s\ge0$, then $\omega_z$ is sub-tame and $\Diff_c^{k,\omega_z}(X)_0$ for all integer $k\ne n$.
The conclusion follows.
\ep

We will later use the following form of simplicity results. The proof is given in Appendix (Theorem~\ref{thm:continuous2}).

\begin{thm}\label{thm:continuous}
For each $X\in\{S^1,\bR\}$, the following hold.
\be
\item
If $\alpha\ge1$ is a real number, then 
every proper quotient 
of $\Diff_c^\alpha(X)_0$
is abelian.
If, furthermore, $\alpha\ne2$, then $\Diff_c^\alpha(X)_0$ is simple.
\item
If $\alpha>1$ is a real number, then every proper quotient 
of $\bigcap_{\beta<\alpha}\Diff_c^\beta(X)_0$
is abelian.
If, furthermore, $\alpha>3$, then $\bigcap_{\beta<\alpha}\Diff_c^\beta(X)_0$ is simple.
\ee
\end{thm}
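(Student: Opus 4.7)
The plan is to deduce simplicity from Corollary~\ref{cor:mather} whenever the regularity avoids the Mather gap at $k=n+1=2$, and to handle the gap cases via the classical Mather--Thurston--Epstein theorem on simplicity of the commutator subgroup combined with a centerlessness argument.

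For part~(1), write $\alpha = k+\tau$ with $k=\lfloor\alpha\rfloor$ and $\tau\in[0,1)$. If $\tau\in(0,1)$, then $\Diff_c^\alpha(X)_0 = \Diff_c^{k,\omega_\tau}(X)_0$ and Corollary~\ref{cor:mather}, applied with $z=\tau\in(0,1)_{\bC}$, gives simplicity whenever $\operatorname{Re}(k+\tau)=\alpha\ne 2$. For integer $\alpha=k\ne 2$, the integer case of Theorem~\ref{thm:mather} applies directly to $\Diff_c^k(X)_0$. The remaining case is $\alpha=2$. Set $G=\Diff_c^2(X)_0$ and $H=[G,G]$; the classical Epstein--Mather--Thurston machinery (fragmentation together with transitivity of $G$ on $X$) shows that $H$ is simple even at the Mather gap. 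Given a nontrivial normal subgroup $N\trianglelefteq G$, either $H\sse N$, so $G/N$ is a quotient of the abelian group $G/H$, or $N\cap H$ is a proper normal subgroup of $H$; by simplicity of $H$ this forces $N\cap H=\{1\}$, so $[N,H]\sse N\cap H=\{1\}$. A standard centerlessness argument (any element of $G$ centralizing all elements of $H$ supported near a point $x\in X$ must fix $x$) then forces $N=\{1\}$, contradicting the hypothesis. Hence every proper quotient of $G$ is abelian.

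For part~(2), let $G_\alpha = \bigcap_{\beta<\alpha}\Diff_c^\beta(X)_0$. The group $G_\alpha$ inherits transitivity on $X$ and the fragmentation property from each $\Diff_c^\beta(X)_0$, so Epstein's argument again yields that $[G_\alpha,G_\alpha]$ is simple. The ``proper quotients are abelian'' statement for $\alpha>1$ follows by the same normal-subgroup dichotomy and centerlessness argument used in the $\alpha=2$ case of part~(1). For the simplicity claim when $\alpha>3$, it remains to show that $G_\alpha$ is perfect. Mather's perfectness theorem writes every element of $\Diff_c^\beta(X)_0$ (for $\beta>n+1=2$) as a product of boundedly many commutators, but the commutator factors a priori depend on $\beta$. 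The key point is to obtain a uniform decomposition inside $G_\alpha$: for $g\in G_\alpha$, fix a compact support, apply fragmentation with $C^\infty$ cutoffs, and carry out Mather's construction so that each commutator factor is supported in a fixed compact set and lies simultaneously in every $\Diff_c^\beta(X)_0$ with $\beta<\alpha$. The threshold $\alpha>3$ provides enough headroom above the Mather gap for the regularity of these factors to be controlled uniformly in $\beta$.

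The main obstacle is the perfectness of $G_\alpha$ when $\alpha>3$: Mather's original argument does not automatically produce commutator factors lying in every $\Diff_c^\beta(X)_0$ with $\beta<\alpha$. Overcoming this requires a careful revisiting of the rolling-up construction, ensuring that the cutoff and conjugation steps are performed by $C^\infty$ diffeomorphisms independent of $\beta$, so that the loss of regularity is absorbed uniformly and the resulting decomposition lives inside $G_\alpha$.
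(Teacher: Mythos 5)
Your treatment of part~(1) is essentially the paper's: simplicity away from $\alpha=2$ via Mather (Corollary~\ref{cor:mather} for non-integer $\alpha$, Mather's original theorem for integer $\alpha$), plus the normal-subgroup dichotomy with a centerlessness argument at the Mather gap. That part is fine.

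Part~(2) has a genuine gap, and you have correctly identified where it lies without actually filling it. Your argument for perfectness of $G_\alpha=\bigcap_{\beta<\alpha}\Diff_c^\beta(X)_0$ when $\alpha>3$ (``carry out Mather's construction so that each commutator factor lies simultaneously in every $\Diff_c^\beta(X)_0$ with $\beta<\alpha$'') is a statement of the problem, not a solution: Mather's rolling-up construction produces factors whose regularity is controlled only relative to the fixed regularity class $\beta$ in which one works, and the concluding sentence about ``$C^\infty$ cutoffs independent of $\beta$'' does not explain why those factors land in the intersection over all $\beta<\alpha$. The same problem silently infects your claim that ``$G_\alpha$ inherits the fragmentation property from each $\Diff_c^\beta(X)_0$'': fragmenting $f\in G_\alpha$ inside $\Diff_c^\beta(X)_0$ produces pieces that are a priori only $C^\beta$, not in $G_\alpha$, so Ling's criterion cannot be invoked directly for $X=S^1$.

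What the paper actually does at this point is structural rather than constructive: it proves (Claim~\ref{cla:cap}, via the optimal-modulus construction of Lemma~\ref{lem:conc-mod}) that
\[
\bigcap_{\beta<\alpha}\Diff_c^\beta(X)_0 \;=\; \bigcup_{\mu\in\FF(\alpha)}\Diff_c^{\form{\alpha},\mu}(X)_0 ,
\]
i.e.\ the intersection is in fact a union of groups at the single integer regularity $\form{\alpha}$ over varying concave moduli. The point of Lemma~\ref{lem:conc-mod} is that every $f$ in the intersection admits a concave modulus $\mu$ attached to $f^{(\form{\alpha})}$ such that $C_c^\mu\sse C_c^{\beta-\form{\alpha}}$ for all relevant $\beta$, so that $\Diff_c^{\form{\alpha},\mu}(X)_0$ sits inside the intersection. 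With that decomposition in hand, fragmentation of $G_\alpha$ is inherited because each $\Diff_c^{\form{\alpha},\mu}(X)_0$ has fragmentation and is contained in $G_\alpha$, the simplicity of $[G_\alpha,G_\alpha]$ over $S^1$ then follows from Ling, and for $\alpha>3$ each $\Diff_c^{\form{\alpha},\mu}(X)_0$ is perfect by Mather (here $\form{\alpha}\ge 3 > 2 = n+1$, so no tameness hypothesis on $\mu$ is needed), whence $G_\alpha$ is a union of perfect groups and therefore perfect. You should replace your sketched uniform-commutator argument by this decomposition; without it, both the fragmentation step (hence the proper-quotient-abelian claim over $S^1$) and the perfectness step for $\alpha>3$ are unsupported.
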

\subsection{Locally dense copies of Thompson's group $F$}\label{ss:dense-F}

Recall that Thompson's group $F$ is defined to be the group of piecewise linear homeomorphisms of the unit interval $[0,1]$ such that the discontinuities of the first derivatives lie at dyadic rational points, and so that all first derivatives are powers of two. It is well--known that Thompson's group $F$ is generated by two elements (see~\cite{CFP1996,BurilloBook}). %Moreover, $F$ can be realized as a two--chain group. 

We will denote the standard piecewise linear representation of $F$ as
\[
\rho_F\co F\to \Homeo_+[0,1].\]
A typical choice of a generating set for $F$ is $\{x_0,x_1\}$, 
which are determined by the breakpoints data:

\begin{align*}
&\rho_F(x_0).(0,1/4,1/2,1)=(0,1/2,3/4,1),\\
&\rho_F(x_1).(0,1/2,5/8,3/4,1)=(0,1/2,3/4,7/8,1).
\end{align*}

Recall that a group action on a topological space is \emph{minimal} if every orbit is dense.
The action $\rho_F$ is minimal on $(0,1)$, but it has an even stronger property: the diagonal action of $\rho_F$ on 
\[X=\{(x,y)\in (0,1)\times (0,1)\mid x<y\}\]
is minimal. This follows from the transitivity of $F$ on a pair of dyadic rationals in $X$; see~\cite{CFP1996} and~\cite{BurilloBook}. 

Alternatively, the action $\rho_F$ on $(0,1)$ is \emph{locally dense}~\cite{Brin2004GD}. The general definition of local density is not important for our purposes. For a \emph{chain group} $G\le\Homeo^+[0,1]$ (see Remark~\ref{rem:chain} below for a definition), the local density of the action of $G$ on $(0,1)$ is equivalent to the minimality of the action of $G$ on $X$, which in turn is equivalent to the minimality of the action of $G$ on $(0,1)$; this is proved in~\cite[Lemma 6.3]{KKL2017}.
Thompson's group $F$ is an example of a chain group (Corollary~\ref{cor:chain-smooth}).

We will require the following result:

\begin{thm}[Ghys--Sergiescu,~\cite{GS1987}]\label{thm:ghys-sergiescu}
The standard piecewise--linear realization $\rho_F$ of Thompson's group $F$ is topologically conjugate to a $C^{\infty}$ action on $[0,1]$ such that each element is $C^\infty$ tangent to the identity at $\{0,1\}$.
\end{thm}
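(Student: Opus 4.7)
The plan is to construct explicit $C^\infty$ diffeomorphisms $f_0, f_1$ of $[0,1]$ generating a smooth faithful copy of Thompson's group $F$, and then to show this smooth action is topologically conjugate to the standard PL action $\rho_F$ via minimality and local density. The direct reparametrization approach (finding a single $h\in\Homeo_+[0,1]$ that smooths out $\rho_F$ globally) is notoriously delicate because $\rho_F(g)$ has PL breakpoints at infinitely many dyadic rationals with derivative jumps of all integer powers of $2$; it is cleaner to build a smooth action first and recover the conjugacy afterwards.

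First I would pick $f_0\in\Diff_+^\infty[0,1]$ satisfying $\Fix(f_0)=\{0,1\}$, $f_0(x)>x$ for $x\in(0,1)$, $f_0(1/2)=3/4$, and with $f_0$ tangent to the identity to infinite order at both endpoints; such an $f_0$ exists via a standard bump-function construction of an infinite-order-flat time-one map of a vector field. Next, choose a $C^\infty$ diffeomorphism $\tau\co[0,1]\to[1/2,1]$ that is tangent to the identity to infinite order at its endpoints (after the obvious affine identification). Define $f_1:=\tau f_0\tau^{-1}$ on $[1/2,1]$, extended by the identity on $[0,1/2]$. Because $f_0$ is flat to infinite order at $0$, the conjugate $\tau f_0\tau^{-1}$ is flat to infinite order at $1/2$, so this extension yields a genuine element $f_1\in\Diff_+^\infty[0,1]$ that is also tangent to the identity at $\{0,1\}$ (and at $1/2$).

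The main obstacle is verifying that $\form{f_0,f_1}\cong F$ via $x_i\mapsto f_i$; this requires the infinite family of relations defining $F$, such as $[x_0x_1^{-1},\,x_0^{-n}x_1x_0^n]=1$ for $n\ge1$, to hold identically. The key observation is that $\supp(f_0^{-n}f_1 f_0^n)=f_0^{-n}[1/2,1]$, and these intervals, together with $\supp f_1=[1/2,1]$, form a chain of intervals whose combinatorics exactly mirror the chain of supports of $\rho_F(x_0^{-n}x_1x_0^n)$ in the PL model. Hence the relations of $F$ among the $f_i$ reduce to disjointness and nesting statements about supports, which hold by construction. Thus $\form{f_0,f_1}$ is a chain group isomorphic to $F$ in the sense of~\cite{KKL2017}, and the resulting representation $\sigma\co F\to\Diff_+^\infty[0,1]$ is faithful and $C^\infty$-tangent to the identity at $\{0,1\}$ because this tangency is preserved under composition and inversion.

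Finally, both $\rho_F$ and $\sigma$ are minimal (indeed locally dense) chain group actions of $F$ on $(0,1)$ with the same combinatorial chain structure on their generating supports. By the rigidity of minimal locally dense chain group actions (the content of~\cite[Lemma 6.3]{KKL2017} together with Brin's classification), any two such actions are topologically conjugate on $(0,1)$, and the conjugating homeomorphism extends to $[0,1]$ fixing the endpoints. This produces the desired $h\in\Homeo_+[0,1]$ with $h\rho_F(g)h^{-1}\in\Diff_+^\infty[0,1]$ and infinite-order tangent to the identity at $\partial I$ for every $g\in F$.
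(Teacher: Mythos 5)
You take a different route from the paper, which invokes the original Ghys--Sergiescu construction of a smooth $T$-action on the circle and restricts to $F$ acting on an interval; your plan is instead to build a smooth copy of $F$ inside $\Diff_+^\infty[0,1]$ from scratch and then recover the conjugacy. The overall strategy is reasonable, but the construction of the smooth copy of $F$ has a gap. You claim the relations of $F$ ``reduce to disjointness and nesting statements about supports, which hold by construction,'' but this is not so. In the two-relator presentation $F=\langle x_0,x_1 \mid [x_0x_1^{-1},\,x_0^{-1}x_1x_0],\; [x_0x_1^{-1},\,x_0^{-2}x_1x_0^2]\rangle$, the commutators vanish in the PL model precisely because $\rho_F(x_0)$ and $\rho_F(x_1)$ \emph{coincide} on the tail interval $[3/4,1]$: this forces $\supp\rho_F(x_0x_1^{-1})\subseteq(0,3/4)$, disjoint from $\supp\rho_F(x_0^{-1}x_1x_0)=\rho_F(x_0)\bigl(\supp\rho_F(x_1)\bigr)=(3/4,1)$. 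In your construction, $f_1=\tau f_0\tau^{-1}$ on $[1/2,1]$ for an essentially arbitrary diffeomorphism $\tau\colon[0,1]\to[1/2,1]$ that is merely flat at its endpoints; nothing forces $f_1$ to coincide with $f_0$ on any interval $[c,1]$, so generically $\supp(f_0 f_1^{-1})$ reaches up to $1$, the first relator does not vanish, and $\langle f_0,f_1\rangle$ is not even a quotient of $F$ (for a generic smooth choice it is expected to be a free group). To salvage the approach you must additionally arrange $f_0=f_1$ on a tail interval $[c,1]$ with $c\ge f_0(1/2)$, after which the two relators are killed by disjointness and faithfulness follows since every proper quotient of $F$ is abelian.

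Two secondary points. The intervals $\supp(f_0^{-n}f_1f_0^n)$ all share the endpoint $1$ and are nested, not ``a chain of intervals''; likewise $\supp f_1\subseteq\supp f_0$ is a nested configuration, not an overlapping one, so the chain-group machinery of~\cite{KKL2017} does not apply in the form you invoke it, and the claim that $\langle f_0,f_1\rangle$ ``is a chain group isomorphic to $F$'' is circular, since the definition of a chain group already demands that consecutive generator pairs generate $F$. Also, \cite[Lemma~6.3]{KKL2017} equates local density with minimality for chain group actions; it does not by itself yield a topological conjugacy between two minimal actions, which requires an additional Rubin-type rigidity input.
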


The original construction of Ghys--Sergiescu is a $C^\infty$ action of Thompson's group $T$ for a circle; the above theorem is an easy consequence by restricting their action on an interval. Let us denote this action as
\[\rho_{\mathrm{GS}}\co F\to \Diff_0^\infty[0,1].\]
Note $\rho_{\mathrm{GS}}(F)$ acts minimally on $(0,1)$.
There exists a homeomorphism $h_{\mathrm{GS}}\co [0,1]\to [0,1]$ such that for all $g\in F$ we have
\[ \rho_{\mathrm{GS}}(g) = h_{\mathrm{GS}}\circ\rho_F(g)\circ h_{\mathrm{GS}}^{-1}.\]
It will be convenient for us to denote $a_i = \rho_{\mathrm{GS}}(x_i)$ for $i=0,1$.

\begin{cor}\label{cor:chain-smooth}
There exists a chain of two intervals $(U_1,U_2)$ and $C^{\infty}$ diffeomorphisms $f_1$ and $f_2$ supported on $U_1$ and $U_2$ respectively
 such that $\form{f_1,f_2}=\rho_{\mathrm{GS}}(F)$.
\end{cor}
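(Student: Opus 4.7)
The plan is to take $f_1 = a_0 a_1^{-1}$ and $f_2 = a_1$, with $U_i := \supp f_i$. Since $a_0 = f_1 f_2 \in \langle f_1, f_2 \rangle$, we immediately obtain $\langle f_1, f_2 \rangle = \langle a_0, a_1 \rangle = \rho_{\mathrm{GS}}(F)$, and $f_1, f_2 \in \Diff_0^\infty[0,1]$ by Theorem~\ref{thm:ghys-sergiescu}. So the proof reduces to identifying $U_1$ and $U_2$ explicitly and verifying the chain property.

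The key observation I would exploit is that in the piecewise linear model, the generators $\rho_F(x_0)$ and $\rho_F(x_1)$ agree on $[3/4,1]$: inspection of the breakpoint data shows that both restrict there to the affine map $x \mapsto x/2 + 1/2$. Consequently $\rho_F(x_0 x_1^{-1})$ is the identity on $\rho_F(x_1)([3/4,1]) = [7/8, 1]$. A direct breakpoint--by--breakpoint computation on the remaining four intervals of linearity (namely $[0,1/4]$, $[1/4,1/2]$, $[1/2,3/4]$, and $[3/4,7/8]$) shows that $\rho_F(x_0 x_1^{-1})(y) > y$ for every $y \in (0, 7/8)$, so $\supp \rho_F(x_0 x_1^{-1}) = (0, 7/8)$. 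Transporting through the topological conjugacy $h_{\mathrm{GS}}$ yields $U_1 = (0,\, h_{\mathrm{GS}}(7/8))$ and $U_2 = h_{\mathrm{GS}}((1/2, 1)) = (h_{\mathrm{GS}}(1/2),\, 1)$.

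Because $h_{\mathrm{GS}}$ is an orientation-preserving homeomorphism of $[0,1]$ fixing the endpoints, we have $0 < h_{\mathrm{GS}}(1/2) < h_{\mathrm{GS}}(7/8) < 1$. Hence $U_1 \cap U_2 = (h_{\mathrm{GS}}(1/2),\, h_{\mathrm{GS}}(7/8))$ is a nonempty proper subset of each $U_i$, so $(U_1, U_2)$ is a chain of two intervals as required. I do not anticipate any serious obstacle; the only mildly tedious step is the explicit breakpoint calculation for $\rho_F(x_0 x_1^{-1})$, and the conceptual point is simply that the standard generators $x_0$ and $x_1$ of $F$ coincide on a neighborhood of $1$, which forces their quotient to be compactly supported away from $1$ while still generating $F$ together with $x_1$.
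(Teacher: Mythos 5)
Your proof is correct and follows essentially the same approach as the paper, which declares the verification ``routine'' and defers to \cite{KKL2017}. The only cosmetic difference is your choice $f_1 = a_0 a_1^{-1}$ versus the paper's $f_1 = a_1^{-1} a_0$ (so that $a_0 = f_1 f_2$ rather than $a_0 = f_2 f_1$); in your version the support of $f_1$ transports to $(0, h_{\mathrm{GS}}(7/8))$ while the paper's choice gives $(0, h_{\mathrm{GS}}(3/4))$, and both form a chain with $U_2 = (h_{\mathrm{GS}}(1/2), 1)$. Your breakpoint computation is accurate: $\rho_F(x_0)$ and $\rho_F(x_1)$ both restrict to $x \mapsto x/2 + 1/2$ on $[3/4,1]$, so $\rho_F(x_0 x_1^{-1})$ fixes $[7/8,1]$ pointwise, and the monotonicity check on the remaining pieces gives $\supp \rho_F(x_0 x_1^{-1}) = (0, 7/8)$.
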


\bp It is routine to check that $f_1=a_1^{-1}a_0$ and $f_2=a_1$ satisfy the conclusion.
See~\cite{KKL2017} for details. \ep

\begin{rem}\label{rem:chain}
%%%%
More generally, if $(U_1,\ldots,U_n)$ is a chain of intervals and if  $f_1,\ldots,f_n\in \Homeo_+(\bR)$ satisfy that $\supp f_i=U_i$ for each $i$, then the group $\langle f_1,\ldots,f_n\rangle$ is called a \emph{pre--chain group} (cf.~\cite{KKL2017}). The group $\langle f_1,\ldots,f_n\rangle$ is called a \emph{chain group} if moreover we have $\langle f_i,f_{i+1}\rangle\cong F$ for each $1\le i<n$. If $\langle f_1,\ldots,f_n\rangle$ is a pre--chain group then for all sufficiently large $N$, we have $\langle f_1^N,\ldots,f_n^N\rangle$ is a chain group~\cite{KKL2017}.
\end{rem}

\section{The Slow Progress Lemma}\label{sec:psi}
Throughout this section, we assume the following.
Let $k\in\bN$, 
and
 let $G$ be a group with a finite generating set $V$.
We will consider an arbitrary representation $\psi$ of $G$ given in one of the following two types:
\begin{itemize}
\item
$\psi\co G\to\Diff_+^{k,\omega}(I)$,
where 
$\omega\succ_k 0$ is some concave modulus;
\item
$\psi\co G\to\Diff_+^{k,\mathrm{bv}}(I)$,
in which case we will put $\omega=\omega_1$.
\end{itemize}

We denote by $\|h\|$ the syllable length of $h\in G$ with respect to $V$ as in Section~\ref{ss:cdcl}.
We also use the notation $\mathcal{V} = \cup_{v\in V}\pi_0 \supp\psi(v)$.

Suppose we have sequences $\{N_i\}_{i\in\bN}\sse\bN$ and $\{v_i\}_{i\in\bN}\sse V$ such that 
the following two conditions hold.
First, for some $K>0$ we assume
\begin{equation}\label{A1}\tag{A1}
\sup_{i\in\bN}N_i(1/i)^{k-1}\omega(1/i)\le K.\end{equation}
Second, for each $v\in V$ we assume the following set has a well-defined natural density:
\begin{equation}\label{A2}\tag{A2}
\bN_v:=\{i\in\bN\mid v_i = v\}.\end{equation}
Let us define a sequence of words $\{w_i\}_{i\ge0}\sse G$ by $w_0=1$
and  \[w_i=v_i^{N_i}\cdot w_{i-1}.\]

The main content of this section is the following:

\begin{lem}[Slow Progress Lemma]\label{lem:stutter}
For each $x\in I$, we have the following:
\[\lim_{i\to\infty} \left(i - \cd_{\mathcal{V}}(x,\psi(w_i)x)\right)=\infty.\]
\end{lem}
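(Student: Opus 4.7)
The plan is to show that the covering distance $d_i := \cd_\VV(x, \psi(w_i)x)$ grows sublinearly in $i$ by demonstrating that advancing steps (those $j$ with $d_j > d_{j-1}$) are rare. The key observation is that $d_i - d_{i-1} \in \{-1, 0, 1\}$: if $x_{i-1} \ne x_i$, then both lie in the same component $J_i$ of $\supp\psi(v_i)$, which is an element of $\VV$, so $\cd(x_{i-1}, x_i) \le 1$. Hence $(i - d_i)$ is monotone non-decreasing in $i$, and it suffices to exhibit infinitely many non-advancing indices $j$ with $d_j \le d_{j-1}$.

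I would argue by contradiction. Suppose only finitely many non-advancing indices exist, so that for all sufficiently large $j$, $\psi(v_j)^{N_j}$ pushes $x_{j-1}$ far enough to exit the last element of an optimal chain cover of the interval between $x$ and $x_{j-1}$. A short geometric analysis, based on modifying such an optimal chain cover by replacing its final interval with $J_j$ itself and invoking the inequalities defining $\delta$-fast and $\lambda$-expansive (alternatives (E1)--(E4)), shows that an advance at index $j$ forces $\psi(v_j)^{N_j}$ to be $\delta$-fast or $\lambda$-expansive on $J_j$, for constants $\delta, \lambda > 0$ depending only on the combinatorial geometry of $\VV$.

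Next, for each $v \in V$, I would partition $\bN_v = \bigsqcup_{K \in \pi_0 \supp\psi(v)} \bN_{v, K}$, where $\bN_{v, K}$ consists of the indices $j \in \bN_v$ with $x_{j-1} \in K$. For each pair $(v, K)$ for which $\bN_{v, K}$ has positive natural density in $\bN$, I would apply Theorem~\ref{thm:est} to $f = \psi(v)$ after reindexing $\bN_{v, K}$. Hypothesis (A1), together with Lemma~\ref{lem:omega} and the positive density of $\bN_{v, K}$, provides the required exponent bound after reindexing, so Theorem~\ref{thm:est} yields that the $\delta$-fast/$\lambda$-expansive indices within $\bN_{v, K}$ form a set of density zero. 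Since $\sum_{(v, K)} d_\bN(\bN_{v, K}) \le |V|$, for any $\epsilon > 0$ only finitely many pairs exceed density $\epsilon$, and the remaining low-density pairs contribute at most $\epsilon$ in total. Letting $\epsilon \to 0$, the set of advance indices has density zero in $\bN$, contradicting the assumption.

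The main obstacle will be ensuring the multiplicity condition in Theorem~\ref{thm:est}(iii), since the naive choice $J_j = K$ at every visit to a component $K$ fails that hypothesis immediately. I would address this by replacing each instance of $K$ with a distinct compact super-interval of comparable length containing $K$ together with additional fixed points of $\psi(v)$ (or an accumulation point of fixed points), so as to preserve the $k$-fixed condition while attaining bounded intersection multiplicity. Lemma~\ref{lem:long}(2) then provides the density-theoretic bookkeeping needed to transfer the density-zero conclusion between these perturbed sequences and the original one.
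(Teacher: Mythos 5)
Your reduction is sound: since $\cd_\VV(x_{i-1},x_i)\le 1$, the quantity $i-\cd_\VV(x,x_i)$ is nondecreasing, so it suffices to produce infinitely many non-advancing indices; this parallels the paper's Lemma~\ref{lem:limsup}. The framework of invoking Theorem~\ref{thm:est} via the density bookkeeping on $\bN_v$ (as in Lemma~\ref{lem:v-count}) is also correct. However, the central implication you assert---that an advance at index $j$ forces $\psi(v_j^{N_j})$ to be $\delta$-fast or $\lambda$-expansive on $J_j$ (or any enlarged $k$-fixed interval), with constants depending only on $\VV$---is false. The covering distance advances precisely when $x_j$ crosses the next marker point in the sense of Lemma~\ref{lem:marker}, but that marker can lie arbitrarily close to $x_{j-1}$ and arbitrarily far from any fixed point of $\psi(v_j)$. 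Consequently the displacement $|x_j-x_{j-1}|$ can be negligible compared both to $|L_j|$ and $|R_j|$ (defeating fastness) and to the distances from $\{x_{j-1},x_j\}$ to $\partial L_j\cup\partial R_j$ (defeating expansiveness). No uniform $\delta,\lambda$ exist because $\VV$ is typically infinite and consecutive overlaps can shrink without bound.

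The paper's resolution (Lemmas~\ref{lem:ball-bag} through~\ref{lem:deeper}) is to couple $C$ consecutive steps into a ``bag'' $[z_m,z_{m+C})$. If every step in the bag is slow (not $\delta$-fast on the $k$-fixed intervals $L_i,R_i$), then the ``gap'' $[x_m,x_{m+C-1}]$ is small relative to the bag; this forces one of the two boundary steps to land deep near an endpoint of its $J^*$-interval, i.e., to be $\lambda$-expansive. Thus slowness on a density-one set forces expansiveness on a density-one set, and Theorem~\ref{thm:est}---applied twice, once to rule out fastness and once to rule out expansiveness---yields the contradiction. This aggregation of consecutive steps is the key idea your proposal lacks; no argument treating one advance at a time will close the gap. (Your proposed fix for the multiplicity hypothesis of Theorem~\ref{thm:est} also does not work as stated: distinct super-intervals of a single component $K$ all still contain $K$, so their intersection multiplicity remains unbounded. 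The paper instead derives bounded multiplicity of the $L_i,R_i$ from the monotone marching supplied by the contradiction hypothesis, as in Lemma~\ref{lem:multiplicity}.)
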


The proof of the lemma occupies most of this section. 
As a consequence of this lemma, we will then describe a group theoretic obstruction for algebraic smoothing.
\begin{rem}
The statement of the Slow Progress Lemma is \emph{topological}.
In other words, even after $\psi$ is replaced by an arbitrary topologically conjugate representation, the same conclusion holds.
\end{rem}

\subsection{Reduction to limit superior}
For brevity, we simply write $\cl$ and $\cd$ for $\cl_{\mathcal{V}}$ and $\cd_{\mathcal{V}}$.
 We write
$gx=\psi(g)x$  for $g\in G$ and $x\in I$.

\begin{lem}\label{lem:limsup}
Let $x\in I$. Then the following are equivalent:
\be[(i)]
\item $\limsup_{i\to\infty} (i-\cd(x,w_i x))=\infty$;
\item $\lim_{i\to\infty} \left(i - \cd(x,w_i  x)\right)=\infty$.
\ee
\end{lem}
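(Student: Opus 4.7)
The direction (ii) $\Rightarrow$ (i) is immediate, so the substance is (i) $\Rightarrow$ (ii). My plan is to prove the stronger statement that the sequence
\[ f(i) := i - \cd_{\mathcal{V}}(x, \psi(w_i)x) \]
is monotonically non-decreasing in $i$; once this is established, $\limsup f(i) = \infty$ forces $\lim f(i) = \infty$, which is exactly (ii).

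To verify monotonicity, I would write $w_{i+1} = v_{i+1}^{N_{i+1}} \cdot w_i$, so that $\psi(w_{i+1})x = \psi(v_{i+1})^{N_{i+1}}\psi(w_i)x$. Since $v_{i+1}^{N_{i+1}}$ has syllable length at most $1$ (as a single power of a generator in $V$), Lemma~\ref{lem:syll-cover} yields
\[ \cd_{\mathcal{V}}(\psi(w_i)x,\, \psi(w_{i+1})x) \le \|v_{i+1}^{N_{i+1}}\| \le 1. \]
Combined with the triangle inequality for covering distance (Lemma~\ref{lem:triangle}), this gives
\[ \cd_{\mathcal{V}}(x, \psi(w_{i+1})x) \le \cd_{\mathcal{V}}(x, \psi(w_i)x) + 1, \]
so that $f(i+1) \ge f(i)$, as required.

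A minor point to address is well-definedness of the finite covering distance. If $x \notin \supp\psi$, then $\psi(w_i)x = x$ for all $i$, hence $\cd_\VV(x,\psi(w_i)x) = 0$ and $f(i) = i \to \infty$ trivially. If $x \in \supp\psi$, an easy induction shows that $\psi(w_i)x$ remains in the same component of $\supp\psi$ as $x$ (since each step either fixes the point or moves it within some component of $\supp\psi(v_{i+1})$, which lies inside a single component of $\supp\psi$), so $\cd_\VV(x,\psi(w_i)x) < \infty$ throughout. There is no real obstacle here; the whole lemma is a purely formal consequence of the fact that each $w_i$ differs from $w_{i-1}$ by a single syllable, together with the triangle inequality for $\cd_\VV$.
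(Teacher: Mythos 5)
Your proof is correct, and it is essentially the same argument as in the paper, just presented more directly: the paper argues by contradiction but in doing so establishes precisely the monotonicity inequality $j - \cd(x, w_j x) \le a - \cd(x, w_a x)$ for $j < a$ (via $\cd(w_j x, w_a x) \le a - j$), which is the multi-step version of your single-step observation. Your reorganization is clean and nothing is lost.
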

\bp
Assume (ii) does not hold.
There exists $M_0>0$ 
and an infinite set $A\sse\bN$ such that for all $a\in A$ we have
\[
a - \cd(x,w_a x)<M_0.\]
If (i) is true, then we have an increasing sequence $\{j(s)\}_{s\in\bN}$ such that 
\[\lim_{s\to\infty} (j(s)-\cd(x,w_{j(s)}x))=\infty.\]
For each $s\in\bN$, let us choose $a(s)\in A$ such that $j(s)<a(s)$.
We see that
\[
\cd(x,w_{a(s)}x)-\cd(x,w_{j(s)}x)
\le \cd(w_{j(s)}x,w_{a(s)}x)
\le a(s) - j(s),\]
\[j(s)-\cd(x,w_{j(s)}x)
\le a(s) - \cd(x,w_{a(s)}x) <M_0.\]
This is a contradiction, and (i)$\Rightarrow$(ii) is proved. The converse  is immediate.
\ep

\subsection{Markers of covering lengths}
In order to prove Lemma~\ref{lem:stutter} by contradiction, let us make the following standing assumption of this section: there exists a point $x\in U\in\pi_0 \supp\psi(G)$ and a real number $M_0>0$ such that the sequence $\{ x_i:=w_i x\}_{i\ge0}$ satisfies
 \begin{equation}\label{A3}\tag{A3}
\text{for all }i\ge0,\text{ we have }i-M_0\le \cl[x,x_i)\le i.\end{equation}
By Lemma~\ref{lem:limsup},
it suffices for us to deduce  a contradiction from \eqref{A3}.

The sequence $\{x_i\}$ accumulates at $\partial U$. 
Since the sequence cannot accumulate simultaneously at the both endpoints of $U$ by assumption \eqref{A3},
we may make an additional assumption:
\begin{equation}\label{A4}\tag{A4}
\lim_{i\to\infty} x_i=\sup U.\end{equation}

For each $i\in\bN$, we define
\[z^*_i=\sup\{z\in[x,\sup U)\mid \cl[x,z)\le i\}\}.\]
The point $z^*_i$ is the ``length--$i$ marker'' of covering lengths 
in the following sense.

\begin{lem}\label{lem:marker}
\be
\item
Define $h\co (x,\sup U)\to \bN$ by $h(z):=\cl[x,z)$.
Then $h$ is a surjective, monotone increasing, left-continuous function.
\item
For all $1\le i<i+j$, we have
\begin{align*}
 \cl[z^*_i,z^*_{i+j})&=j,\\
  \cl[z^*_i,z^*_{i+j}]&=j+1.\end{align*}
\item
There exists $M_1,M_2>0$ such that for all $i\ge M_1$ 
we have that \[z^*_{i-M_2}< x_{i}< z^*_{i-M_2+1}.\]
\ee\end{lem}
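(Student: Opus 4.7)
I would prove the three parts in sequence, using the chain structure of minimal covers of subintervals of $U$ by elements of $\mathcal{V}$.

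\emph{Part (1).} Monotonicity is immediate from $[x,z) \subseteq [x,z')$ whenever $z < z'$. Since $[x,z]$ is a compact subset of $U$ covered by the open family $\mathcal{V}$, we have $h(z) \in \bN$. By Lemma~\ref{lem:chain-cover}, a minimal cover of $[x,z)$ is a chain $(V_1,\ldots,V_m)$ with $V_1 \ni x$. As $z$ increases past $\sup V_m$, the chain must be extended by exactly one new interval containing the threshold, so $h$ increases by precisely $1$ at each threshold and is constant between them. Left-continuity follows because a minimal cover of $[x,z)$ also covers $[x,z_n)$ for $z_n \uparrow z$. Surjectivity onto $\bN$ follows from the step-$1$ increases together with $h(x_i) \to \infty$, which is a consequence of (A3) and (A4).

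\emph{Part (2).} From part (1), the thresholds form a chain $V_1, V_2, \ldots$ with $z^*_i = \sup V_i$. The chain $V_{i+1},\ldots,V_{i+j}$ covers $[z^*_i, z^*_{i+j})$, so $\cl[z^*_i, z^*_{i+j}) \le j$. Subadditivity (Lemma~\ref{lem:triangle}) yields the matching lower bound:
\[ i + j = \cl[x, z^*_{i+j}) \le \cl[x, z^*_i) + \cl[z^*_i, z^*_{i+j}) = i + \cl[z^*_i, z^*_{i+j}). \]
For the closed interval, I would first establish $\cl[x, z^*_{i+j}] = i+j+1$: a hypothetical cover of $[x, z^*_{i+j}]$ by only $i+j$ open intervals would extend past $z^*_{i+j}$ and cover $[x, z^*_{i+j}+\varepsilon)$, contradicting the definition of $z^*_{i+j}$. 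Subadditivity then gives $\cl[z^*_i, z^*_{i+j}] \ge j+1$, with the upper bound realized by appending a single interval containing $z^*_{i+j}$.

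\emph{Part (3).} This is the dynamical core. Since $x_i = \psi(v_i^{N_i}) x_{i-1}$ with $v_i \in V$, the points $x_{i-1}$ and $x_i$ either coincide or lie in a common component of $\supp\psi(v_i) \in \mathcal{V}$, so $\cd(x_{i-1}, x_i) \le 1$. Two applications of the triangle inequality give $|h(x_i) - h(x_{i-1})| \le 1$. Defining the \emph{delay} $d_i := i - h(x_i)$, we compute
\[ d_i - d_{i-1} = 1 - (h(x_i) - h(x_{i-1})) \ge 0, \]
so $\{d_i\}$ is monotonically non-decreasing. Assumption (A3) bounds $d_i \in [0, M_0]$, so $d_i$ stabilizes: there exist $M_1 \in \bN$ and $d_\infty \in \{0,1,\ldots,M_0\}$ with $d_i = d_\infty$ for $i \ge M_1$. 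Setting $M_2 := d_\infty + 1$, we obtain $h(x_i) = i - M_2 + 1$ for $i \ge M_1$, which by part (1) places $x_i$ in the level set $(z^*_{i-M_2}, z^*_{i-M_2+1}]$.

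The main obstacle is upgrading the right-hand inequality from $\le$ to strict $x_i < z^*_{i-M_2+1}$. A coincidence $x_i = z^*_{i-M_2+1}$ would put $x_i$ at a boundary of some $V \in \mathcal{V}$ and hence at a fixed point of some generator $v \in V$. When $v = v_{i+1}$ this gives $x_{i+1} = x_i$ and $d_{i+1} = d_i + 1$, violating stabilization; the harder case $v \ne v_{i+1}$ likely requires a finer analysis of how $\psi(v_{i+1}^{N_{i+1}})$ displaces boundary points of other components, or possibly a slight enlargement of $M_2$ to absorb any such coincidence.
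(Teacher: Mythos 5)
Your parts (1) and (2), and the delay-stabilization argument opening part (3), track the paper's proof. (The paper does part (1) more carefully via an explicit recursion $z'_1:=\sup\{\sup J : x\in J\in\mathcal{V}\}$, $z'_i:=\sup\{\sup J : z'_{i-1}\in J\in\mathcal{V}\}$, proving inductively that $z'_i=z^*_i$ and $\cl[x,z^*_i)=i$; your claim that "the chain must be extended by exactly one new interval" is the right intuition but needs this argument, since a minimal cover of $[x,z')$ for $z'>z$ need not contain a minimal cover of $[x,z)$, and the fact that each point lies in at most $|V|$ members of $\mathcal{V}$ is what makes the supremum defining $z'_i$ attained.) The genuine gap is exactly where you flag it in part (3), and neither of your proposed repairs works. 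Enlarging $M_2$ is impossible: the left inequality $z^*_{i-M_2}<x_i$ is equivalent to $M_2>d_i$, so once $d_i$ stabilizes at $d_\infty$ the value $M_2=d_\infty+1$ is forced and there is no slack. The case $v=v_{i+1}$ is vacuous rather than substantive: once $h(x_{i+1})=h(x_i)+1$ we already know $x_{i+1}\ne x_i$, hence $x_i\notin\Fix\psi(v_{i+1})$, so $x_i$ lies in the interior of a component $J_{i+1}$ of $\supp\psi(v_{i+1})$. The hard case $v\ne v_{i+1}$ is the only one.

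The missing idea is that strictness on the right is automatic from the characterization of $z^*_m$ established in part (1); no analysis of which generators fix the threshold is required. Write $j:=h(x_i)$ and suppose $h(x_{i+1})=j+1$, so $x_i\in(z^*_{j-1},z^*_j]$ and $x_{i+1}\in(z^*_j,z^*_{j+1}]$. Both points lie in a single open component $J_{i+1}\in\pi_0\supp\psi(v_{i+1})$, and since $J_{i+1}$ is an interval with $x_i\le z^*_j<x_{i+1}$, the marker $z^*_j$ lies in $J_{i+1}$. By $z^*_{j+1}=z'_{j+1}=\sup\{\sup J : z^*_j\in J\in\mathcal{V}\}$ we get $\sup J_{i+1}\le z^*_{j+1}$, and since $J_{i+1}$ is open, $x_{i+1}<\sup J_{i+1}\le z^*_{j+1}$, which is the required strict inequality at index $i+1$. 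The paper phrases this as a propagation argument beginning at a coincidence $x_i=z^*_j$, but the inequality in fact holds every time $h$ advances by one.
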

\bp
(1) Monotonicity of $h$ is clear. 
For the left--continuity and surjectivity, it suffices to show
$\cl[x,z^*_i)=i$.
Let us define
\[
z'_i =
\begin{cases}
\sup\{\sup J\mid x\in J\in\mathcal{V}\}&\text{ if }i=1,\\
 \sup\{\sup J\mid z'_{i-1}\in J\in\mathcal{V}\}&\text{ if }i\ge2.
\end{cases}\]
Since each point in $I$ belongs to at most $|V|$ intervals in $\mathcal{V}$, each $z_i'$ is realized as $\sup J$ for some $J\in\mathcal{V}$.

We claim that 
$z^*_i=z_i'$
and that
$\cl[x,z^*_i)=i$
for each $i\in\bN$. The case $i=1$ is trivial. Let us assume the claim for $i-1$. Then we have $\cl[x,z_i')=i$ and $z_i'\le z^*_i$.
If $z_i'<z^*_i$ then there exists $t\in(z_i',z^*_i)$ such that $\cl[x,t)=i$.
But whenever $t\in J\in\mathcal{V}$ we have $z'_{i-1}\not\in J$, by the choice of $z_i'$. This shows $\cl[x,t)>i$, a contradiction. Hence the claim is proved.

(2) Note that
\[
\cl[z^*_i,z^*_{i+j})\ge \cl[x,z^*_{i+j})-\cl[x,z^*_i)=j.\] The opposite inequality is immediate from the definition of $z_i'$. For the second equation, it suffices to further note that $\cl[x,z^*_i]=i+1$.

(3) By \eqref{A3}, the following holds for all but finitely many $i$:
\[\cl[x,x_{i+1})=\cl[x,x_i)+1.\]
For such an $i$, 
we have that $x_i\in(z^*_{j-1},z^*_{j}]$ 
and $x_{i+1}\in(z^*_{j},z^*_{j+1}]$ for $j=\cl[x,x_i)$.
If $x_i=z^*_{j}$, then $x_{i+1}<z^*_{j+1}$
and moreover, $x_{i+\ell}<z^*_{j+\ell}$ for all $\ell\in\bN$.
\ep
Let us write $z_i=z^*_{i-M_2}$. After increasing $M_0$ if necessary, 
we have the following for all $i\ge M_0$ and $j>0$:
\begin{equation}\label{A5}\tag{A5}
\cl[z_i,z_{i+j})=j=
\cl[z_i,z_{i+j}]-1\quad
\text{ and }\quad x_{i-1}<z_i<x_i.
\end{equation}
We may also assume:
\begin{equation}\label{A6}\tag{A6}
\cl[x,x_{M_0})>8k.
\end{equation}

Consider the set of ``significant generators'' and their minimum density:
\begin{align*}
V_1 &=\{v\in V\mid d_\bN(\bN_v)>0\},\\
\delta_1&=\min\{d_\bN(\bN_v)\mid v\in V_1\}/2.\end{align*}
By further increasing $M_0$, we may require:
\begin{equation}\label{A7}\tag{A7}
\#(\bN_v\cap[1,N])\ge \delta_1 N
\end{equation}
for all $v\in V_1$ and $N\ge M_0$.
We note the following.
\begin{lem}\label{lem:v-count}
Let $v\in V_1$,
and let $\bN_v=\{
j_1<j_2<j_3<\cdots\}$.
Then there exists a constant $K_1\ge K$ such that whenever $m\in\bN$ satisfies $j_m\ge M_0$, we have
\[
N_{j_m} \le K_1 m^{k-1}/\omega(1/m).\]
\end{lem}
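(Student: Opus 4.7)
The plan is to combine the growth bound \eqref{A1} on $N_i$ with the density bound \eqref{A7} coming from $v \in V_1$, and then absorb the constants using Lemma~\ref{lem:omega}~(2).

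First, I would rewrite \eqref{A1} as
\[
N_i \le K\cdot\frac{i^{k-1}}{\omega(1/i)}\qquad\text{for all }i\in\bN,
\]
so in particular $N_{j_m}\le K\, j_m^{k-1}/\omega(1/j_m)$. The task is therefore to replace $j_m$ by $m$ in this upper bound, up to a multiplicative constant.

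Next, I would use \eqref{A7}: since $v\in V_1$ and $j_m\ge M_0$, the count of elements of $\bN_v$ in $[1,j_m]$ is exactly $m$, so $m\ge \delta_1 j_m$, i.e.\ $j_m\le m/\delta_1$. This immediately gives $j_m^{k-1}\le \delta_1^{-(k-1)}m^{k-1}$, handling the numerator.

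For the denominator, I need $\omega(1/j_m)$ to be comparable to $\omega(1/m)$ from below. Since $1/j_m\ge \delta_1/m$ and $\omega$ is monotone, $\omega(1/j_m)\ge \omega(\delta_1/m)$; and by Lemma~\ref{lem:omega}~(2) applied with $C=1/\delta_1$ and $x=\delta_1/m$, we get
\[
\omega(1/m)\le \left(1/\delta_1+1\right)\omega(\delta_1/m)\le\left(1/\delta_1+1\right)\omega(1/j_m).
\]
Combining these two steps yields
\[
N_{j_m}\le K\cdot\frac{j_m^{k-1}}{\omega(1/j_m)}\le K\cdot\delta_1^{-(k-1)}\left(1/\delta_1+1\right)\cdot\frac{m^{k-1}}{\omega(1/m)}.
\]
Setting $K_1:= K\cdot\delta_1^{-(k-1)}(1/\delta_1+1)$ gives the desired estimate, and since $\delta_1\le 1/2$, all the extra factors are at least $1$, so $K_1\ge K$. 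There is no serious obstacle here: the lemma is essentially bookkeeping, with the only subtle point being the use of Lemma~\ref{lem:omega}~(2) to convert $\omega(\delta_1/m)$ into $\omega(1/m)$ in the correct direction.
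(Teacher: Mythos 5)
Your argument is correct and follows essentially the same route as the paper's: use \eqref{A7} to bound $j_m \le m/\delta_1$, then compare $\omega(1/j_m)$ to $\omega(1/m)$ via the concavity of $\omega$. The only cosmetic difference is that you invoke Lemma~\ref{lem:omega}~(2) to get $\omega(1/m) \le (1/\delta_1+1)\,\omega(\delta_1/m)$, whereas the paper uses the sharper concavity bound $\omega(\delta_1/m) \ge \delta_1\omega(1/m)$ (a direct consequence of Lemma~\ref{lem:omega}~(1), or of $\omega(tx)\ge t\omega(x)$ for $t\in[0,1]$); your slightly larger $K_1$ is immaterial for the statement.
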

\bp
Note that
\[m = \#(\bN_v\cap [1,j_m])\ge \delta_1 j_m.\]
Hence, we have $j_m\le m/\delta_1$.
Lemma~\ref{lem:omega} implies that
\[
\omega(1/j_m)\ge \omega(\delta_1/m)\ge \delta_1\omega(1/m).\]
The desired inequality is now immediate.
\ep

\subsection{Estimating gaps}
Let $i\ge M_0$. Since
\[x_{i-1} <z_i<x_i=v_i^{N_i}x_{i-1},\]
we can find $J_i\in\pi_0 \supp \psi(v_i)$ such that 
$\{x_{i-1},x_i\}\sse J_i$.
We define
\begin{align*}
p_i &= \inf\{
z\in(\inf U, \inf J_i] \mid \#\left([z,\inf J_i]\cap\Fix \psi(v_i)\right)\le k
\},\\
q_i &= \sup\{
z\in[\sup J_i,\sup U)\mid \#\left([\sup J_i,z]\cap\Fix \psi(v_i)\right)\le k
\}.\end{align*}
As illustrated in Figure~\ref{fig:jm}, we will write
\[
L_i = [p_i,\sup J_i],\quad
R_i = [\inf J_i, q_i],\quad
J_i^*=[p_i,q_i].\]
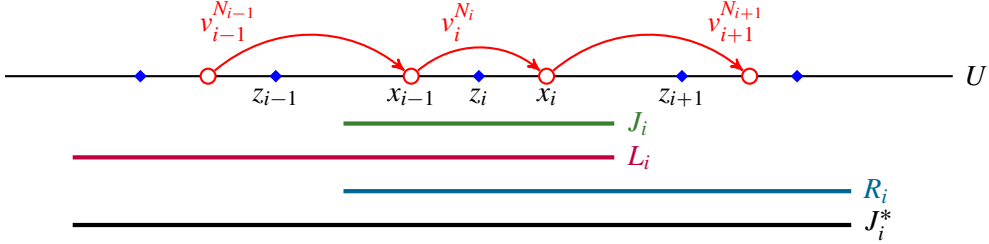
\begin{figure}[htb!] 
  \tikzstyle {bv}=[blue,draw,shape=diamond,fill=blue,inner sep=1pt]
  \tikzstyle {wv}=[red,draw,shape=circle,fill=white,inner sep=2pt]
\begin{tikzpicture}[>=stealth',auto,node distance=3cm, thick, scale=.9]
\draw  [thick] (-6,0) 
--
(0,0) node (x1) [wv] {} node [below]  {\small $x_{i-1}$}
--(1,0) node (z1) [bv] {} node [below]  {\small $z_{i}$}
-- (2,0)  node  (x2) [wv] {} node [below] {\small $x_i$}
-- (8,0) node [right] {\small $U$};
\draw (-4,0) node [bv] {} (-3,0) node  (x3) [wv] {}  (-2,0) node [bv] {}
 node [below]  {\small $z_{i-1}$}
(4,0) node [bv] {}
 node [below]  {\small $z_{i+1}$}
(5,0) node (x4)  [wv] {}  (5.7,0) node [bv] {};
\draw [ultra thick,OliveGreen] (-1,-.7) -- (3,-.7) node [right] {\small $J_i$};
\draw [ultra thick,purple] (-5,-1.2) -- (3,-1.2) node [right] {\small $L_i$};
\draw [ultra thick,MidnightBlue] (-1,-1.7) -- (6.5,-1.7) node [right] {\small $R_i$};
\draw [ultra thick] (-5,-2.2) -- (6.5,-2.2) node [right] {\small $J_i^*$};
\path (x1) edge [->,bend left,red,out = 40,in=140] node  {} (x2);
\path (x3) edge [->,bend left,red,out = 40,in=140] node  {} (x1);
\path (x2) edge [->,bend left,red,out = 40,in=140] node  {} (x4);
\draw (.7,.8) node [red] {\small $v_i^{N_i}$};
\draw (-2.7,.8) node [red] {\small $v_{i-1}^{N_{i-1}}$};
\draw (4.8,.8) node [red] {\small $v_{i+1}^{N_{i+1}}$};

\end{tikzpicture}\caption{Intervals from supports.}\label{fig:jm}
\end{figure}

Roughly speaking, $L_i$ is obtained from $J_i$ by successively attaching adjacent components of $\supp \psi(v_i)$ on the left
until we have included at least $k+1$ fixed points of $\psi(v_i)$
or an accumulated fixed point of $\psi(v_i)$.
By \eqref{A4} and~\eqref{A6}, the intervals $L_i$ and $R_i$ are compactly contained in $U$. 
\begin{lem}\label{lem:multiplicity}
For each $i\in\bN\cap[M_0,\infty)$, the following hold.
\be
\item
The map $\psi(v_i)$ is $k$--fixed on $L_i$ and also on $R_i$.
\item
We have that $\{x_{i-1},z_i,x_i\}\sse J_i\sse L_i\cap R_i$,
 $z_{i+1}\not\in J_i$ and $z_i\not\in J_{i+1}$.
\item
We have that
\[\sum_{j\ge M_0}\left(|L_j|+|R_j|\right)\le 2k|V| \cdot|I|.\]
\item
$
\#\{j\ge M_0\mid v_j=v_i\text{ and }J_j^*\cap J_i^*\ne\varnothing\}\le 4k$.
\ee
\end{lem}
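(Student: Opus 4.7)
The plan is to dispatch the four parts in order: (1) and (2) follow from the definitions, while (3) and (4) rest on a preliminary distinctness claim about the components $J_j$.

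For part (1), I would track the non-increasing step function $N_L(z):=\#([z,\inf J_i]\cap\Fix\psi(v_i))$ near the infimum $p_i$. If $N_L(p_i)\le k$, then since every $z<p_i$ violates this inequality, fixed points of $\psi(v_i)$ must accumulate at $p_i$, placing $p_i\in L_i\cap(\Fix\psi(v_i))'$; otherwise $N_L(p_i)>k$ and $L_i$ directly contains more than $k$ fixed points. Either alternative of the $k$-fixed condition is satisfied on $L_i$, and a symmetric argument handles $R_i$. Part (2) combines the choice of $J_i\in\pi_0\supp\psi(v_i)$ as the component containing both $x_{i-1}$ and $x_i$ with the intermediate position $x_{i-1}<z_i<x_i$ from \eqref{A5} for the containment $\{x_{i-1},z_i,x_i\}\subseteq J_i\subseteq L_i\cap R_i$; the exclusions $z_{i+1}\notin J_i$ and $z_i\notin J_{i+1}$ follow because either would make $[z_i,z_{i+1}]$ a subset of a single member of $\VV$, giving $\cl[z_i,z_{i+1}]\le 1$ and contradicting $\cl[z_i,z_{i+1}]=2$ from \eqref{A5}.

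Before attacking (3) and (4), I would establish the following distinctness claim: for each $v\in V$, distinct indices $j,j'\ge M_0$ with $v_j=v_{j'}=v$ yield distinct components $J_j\ne J_{j'}$ of $\supp\psi(v)$. Part (2) gives $z_j\in J_j$ and $z_{j+1}\notin J_j$; meanwhile $\{z_i\}$ is monotone increasing by the definition $z_i=z^*_{i-M_2}$ and Lemma~\ref{lem:marker}. Therefore $z_{j+1}>\sup J_j$, and a fortiori $z_{j'}>\sup J_j$ for every $j'>j$ with $v_{j'}=v$, which together with $z_{j'}\in J_{j'}$ forces $J_{j'}\ne J_j$.

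For (3), fix $v\in V$; the distinctness claim implies that $\{J_j:v_j=v,j\ge M_0\}$ is a pairwise disjoint family of components of $\supp\psi(v)$, which I list in increasing order of position along $\bR$. If a point $x\in I$ lies in $L_{j_1},\ldots,L_{j_m}$ with $J_{j_1}<\cdots<J_{j_m}$, then $x\le\sup J_{j_1}<\inf J_{j_m}$, and the interval $[x,\inf J_{j_m}]$ contains the fixed points $\sup J_{j_s},\inf J_{j_s}$ for $s=2,\dots,m-1$ together with $\inf J_{j_m}$; the definition of $p_{j_m}$ bounds this count by a multiple of $k$, so $m=O(k)$ uniformly in $x$. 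Integrating the multiplicity bound over $I$, summing over $v\in V$, and adding the analogous estimate for $\{R_j\}$ yields the claim. For (4), applied with $v=v_i$, the interval $J_i^*$ contains at most $O(k)$ components of $\supp\psi(v_i)$ (those swept out in forming $L_i\cup R_i$), and at most $O(k)$ further components lie outside $J_i^*$ on each side yet have $J_j^*$ extending into $J_i^*$; summing the two contributions gives at most $4k$ indices $j$.

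The main obstacle is the preliminary distinctness claim, which converts the local exclusion $z_{i+1}\notin J_i$ from part (2) into the global statement that no component of $\supp\psi(v)$ appears twice in $\{J_j:v_j=v\}$. Once this is in hand, both (3) and (4) reduce to straightforward bookkeeping of fixed-point counts, with the constants matching the $k$-dependence of the sweep width in the definitions of $L_i$ and $R_i$.
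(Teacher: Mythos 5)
Your proof is correct and follows the same approach as the paper's, which is considerably more terse (it declares parts (1) and (2) ``obvious'' and says part (4) ``follows similarly''). The distinctness claim you isolate — that distinct $j,j'$ with $v_j=v_{j'}$ give distinct components $J_j\ne J_{j'}$ — is precisely the fact the paper uses implicitly when it asserts ``at most $2k$ indices $j$ with $v_j=v$ and $A\sse L_j\cup R_j$,'' and your derivation of it from part (2) together with the monotonicity of the markers $z_i$ from Lemma~\ref{lem:marker} is exactly right.
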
 \bp 
Parts (1) and (2) are obvious from the definition and from the fact that $\cl[z_i,z_{i+1}]=2$.

For part (3),
suppose $x\in A\in\pi_0 \supp\psi(v)$ for some $v\in V$. 
There exist at most $2k$ indices $i\ge M_0$ such that
$v_j=v$ and such that  $A\sse L_j\cup R_j$.
Hence, the total number of $L_i$'s and $R_i$'s containing a given arbitrary point $x$ is at most $2k|V|$. Part (4) follows similarly.
\ep

Let us pick an integer $C\ge 8k$.
We call each $x_i$ as a \emph{ball}, and the interval $[z_i,z_{i+C})$ as a \emph{bag} (of size $C$). 
For each $m\ge M_0$, we define \begin{align*} \bag(m)&=[z_m,z_{m+C}),\\ \gap(m)&=[x_m,x_{m+C-1}]. \end{align*}
See Figure~\ref{fig:gap}.

For each $\delta>0$ and $v\in V$, we let
\begin{align*}
\operatorname{Ball}_\delta
&=
\left\{
i\in\bN\cap[ M_0,\infty)
\mid
\sup_{L_i}|\psi(v_i^{N_i})-\Id|
<\delta|L_i|
\text{ and }
\sup_{R_i}|\psi(v_i^{N_i})-\Id|<\delta|R_i|
\right\},\\
\operatorname{Bag}_\delta
&=
\left\{
i\in\bN\cap[M_0,\infty)
\mid
[i,i+C]\cap\bZ\sse \operatorname{Ball}_\delta
\right\}.\end{align*}
Intuitively speaking, 
$\operatorname{Ball}_\delta$ is the collection of balls which are 
$\delta$--fast neither 
on $L_i$ nor on $R_i$.
Also,
$\operatorname{Bag}_\delta$ is the set of bags which ``involve'' only balls from 
$\operatorname{Ball}_\delta$.
We now use the analytic estimate from Section~\ref{sec:analytic}:

\begin{lem}\label{lem:ball-bag}
For each $\delta>0$, the sets $\operatorname{Ball}_\delta$ and $\operatorname{Bag}_\delta$ have the natural density one.
\end{lem}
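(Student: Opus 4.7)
The plan is to reduce the claim about $\operatorname{Ball}_\delta$ to a direct application of Theorem~\ref{thm:est}, done separately for each generator $v\in V$, and then deduce the claim about $\operatorname{Bag}_\delta$ from Lemma~\ref{lem:long}(1). Up to the finite head $\{i<M_0\}$, the complement of $\operatorname{Ball}_\delta$ decomposes as $\bigcup_{v\in V}(S_v^L\cup S_v^R)$, where $S_v^L$ (resp.\ $S_v^R$) is the set of $i\in\bN_v$ with $i\ge M_0$ such that $\psi(v_i^{N_i})$ is $\delta$--fast on $L_i$ (resp.\ $R_i$). It therefore suffices to show $d_\bN(S_v^L)=d_\bN(S_v^R)=0$ for every $v\in V$.

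For $v\in V\setminus V_1$, we have $d_\bN(\bN_v)=0$ by definition of $V_1$, so the sets $S_v^L$ and $S_v^R$ are already density zero. Fix now $v\in V_1$ and enumerate $\bN_v=\{j_1<j_2<\cdots\}$. I would apply Theorem~\ref{thm:est} with $f=\psi(v)\in\Diff_+^{k,\omega}(I)\cup\Diff_+^{k,\mathrm{bv}}(I)$, the power sequence $\{N_{j_m}\}_{m\in\bN}$, and the interval sequence $\{L_{j_m}\}_{m\in\bN}$. Hypothesis (ii) of Theorem~\ref{thm:est} is exactly the content of Lemma~\ref{lem:v-count}: it gives $N_{j_m}(1/m)^{k-1}\omega(1/m)\le K_1$ for all sufficiently large $m$. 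Hypothesis (iii) follows from Lemma~\ref{lem:multiplicity}(1), which asserts that $\psi(v)$ is $k$--fixed on each $L_{j_m}$, together with Lemma~\ref{lem:multiplicity}(4), which bounds the overlap multiplicity of $\{J_{j_m}^*\}$ (and hence of $\{L_{j_m}\}\subseteq\{J_{j_m}^*\}$) by $4k$. Theorem~\ref{thm:est} then gives that $\{m\in\bN\mid\psi(v^{N_{j_m}})\text{ is }\delta\text{--fast on }L_{j_m}\}$ has density zero in $\bN$. Since $j_m\ge m$, whenever a set $S\subseteq\bN$ has density zero, so does $\{j_m\mid m\in S\}$; this translation shows $d_\bN(S_v^L)=0$. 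The identical argument with $R_{j_m}$ in place of $L_{j_m}$ yields $d_\bN(S_v^R)=0$. Taking the finite union over $v\in V$ produces $d_\bN(\operatorname{Ball}_\delta)=1$.

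For the second claim, note that modulo a finite set one has $\operatorname{Bag}_\delta=\{i\in\bN\mid i+[C+1]^*\subseteq\operatorname{Ball}_\delta\}$, since $[i,i+C]\cap\bZ=i+\{0,1,\dots,C\}=i+[C+1]^*$. Applying Lemma~\ref{lem:long}(1) with $A=\operatorname{Ball}_\delta$ and $s=C+1$ immediately gives $d_\bN(\operatorname{Bag}_\delta)=1$. The main technical hurdle in the argument is checking that the bounded-multiplicity hypothesis of Theorem~\ref{thm:est} really does survive when we pass to the subsequence indexed by $\bN_v$, but this is already packaged for us by Lemma~\ref{lem:multiplicity}(4); everything else is bookkeeping.
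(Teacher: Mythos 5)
Your proof is correct and follows essentially the same route as the paper's: apply Theorem~\ref{thm:est} to $f=\psi(v)$ for each $v\in V_1$ (with hypotheses supplied by Lemma~\ref{lem:v-count} and Lemma~\ref{lem:multiplicity}), observe that $v\notin V_1$ contributes density zero, and then deduce the $\operatorname{Bag}_\delta$ statement from Lemma~\ref{lem:long}(1). The only cosmetic difference is that you pass from density zero in the $m$-index to density zero in $\bN$ via the monotone inequality $j_m\ge m$, whereas the paper phrases the same step as computing the relative density within $\bN_v$ and multiplying by $d_\bN(\bN_v)$; both are valid.
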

\bp
Let $v\in V_1$.
By Lemmas~\ref{lem:v-count} and~\ref{lem:multiplicity},
we can apply Theorem~\ref{thm:est} to $f = \psi(v)$. 
We see that \[\lim_N \frac{\#(\operatorname{Ball}_\delta\cap\bN_v\cap[0,N])}{\#(\bN_v\cap[0,N])}= 1.\]
It follows that $d_\bN(\operatorname{Ball}_\delta)=1$.
By Lemma~\ref{lem:long} (1), we have  $d_\bN(\Bag_\delta)=1$.
\ep

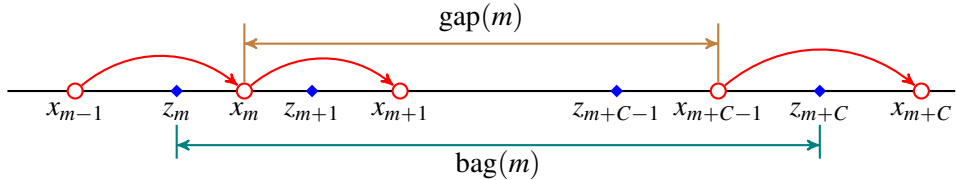
\begin{figure}[h!]
  \tikzstyle {bv}=[blue,draw,shape=diamond,fill=blue,inner sep=1pt]
  \tikzstyle {wv}=[red,draw,shape=circle,fill=white,inner sep=2pt]
\begin{tikzpicture}[>=stealth',auto,node distance=3cm, thick,scale=.9]
\draw  (-6.5,0) 
--
(-5.5,0) node (x1) [wv] {} node [below]  {\small $x_{m-1}$}
--(-4,0) node (z1) [bv] {} node [below]  {\small $z_{m}$}
-- (-3,0)  node  (x2) [wv] {} node [below] {\small $x_m$}
--(-2,0) node (z2) [bv] {} node [below]  {\small $z_{m+1}$}
-- (-.7,0)  node  (x3) [wv] {} node [below] {\small $x_{m+1}$}
--(2.5,0) node (z3) [bv] {} node [below]  {\small $z_{m+C-1}$}
-- (4,0)  node  (x4) [wv] {} node [below] {\small $x_{m+C-1}$}
--(5.5,0) node (z4) [bv] {} node [below]  {\small $z_{m+C}$}
-- (7,0)  node  (x5) [wv] {} node [below] {\small $x_{m+C}$}
-- (7.5,0);
\path (x1) edge [->,bend left,red,out = 40,in=140] node  {} (x2);
\path (x2) edge [->,bend left,red,out = 40,in=140] node  {} (x3);
\path (x4) edge [->,bend left,red,out = 40,in=140] node  {} (x5);
\draw [brown] (x2) -- (-3,1);
\draw [brown] (x4) -- (4,1);
\path (-3,.7) edge [brown,<->] node  {}  (4,.7);
\draw (.5,.7) node [above] {\small $\mathrm{gap}(m)$};
\draw [teal] (-4,-.5) -- (-4,-1);
\draw [teal] (5.5,-.5) -- (5.5,-1);
\path (-4,-.8) edge [teal,<->] node  {}  (5.5,-.8);
\draw (.75,-.7) node [below] {\small $\mathrm{bag}(m)$};
\end{tikzpicture}%
\caption{The gap in a bag.}
\label{fig:gap}
\end{figure}

\begin{lem}\label{lem:gap-bag}
For each $\delta\in(0,\frac{1}{2C}]$ and $m\in\Bag_\delta$,
we have $|\gap(m)|\le 2\delta|\bag(m)|$.
\end{lem}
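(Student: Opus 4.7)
The plan is to combine monotonicity of the orbit with the fastness estimate built into the $\Ball_\delta$ condition. By assumption (A4) and (A5), the sequence $\{x_i\}_{i\ge M_0}$ is strictly increasing (since $x_{i-1}<z_i<x_i$), so $|\gap(m)|=x_{m+C-1}-x_m=\sum_{i=m+1}^{m+C-1}(x_i-x_{i-1})$ is a telescoping sum of positive increments. For each $i$ in this range, the hypothesis $m\in\Bag_\delta$ forces $i\in\Ball_\delta$, and Lemma~\ref{lem:multiplicity}(2) gives $x_{i-1}\in J_i\subseteq L_i\cap R_i$. The Ball condition then yields both $x_i-x_{i-1}<\delta|L_i|$ and $x_i-x_{i-1}<\delta|R_i|$, so in particular $(x_i-x_{i-1})<\delta\min(|L_i|,|R_i|)$.

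The next step is to bound $\sum_{i=m+1}^{m+C-1}\min(|L_i|,|R_i|)$ by $2|\bag(m)|$. The starting observation is that $J_i\subseteq(z_{i-1},z_{i+1})$, which follows from $z_i\in J_i$, $z_{i\pm1}\notin J_i$ (Lemma~\ref{lem:multiplicity}(2)) and $\cl[z_{i-1},z_i]=2$. Consequently $|J_i|\le z_{i+1}-z_{i-1}$, and a clean telescoping argument
\[
\sum_{i=m+1}^{m+C-1}(z_{i+1}-z_{i-1})=(z_{m+C}-z_{m+1})+(z_{m+C-1}-z_m)\le 2|\bag(m)|
\]
already controls $\sum|J_i|$ by $2|\bag(m)|$. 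Writing $L_i=J_i\cup L_i'$ with $L_i'=[p_i,\inf J_i]$, the extension $L_i'$ consists of at most $k$ components of $\supp v_i$ (interposed between the $\le k$ fixed points of $v_i$ in $[p_i,\inf J_i]$ from the definition of $p_i$), and analogously for $R_i'$.

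The main obstacle is the remaining bookkeeping: showing that the extensions $L_i'$ and $R_i'$, which can a priori reach outside $\bag(m)$, are absorbed into the factor $2$. For this I would exploit Lemma~\ref{lem:multiplicity}(4), which bounds by $4k$ the multiplicity with which the intervals $J_j^*$ sharing a single generator can overlap, together with the hypothesis $C\ge 8k$ which provides enough buffer indices at each end of the bag. Concretely, one chooses to bound $(x_i-x_{i-1})$ by $\delta|R_i|$ on the indices where the $R_i$ extension points into $\bag(m)$ (namely the left and interior indices), and by $\delta|L_i|$ on the remaining indices where the $L_i$ extension does so; in both cases the extension contributes to a set that overlaps the bag with multiplicity at most $4k$ per generator, so the sum of extensions is absorbed into a single additional copy of $|\bag(m)|$. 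Once $\sum\min(|L_i|,|R_i|)\le 2|\bag(m)|$ is in hand, the conclusion $|\gap(m)|<\delta\sum\min(|L_i|,|R_i|)\le 2\delta|\bag(m)|$ follows immediately.
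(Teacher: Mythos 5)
Your approach diverges from the paper's, and as written the decisive estimate $\sum_{i=m+1}^{m+C-1}\min(|L_i|,|R_i|)\le 2|\bag(m)|$ does not hold. Already $\sum_i|J_i|$ can come within an additive error $(z_{m+1}-z_m)+(z_{m+C}-z_{m+C-1})$ of $2|\bag(m)|$: consecutive $J_i$ and $J_{i+1}$ both contain $x_i$ by Lemma~\ref{lem:multiplicity}, so the collection $\{J_i\}$ covers $\bag(m)$ with multiplicity two on a large set. This leaves essentially no room for the extensions $L_i\setminus J_i$ and $R_i\setminus J_i$, which can individually have length comparable to $|\bag(m)|$. Moreover, the multiplicity that actually controls those extensions is the one of Lemma~\ref{lem:multiplicity}, which is of order $k|V|$, not $2$; your own phrase ``$4k$ per generator'' concedes this, and there is no mechanism offered to collapse that factor back to a universal constant. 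Finally, even accepting the optimistic accounting you sketch ($\sum|J_i|\le 2|\bag(m)|$ plus ``one additional copy'' of $|\bag(m)|$ for the extensions), the total would be $3|\bag(m)|$, not the asserted $2|\bag(m)|$.

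The paper's proof avoids any unconditional bound on $\sum\min(|L_i|,|R_i|)$ and instead uses a bootstrap inequality. The key pointwise observation is: for each interior index $i\in[m+2,m+C-2]$, the bound $\max(\cl(L_i),\cl(R_i))\le 2k+1$ together with $C\ge 8k$ forces at least one of $L_i$, $R_i$ to lie entirely inside $\gap(m)$, so $\min(|L_i|,|R_i|)\le|\gap(m)|$. Summing $|x_i-x_{i-1}|<\delta\min(|L_i|,|R_i|)$ over interior indices gives $(C-3)\delta|\gap(m)|$, and the two boundary indices contribute at most $\delta|R_{m+1}\cup L_{m+C-1}|\le\delta|\bag(m)|$. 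This yields the self-referential inequality $|\gap(m)|\le(C-3)\delta|\gap(m)|+\delta|\bag(m)|$, which is then solved: $|\gap(m)|\le\frac{\delta}{1-(C-3)\delta}|\bag(m)|\le 2\delta|\bag(m)|$ using $\delta\le\frac{1}{2C}$. The structural difference is that the paper bounds each summand by a quantity that already involves the unknown $|\gap(m)|$ and then solves, rather than attempting a direct summation against $|\bag(m)|$.
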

\bp
Let $i\in[m+2,m+C-2]\cap\bZ$.
From Lemma~\ref{lem:multiplicity}
and
from the fact that 
\[\max(\cl(L_i),\cl(R_i))\le 2k+1,\]
we see that
either $L_i\sse\gap(m)$ or $R_i\sse\gap(m)$.
As $m\in\Bag_\delta$, we have
$i\in\Ball_\delta$ and hence,
\[|x_i-x_{i-1}| = |v_i^{N_i}x_{i-1}-x_{i-1}| <
\delta\min(|L_i|,|R_i|)
\le \delta|\gap(m)|.\]

By a similar argument,
\begin{align*}
|x_{m+1}-x_m| +|x_{m+C-1}-x_{m+C-2}| 
&<\delta \left(|R_{m+1}| + | L_{m+C-1}| \right)\\
&=\delta \left(|R_{m+1}\cup L_{m+C-1}| \right)
\le \delta |\bag(m)|.\end{align*}

By summing up $|x_i-x_{i-1}|$ for $i=m+1,\ldots,m+C-1$, we obtain that
\begin{align*}
|\gap(m)|&\le (C-3)\delta|\gap(m)|  + \delta|\bag(m)|,\\
|\gap(m)|&\le\frac{\delta}{1- (C-3)\delta}|\bag(m)|
\le 2\delta |\bag(m)|.\qedhere
\end{align*}
\ep

Recall $J_m^*=L_m\cup R_m$.
For each $\lambda>0$, we define
\begin{align*}
D_{C,\lambda}:=\{m\in\bN\cap[M_0,\infty)\co &
\text{ either }
|x_m-x_{m-1}|>\lambda |x_m-\sup J^*_m|\\
&\text{ or } |x_{m+C}-x_{m+C-1}|>\lambda |x_{m+C-1}-\inf J^*_{m+C}|\}.
\end{align*}

\begin{lem}\label{lem:deep}
%For each $\delta\in(0,\frac{1}{2C}]$  and $\lambda\in(0,\frac1{2\delta}-1]$, we have
If $\delta\in(0,\frac{1}{2C}]$ and $2\delta(1+\lambda)\le 1$,
then $\Bag_\delta\sse D_{C,\lambda}$.
\end{lem}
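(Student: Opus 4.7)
I would argue by contradiction: suppose $m \in \Bag_\delta \setminus D_{C,\lambda}$. Writing $s_i := |x_i - x_{i-1}|$ and using that $x_m \in J_m^* = [p_m, q_m]$ and $x_{m+C-1} \in J_{m+C}^*$, the negation of $m \in D_{C,\lambda}$ gives
\[
q_m - x_m \ge s_m/\lambda \quad\text{and}\quad x_{m+C-1} - p_{m+C} \ge s_{m+C}/\lambda.
\]
The first key input is a covering-length comparison: $\cl(J_m^*) \le \cl(L_m) + \cl(R_m) \le 2(2k+1) = 4k+2$, whereas $\cl[z_m, z_{m+C}] = C+1 \ge 8k+1$ by (A5) and the standing choice $C \ge 8k$. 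Since $z_m \in J_m \subseteq J_m^*$, the inclusion $[z_m, z_{m+C}] \subseteq J_m^*$ would force a covering-length contradiction. Hence $q_m < z_{m+C}$, and symmetrically $p_{m+C} > z_m$; that is, both relevant boundary points of the stars lie strictly inside the bag.

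Assuming without loss of generality that $x_m \le x_{m+C-1}$ (the reverse is handled analogously via absolute values), the two displayed inequalities combine with $z_{m+C} > q_m$ and $z_m < p_{m+C}$ to yield
\[
|\bag(m)| + |\gap(m)| = (z_{m+C} - x_m) + (x_{m+C-1} - z_m) > (s_m + s_{m+C})/\lambda.
\]
Invoking Lemma~\ref{lem:gap-bag} to replace $|\gap(m)|$ by $2\delta|\bag(m)|$ yields
\[
(1+2\delta)|\bag(m)| > (s_m + s_{m+C})/\lambda,
\]
equivalently $s_m + s_{m+C} < \lambda(1+2\delta)|\bag(m)|$.

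To close the argument, I would then exploit the $\operatorname{Ball}_\delta$ condition at the boundary indices $m$ and $m+C$ (namely $s_m < \delta|R_m|$ and $s_{m+C} < \delta|L_{m+C}|$), together with the placement of $R_m$ and $L_{m+C}$ relative to the bag controlled by the covering-length input above, to bound $|\bag(m)|$ from above in terms of $s_m$ and $s_{m+C}$. Combined with the inequality just obtained, this should collide precisely under the arithmetic hypothesis $2\delta(1+\lambda) \le 1$, forcing the desired contradiction. The main obstacle lies exactly in this closing step: producing the correct upper bound on $|\bag(m)|$ (essentially, showing it is controlled by a constant multiple of $\max(|R_m|, |L_{m+C}|)$) so that the final algebra cancels cleanly under the sharp choice $2\delta(1+\lambda)\le 1$, with the factor $1+2\delta$ from Lemma~\ref{lem:gap-bag} balancing the factor $1+\lambda$ from the hypothesis.
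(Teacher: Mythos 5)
Your proposal correctly identifies the contradiction setup and the relevant negation of $m\in D_{C,\lambda}$, and your covering-length observation that $q_m<z_{m+C}$ and $z_m<p_{m+C}$ is correct, but the argument has the estimates running in the wrong direction and cannot be closed as outlined. You pair $q_m$ with $z_{m+C}$ to \emph{lower}-bound $z_{m+C}-x_m>q_m-x_m\ge s_m/\lambda$, and symmetrically, which yields a \emph{lower} bound $(1+2\delta)|\bag(m)|>(s_m+s_{m+C})/\lambda$. This is vacuous: the quantities $s_m$ and $s_{m+C}$ are tiny (they are each at most $\delta\min(|L_i|,|R_i|)$ when $i\in\Ball_\delta$), so a lower bound of this form on $|\bag(m)|$ is perfectly consistent and cannot collide with anything. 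There is no upper bound of the kind your last paragraph asks for; the step you flag as "the main obstacle" is not a missing computation but a dead end.

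The workable direction is to \emph{upper}-bound $|\bag(m)|$. By the interlacing $x_{m-1}<z_m<x_m$ and $x_{m+C-1}<z_{m+C}<x_{m+C}$ from (A5), one has
\[
|\bag(m)|=z_{m+C}-z_m<x_{m+C}-x_{m-1}=s_m+|\gap(m)|+s_{m+C}.
\]
The negation of $m\in D_{C,\lambda}$ gives $s_m\le\lambda(q_m-x_m)$ and $s_{m+C}\le\lambda(x_{m+C-1}-p_{m+C})$. Your covering-length comparison should be sharpened: apply it to $R_m$ and $L_{m+C}$ separately, each of which has $\cl\le 2k+1$, while $\cl[z_m,z_{m+C}]=C+1\ge 8k+1$; if $q_m>p_{m+C}$ then $R_m\cup L_{m+C}\supseteq[z_m,z_{m+C}]$, forcing $C+1\le 4k+2$, a contradiction. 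Hence $q_m\le p_{m+C}$, and therefore $(q_m-x_m)+(x_{m+C-1}-p_{m+C})\le x_{m+C-1}-x_m=|\gap(m)|$, so $s_m+s_{m+C}\le\lambda|\gap(m)|$. Substituting back and invoking Lemma~\ref{lem:gap-bag} yields $|\bag(m)|<(1+\lambda)|\gap(m)|\le 2\delta(1+\lambda)|\bag(m)|\le|\bag(m)|$, the desired contradiction. The decisive change from your attempt is to pair $z_m$ with $x_{m-1}$ and $z_{m+C}$ with $x_{m+C}$ (upper-bounding $|\bag(m)|$) rather than pairing $q_m$ with $z_{m+C}$ (lower-bounding a length).
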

\bp
Assume that $m\in \Bag_\delta\setminus D_{C,\lambda}$.
By Lemma~\ref{lem:gap-bag}, we have
\begin{align*}
|\bag(m)|&< |x_m-x_{m-1}|+|x_{m+C}-x_{m+C-1}|+|\gap(m)|\\
&\le
\lambda |x_m-\sup J^*_m| + \lambda |x_{m+C-1}-\inf J^*_{m+C}|
+|\gap(m)|\\
&\le (1+\lambda)|\gap(m)|\le 2\delta(1+\lambda)|\bag(m)|.\end{align*}
This is a contradiction.
\ep

\begin{lem}\label{lem:deeper}
For all $\lambda\ge1$, the following set has the natural density one.
\[E_\lambda=\left\{ m\in\bN\cap[M_0,\infty)\mid \psi(v_m^{N_m})\text{ is }\lambda\text{--expansive on }J_m^*\right\}.\]
\end{lem}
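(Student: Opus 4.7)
The plan is to leverage Lemma~\ref{lem:deep} to pass from the density-one set $\Bag_\delta$ into $D_{C,\lambda}$, then translate the two defining inequalities of $D_{C,\lambda}$ into expansiveness statements on the appropriate $J^*$-intervals, and finally use Lemma~\ref{lem:long}(2) to absorb the shift coming from the second alternative.

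Concretely, I would first observe that for $m \in D_{C,\lambda}$, Lemma~\ref{lem:multiplicity}(2) together with \eqref{A5} places $\{x_{m-1},x_m\} \subseteq J_m \subseteq J_m^* = [p_m,q_m]$ with $x_{m-1}<z_m<x_m$, and similarly $\{x_{m+C-1},x_{m+C}\} \subseteq J_{m+C}^*$. If the first defining alternative of $D_{C,\lambda}$ holds, then $\psi(v_m^{N_m})$ moves $y = x_{m-1}$ to $x_m$ with $x_m - x_{m-1} > \lambda(q_m - x_m)$, which is exactly alternative (E2) for $f = \psi(v_m^{N_m})$ on $J = J_m^*$; hence $m \in E_\lambda$. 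If the second alternative holds, the analogous computation performed at index $m+C$ (with $y = x_{m+C-1}$ and $f(y) = x_{m+C}$) matches alternative (E1) for $f = \psi(v_{m+C}^{N_{m+C}})$ on $J = J_{m+C}^*$, yielding $m + C \in E_\lambda$. Consequently
\[ D_{C,\lambda} \subseteq E_\lambda \cup \bigl((E_\lambda - C) \cap \bN\bigr) \]
for every integer $C \geq 8k$.

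Next, for each such $C$ I would choose $\delta_C \leq \min\bigl(1/(2C),\,1/(2(1+\lambda))\bigr)$, so that the hypotheses of Lemmas~\ref{lem:gap-bag} and~\ref{lem:deep} are both met. Lemma~\ref{lem:ball-bag} then gives $d_\bN(\Bag_{\delta_C}) = 1$, while Lemma~\ref{lem:deep} gives $\Bag_{\delta_C} \subseteq D_{C,\lambda}$. Combining this with the containment above, we conclude $d_\bN\bigl(E_\lambda \cup ((E_\lambda - C) \cap \bN)\bigr) = 1$ for every integer $C \geq 8k$. Applying Lemma~\ref{lem:long}(2) with $X = Y = E_\lambda$ and $\beta_0 = 8k$ finally yields $d_\bN(E_\lambda) = 1$.

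The main subtlety is the asymmetry built into the definition of $D_{C,\lambda}$: its two defining alternatives concern different indices (namely $m$ and $m+C$), so one cannot hope for a direct containment $D_{C,\lambda} \subseteq E_\lambda$. The whole point of leaving the parameter $C$ free (subject only to $C \geq 8k$) is that it can then be varied, and the density-one conclusion for each shift $C$ combines, via Lemma~\ref{lem:long}(2), to wash out the shift. No new analytic input is needed beyond the lemmas already established.
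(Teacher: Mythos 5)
Your proof is correct and takes essentially the same route as the paper's: the paper introduces the auxiliary sets $X_\lambda$, $Y_\lambda$ and writes $D_{C,\lambda}=X_\lambda\cup((Y_\lambda-C)\cap[M_0,\infty))$ before applying Lemma~\ref{lem:long}(2), whereas you bypass those auxiliary sets by noting directly that $X_\lambda,Y_\lambda\subseteq E_\lambda$ and applying the lemma with $X=Y=E_\lambda$. This is a purely cosmetic simplification of the same argument; the identification of the two alternatives in $D_{C,\lambda}$ with (E2) at index $m$ and (E1) at index $m+C$, the choice of $\delta_C$, and the use of Lemma~\ref{lem:ball-bag} and Lemma~\ref{lem:deep} all match the paper exactly.
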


\bp 
We may assume $\lambda>8k$. For $\delta>0$, we define
\begin{align*}
X_\lambda&=\{m\in\bN\cap[M_0,\infty) \co |x_m-x_{m-1}|>\lambda|x_m-\sup J^*_m|\},\\
Y_\lambda&=\{m\in\bN\cap[M_0,\infty) \co |x_m-x_{m-1}|>\lambda|x_{m-1}-\inf J^*_m|\}.
\end{align*}
Then we see
\[D_{C,\lambda}=X_\lambda\cup \left((Y_\lambda-C)\cap[M_0,\infty)\right).\]

Lemmas~\ref{lem:ball-bag} and~\ref{lem:deep} imply that  $d_\bN(D_{C,\lambda})=1$,
Hence by Lemma~\ref{lem:long}, we obtain that 
$d_\bN(X_\lambda\cup Y_\lambda)=1$.
This implies $d_\bN(E_\lambda)=1$.
 \ep

\bp[Completing the proof of the Slow Progress Lemma]
We see from Lemma~\ref{lem:v-count} and Theorem~\ref{thm:est} that
%We see from Theorem~\ref{thm:est} that
%(and from Lemmas~\ref{lem:v-count} and~\ref{lem:multiplicity})
\[\lim_N \frac{\#(E_\lambda\cap\bN_v\cap[0,N])}{\#(\bN_v\cap[0,N])}= 0\]
for each $v\in V_1$. 
This implies $d_\bN(E_\lambda)=0$, contradicting Lemma~\ref{lem:deeper}.
Hence the assumption~\eqref{A3} is false and the proof is complete.
\ep

\subsection{Consequences of the Slow Progress Lemma}
The following is the main obstruction of algebraic smoothing in the Main Theorem.

\begin{lem}\label{lem:kernel-psi}
Let $u\in G$ and let $U\in\pi_0 \supp \psi(G)$.
If $\supp\psi(u)\cap U$ is compactly contained in $U$,
then for each real number $T_0>0$
and for all sufficiently large $i\in\bN$,
 there exists $h_i\in G$ such that the following hold:
\be[(i)]
\item
$\|h_i\|<2i-T_0$;
\item
$U\cap\supp\psi[w_i uw_i^{-1} , h_i w_i uw_i^{-1} h_i^{-1}]=\varnothing$.
\item
For each $v\in V$ and for at least one $h'\in \{v\cdot h_i,v^{-1}\cdot h_i\}$,
 we have 
\[U\cap\supp\psi[w_i uw_i^{-1} , h' w_i uw_i^{-1} (h')^{-1}]=\varnothing;\]
\ee
\end{lem}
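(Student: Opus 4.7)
The plan is to compactly contain $\supp \psi(u) \cap U$ in an interval $[p,q] \subseteq U$ and then, for each $i$, use Lemma~\ref{lem:slide} to produce $h_i \in G$ whose action on $U$ translates $\psi(w_i)([p,q])$ off itself. Conditions (ii) and (iii) will follow because diffeomorphisms of $U$ with disjoint supports on $U$ commute there. The main obstacle is achieving the linear syllable length bound $\|h_i\| < 2i - T_0$: this amounts to controlling $\cd(P_i, Q_i)$ for $P_i = \psi(w_i)(p)$ and $Q_i = \psi(w_i)(q)$, and hence reduces precisely to the Slow Progress Lemma applied to $p$ and to $q$ combined with the triangle inequality.

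If $\supp \psi(u) \cap U = \varnothing$, take $h_i = 1$ and we are done. Otherwise $\supp \psi(u) \cap U$ is a nonempty open set compactly contained in $U$, and I pick $p < q$ in $U$ with $\supp \psi(u) \cap U \subseteq [p,q] \subseteq U$. Set $P_i = \psi(w_i)(p)$ and $Q_i = \psi(w_i)(q)$. Because $\psi(G)$ preserves each component of $\supp \psi(G)$ and is orientation-preserving, $P_i < Q_i$ lie in $U$ and
\[
\supp \psi(w_i u w_i^{-1}) \cap U \;=\; \psi(w_i)\bigl(\supp \psi(u) \cap U\bigr) \;\subseteq\; [P_i, Q_i].
\]

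Applying the Slow Progress Lemma at $x = p$ and $x = q$ yields $f \colon \bN \to \bR$ with $f(i) \to \infty$ and $\cd(p, P_i),\, \cd(q, Q_i) \leq i - f(i)$. Writing $C_0 = \cd(p,q) < \infty$, two applications of Lemma~\ref{lem:triangle}(3) give
\[
\cd(P_i, Q_i) \;\leq\; \cd(P_i, p) + \cd(p,q) + \cd(q, Q_i) \;\leq\; 2i - 2f(i) + C_0.
\]
For $i$ so large that $2f(i) > C_0 + T_0$, Lemma~\ref{lem:slide} produces $h_i \in G$ with $\|h_i\| = \cd(P_i, Q_i) < 2i - T_0$ and $\psi(h_i)(P_i) > Q_i$, yielding (i). Since $\psi(h_i)$ is orientation-preserving, $\psi(h_i)([P_i, Q_i]) \subseteq (Q_i, \sup U)$ is disjoint from $[P_i, Q_i]$ inside $U$, so the restrictions of $\psi(w_i u w_i^{-1})$ and $\psi(h_i w_i u w_i^{-1} h_i^{-1})$ to $U$ have disjoint supports and thus commute on $U$, giving (ii).

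For (iii), fix $v \in V$ and let $r = \psi(h_i)(P_i)$, which satisfies $r > Q_i$. Since $\psi(v)$ is an orientation-preserving bijection of $I$, at least one of $\psi(v)(r)$ and $\psi(v^{-1})(r)$ is $\geq r$: if $\psi(v)(r) < r$, then applying the increasing map $\psi(v^{-1})$ forces $\psi(v^{-1})(r) > r$, and conversely. Choose the sign so that $h' := v^{\pm 1} h_i$ satisfies $\psi(h')(P_i) = \psi(v^{\pm 1})(r) \geq r > Q_i$; then the argument for (ii) applies verbatim with $h'$ in place of $h_i$, producing the disjointness on $U$ required for (iii).
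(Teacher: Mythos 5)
Your proof is correct and follows essentially the same route as the paper: compress the compactly contained support into an interval $[p,q]$, push it forward by $\psi(w_i)$, invoke the Slow Progress Lemma at each endpoint together with the triangle inequality to bound $\cd(P_i,Q_i)$ below $2i-T_0$, produce $h_i$ via Lemma~\ref{lem:slide}, and finish (ii)--(iii) by observing that $\psi(h_i)$ (resp.\ $\psi(v^{\pm1}h_i)$) translates the support clear of itself inside $U$. The paper works directly with $x=\inf(\supp\psi(u)\cap U)$ and $y=\sup(\supp\psi(u)\cap U)$ rather than an enclosing $[p,q]$, but this is a cosmetic difference.
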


\bp
Let $u, U$ and $T_0$ be given as in the hypothesis.
We write 
\[
x=\inf(\supp\psi(u)\cap U),\quad
y=\sup(\supp\psi(u)\cap U).\]
Put $T = \cd(x,y)$. 
By the Slow Progress Lemma, whenever $i\gg0$ we have
\[\cd(x,w_i x)<i-(T_0+T),\quad \cd(y,w_i y)<i-(T_0+T).\]
\[
\cd(w_ix,w_iy)\le 2i-2(T_0+T)+T< 2i-T_0.\]
Put $u_i = w_i uw_i^{-1}$.
Since $\supp \psi(u_i)\cap U\sse (w_ix,w_iy)$,
we see from Lemma~\ref{lem:slide} that there exists $h_i\in G$ with $\|h_i\|<2i - T_0$ satisfying $h_iw_ix>w_iy$. Furthermore, for each $v\in V$ there is a $s(v)\in\{1,-1\}$ such that $v^{s(v)}h_iw_ix\ge h_iw_ix >w_iy$.
We see that 
\[
(\supp \psi(u_i)\cap U)\cap h (\supp \psi(u_i)\cap U)=\varnothing\]
if $h=h_i$ or if $h=v^{s(v)}h_i$ for some $v\in V$.
This gives the desired relations.
\ep

\section{A dynamically fast subgroup of $\Diff_+^{k,\mu}(I)$}\label{sec:phi}
Recall we have defined $G^\dagger$ in Section~\ref{ss:univ}.
We will now build a representation $\phi\colon G^\dagger \to\Diff_0^{k,\mu}(I)$ such that $\supp\phi(G^\dagger )$ is connected and 
$\phi(G^\dagger)$
admits no injective homomorphisms into $\Diff^{k,\omega}_+(I)$ for all $0\prec_k\omega\ll\mu$.

The criticality of the regularity will be encoded in a dynamically fast condition described as follows.
As in Lemma~\ref{lem:universal}, we let $1\neq u_0\in G^\dagger $ be given such that $\supp\phi(u_0)$ is compactly contained in $\supp\phi(G^\dagger)$. We build a sequence a elements $\{w_i\}_{i\geq 0}\sse G^\dagger $ which depend on $k,\mu$ such that, after replacing $u_0$ by a suitable conjugate $u$ in $G^\dagger $ if necessary, we have \[\cd(\inf\supp\phi(w_i u w_i^{-1}),\sup\supp\phi(w_iu w_i^{-1}))\geq 2i.\] We build the representation $\phi$ in several steps.

\subsection{Setting up notation}\label{ss:global}
Let us prepare some notation which we will use throughout this section. 
We fix $k\in\bN$ and $\mu\gg\omega_1$. We put $\delta=9/10$ and recall the notation 
\[\{\epsilon_0,\delta_0,\ell_0^*,\ell_i,N_i\}\] 
from Setting~\ref{setting:kmu} and from Corollary~\ref{cor:optimal}.
Namely, we pick a universal constant $\epsilon_0\in(0,1)$,
and define $\delta_0\ge 9/10$ from $\epsilon_0$. For instance, we can set $\epsilon_0=1/1000$.
We have defined a constant $\ell_0^*$ depending on $\mu$,
so that 
\[\ell_0^*,\mu(\ell_0^*)\in(0,\epsilon_0].\]

We will choose $K^*\in\bN$, and let
\[\ell_i=1/\left((i+K^*)\log^2(i+K^*)\right).\]
We have defined another sequence
\[
N_i=\lceil 1/\left({\ell_i^{k-1}\mu(\ell_i)}\right)\rceil.\]

Possibly after increasing $K^*>0$, we may assume that 
 $\ell_1\le\ell_0^*$
and that  \[ \kappa:=\ell_2/(2\ell_2+\ell_1) > 1/4.\]
In Corollary~\ref{cor:optimal}, we verified that
for all concave modulus $0\prec_k \omega\ll\mu$ we have
\[\lim_{i\to\infty} N_i(1/i)^{k-1}\omega(1/i)=0.\]

Recall we have a generating set $V^\dagger=\{\bba,\bbb,\bbc,\bbd,\bbe\}\sse G^\dagger$ as in Section~\ref{ss:univ}.
For $i\in\bN$, we let $v_{2i-1}=\bbb$ and $v_{2i}=\bba$.
Define a sequence $\{w_i\}_{i\in\bN}\sse G^\dagger $ 
by $w_0=1$ and $w_i=v_i^{N_i}\cdot w_{i-1}$.

\subsection{A configuration of intervals in $I$}
\label{ss:config}
Let us now build an infinite chain 
\[
\mathcal{F}=\left(\ldots,L_2^-,L_1^-,D^-,C^-,B^-,I_0,B^+,C^+,D^+,L_1^+,L_2^+,\ldots\right)\]
 of bounded open intervals in $\bR$
as shown in Figure~\ref{f:config1}.
The union of $\FF$ will be also bounded.
We will simultaneously define representations 
\[
\rho_0,\rho_1,\rho_2\co G^\dagger\to\Diff_+^{k,\mu}(\bR).\]

As in Lemma~\ref{lem:universal}, 
we put 
\[u^\dagger=\left[\left[\bbc^\bbd,\bbe\cdot\bbe^\bbd\cdot\bbe^{-1}\right],\bbc\right]\in G^\dagger\setminus\{1\}.\]
The standard affine action of $\BS(1,2)$ is conjugate to a $C^\infty$--action on $\bR$ supported in $[0,1]$; see \cite{Tsuboi1984Asterisque} or \cite[Section 4.3]{Navas2011}, for instance. 
Applying Lemma~\ref{lem:res} to  
\[1\ne u^\dagger\in(\form{c}\times\form{\bba,\bbe})\ast\form{\bbd}\le G^\dagger,\] we have an action
\[
\rho_0\co G^\dagger\to\Diff^\infty_+(\bR)\]
such that $\rho_0(\bbb)=1$, 
$\rho_0(u^\dagger)\ne1$ and moreover,
 $I_0:=(-1,1)=\supp\rho_0$.
By the same lemma, we can also require that 
 \[
 \rho_0\form{\bba,\bbe}\cong\form{\bba,\bbe}\cong\BS(1,2).\]
 
We will include six more open intervals \[B^\pm, C^\pm, D^\pm\] to the chain $\FF$ as shown in the configuration (Fig~\ref{f:config1}).
We will require that $B^-=-B^+$ and so forth, where we use the notation \[ -(r,s) = (-s,-t)\] for $0\le r<s\le \infty$. 
Also, we set $\sup C^+=2$ and $\sup D^+=3$.

By  Corollary~\ref{cor:chain-smooth}, 
there exists a $C^{\infty}$ diffeomorphisms $c_1^+,d_1^+$ supported on $C^+,D^+$ respectively such that $\form{c_1^+,d_1^+}\cong F$ and $\form{c_1^+,d_1^+}$ acts locally densely on $C^+\cup D^+$. 
We may require $c_1^+(x)>x$ for $x\in C^+$ and $d_1^+(x)>x$ for $x\in D^+$.
We define $c_1^-, d_1^-$ symmetrically so that
$c_1^-(-x)=-c_1^+(x)$ and $d_1^-(-x)=-d_1^+(x)$.
In particular, 
\[ \supp d_1^\pm =D^\pm,\quad \supp c_1^\pm =C^\pm.\]
We choose $b_1\in\Diff^\infty_+(\bR)$ supported on $B^+\cup B^-$ such that 
$b_1(x)>x$ for $x\in B^+$ and $b_1(x)<x$ for $x\in B^-$.
We define 
\[
\rho_1\co G^\dagger\to\Diff^\infty_+(\bR)\]
by $\rho_1(\bba)=\rho_1(\bbe)=1$ and $\rho_1(\bbb)=b_1,\rho_1(\bbc)=c_1^+c_1^-, \rho_1(\bbd)=d_1^+d_1^-$.

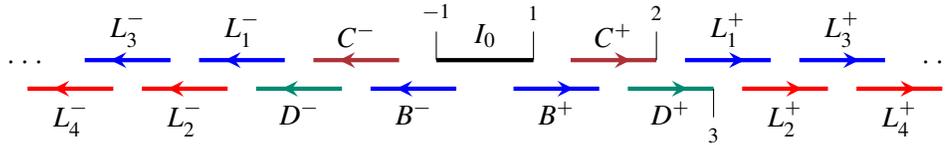
\begin{figure}[h!]
{
\tikzstyle {Av}=[red,draw,shape=circle,fill=red,inner sep=1pt]
\tikzstyle {Bv}=[blue,draw,shape=circle,fill=blue,inner sep=1pt]
\tikzstyle {Cv}=[Maroon,draw,shape=circle,fill=Maroon,inner sep=1pt]
\tikzstyle {Dv}=[PineGreen,draw,shape=circle,fill=PineGreen,inner sep=1pt]
\tikzstyle {Ev}=[Orange,draw,shape=circle,fill=Plum,inner sep=1pt]
\tikzstyle {A}=[red,postaction=decorate,decoration={%
    markings,%
    mark=at position .7 with {\arrow[red]{stealth};}}]
\tikzstyle {B}=[blue,postaction=decorate,decoration={%
    markings,%
    mark=at position .7 with {\arrow[blue]{stealth};}}]
\tikzstyle {C}=[Maroon,postaction=decorate,decoration={%
    markings,%
    mark=at position .7 with {\arrow[Maroon]{stealth};}}]
\tikzstyle {D}=[PineGreen,postaction=decorate,decoration={%
    markings,%
    mark=at position .7 with {\arrow[PineGreen]{stealth};}}]
\tikzstyle {E}=[Orange,postaction=decorate,decoration={%
    markings,%
    mark=at position .7 with {\arrow[Orange]{stealth};}}]
\begin{tikzpicture}[ultra thick,scale=.38]
\path (1,0) edge [B] node  {}  (4,0);
\draw (2.5,0) node [below] {\small $B^+$}; 

\path (3,1) edge [C] node  {}   (6,1);
\draw (4.5,1) node [above] {\small $C^+$};

\path (5,0)  edge [D] node  {}   (8,0);
\draw (6.5,0) node [below] {\small $D^+$}; 
\path    (7,1)  edge [B] node  {}   (10,1);
\draw (8.5,1) node [above] {\small $L_1^+$};

\path  (9,0)  edge [A] node  {}   (12,0);
\draw (10.5,0) node [below] {\small $L_2^+$};

\path  (11,1)  edge [B] node  {}   (14,1);
\draw (12.5,1) node [above] {\small $L_3^+$};
\path  (13,0)  edge [A] node  {}   (16,0);
\draw (14.5,0) node [below] {\small $L_4^+$};

\path   (-1.7,1)  edge node  {}   (1.7,1);
\draw (0,1) node [above] {\small $I_0$};

\path   (-1,0)  edge [B] node  {}   (-4,0);
\draw (-2.5,0) node [below] {\small $B^-$}; 
\path   (-3,1)  edge [C] node  {}   (-6,1);
\draw (-4.5,1) node [above] {\small $C^-$};
\draw (16,.2) node [above] {$\cdots$};
\draw (-16,.2) node [above] {$\cdots$};

\path (-5,0)  edge [D] node  {}   (-8,0);
\draw (-6.5,0) node [below] {\small $D^-$}; 

\path  (-7,1)  edge [B] node  {}   (-10,1);
\draw (-8.5,1) node [above] {\small $L_1^-$};

\path  (-9,0)  edge [A] node  {}   (-12,0);
\draw (-10.5,0) node [below] {\small $L_2^-$};

\path  (-11,1)  edge [B] node  {}   (-14,1);
\draw (-12.5,1) node [above] {\small $L_3^-$};
\path  (-13,0)  edge [A] node  {}   (-16,0);
\draw (-14.5,0) node [below] {\small $L_4^-$};

\draw  [thin,black] (8,0) -- (8,-1) node [below,black] {\tiny $3$};
\draw  [thin,black] (6,1) -- (6,2) node [above,black] {\tiny $2$};
\draw  [thin,black] (1.7,1) -- (1.7,2) node [above,black] {\tiny $1$};
\draw  [thin,black] (-1.7,1) -- (-1.7,2) node [above,black] {\tiny $-1$};

\end{tikzpicture}}
\caption{The family of bounded open intervals $\mathcal{F}$.
Intervals of the same color are supporting the same generator
in Figure~\ref{f:gdagger}. }
\label{f:config1}
\end{figure}

Note that $\ell_1/\ell_2<2$ and that the sequence $\{\ell_i/\ell_{i+1}\}$ decreases to $1$.
Hence, 
\[
\frac13>\frac{\ell_{i+1}}{2\ell_{i+1}+\ell_i}\ge\kappa
=\frac{\ell_2}{2\ell_2+\ell_1}>\frac14.
\]

Let us inductively define 
\begin{align*}
L^+_1 &= (3-\kappa\ell_1,3-\kappa\ell_1+\ell_1),\\
L^+_{i+1} &= (\sup L^+_i - \kappa\ell_i,\sup L^+_i-\kappa\ell_i+\ell_{i+1}).
\end{align*}
Note that $|L^+_i\cap L^+_{i+1}|=\kappa\ell_i$;
see  Figure~\ref{f:lili}.
Since \[\kappa\ell_{i-1} + \kappa\ell_{i}\le \ell_{i}-\kappa\ell_{i}<\ell_{i},\] we see that 
$L^+_{i-1}\cap L^+_{i+1}=\varnothing$. 
In other words, the collection $\{L_i^+\}$ has no triple intersections.
Then we define symmetrically $L_i^-=-L_i^+$ and add $L_i^\pm$ to $\FF$.
This completes the definition of the infinite chain $\FF$.
As $\sum_i \ell_i<\infty$, there exists some compact interval $I$ such that  \[\overline{\bigcup\FF}=I\sse\bR.\]

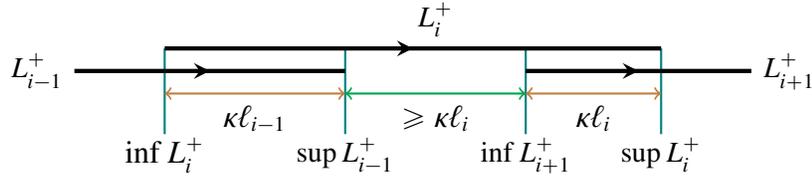
\begin{figure}[h!]
\tikzstyle {Ar}=[black,postaction=decorate,decoration={%
    markings,%
    mark=at position .5 with {\arrow[black]{stealth};}}]
\tikzstyle {Av}=[red,draw,shape=circle,fill=red,inner sep=1pt]
\tikzstyle {Bv}=[blue,draw,shape=circle,fill=blue,inner sep=1pt]
\tikzstyle {Cv}=[Maroon,draw,shape=circle,fill=Maroon,inner sep=1pt]
\tikzstyle {Dv}=[PineGreen,draw,shape=circle,fill=PineGreen,inner sep=1pt]
\tikzstyle {Ev}=[Orange,draw,shape=circle,fill=Plum,inner sep=1pt]
\begin{tikzpicture}[thick,scale=.6]
\draw  [teal] (-6,.5) -- (-6,-1.4);
\draw  [teal] (5,.5) -- (5,-1.4);
\draw  [teal] (-2,.5) -- (-2,-1.4);
\draw  [teal] (2,.5) -- (2,-1.4);
\path (-6,-.5) edge [brown,<->] node  {}  (-2,-.5);
\path (-2,-.5) edge [Green,<->] node  {}  (2,-.5);
\path (5,-.5) edge [brown,<->] node  {}  (2,-.5);
\draw  (-6,-1.2) node [below] {$\inf L_{i}^+$};
\draw  (5,-1.2) node [below] {\small $\sup L_{i}^+$};
\draw  (2,-1.2) node [below] {\small $\inf L_{i+1}^+$};
\draw  (-2,-1.2) node [below] {\small $\sup L_{i-1}^+$};
\draw (-4,-.5) node [below] { $\kappa\ell_{i-1}$};
\draw (3.5,-.5) node [below] { $\kappa\ell_{i}$};
\draw (0,-.5) node [below] { $\ge\kappa\ell_{i}$};
\draw (0,.5) node [above] {\small $L_i^+$}; 
\path (-8,0) edge [ultra thick,Ar] (-2,0);
\path  (-6,.5)  edge [ultra thick,Ar] (5,.5);
\draw (-8,0) node [left]   {\small $L_{i-1}^+$};
\path  (2,0)  edge [ultra thick,Ar] (7,0);
\draw (7,0) node [right]   {\small $L_{i+1}^+$}; 
\end{tikzpicture}
\caption{The bounded open intervals $L_i$'s.}
\label{f:lili}
\end{figure}

By applying Theorem~\ref{thm:optimal} to the parameter
\[\left(k,\mu,\delta_0,\{N_{2i-1}\}_{i\in\bN},\left\{\overline{L_{2i-1}^+}\right\}_{i\in\bN}\right),\]
we obtain a diffeomorphism $b_2^+\in\Diff_+^{k,\mu}(\bR)$ supported on $\cup_i L_{2i-1}^+$ such that $b_2^+$ is $\delta_0$--fast on each $L_{2i-1}^+$,
and such that  $b_2^+(x)>x$ for each $x\in \cup_i L_{2i-1}^+$.
Note that we are invoking the hypothesis that
\[N_{2i-1}\cdot \ell_{2i-1}^{k-1}\cdot\mu(\ell_{2i-1})\ge 1.\]
We define $b_2^-(x)=-b_2^+(-x)$.
We also define $a_2^\pm$ completely analogously with respect to the parameter
\[\left(k,\mu,\delta_0,\{N_{2i}\}_{i\in\bN},\left\{\overline{L_{2i}^+}\right\}_{i\in\bN}\right).\]
Then we define 
\[
\rho_2\co G^\dagger\to\Diff_+^{k,\mu}(\bR)\]
by $\rho_2(\bba)=a_2^+a_2^-,
\rho_2(\bbb)=b_2^+b_2^-$
and $\rho_2(\bbc)=\rho_2(\bbd)=\rho_2(\bbe)=1$.

For each $v\in V^\dagger$, we define \[\phi(v) = \rho_0(v) \rho_1(v) \rho_2(v).\]
We see from the construction that
\begin{itemize}
\item $\supp\phi\form{\bba,\bbe}\cap\supp\phi(\bbc)=\varnothing$;
\item $\phi(\bba)\phi(\bbe)\phi(\bba)^{-1}=\rho_0(\bba)\rho_0(\bbe)\rho_0(\bba)^{-1}=\rho_0(\bbe)^2 = \phi(\bbe)^2$.
\end{itemize}
Hence, the map $\phi$ extends to a group action
\[\phi\co G^\dagger\to\Diff_0^{k,\mu}(I).\]

Let us summarize the properties of $\phi$ below.
The proofs are obvious from construction and from Theorem~\ref{thm:optimal}.
We continue to use the notation from Section~\ref{ss:global}.

\begin{lem}\label{lem:phi}
The following hold for 
$\phi=\phi_{k,\mu}\co G^\dagger\to\Diff_0^{k,\mu}(I)$.
\be
\item
$\supp\phi=I\setminus \partial I$.
\item
For each $g\in G^\dagger$, the restriction $\phi(g)\restriction_{I\setminus\partial I}$ is a $C^\infty$ diffeomorphism.
\item
For each $i\ge1$, the map $\phi(v_i^{N_i})$ is $\delta_0$--fast on $L_i^\pm$.
\item\label{p:acc}
Every orbit of 
 $\phi\form{\bba,\bbc,\bbd,\bbe}$ in $I_0$ is accumulated at $\partial I_0$.
 \ee
\end{lem}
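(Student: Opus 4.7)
The plan is to verify the four claims by unpacking $\phi(v)=\rho_0(v)\rho_1(v)\rho_2(v)$ from Section~\ref{ss:config} and invoking the relevant parts of Theorem~\ref{thm:optimal}. The single unifying observation is that for every $v\in V^\dagger$, the three supports $\supp\rho_0(v)$, $\supp\rho_1(v)$, $\supp\rho_2(v)$ are pairwise disjoint: we have $\supp\rho_0\sse I_0$, $\supp\rho_1\sse B^\pm\cup C^\pm\cup D^\pm$, and $\supp\rho_2\sse\bigcup_i L_i^\pm$, while the chain $\FF$ is arranged so that these three ``zones'' lie pairwise non-adjacent, i.e., separated by non-consecutive intervals in the chain.

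For (i), the disjointness gives $\supp\phi(v)=\bigsqcup_j\supp\rho_j(v)$, hence $\supp\phi=\bigcup_{v\in V^\dagger}\supp\phi(v)=\bigcup\FF=I\setminus\partial I$; the last equality uses that $\FF$ is a chain of open intervals whose union is connected and has closure $I$. For (ii), note that $\rho_0$ and $\rho_1$ land in $\Diff_+^\infty(\bR)$ by their explicit constructions (standard smooth bump functions and a Ghys--Sergiescu-type smoothing of $F$), while for $\rho_2$ every $x\in I\setminus\partial I$ has a neighborhood meeting only finitely many $L_i^\pm$, since these accumulate only at $\partial I$. Theorem~\ref{thm:optimal}(\ref{p:inf}) then gives $C^\infty$ regularity of $\rho_2(v)$ at such $x$; composing and inverting $C^\infty$ maps preserves this property, so $\phi(g)|_{I\setminus\partial I}$ is $C^\infty$ for every $g\in G^\dagger$.

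For (iii), by the reflection symmetry of the construction it suffices to treat $L_i^+$. For odd $i$ we have $v_i=\bbb$: on $L_i^+$, $\rho_0(\bbb)=1$, and $\rho_1(\bbb)=b_1$ is trivial there because $\supp b_1\sse B^\pm$ is non-adjacent to $L_i^+$ in $\FF$; likewise $b_2^-$ is trivial on $L_i^+$. Hence $\phi(\bbb)|_{L_i^+}=b_2^+|_{L_i^+}$. The remaining components of $\supp b_2^+$ are the $L_{2j-1}^+$ for $j\ne (i+1)/2$, each disjoint from $L_i^+$, so $L_i^+$ is $b_2^+$-invariant and $\phi(\bbb^{N_i})|_{L_i^+}=(b_2^+)^{N_i}|_{L_i^+}$, which is $\delta_0$-fast on $L_i^+$ by Theorem~\ref{thm:optimal}(\ref{p:fast}). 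The even case $v_i=\bba$ and the mirror cases on $L_i^-$ are identical.

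For (iv), the supports of $\rho_1(v)$ and $\rho_2(v)$ for $v\in\{\bba,\bbc,\bbd,\bbe\}$ are all disjoint from $I_0$, so on $I_0$ the action of $\phi\form{\bba,\bbc,\bbd,\bbe}$ coincides with $\rho_0(G^\dagger)$ (using $\rho_0(\bbb)=1$). Orbits stay in $I_0$ because $\partial I_0\sse\Fix\rho_0(G^\dagger)$. Fix $x\in I_0$ and set $y=\sup\{\rho_0(g).x:g\in G^\dagger\}\in[x,1]$; if $y<1$ then $y\in I_0=\supp\rho_0$, so some generator $v$ satisfies $\rho_0(v)(y)\ne y$, and after possibly replacing $v$ by $v^{-1}$ we may assume $\rho_0(v)(y)>y$. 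Continuity then produces an orbital point strictly above $y$, contradicting the supremum; hence $y=1$, and symmetrically the infimum equals the left endpoint of $I_0$, so the orbit accumulates at $\partial I_0$. None of the four verifications is intrinsically deep---the main burden is keeping track of which of the three layers $\rho_0,\rho_1,\rho_2$ is active on each piece of the chain $\FF$, a task which the chain structure and Theorem~\ref{thm:optimal} are precisely designed to accommodate.
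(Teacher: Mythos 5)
Your proposal is correct and matches the route the paper intends: the paper's entire proof is the sentence ``The proofs are obvious from construction and from Theorem~\ref{thm:optimal},'' and you have correctly unpacked what that means, layer by layer.

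One small inaccuracy in your justification of (i) is worth flagging. You assert that the three ``zones'' $I_0$, $B^\pm\cup C^\pm\cup D^\pm$, and $\bigcup_i L_i^\pm$ ``lie pairwise non-adjacent, i.e., separated by non-consecutive intervals in the chain.'' That is false: $I_0$ and $B^+$ are consecutive in $\FF$ and therefore overlap, as do $D^+$ and $L_1^+$. The disjointness you actually need and use---that for each fixed $v\in V^\dagger$ the supports of $\rho_0(v)$, $\rho_1(v)$, $\rho_2(v)$ are pairwise disjoint---does hold, but for a finer, generator-by-generator reason: the construction distributes the generators among the layers so that whenever two of the three layers act nontrivially on the same $v$, their supports are at least two steps apart in $\FF$. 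For example, for $v=\bbc$ the active layers are $\rho_0$ (on $I_0$) and $\rho_1$ (on $C^\pm$), and $I_0$ and $C^+$ are separated by $B^+$; for $v=\bbb$ the active layers are $\rho_1$ (on $B^\pm$) and $\rho_2$ (on $\bigcup_j L_{2j-1}^\pm$), separated by $C^+$ and $D^+$. With that explanation corrected, the rest of your verification---the invariance of each $L_i^\pm$ under the relevant $a_2^\pm$ or $b_2^\pm$ in (iii), the appeal to Theorem~\ref{thm:optimal}(\ref{p:inf}) in (ii), the supremum argument in (iv)---is sound.
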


\subsection{The behavior of $\{w_i\}_{i\geq 0}$ under $\phi$}
Whereas we have good control over the compactly supported diffeomorphism $\phi(u)$, we will need to have good control over commutators of conjugates of $\phi(u)$.

\begin{lem}\label{lem:stretch-interval}
For each nonempty open interval 
$U_0\sse \supp\phi(G^\dagger )$, there exists a suitably chosen $f\in\phi(G^\dagger )$ such that $f(U_0)\cap L_1^+\neq\varnothing$ and such that $f(U_0)\cap L_1^-\neq\varnothing$.
\end{lem}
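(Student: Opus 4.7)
The plan is to produce $f \in \phi(G^\dagger)$ such that $\inf f(U_0) \in L_1^-$ and $\sup f(U_0) \in L_1^+$; since $L_1^\pm$ are open intervals and $f(U_0)$ accumulates at each of these infimum/supremum points from the interior, this forces $f(U_0) \cap L_1^\pm \ne \varnothing$.

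I will build $f$ as a product $f = g_4 g_3 g_2 g_1 g_0$, where each factor successively pushes the image interval further along the chain $\FF$ outward toward $L_1^\pm$. First, by connectedness of $\supp\phi = I \setminus \partial I$ together with Lemma~\ref{lem:slide}, I can choose $g_0 \in \phi(G^\dagger)$ so that $g_0(U_0) \cap I_0 \ne \varnothing$. Next, exploiting the rich dynamics of $\phi\form{\bba,\bbc,\bbd,\bbe}$ on $I_0$---which contains a faithful $\BS(1,2)$-subaction $\rho_0\form{\bba,\bbe}$ conjugate to the standard affine action, and whose orbits accumulate at $\partial I_0$ by Lemma~\ref{lem:phi}(iv)---I will choose $g_1 \in \phi\form{\bba,\bbc,\bbd,\bbe}$ so that $g_1 g_0(U_0)$ contains a compact subinterval of $I_0$ whose endpoints lie inside the overlap regions $B^- \cap I_0$ and $B^+ \cap I_0$ respectively. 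Finally, I apply successive large powers $g_2 = \phi(\bbb)^{n_2}$, $g_3 = \phi(\bbc)^{n_3}$, and $g_4 = \phi(\bbd)^{n_4}$. By the construction in Section~\ref{ss:config}, each of $\phi(\bbb), \phi(\bbc), \phi(\bbd)$ is built as a mirror-symmetric product ($b_1$, $c_1^+ c_1^-$, $d_1^+ d_1^-$), and its restriction to each component $B^\pm$, $C^\pm$, or $D^\pm$ of its support is a single-interval diffeomorphism with no interior fixed point which pushes points consistently outward. Choosing the exponents large enough in succession drives both endpoints of the image interval across the chain overlaps $B^\pm \cap C^\pm$, then $C^\pm \cap D^\pm$, and finally into $D^\pm \cap L_1^\pm$.

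The main obstacle I expect to face is guaranteeing simultaneity: a single factor $g_i$ must push \emph{both} endpoints of the current interval in their respective outward directions at once. This is precisely why $\rho_1$ was built mirror-symmetrically in Section~\ref{ss:config}: a single power of $\phi(\bbb), \phi(\bbc)$, or $\phi(\bbd)$ acts outward on both the positive and the negative side of the chain simultaneously. The remaining bookkeeping---that iterates of each outward-pushing diffeomorphism converge to the appropriate boundary of its component of support---is immediate since each of $b_1, c_1^\pm, d_1^\pm$ is a fixed-point-free diffeomorphism of a single interval with consistent directional motion, so large enough exponents suffice to advance the endpoints into the next overlap region of the chain $\FF$.
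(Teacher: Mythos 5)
Your overall plan is reasonable and your Step~3 (pushing both endpoints outward across the chain via large positive powers of $\phi(\bbb), \phi(\bbc), \phi(\bbd)$, exploiting the mirror symmetry of $\rho_1$) is correct and essentially matches the paper's Case~1. However, there is a genuine gap at your Step~2.

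You assert that, given an interval whose intersection with $I_0$ is $(a,b)$, there is a \emph{single} $g_1\in\phi\form{\bba,\bbc,\bbd,\bbe}$ with $g_1(a)\in B^-\cap I_0$ and $g_1(b)\in B^+\cap I_0$, citing Lemma~\ref{lem:phi}(\ref{p:acc}) (orbits accumulate at $\partial I_0$) and the presence of a $\BS(1,2)$ sub-action. Neither justification gives simultaneous control of two points. Lemma~\ref{lem:phi}(\ref{p:acc}) lets you move $a$ close to $\inf I_0$, or $b$ close to $\sup I_0$, but not both at once: once you pick $g_1$ to send $a$ into $B^-\cap I_0$, you have no control over $g_1(b)$, which for all you know could be dragged left along with $a$. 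The $\BS(1,2)$ sub-action doesn't help either --- a compactly supported conjugate of the standard affine action expands near one endpoint while contracting near the other, and in any case $\rho_0\form{\bba,\bbe}$ is supported only in slabs in the interior of $I_0$, not near $\partial I_0$. Spreading an arbitrary pair $(a,b)$ toward both ends of $I_0$ using elements supported entirely in $I_0$ requires a nontrivial ping--pong-type argument that you have not supplied, and it is precisely the subtlety this lemma exists to handle.

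The paper sidesteps this issue by \emph{decoupling} the two endpoints. It first sends $z_2=\sup U_0$ into $D^+\cap L_1^+$ (using the whole group $\phi(G^\dagger)$). Crucially, $D^+\cap L_1^+$ lies in the connected component $C^+\cup D^+$ of $\supp\phi\form{\bba,\bbc,\bbd,\bbe}$, which is \emph{separated from} $I_0$ by the fixed-point region $Q=(B^+\cup L_1^+)\setminus(I_0\cup C^+\cup D^+\cup L_2^+)$. So once $z_2$ is across that gap, any $f_1\in\phi\form{\bba,\bbc,\bbd,\bbe}$ chosen freely to push $z_1$ into $B^-\cap I_0$ can only move $z_2$ within $C^+\cup D^+$; it cannot drag $z_2$ back into $I_0$ or into $Q$. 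That is the one idea your argument is missing: you tried to do everything inside $I_0$, where there is no such barrier protecting the right endpoint while you push the left one.
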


Intuitively, Lemma~\ref{lem:stretch-interval} says that no matter how small an interval we choose inside $\supp\phi(G^\dagger )$, we may find an element of $f\in \phi(G^\dagger )$ so that $f(U_0)$ stretches across
 \[ I_0\cup B^\pm \cup C^\pm \cup D^\pm.\]
Of course, $f(U_0)$ might be much larger than this union, though this is unimportant.

\begin{proof}[Proof of Lemma~\ref{lem:stretch-interval}]
Let $U_0=(z_1,z_2)$ be given as in the hypotheses of the lemma. By Lemmas~\ref{lem:phi} (\ref{p:acc}) and~\ref{lem:slide}, there exists an $f\in\phi(G^\dagger )$ such that $f(z_2)\in D^+\cap L_1^+$. 
So, we may assume $z_2\in D^+\cap L_1^+$. 
We may then assume that $z_1\ge \sup L_1^-$; for, otherwise there is nothing to show.
There are four (overlapping) cases to consider.

{\bf Case 1: $z_1\in B^-\cup C^-\cup D^-$}.

For sufficiently large $n_1,n_2,n_3\in\bN$
and for $f_1=\phi(\bbd^{n_3}\bbc^{n_2}\bbb^{n_1})$, 
we have 
$f_1(z_2)\in L_1^+\setminus D^+$
and
 $f_1(z_1)\in D^-\cap L_1^-$. This is the desired configuration.
 
{\bf Case 2: $z_1\in I_0$}.
 
By Lemma~\ref{lem:phi}  (\ref{p:acc}), there is  $f_1\in\phi\form{\bba,\bbc,\bbd,\bbe}$ such that 
$f_1(z_1)\in B^-\cap I_0$. 
Note that 
\[Q:=(B^+\cup L_1^+)\setminus (I_0\cup C^+\cup D^+\cup L_2^+)\] is a nonempty set which is disjoint from $\supp\phi\form{\bba,\bbc,\bbd,\bbe}$. Hence, $f_1(z_2)\not\in Q$; see Figure~\ref{f:rightofE}.
We have $f_1(z_2)\in C^+\cup D^+$.
As in Case 1, we can find sufficiently large $n_1,n_2,n_3\in\bN$
such that for $f_2=\phi(\bbd^{n_3}\bbc^{n_2}\bbb^{n_1})$
we have $f_2f_1(z_1)\in  L_1^-$. 
and  $f_2f_1(z_2)\in L_1^+$. This is the desired.

{\bf Case 3: $z_1\in B^+$}.

There exist sufficiently large $n_1\in\bN$
such that for $f_1=\phi(\bbb^{-n_1})$, 
we have 
$f_1(z_2)\in D^+  \cap L_1^+$
and
 $f_1(z_1)\in I_0\cap B^+$. So, we again have Case 2.

{\bf Case 4: $z_1\in C^+\cup D^+$}. 

We use the fact that the restriction of $\phi\form{\bbc,\bbd}$ to $C^+\cup D^+$ generates a locally dense copy of Thompson's group $F$. 
As we have seen in Section~\ref{ss:dense-F}, 
 for some suitable $f_1\in\phi\form{\bbc,\bbd}$
  we may arrange$f_1(z_1)\in B^+\cap C^+$  and $f_1(z_2)\in D^+\cap L_1^+$, thus reducing to the previous case.
\end{proof}
 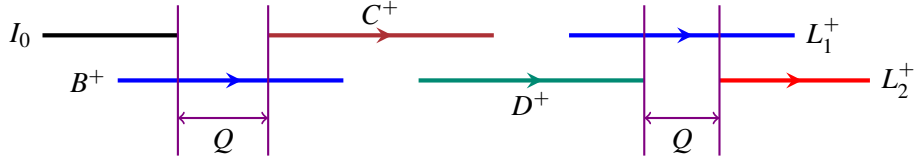
\begin{figure}[h!]
{
\tikzstyle {Av}=[red,draw,shape=circle,fill=red,inner sep=1pt]
\tikzstyle {Bv}=[blue,draw,shape=circle,fill=blue,inner sep=1pt]
\tikzstyle {Cv}=[Maroon,draw,shape=circle,fill=Maroon,inner sep=1pt]
\tikzstyle {Dv}=[PineGreen,draw,shape=circle,fill=PineGreen,inner sep=1pt]
\tikzstyle {Ev}=[Orange,draw,shape=circle,fill=Plum,inner sep=1pt]
\tikzstyle {A}=[red,postaction=decorate,decoration={%
    markings,%
    mark=at position .55 with {\arrow[red]{stealth};}}]
\tikzstyle {B}=[blue,postaction=decorate,decoration={%
    markings,%
    mark=at position .55 with {\arrow[blue]{stealth};}}]
\tikzstyle {C}=[Maroon,postaction=decorate,decoration={%
    markings,%
    mark=at position .55 with {\arrow[Maroon]{stealth};}}]
\tikzstyle {D}=[PineGreen,postaction=decorate,decoration={%
    markings,%
    mark=at position .55 with {\arrow[PineGreen]{stealth};}}]
\tikzstyle {E}=[Orange,postaction=decorate,decoration={%
    markings,%
    mark=at position .55 with {\arrow[Orange]{stealth};}}]
\begin{tikzpicture}[scale=1]
\path (1,0) edge [B,ultra thick] node  {}  (4,0);
\draw (1,0) node [left] {\small $B^+$}; 

\path (3,.6) edge [C,ultra thick] node  {}   (6,.6);
\draw (4.5,.6) node [above] {\small $C^+$};

\path (5,0)  edge [D,ultra thick] node  {}   (8,0);
\draw (6.5,0) node [below] {\small $D^+$}; 
\path    (7,.6)  edge [B,ultra thick] node  {}   (10,.6);
\draw (10,.6) node [right] {\small $L_1^+$};

\path  (9,0)  edge [A,ultra thick] node  {}   (11,0);
\draw (11,0) node [right] {\small $L_2^+$};

\path   (0,.6)  edge [ultra thick] node  {}   (1.8,.6);
\draw (0,.6) node [left] {\small $I_0$};

\path (1.8,-.5) edge [violet,<->,thick] node  {}  (3,-.5);
\draw [violet,thick] (1.8,1) -- (1.8,-1) (3,1) -- (3,-1) (8,1) -- (8,-1) (9,1) -- (9,-1);
\path (8,-.5) edge [violet,<->,thick] node  {}  (9,-.5);
\draw (2.4,-.5) node [below] {\small $Q$};
\draw (8.5,-.5) node [below] {\small $Q$};
\end{tikzpicture}
}
\caption{The point $f_1(z_2)$ stays in $C^+\cup D^+$.}
\label{f:rightofE}
\end{figure}

We retain the elements $\{w_i\}_{i\in\bN}$ as defined in Section~\ref{ss:global}. The following lemma measures the complexity of certain diffeomorphisms in $\phi(G^\dagger)$ and shows that the complexities grow linearly.
\begin{lem}\label{lem:linear-growth}
Let $u\in G^\dagger\setminus\ker\phi$
be an element such that 
$\supp\phi(u)$ is compactly contained in $\supp\phi$.
Then for some conjugate $u'\in G^\dagger$ of $u$, and for some component $U_1$ of $\supp\phi(u')$, 
we have that whenever $i\in \bN$
the bounded open interval $\phi(w_i)U_1$ intersects 
both $L_{i+1}^+$ and $L_{i+1}^-$.  
 In particular, 
we have that  \[\cl(\phi(w_i)U_1) > 2i,\]
and that $\partial (\phi(w_i)U_1)\sse\supp \phi(a)\cup \supp\phi(b)$.
\end{lem}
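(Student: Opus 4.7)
The plan is to first produce a conjugate $u'$ of $u$ whose relevant component $U_1$ of $\supp\phi(u')$ is deeply positioned inside $L_1^\pm$, and then to show by induction on $i$ that this deep positioning propagates along the chain.

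Starting from any component $U_0$ of $\supp\phi(u)$, Lemma~\ref{lem:stretch-interval} produces some $f_1\in\phi(G^\dagger)$ with $f_1(U_0)$ meeting both $L_1^+$ and $L_1^-$, so that $f_1(U_0)$ is a connected interval containing the central region $[\sup L_1^-,\inf L_1^+]$ and stretching some distance into each of $L_1^\pm$. Since $\phi(\bbb)$ restricts on each of $L_1^\pm$ to a bump function (by Theorem~\ref{thm:optimal}) pushing rightward on $L_1^+$ and leftward on $L_1^-$, I would post-compose $f_1$ with a suitable power $\phi(\bbb^m)$ to obtain $f:=\phi(\bbb^m)\circ f_1$ so that $U_1:=f(U_0)$ satisfies
\[
\sup U_1 \ge \inf L_1^+ + D\ell_1 \quad\text{and}\quad \inf U_1 \le \sup L_1^- - D\ell_1.
\]
The desired conjugate is then $u':=fuf^{-1}$.

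The key inductive claim to establish is: for every $i\ge 0$, $\sup\phi(w_i) U_1 \ge \inf L_{i+1}^+ + D\ell_{i+1}$ and $\inf\phi(w_i) U_1 \le \sup L_{i+1}^- - D\ell_{i+1}$. By the choice of the sequence $(v_i)$, the interval $L_i^\pm$ is always a component of $\supp\phi(v_i)$, and Lemma~\ref{lem:construction}(\ref{part:f-Nx}) guarantees that $\phi(v_i^{N_i})$ maps $\inf L_i^+ + D\ell_i$ to at least $\inf L_i^+ + (D+\delta_0)\ell_i$. Combined with the monotonicity of $\phi(v_i^{N_i})$ and the inductive hypothesis, the inequality at step $i$ reduces to the elementary numerical bound $(\kappa - 1/10)\ell_i \ge D\ell_{i+1}$, which follows from $\kappa>1/4$, $D\le 1/10$, $\delta_0\ge 9/10$, and $\ell_{i+1}\le\ell_i$ (see Setting~\ref{setting:kmu} and Section~\ref{ss:global}). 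The argument for the infimum is symmetric.

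The remaining conclusions then follow quickly. Since $\phi(v_i^{N_i})$ preserves $L_i^+$ setwise, we have $\sup\phi(w_i) U_1 \in L_i^+\cap L_{i+1}^+$ and, symmetrically, $\inf\phi(w_i) U_1 \in L_i^-\cap L_{i+1}^-$, so $\phi(w_i) U_1$ meets both $L_{i+1}^+$ and $L_{i+1}^-$. Counting the chain intervals needed to cover $\phi(w_i) U_1$ — namely $L_i^-, L_{i-1}^-,\ldots, L_1^-, D^-, C^-, B^-, I_0, B^+, C^+, D^+, L_1^+,\ldots, L_i^+$ — gives $\cl(\phi(w_i) U_1) \ge 2i + 7 > 2i$. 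Finally, the two endpoints of $\phi(w_i) U_1$ lie in $L_{i+1}^\pm$, which by construction is contained in $\supp\phi(\bba)\cup\supp\phi(\bbb)$. The main obstacle is the setup step: achieving the precise deep positioning of $U_1$ inside $L_1^\pm$ goes slightly beyond what Lemma~\ref{lem:stretch-interval} supplies and requires exploiting the fine bump structure of $\phi(\bbb)$ near the fixed endpoints of $L_1^\pm$ so that iterates of $\phi(\bbb)$ can push an endpoint of $f_1(U_0)$ to any prescribed depth inside $L_1^\pm$.
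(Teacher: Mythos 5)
Your proposal follows the same overall strategy as the paper's proof: use Lemma~\ref{lem:stretch-interval} to stretch an image of $U_0$ across the central region, conjugate by a further power of $\bbb$ to position deeply in $L_1^\pm$, then propagate along the chain by induction. Two technical points merit comment.

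First, the paper tracks interior points $s^\pm\in U_1\cap L_1^\pm$ (with $\inf L_1^++(1-\delta_0)\ell_1<s^+<\sup L_1^+$) rather than the endpoints $\sup U_1$, $\inf U_1$. This choice matters. Your one-sided bound $\sup\phi(w_i)U_1\ge\inf L_{i+1}^++D\ell_{i+1}$ does propagate (even if $\sup\phi(w_{i-1})U_1\ge\sup L_i^+$, since $\sup L_i^+$ is a fixed point of $\phi(v_i^{N_i})$ and $\sup L_i^+=\inf L_{i+1}^++\kappa\ell_i>\inf L_{i+1}^++D\ell_{i+1}$ — a case you don't state but which is needed). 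However, your intermediate claim that $\sup\phi(w_i)U_1\in L_i^+\cap L_{i+1}^+$ is \emph{not} justified: nothing prevents $\sup U_1$ from lying well beyond $L_1^+$ initially (Lemma~\ref{lem:stretch-interval} only gives nonempty intersection, not containment of the endpoint), and then $\sup\phi(w_i)U_1$ escapes into higher-indexed $L_j^+$. The final conclusion $\partial(\phi(w_i)U_1)\sse\supp\phi(\bba)\cup\supp\phi(\bbb)$ is still rescued because $\bigcup_j L_j^+=(\inf L_1^+,\sup I)$ and $\phi(w_i)(\sup U_1)$ lies strictly between $\inf L_{i+1}^+$ and the $\phi(w_i)$-fixed point $\sup I$, but your stated reasoning for it is off. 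Tracking interior points $s^\pm$ as the paper does, where the two-sided bound $\phi(w_{i-1})s^+<\sup L_i^+$ is preserved by the setwise invariance of $L_i^+$ under $\phi(v_i^{N_i})$, is cleaner.

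Second, you invoke the specific behavior of the bump function at $D\ell_i$ from the \emph{proof} of Lemma~\ref{lem:construction}, rather than its stated conclusion. The paper instead uses only the abstract $\delta_0$-fastness guaranteed by Theorem~\ref{thm:optimal}: the existence of a point $x_i$ with $\phi(v_i^{N_i})x_i-x_i\ge\delta_0\ell_i$, together with invariance of $L_i^+$, forces $x_i\le\sup L_i^+-\delta_0\ell_i$, so any point in the deep region $(\sup L_i^+-\delta_0\ell_i,\sup L_i^+)$ lies to its right and is carried past $\inf L_i^++\delta_0\ell_i$. This avoids opening the black box of the construction. Finally, your closing worry about the setup step is unfounded: pushing $\sup f_1(U_0)$ arbitrarily deep into $L_1^+$ via $\phi(\bbb^m)$ requires only that $\phi(\bbb)$ has no interior fixed points in $L_1^+$, which is guaranteed, and is exactly the paper's "conjugating by a further power of $b$ if necessary."
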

\begin{proof}
Choose an open interval $U_0\in\pi_0\supp\phi(u)$
compactly contained in $I$. By Lemma~\ref{lem:stretch-interval}, 
there is a conjugate $u'\in G^\dagger$ of $u$ such that the image $U_1$ of $U_0$ under this conjugation intersects $L_1^\pm$. 
Conjugating by a further power of $b$ if necessary, we may assume $(s^-,s^+)\sse U_1$ 
for some $s^\pm$ satisfying the following.
\begin{align*}
&\inf L_1^+ + (1-\delta_0)\ell_1< s^+ <\sup L_1^+,\\
&\inf L_1^- <  s^- <\sup L_1^- - (1-\delta_0)\ell_1.
\end{align*}
Note $1-\delta_0\le 1/10$. See Figure~\ref{f:p4l4}. 
We now apply $\phi$ to the conjugates $w_iu'w_i^{-1}$.

\begin{figure}[h!]
{
\tikzstyle {Av}=[red,draw,shape=circle,fill=red,inner sep=1.5pt]
\tikzstyle {Bv}=[blue,draw,shape=circle,fill=blue,inner sep=1.5pt]
\tikzstyle {Cv}=[Maroon,draw,shape=circle,fill=Maroon,inner sep=1.5pt]
\tikzstyle {Dv}=[PineGreen,draw,shape=circle,fill=PineGreen,inner sep=1.5pt]
\tikzstyle {Ev}=[Orange,draw,shape=circle,fill=Orange,inner sep=1.5pt]
\tikzstyle {A}=[red,postaction=decorate,decoration={%
    markings,%
    mark=at position .5 with {\arrow[red]{stealth};}}]
\tikzstyle {B}=[blue,postaction=decorate,decoration={
    markings,
    mark=at position .5 with {\arrow[blue]{stealth};}}]
\tikzstyle {C}=[Maroon,postaction=decorate,decoration={
    markings,
    mark=at position .7 with {\arrow[Maroon]{stealth};}}]
\tikzstyle {D}=[PineGreen,postaction=decorate,decoration={ 
    markings, 
    mark=at position .7 with {\arrow[PineGreen]{stealth};}}]
\tikzstyle {E}=[Orange,postaction=decorate,decoration={ 
    markings, 
    mark=at position .7 with {\arrow[Orange]{stealth};}}]
\begin{tikzpicture}[thick,scale=.6]
\draw (-6,.5) -- (6,.5);
\draw  [teal] (-6,.5) -- (-6,-1.4);
\draw  [teal] (6,.5) -- (6,-1.4);
\draw  [teal] (-2,.5) -- (-2,-1.4);
\draw  [teal] (2,.5) -- (2,-1.4);
\path (-6,-.5) edge [brown,<->] node  {}  (-2,-.5);
\path (6,-.5) edge [brown,<->] node  {}  (2,-.5);
\draw  (-6,-1.2) node [below] {\small $s^-$};
\draw  (6,-1.2) node [below] {\small $s^+$};
\draw  (2,-1.2) node [below] {\small $\inf L_1^+$};
\draw  (-2,-1.2) node [below] {\small $\sup L_1^-$};
\draw (-4,-.5) node [below] {\tiny $> (1-\delta_0)\ell_1$};
\draw (4,-.5) node [below] {\tiny $> (1-\delta_0)\ell_1$};
%\draw (0,.5) node [below] {\small $U$}; 
\path (-2,0) edge [ultra thick,B] node {} (-8,0);
\path (2,0) edge [ultra thick,B] node {} (8,0);
\draw (-8,0) node [left]   {\small $L_1^-$};
\draw (8,0) node [right]   {\small $L_1^+$};
\end{tikzpicture}}
\caption{Replacing $u$ by a suitable conjugate $u'$.}
\label{f:p4l4}
\end{figure}
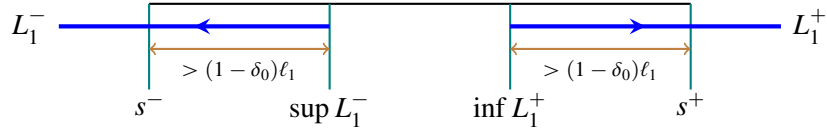

Assume by induction that 
\begin{align*}
&\inf L_i^+ +(1-\delta_0)\ell_i < \phi(w_{i-1})s^+<\sup L_i^+,\\
&\inf L_i^-<  \phi(w_{i-1})s^-<\sup L_i^- - (1-\delta_0)\ell_i.
\end{align*}
As $\phi(v_i^{N_i})$ is $\delta_0$--fast on $L_i^+$,
there is $x_i\in L_i^+$ such that 
$\phi(v_i^{N_i})x_i\ge x_i+\delta_0 \ell_i$.
Then 
\begin{align*}
\phi(w_{i-1})s^+
&\ge\inf L_i^++(1-\delta_0)\ell_i
=\sup L_i^+ -\delta_0\ell_i \ge \phi(v_i^{N_i})x_i-\delta_0\ell_i\ge x_i.\\
\phi(w_i)s^+
&\ge \phi(v_i^{N_i})x_i\ge 
x_i+\delta_0\ell_i
\ge
\inf L_i^+ +\delta_0\ell_i
=\sup L_i^+-(1-\delta_0)\ell_i\\
&=\inf L_{i+1}^+ +(\kappa -1+\delta_0)\ell_i
>
\inf L_{i+1}^+ +(1-\delta_0)\ell_{i+1}.
\end{align*}
Here, we used $\kappa>1/4>2(1-\delta_0)$.
By induction,
 we see that $\phi(w_i)s^\pm\in L_{i+1}^\pm$.

In order to cover $\phi(w_i)U_1$ by intervals in $\FF$,
we need at least
\[
\{I_0,B^\pm, C^\pm,D^\pm,L_1^\pm,\ldots,L_{i}^\pm\}.\]
The conclusion is now obvious.
\end{proof}

\subsection{Certificates of non-commutativity}

The following fact will be used in order to show that $\phi(G^\dagger )$ cannot be smoothed algebraically.

\begin{lem}\label{lem:nontrivial}
Suppose we have $u\in G^\dagger$ such that
 $\supp\phi(u)$ is compactly contained in $\supp\phi=I\setminus\partial I$,
 and $U\in\pi_0\supp\phi(u)$.
If $h\in G^\dagger$ satisfies that $\phi(h)U\ne U$ and that $||h||<\cd(\inf U, \sup U)$,
then $[\phi(u),\phi(huh^{-1})]\neq 1$.
\end{lem}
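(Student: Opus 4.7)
The strategy is to argue by contradiction: assume $[\phi(u), \phi(huh^{-1})] = 1$. Because $\supp \phi(huh^{-1}) = \phi(h)(\supp\phi(u))$, the set $\phi(h)U$ is a component of $\supp\phi(huh^{-1})$. I will apply the Disjointness Condition (Lemma~\ref{lem:disjoint}) to conclude that the two components $U$ and $\phi(h)U$ are either equal or disjoint. Since the hypothesis $\phi(h)U \ne U$ rules out equality, they must be disjoint, and I will then combine Lemma~\ref{lem:syll-cover} with the hypothesis $\|h\| < \cd(\inf U, \sup U)$ to reach a contradiction.

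Before invoking Lemma~\ref{lem:disjoint}, I must verify that $\phi(u)$ and $\phi(huh^{-1})$ belong to $\Diff_+^{1+\mathrm{bv}}(I)$ and are grounded. By Lemma~\ref{lem:phi}(2), every element of $\phi(G^\dagger)$ is $C^\infty$ on the interior $I \setminus \partial I$. Since $\supp\phi(u)$ is compactly contained in the interior, $\phi(u)$ is the identity on a neighborhood of $\partial I$, so it is globally $C^\infty$ on $I$; and since $\phi(h)$ fixes $\partial I$, the set $\phi(h)(\supp\phi(u))$ is also compactly contained in the interior, so the same reasoning yields $\phi(huh^{-1}) \in \Diff^\infty_+(I)$. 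Both are trivially grounded. I regard this regularity check as the main technical point, because for $k=1$ and $\mu\gg\omega_1$ one does not in general have $\Diff_+^{1,\mu}(I) \subseteq \Diff_+^{1+\mathrm{bv}}(I)$; the hypothesis of compact containment of the support is essential to promote the local smoothness of Lemma~\ref{lem:phi}(2) to the global regularity needed for Kopell--Navas.

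For the covering-distance step, write $p=\inf U$ and $q=\sup U$, and suppose $U \cap \phi(h)U = \varnothing$. Since $\phi(h)$ is orientation-preserving, either $\phi(h)p \ge q$ or $\phi(h)q \le p$. In the first case $[p,q] \subseteq [p, \phi(h)p]$, so by definition of covering length $\cd(p, \phi(h)p) \ge \cd(p, q)$; but Lemma~\ref{lem:syll-cover} gives $\cd(p, \phi(h)p) \le \|h\|$, contradicting $\|h\| < \cd(p,q)$. The case $\phi(h)q \le p$ is symmetric, using the point $q$ in place of $p$. Finally, the degenerate possibility that, say, $p \notin \supp\phi$ does not actually occur under disjointness: then $\phi(h)p = p$, so $U$ and $\phi(h)U$ share the endpoint $p$ and overlap in an open neighborhood of $p$, already violating the assumed disjointness.
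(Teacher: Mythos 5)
Your proof is correct and uses the same ingredients as the paper's — Lemma~\ref{lem:disjoint}, Lemma~\ref{lem:syll-cover}, and the regularity upgrade from $C^\infty$-on-the-interior plus compact containment — differing only in the order of steps (you invoke the Disjointness Condition first and then reach the covering-distance contradiction, whereas the paper first shows $U\cap\phi(h)U\neq\varnothing$ and then applies the Disjointness Condition to rule out commutation). The closing remark about $p\notin\supp\phi$ is unnecessary, since $\overline U\subseteq\overline{\supp\phi(u)}\subseteq I\setminus\partial I=\supp\phi$ by compact containment, but it is not incorrect.
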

\begin{proof}
Write $U=(z_1,z_2)$ and $\cd(z_1,z_2)=N<\infty$.
We claim $\phi(h)U\cap U\neq\varnothing$.
For, otherwise 
we have 
either $\phi(h)z_1\geq z_2$ or $\phi(h)z_2\leq z_1$.
But this would imply that one of the following holds:
\begin{itemize}
\item $\cd(\phi(h)z_1,z_1)\geq N$;
\item $\cd(\phi(h)z_2,z_2)\geq N$.
\end{itemize}
This then violates Lemma~\ref{lem:syll-cover}.

Let $f = \phi(u)$ and $g = \phi(huh^{-1})$. Since $\supp f$ is compactly contained in $I$, 
there exists a compact interval $J$ such that 
\[\supp f\cup\supp g \sse J\sse I\setminus \partial I.\]
Since $\phi(G^\dagger)$ is $C^\infty$ at each point $x\in I\setminus\partial I$, we may regard
$f,g\in \Diff^\infty_+(J)$. 
A corollary to Kopell's Lemma (Corollary~\ref{lem:disjoint}) implies that if 
$f$ and $g$ commute, then $U$ and $\phi(h)U$ must either be equal or disjoint. 
They are not disjoint by the previous paragraph and they are not equal 
by the hypothesis.
\end{proof}

We remark that the above fact can be generalized to arbitrary compactly supported representations which are $C^2$ in the interior. The following lemma extracts the main content of this section which will be necessary in the sequel.

\begin{lem}\label{lem:kernel-phi}
Suppose $u\in G^\dagger $ satisfies that $\supp\phi(u)$ is a nonempty set compactly contained in $\supp\phi(G^\dagger )$.
Then there exists a conjugate $u'$ of $u$ in $G^\dagger $
such that
for all $i\in\bN$, for all $s,t\in\{-1,1\}$ and for all $h\in G^\dagger $ satisfying $\|h\|<2i$,
we have 
\[
\phi[w_i u' w_i^{-1} , h' w_i u' w_i^{-1} (h')^{-1}]\ne1\]
for at least one $h'\in\{h,\bba^s\cdot h,\bbb^t\cdot h\}$.
\end{lem}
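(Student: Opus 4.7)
The plan is to combine Lemma~\ref{lem:linear-growth} (linear growth of covering length) with Lemma~\ref{lem:nontrivial} (a Kopell--type criterion for noncommutativity). First I would apply Lemma~\ref{lem:linear-growth} to $u$ to obtain a conjugate $u'$ of $u$ in $G^\dagger$ together with a component $U_1$ of $\supp\phi(u')$ such that, writing $U_1^{(i)}:=\phi(w_i)U_1$, we have $\cl(U_1^{(i)})>2i$ and $\partial U_1^{(i)}\sse\supp\phi(\bba)\cup\supp\phi(\bbb)$ for every $i\in\bN$. Since $U_1^{(i)}$ is a component of $\supp\phi(w_iu'w_i^{-1})$, which remains compactly contained in $\supp\phi=I\setminus\partial I$, this already positions us to invoke Lemma~\ref{lem:nontrivial} with $w_iu'w_i^{-1}$ in the role of $u$ and $U_1^{(i)}$ in the role of $U$; the required covering distance bound follows from $\cd(\inf U_1^{(i)},\sup U_1^{(i)})\ge \cl(U_1^{(i)})>2i$.

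For fixed $i$, $s,t\in\{\pm1\}$ and $h\in G^\dagger$ with $\|h\|<2i$, the task is to produce $h'\in\{h,\bba^s h,\bbb^t h\}$ with $\phi(h')U_1^{(i)}\ne U_1^{(i)}$. The syllable bound $\|h'\|<\cd(\inf U_1^{(i)},\sup U_1^{(i)})$ needed by Lemma~\ref{lem:nontrivial} is automatic: each candidate satisfies $\|h'\|\le\|h\|+1\le 2i$, and since covering length is integer-valued we have $\cd\ge 2i+1$. If $\phi(h)U_1^{(i)}\ne U_1^{(i)}$, take $h'=h$ and conclude. Otherwise $\phi(h)U_1^{(i)}=U_1^{(i)}$, so $\phi(\bba^s h)U_1^{(i)}=\phi(\bba^s)U_1^{(i)}$ and $\phi(\bbb^t h)U_1^{(i)}=\phi(\bbb^t)U_1^{(i)}$, reducing the problem to choosing between the two generators.

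At this point I use the boundary inclusion $\partial U_1^{(i)}\sse\supp\phi(\bba)\cup\supp\phi(\bbb)$ from Lemma~\ref{lem:linear-growth} to select an endpoint $p$ of $U_1^{(i)}$ that lies in one of the two supports. If $p\in\supp\phi(\bba)$, I claim $\phi(\bba^s)U_1^{(i)}\ne U_1^{(i)}$: otherwise $\phi(\bba^s)$ would be an orientation-preserving self-homeomorphism of the open interval $U_1^{(i)}$, hence extend continuously to a self-homeomorphism of $\overline{U_1^{(i)}}$ fixing both endpoints, contradicting $p\in\supp\phi(\bba^s)=\supp\phi(\bba)$. So $h'=\bba^s h$ works; symmetrically, if $p\in\supp\phi(\bbb)$, then $h'=\bbb^t h$ works. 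Lemma~\ref{lem:nontrivial} applied with this $h'$ finishes the proof.

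The step that requires the most care is this last orientation argument: it is what pins down the role of the hypothesis $\partial U_1^{(i)}\sse\supp\phi(\bba)\cup\supp\phi(\bbb)$ from Lemma~\ref{lem:linear-growth}, and it is what makes the two ``correction'' elements $\bba^s$ and $\bbb^t$ sufficient (rather than requiring an enumeration over the full generating set $V^\dagger$). Everything else in the proof is bookkeeping that combines the two key preceding lemmas.
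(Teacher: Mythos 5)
Your proposal is correct and follows essentially the same route as the paper: invoke Lemma~\ref{lem:linear-growth} to get $u'$ and $U_1$, then apply Lemma~\ref{lem:nontrivial} to $w_iu'w_i^{-1}$ and $U_1^{(i)}=\phi(w_i)U_1$ for an appropriate $h'$. The paper's proof is terser, asserting without elaboration that some $h'$ among $\{h,\bba^s h,\bbb^t h\}$ fails to fix both endpoints of $U_1^{(i)}$; you have correctly filled in the detail that this is exactly what the boundary inclusion $\partial U_1^{(i)}\sse\supp\phi(\bba)\cup\supp\phi(\bbb)$ from Lemma~\ref{lem:linear-growth} guarantees, via the observation that an orientation-preserving homeomorphism preserving a bounded open interval must fix both of its endpoints.
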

\begin{proof}
Using Lemma~\ref{lem:linear-growth}, we obtain a conjugate $u'$ of $u$ such that for each $i\in\bN$, the set $\supp\phi(w_iu'w_i^{-1})$ has a component $U_i$ whose covering length is larger than $2i$.

Note that for at least one $h'\in\{h,\bba^s\cdot h,\bbb^t\cdot h\}$, we have that \[\{\inf U_i,\sup U_i\}\not\sse\Fix \phi(h'),\] and that $||h'||\leq 2i$. The nontriviality of $\phi[w_i u' w_i^{-1} , h' w_i u' w_i^{-1} (h')^{-1}]$ follows immediately from Lemma~\ref{lem:nontrivial}.
\end{proof}

\subsection{Finishing the proof of Theorem~\ref{thm:main}}
So far, we have constructed \[ \phi=\phi_{k,\mu}\co G^\dagger \to \Diff^{k,\mu}_0(I).\]
\begin{thm}\label{thm:psi}
Suppose $\omega$ is a concave modulus satisfying $0\prec_k \omega\ll\mu$,
or suppose $\omega=\mathrm{bv}$.
If we have a representation
\[\psi\co G^\dagger \to \Diff^{k,\omega}_+(I),\]
then we have that 
 \[
 [G^\dagger,G^\dagger]\cap\ker\psi\setminus\ker\phi\ne\varnothing.\]
\end{thm}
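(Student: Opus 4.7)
The plan is to produce, for each $\psi$ as in the hypothesis, a commutator
\[ g = \left[w_i u' w_i^{-1},\ h\, w_i u' w_i^{-1}\, h^{-1}\right] \in [G^\dagger, G^\dagger] \]
for suitably large $i$, a suitable $h\in G^\dagger$, and a conjugate $u'$ of the element $u^\dagger$ of Section~\ref{ss:univ}, such that $\phi(g)\ne 1$ while $\psi(g)=1$. The broad idea is to use Lemma~\ref{lem:kernel-phi} to guarantee $\phi$-nontriviality and Lemma~\ref{lem:kernel-psi} (driven by the Slow Progress Lemma) to force $\psi$-triviality. To invoke the Slow Progress Lemma for $\psi$ with the data $\{v_i\},\{N_i\},\{w_i\}$ from Section~\ref{ss:global}, I verify (A2) (immediate since $\bN_\bba$ and $\bN_\bbb$ each have density $1/2$) and (A1); when $0\prec_k\omega\ll\mu$, the computation in the proof of Corollary~\ref{cor:optimal} gives $N_i(1/i)^{k-1}\omega(1/i)\to 0$, and when $\omega=\mathrm{bv}$ I invoke the Slow Progress Lemma's $C^{k,\mathrm{bv}}$ case with $\omega_1$ in place of $\omega$, whose bound follows identically since $\mu\gg\omega_1$.

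To produce $u'$, apply Lemma~\ref{lem:universal} to both $\psi$ and $\phi$ restricted to $\form{\bba,\bbc,\bbd,\bbe}$: the support of $u^\dagger$ under each representation is compactly contained in the corresponding full support. Lemma~\ref{lem:kernel-phi}, applied to $\phi$ and $u^\dagger$, then produces a conjugate $u'=r u^\dagger r^{-1}\in G^\dagger$ with the property that for every $i\in\bN$, every $s,t\in\{\pm1\}$, and every $h\in G^\dagger$ with $\|h\|<2i$, at least one $h'\in\{h,\bba^{s} h,\bbb^{t} h\}$ yields a $\phi$-nontrivial commutator of the shape above. Since $\supp\psi(u')=\psi(r)\,\supp\psi(u^\dagger)$ and $\psi(r)$ preserves $\supp\psi(G^\dagger)$, the set $\supp\psi(u')$ remains compactly contained in $\supp\psi(G^\dagger)$ and therefore meets only finitely many components $U_1,\ldots,U_m$ of $\supp\psi(G^\dagger)$, with compact containment in each $U_j$.

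The construction of $h$: for each $U_j$, Lemma~\ref{lem:kernel-psi} provides (for any prescribed $T_0$ and all sufficiently large $i$) an $h_i^{(j)}\in G^\dagger$ of syllable length less than $2i-T_0$ whose associated commutator kills $\psi$ on $U_j$, with the flexibility that left-multiplication by $\bba^{\pm 1}$ or $\bbb^{\pm 1}$ preserves this vanishing on $U_j$ for appropriate signs. The main obstacle is that Lemma~\ref{lem:kernel-psi} kills the commutator on only one component at a time, whereas $\psi(g)=1$ must hold globally. The plan to overcome this is an iterative construction: starting from $u'$, use $U_1$ to produce $g^{(1)}\in[G^\dagger,G^\dagger]$ with $\psi(g^{(1)})|_{U_1}=1$ and $\phi(g^{(1)})\ne 1$; since $\supp\psi(g^{(1)})$ is a union of two compactly contained translates and still lies compactly in $\supp\psi(G^\dagger)$, one may reapply Lemmas~\ref{lem:kernel-phi} and~\ref{lem:kernel-psi} (after a further conjugation dictated by the former) to build $g^{(2)}$ killing $\psi$ on $U_2$, and so on. At each stage, the flexibility clause of Lemma~\ref{lem:kernel-psi} ensures the sign data required by Lemma~\ref{lem:kernel-phi} for $\phi$-nontriviality is compatible with $\psi$-vanishing on the current target. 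After $m$ iterations, the resulting nested commutator $g^{(m)}\in[G^\dagger,G^\dagger]$ has support concentrated in $U_1\cup\cdots\cup U_m$, satisfies $\psi(g^{(m)})=1$ globally, and satisfies $\phi(g^{(m)})\ne 1$.
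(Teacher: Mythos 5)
Your proposal is correct and follows essentially the same route as the paper's proof: verify (A1)/(A2) via the calculation from Corollary~\ref{cor:optimal} (substituting $\omega_1$ for the bv case), use Lemma~\ref{lem:universal} for compact containment of $\supp(u^\dagger)$, and then iterate Lemmas~\ref{lem:kernel-phi} and \ref{lem:kernel-psi}---re-conjugating at each step---to kill $\psi$ on one component of $\supp\psi(G^\dagger)$ at a time while preserving $\phi$-nontriviality via the matched sign data $(s,t)$. The only cosmetic differences from the paper are that you conjugate $u^\dagger$ before naming the components $U_1,\dots,U_m$ (this is immaterial since conjugation preserves components), and you do not explicitly dispose of the trivial case $\psi(u^\dagger)=1$, which the paper handles with a one-line remark.
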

\bp
Let $u_1:=u^\dagger\in [G^\dagger,G^\dagger] $ be the element considered in Lemma~\ref{lem:universal} and  Section~\ref{ss:config}.
By the same lemma, $\supp\psi(u_1)$ is compactly contained in $\supp\psi(G^\dagger )$.
We see from the construction that $\phi(u_1)\ne1$.
So, we may assume $\psi(u_1)\ne1$.
Let us choose a minimal collection $\{U_1,\ldots,U_n\}\sse\pi_0 \supp\psi(G^\dagger )$
such that
\[\overline{\supp\psi(u_1)}\sse U_1\cup \cdots\cup U_n.\]

%Fix an integer $M_0\in\bN$.
There exists a conjugate $u_1'$ of $u_1$ 
satisfying
the conclusion of Lemma~\ref{lem:kernel-phi}.
Recall from Section~\ref{ss:global} that we have
\[
\lim_{i\to\infty} N_i(1/i)^{k-1}\omega(1/i)=0.\]
Hence, we
can apply Lemma~\ref{lem:kernel-psi} to $u_1'$ and $U_1$.
We obtain some $i\in\bN$, some $h_1\in G^\dagger $ with $\|h_1\|<2i$,
and some $s,t\in\{1,-1\}$ such that 
\[
U_1\cap
\supp\psi
\left[
w_i u_1' w_i^{-1},
h_1'w_i u_1' w_i^{-1}(h_1')^{-1}
\right]=\varnothing\]
for all choice of $h_1'\in\{h_1,\bba^{s}\cdot h_1,\bbb^{t}\cdot h_1\}$.
As $u_1'$ has been chosen to satisfy Lemma~\ref{lem:kernel-phi},
there exists a choice of $h_1'$
such that 
\[
u_2:=\left[
w_i u_1' w_i^{-1},
h_1'w_i u_1' w_i^{-1}(h_1')^{-1}
\right]\in [G^\dagger,G^\dagger]\setminus\ker\phi.\]

Note that $\supp\phi(u_2)$ is still compactly contained in $\supp\phi(G^\dagger )$.
We now have
\[
\overline{\supp\psi(u_2)}\sse U_2\cup \cdots\cup U_n.\]
Inductively, we use $u_2$ to obtain $u'_2$ satisfying Lemma~\ref{lem:kernel-phi}. 
The same argument as above yields
$u_3\in [G^\dagger,G^\dagger]\setminus\ker\phi$  such that
\[
\overline{\supp\psi(u_3)}\sse U_3\cup \cdots\cup U_n.\]
Continuing this way, we obtain an element $u_m\in[G^\dagger,G^\dagger]\cap \ker\psi\setminus\ker\phi$
for some $m\le n+1$.
\ep

\begin{rem}
The idea of finding a nontrivial kernel element of an interval action by successively taking commutators appeared in~\cite{BS1985}, where Brin and Squier proved that $\mathrm{PL}[0,1]$ does not contain a nonabelian free group.
One can trace this idea back to the proof of the Zassenhaus Lemma on Zassenhaus neighborhoods of semisimple Lie groups~\cite{Raghunathan1972}.
This idea was also used in~\cite{BKK2016,KKFreeProd2017}.
\end{rem}

\bp[Proof of Theorem~\ref{thm:main}]
Let $\phi_{k,\mu}=\phi$ be the representation constructed in this section. 
Theorem~\ref{thm:psi} implies the conclusion (\ref{p:main-ker}).
We have already verified (\ref{p:cinf}).
\ep

\begin{rem}
The group $\phi_{k,\mu}(G^\dagger)$ we constructed 
is never a subgroup of a right-angled Artin group,
or even a subgroup of a braid group;
see~\cite[Theorem 3.12]{KKFreeProd2017}
and~\cite[Corollary 1.2]{KK2015GT}.
\end{rem}

%\section{Consequences of Theorem~\ref{thm:main}}\label{sec:consequences}
\section{Proof of the Main Theorem}\label{sec:consequences}
Let us now complete the proofs of all the results in the introduction.
%For conciseness, let us introduce a notation. \begin{notation} Let $G$ and $H$ be groups. We will write $G\hookrightarrow H$, if there exists an injective group homomorphism from $G$ into $H$. We write $G\not\hookrightarrow H$, if there does not exist such an injective group homomorphism.\end{notation}

\subsection{The Rank Trick}\label{ss:rank-trick}
If $\phi\co G\to \Homeo_+[0,1]$ be a representation,
then a priori, it is possible that the rank of the abelianization $H_1(\phi(G),\bZ)$ is less than that of $H_1(G,\bZ)$.
Let us now describe a systematic way of producing another representation $\phi_0$ such that 
the rank of $H_1(\phi_0(G),\bZ)$ is maximal.

\begin{lem}[Rank Trick]\label{lem:rank-trick}
Let $G$ be a group such that  $H_1(G,\bZ)$ is finitely generated free abelian.
If we have a representation
\[\rho\co G\to \Homeo^+(\bR)\]
such that $\supp\rho$ is bounded,
then there exists another representation 
\[\rho_0\co G\to \form{\rho(G),\Diff_+^\infty(\bR)}\le \Homeo^+(\bR)\]
satisfying the following:
\be[(i)]
\item\label{p:bdd} $\supp\rho_0$ is bounded;
\item\label{p:gsgs} $\rho_0(g)=\rho(g)$ for each $g\in [G,G]$;
\item\label{p:max-abel} $H_1(\rho_0(G),\bZ)\cong H_1(G,\bZ)$.
\ee
\end{lem}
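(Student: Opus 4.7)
The plan is to realize the abelianization of $G$ inside $\Diff_+^\infty(\bR)$ by a system of diffeomorphisms with pairwise disjoint supports, placed disjointly from $\supp \rho$, and then multiply $\rho$ by this ``abelian correction.'' Concretely, write $H_1(G,\bZ) \cong \bZ^n$ and let $\tau \co G \to \bZ^n$ denote the abelianization map. Since $\supp \rho$ is bounded, I would pick pairwise disjoint compact intervals $J_1, \ldots, J_n \sse \bR \setminus \overline{\supp \rho}$ and choose $f_i \in \Diff_+^\infty(\bR)$ with $\supp f_i = J_i \setminus \partial J_i$. The disjointness of supports immediately gives $\langle f_1, \ldots, f_n \rangle \cong \bZ^n$, because on $J_i$ the product $f_1^{a_1} \cdots f_n^{a_n}$ agrees with $f_i^{a_i}$.

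Next, I would lift $\tau$ to $\tilde\tau \co G \to \langle f_1, \ldots, f_n \rangle \le \Diff_+^\infty(\bR)$ by sending the generators of $\bZ^n$ to the corresponding $f_i$, and define
\[
\rho_0(g) := \rho(g) \cdot \tilde\tau(g).
\]
The fact that $\rho_0$ is a homomorphism follows from $\supp \rho \cap \supp \tilde\tau(G) = \varnothing$ together with the commutativity of $\tilde\tau(G)$: for $g,h \in G$,
\[
\rho_0(gh) = \rho(g)\rho(h)\tilde\tau(g)\tilde\tau(h) = \rho(g)\tilde\tau(g)\rho(h)\tilde\tau(h) = \rho_0(g)\rho_0(h).
\]
Conclusion (\ref{p:bdd}) is then immediate since $\supp \rho_0 \sse \supp \rho \cup \bigcup_i J_i$ is bounded. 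For (\ref{p:gsgs}), if $g \in [G,G]$ then $\tau(g) = 0$, so $\tilde\tau(g) = 1$ and $\rho_0(g) = \rho(g)$.

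The remaining point is condition (\ref{p:max-abel}), which is the content of the trick. Here I would use the fact that, because of the disjoint supports, the rule $\rho(g) \cdot f \mapsto f$ gives a well-defined group retraction $p \co \langle \rho(G), f_1, \ldots, f_n\rangle \to \langle f_1, \ldots, f_n\rangle \cong \bZ^n$; well-definedness is just the observation that $\rho(g) \cdot f$ agrees with $f$ on $\bigcup_i J_i$ and with $\rho(g)$ on $\supp \rho$. Then $p \circ \rho_0 = \tilde\tau$, so on abelianizations the composition
\[
H_1(G,\bZ) \twoheadrightarrow H_1(\rho_0(G),\bZ) \twoheadrightarrow \bZ^n
\]
coincides with $\tau$, which is an isomorphism $H_1(G,\bZ) \cong \bZ^n$ by hypothesis. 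Since the first arrow is a surjection whose composition with the second is an isomorphism, both arrows are isomorphisms, giving (\ref{p:max-abel}).

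I do not anticipate a serious obstacle; the only thing to check carefully is that the retraction $p$ is well-defined, which is the reason we needed $\supp \rho$ bounded (so that we can fit the $J_i$'s disjointly from it).
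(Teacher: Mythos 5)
Your proof is correct and follows essentially the same route as the paper: both realize the abelianization $H_1(G,\bZ)\cong\bZ^m$ by $C^\infty$ diffeomorphisms with pairwise disjoint supports placed away from $\supp\rho$, define $\rho_0(g)=\rho(g)\cdot\tilde\tau(g)$, and recover condition (iii) via the projection onto the ``abelian correction'' factor. The only cosmetic difference is that you spell out the retraction $p$ explicitly, whereas the paper simply notes that $\alpha$ factors through $\rho_0$ followed by the projection $\rho(G)\times\langle h_1,\ldots,h_m\rangle\to\langle h_1,\ldots,h_m\rangle$; the content is identical.
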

\bp
Let $ H_1(G,\bZ)\cong\bZ^m$ for some $m\ge0$.
We can pick compactly supported $C^\infty$--diffeomorphisms $h_1,\ldots,h_m$ such that 
\[\supp h_i\cap\supp\rho(G)=\varnothing=\supp h_i\cap\bigcup_{j\ne i}\supp h_j.\]
for each $i$. The abelianization of $G$ can be realized as some surjection
\[
\alpha\co G\to\form{h_1,\ldots,h_m}\cong\bZ^m.\]
We define a representation 
$\rho_0\co G\to \Homeo^+(\bR)$ by the recipe
\[
\rho_0(g)=\rho(g)\alpha(g)\]
for each $g\in G$. It is clear that $\rho_0$ satisfies parts (\ref{p:bdd}) and (\ref{p:gsgs}).
Since $\alpha$ decomposes as
\[
\xymatrix{
G\ar[r]^>>>>{\rho_0}
&
\rho(G)\times\form{h_1,\ldots,h_m}\ar[r]^>>>>>{\mathrm{proj.}}
&\form{h_1,\ldots,h_m},
}
\]
we see that $\rho_0(G)$ surjects onto $\bZ^m$. This proves part (\ref{p:max-abel}).\ep

\begin{rem}\label{rem:rank-trick}
Algebraically, the group $\rho_0(G)$ is a subdirect product of $\rho(G)$ and $\bZ^m$.
\end{rem}

\subsection{The Chain Group Trick}
Let us describe a general technique of embedding a finitely generated orderable group into a countable simple group.
In Remark~\ref{rem:chain}, we defined the notion of a chain group, which is a certain finitely generated subgroup of $\Homeo^+(\bR)$. 
We will need the following result of the authors with Lodha:

\begin{thm}[{\cite[Theorem 1.3]{KKL2017}}]\label{thm:chain-group}
If $H\le \Homeo^+(I)$ is a chain group acting minimally on $I\setminus \partial I$,
then
 $[H,H]$ is simple
 and every proper quotient of $H$ is abelian.\end{thm}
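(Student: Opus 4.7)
The plan is to establish simplicity of $[H,H]$ first and then to derive the statement on proper quotients from simplicity together with minimality. As a preliminary observation, $[H,H]$ has dense orbits on $I\setminus\partial I$: each consecutive pair $\langle f_i,f_{i+1}\rangle$ realizes Thompson's $F$, whose commutator subgroup is already known to be simple and to act minimally on the chain component $U_i\cup U_{i+1}$, and these components cover $I\setminus\partial I$.

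For the simplicity of $[H,H]$, I would run a Higman--Epstein style commutator-displacement argument. Let $\{1\}\ne N\triangleleft[H,H]$ and pick $1\ne g\in N$. By minimality of $H$ on $I\setminus\partial I$ one can conjugate $g$ and shrink to locate a nonempty open interval $V$ on which $g$ displaces, i.e.\ $g(V)\cap V=\varnothing$. Then for any $h\in[H,H]$ supported in a sufficiently small subinterval of $V$ the commutator $[g,h]=(ghg^{-1})\cdot h^{-1}\in N$ is the product of two disjointly supported copies of $h^{\pm 1}$, from which one extracts a conjugate of $h$ by further commuting with test elements supported in $V$ or $gV$. Applying this to a generating family of small-support commutators---one fragments any element of $[H,H]$ across the chain using that each $[\langle f_i,f_{i+1}\rangle,\langle f_i,f_{i+1}\rangle]$ is simple and acts minimally on $U_i\cup U_{i+1}$---one concludes $N=[H,H]$.

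To derive the statement on proper quotients, let $\{1\}\ne N\triangleleft H$. The intersection $N\cap[H,H]$ is normal in $[H,H]$, so by simplicity either $[H,H]\subseteq N$ (whence $H/N$ is a quotient of the abelian group $H/[H,H]$) or $N\cap[H,H]=\{1\}$. In the second case $[n,c]\in N\cap[H,H]=\{1\}$ for all $n\in N$ and $c\in[H,H]$, so $N$ centralizes $[H,H]$. Then $\Fix N$ is $[H,H]$-invariant; by minimality of $[H,H]$ on $I\setminus\partial I$ either $\Fix N=I$, which forces $N=\{1\}$ by faithfulness (a contradiction), or $\Fix N\subseteq\partial I$, in which case any $1\ne n\in N$ has no interior fixed points yet commutes with some $c\in[H,H]$ whose support $J$ is a compact interval in $I\setminus\partial I$ (take $c$ inside one of the $[F,F]$-copies), forcing $n(J)=J$ and hence $n$ to fix the endpoints of $J$ in $I\setminus\partial I$, again a contradiction.

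The hard part is the commutator-displacement step underlying simplicity, specifically the fragmentation claim that $[H,H]$ is generated by elements supported in a single $U_i\cup U_{i+1}$. Executing this inside $[H,H]$ requires a careful use of the chain relations $\langle f_i,f_{i+1}\rangle\cong F$ to shift supports across overlapping intervals while staying in the commutator subgroup, and it is precisely the local density of the $H$-action on $I\setminus\partial I$ that makes the needed conjugations available.
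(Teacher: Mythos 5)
There are two genuine gaps, one of which you acknowledge yourself. The less conspicuous one is in your preliminary observation that $[H,H]$ acts minimally on $I\setminus\partial I$: you assert that each consecutive pair $\langle f_i,f_{i+1}\rangle$ realizes Thompson's $F$ acting minimally on $U_i\cup U_{i+1}$, but the definition of a chain group is purely algebraic---it demands only that $\langle f_i,f_{i+1}\rangle$ be \emph{abstractly isomorphic} to $F$---and an abstract $F$-action on an interval need not be minimal (it can have an exceptional minimal set or a wandering interval, for instance). Minimality of the full chain group $H$ on $I\setminus\partial I$ does not automatically descend to the two-generator subgroups, so the claim that each $[\langle f_i,f_{i+1}\rangle,\langle f_i,f_{i+1}\rangle]$ acts minimally on its chain component is unjustified as stated. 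This step can be repaired by first invoking the equivalence of minimality with local density for chain groups (quoted in Remark~\ref{rem:chain} as~\cite[Lemma 6.3]{KKL2017}), but you do not do so.

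The second and more serious gap is the one you point out: the fragmentation claim---that $[H,H]$ is generated by elements supported in a single $U_i\cup U_{i+1}$, together with the perfectness needed to close the Higman--Epstein commutator-displacement loop---is left entirely unproved, and this is precisely the crux of the theorem. In~\cite{KKL2017}, where this statement originates, the route is different and sidesteps fragmentation: one shows that the compactly supported part of $H$ acts CO-transitively on $I\setminus\partial I$ and then applies the Higman-type criterion (Lemma~\ref{lem:higman} in the present paper, which is~\cite[Lemma 2.5]{KKL2017}) to obtain simplicity of the derived group. Your reduction of the proper-quotients clause to simplicity of $[H,H]$, minimality, and the existence of nontrivial compactly supported elements of $[H,H]$ is sound; but since the simplicity and minimality inputs are not established, the proof as proposed does not go through.
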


In~\cite{KKL2017}, it is shown that every finitely generated orderable group embeds into some minimally acting chain group. 
We will need a variation of this result for diffeomorphisms.
Let us use notations $\rho_{\mathrm{GS}}$, $h_{\mathrm{GS}}$  and $\{a_0,a_1\}$ as defined in Section~\ref{ss:dense-F}.
By an \emph{$n$--generator group}, we mean a group generated by at most $n$ elements.

\begin{lem}[Chain Group Trick]\label{lem:chain-smooth}
Let $G$ be an $n$--generator subgroup of $\Homeo_+(\bR)$ such that ${\supp G}$ is compactly contained in $(0,1)$.
We put
 \[\yt G=  \form{G,\rho_{\mathrm{GS}}(F)}.\]
\be
\item
Then $\yt G$ is 
an $(n+2)$--chain group acting minimally on $(0,1)$.
In particular, $[\yt G,\yt G]$ is simple and every proper quotient of $\yt G$ is abelian.
\item
If $H_1(G,\bZ)$ is free abelian,
then 
there is an embedding from $G$ into $[\yt G,\yt G]$.
\ee
\end{lem}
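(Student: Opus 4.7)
The plan is to realize $\yt G$ as a minimally acting $(n+2)$-chain group for part (1), then invoke the Rank Trick (Lemma~\ref{lem:rank-trick}) for part (2).

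For part (1), I would produce $n+2$ elements $h_1, \ldots, h_{n+2} \in \yt G$ whose supports form a chain of open intervals in $(0,1)$ and which together generate $\yt G$. Starting from Corollary~\ref{cor:chain-smooth}, the $C^\infty$ generators $f_1, f_2$ of $\rho_{\mathrm{GS}}(F)$ have supports forming a chain of two; after suitably restricting (e.g., replacing them with conjugates pushing their supports toward the ends of $(0,1)$ so as to leave interior room), these serve as the outermost links $h_1$ and $h_{n+2}$. Since $\supp G$ is compactly contained in $(0,1)$ and $\rho_{\mathrm{GS}}(F)$ acts minimally and locally densely on $(0,1)$, I would conjugate the generators of $G$ by suitably chosen elements of $\rho_{\mathrm{GS}}(F)$ to reposition them as middle links $h_2, \ldots, h_{n+1}$ with connected supports consecutively overlapping, thus producing a chain of $n+2$ intervals. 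After raising each $h_i$ to a sufficiently large power (Remark~\ref{rem:chain}), the resulting pre-chain group becomes a chain group, which one verifies equals $\yt G$ (the original generators of $G$ and of $\rho_{\mathrm{GS}}(F)$ being recoverable from the new ones). Minimality of $\yt G$ on $(0,1)$ is inherited from $\rho_{\mathrm{GS}}(F)$, and Theorem~\ref{thm:chain-group} then yields simplicity of $[\yt G,\yt G]$ and the abelianness of proper quotients.

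For part (2), I would apply the Rank Trick to the inclusion $\rho \co G \hookrightarrow \Homeo_+(\bR)$, choosing the auxiliary commuting diffeomorphisms $h_1, \ldots, h_m$ that realize $H_1(G, \bZ) \cong \bZ^m$ to lie inside $\rho_{\mathrm{GS}}(F) \le \yt G$, with pairwise disjoint supports also disjoint from $\supp G$; local density of $\rho_{\mathrm{GS}}(F)$ makes this selection possible. Then $\rho_0(g) = g \cdot \alpha(g) \in \yt G$ defines a homomorphism $\rho_0 \co G \to \yt G$, injective because $g$ and $\alpha(g)$ have disjoint supports and the inclusion $\rho$ is injective. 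By further tuning the choice of the $h_i$ within $\rho_{\mathrm{GS}}(F)$, I would arrange that their abelianization images in $\yt G^{\mathrm{ab}}$ cancel the images of the generators $g_i$ of $G$, so that each $\rho_0(g)$ projects to zero in $\yt G^{\mathrm{ab}}$. The hypothesis that $H_1(G, \bZ)$ is free abelian is exactly what allows the required linear assignment $g \mapsto \alpha(g)$ to be realized consistently. Consequently $\rho_0(G) \le [\yt G, \yt G]$, and $\rho_0$ is the desired embedding.

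The main obstacle is the chain construction in part (1): simultaneously enforcing that the $n+2$ generators have connected supports forming a chain, that consecutive pairs generate a copy of $F$, and that the resulting subgroup equals all of $\yt G$. Overcoming this relies on the local density of $\rho_{\mathrm{GS}}(F)$ (to freely reposition conjugates of $G$-generators anywhere in $(0,1)$) and on the pre-chain-to-chain conversion of Remark~\ref{rem:chain}; the abelianization-matching step in part (2) is then a comparatively routine bookkeeping argument once the chain structure is in place.
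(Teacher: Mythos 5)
Your plan for part (1) has two gaps. First, conjugating $g_i$ by $\eta\in\rho_{\mathrm{GS}}(F)$ produces $\eta g_i\eta^{-1}$, whose support is $\eta(\supp g_i)$; this is connected only when $\supp g_i$ already is, but $\supp g_i$ can be an arbitrary (possibly infinite-component) open subset of $(0,1)$, so conjugation cannot turn the $G$--generators into middle links of a chain. Second, the pre-chain-to-chain conversion of Remark~\ref{rem:chain} replaces each generator by a large power, which produces a \emph{proper subgroup}; there is no way to recover $h_i$ from $h_i^N$ inside the resulting group, so you do not end up with $\yt G$. The paper dodges both problems at once with a multiplication rather than a conjugation: after shrinking $\supp G$ into a tiny interval $(t_0,f_1(t_0))\subset(s_2,s_3)$, it sets $u_i:=a_1 g_i$, and the ``dominance'' claim shows $\supp(a_1g_i)=\supp a_1=(s_1,1)$ \emph{independently of $g_i$} --- a fixed interval. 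Offsetting by conjugates of $a_0$, namely $u^*_i:=(a_0^{i}u_i^{-1}a_0^{-i})(a_0^{i-1}u_{i-1}a_0^{1-i})$, then yields a bona fide chain generating set for all of $\yt G$ with no power-raising required.

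Part (2) also breaks down. Since $F^{\mathrm{ab}}\cong\bZ^2$, the image of $\rho_{\mathrm{GS}}(F)$ in $\yt G^{\mathrm{ab}}$ has rank at most $2$; in the chain basis $\{[u^*_0],\dots,[u^*_{n+1}]\}$ it is spanned by $[u^*_0]$ and $[u^*_{n+1}]+\sum_{i=1}^n[u^*_i]$, while a short computation gives $[g_1]=-[u^*_1]$, which lies outside that rank-2 subgroup. So no choice of $h_i\in\rho_{\mathrm{GS}}(F)$ can satisfy $[h_i]=-[g_i]$, and the cancellation you propose cannot be arranged. The paper instead adjoins a single $f_1\in\rho_{\mathrm{GS}}(F)$, uses that $\{f_1^j(\supp G)\}_{j\ge0}$ are pairwise disjoint to embed $G$ into the internal direct product $G\times f_1Gf_1^{-1}\times\cdots\times f_1^m Gf_1^{-m}\le\langle G,f_1\rangle$, and sends $g_i\mapsto g_i\cdot f_1^ig_i^{-1}f_1^{-i}$ for $i\le m$ (and $g_i\mapsto g_i$ for $i>m$). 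The freeness of $H_1(G,\bZ)$ is what makes the coordinate maps $G\to\bZ\to G$ hidden in this formula into homomorphisms, and the image visibly dies in $H_1(\langle G,f_1\rangle,\bZ)$. Your ``abelianization bookkeeping'' instinct is pointing in the right direction, but the bookkeeping has to happen inside $\langle G,f_1\rangle$ rather than inside $\rho_{\mathrm{GS}}(F)$.
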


\bp
We will follow the proof of \cite[Theorem 1.3]{KKL2017}, taking extra care with elements of $\rho_{\mathrm{GS}}(F)$.
Let us fix a generating set $\{g_1,\ldots,g_n\}$ of $G$.

(1)
Denote by $\bQ_{\mathrm{GS}}$ the set of $h_{\mathrm{GS}}$--images of all dyadic rationals in $[0,1]$.
We set 
\[0< s_1: =h_{\mathrm{GS}}(1/2) < s_2 := a_1^{-2}a_0.s_1 < s_3 := a_1^{-1}a_0.s_1 < s_4:= a_0.s_1 < 1.\]
Since $s_i\in\bQ_{\mathrm{GS}}$,
 we can find $f_1\in\rho_{\mathrm{GS}}(F)$ 
such that $\supp f_1 = (s_2,s_3)$ and such that $f_1(t)\ge t$ for all $t\in [0,1]$.
We fix $t_0\in(s_2,s_3)\cap \bQ_{\mathrm{GS}}$,
 so that \[s_2=f_1(s_2)<t_0  < f_1(t_0) < s_3=f_1(s_3).\]
After conjugating $G$ by a suitable element of $\rho_{\mathrm{GS}}(F)$ if necessary, 
we may assume that the closure of $\supp G$ is contained in $(t_0,f_1(t_0))$.

\begin{claim*}
If $g=g_i$ for some $1\le i\le n$, then we have that
\[
a_1\circ g(t)\begin{cases}
=t&\text{ if }t\le s_1,\\
\in(t,a_0(t))&\text{ if }t\in(s_1,s_4),\\
=a_0(t)&\text{ if }t >s_4.
\end{cases}
\]
\end{claim*}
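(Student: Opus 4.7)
The plan is to translate the claim through the conjugation $h_{\mathrm{GS}}$ so that $a_0=\rho_{\mathrm{GS}}(x_0)$ and $a_1=\rho_{\mathrm{GS}}(x_1)$ become the standard piecewise linear generators of Thompson's group $F$, for which all required values can be read off from the breakpoint data in Section~\ref{ss:dense-F}. Writing $\tau=h_{\mathrm{GS}}^{-1}(t)$, the three ranges $t\le s_1$, $t\in(s_1,s_4)$, $t>s_4$ correspond respectively to $\tau\le 1/2$, $\tau\in(1/2,3/4)$, $\tau>3/4$, since $s_1=h_{\mathrm{GS}}(1/2)$ and $s_4=a_0\cdot s_1=h_{\mathrm{GS}}(x_0(1/2))=h_{\mathrm{GS}}(3/4)$.

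First I would record three elementary facts about $x_0,x_1\in F$, all visible from the piecewise linear formulas: (i) $x_1$ is the identity on $[0,1/2]$, so $a_1$ fixes $(-\infty,s_1]$ (extending by the identity outside $[0,1]$); (ii) on $[3/4,1]$ both $x_0$ and $x_1$ act as $\tau\mapsto \tau/2+1/2$, so $a_0$ and $a_1$ agree on $[s_4,+\infty)$; and (iii) on $(1/2,3/4)$ the formulas give $\tau<x_1(\tau)<x_0(\tau)$, so that $t<a_1(t)<a_0(t)$ for $t\in(s_1,s_4)$.

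Next I would exploit the hypothesis $\overline{\supp G}\subseteq (t_0,f_1(t_0))\subseteq (s_2,s_3)$: every $g=g_i$ fixes $\bR\setminus(t_0,f_1(t_0))$ pointwise and preserves $(t_0,f_1(t_0))$ setwise. The three cases of the claim are then handled as follows. If $t\le s_1$, then $t\le s_1<s_2<t_0$ gives $g(t)=t$, and $a_1$ fixes $t$ by (i), so $a_1\circ g(t)=t$. If $t>s_4$, then $t>s_4>s_3>f_1(t_0)$ gives $g(t)=t$, and (ii) yields $a_1\circ g(t)=a_1(t)=a_0(t)$. If $s_1<t<s_4$ and $t\notin(t_0,f_1(t_0))$, then $g(t)=t$ again and (iii) immediately gives $a_1\circ g(t)=a_1(t)\in(t,a_0(t))$.

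The main bookkeeping obstacle is the remaining subcase $t\in(t_0,f_1(t_0))$, where $g(t)$ is not explicit. To handle it I would use only the structural identities $a_1\cdot s_2=s_3$ and $a_1\cdot s_3=a_0\cdot s_1=s_4$ together with monotonicity of $a_0,a_1$: since $g$ preserves $(t_0,f_1(t_0))\subseteq (s_2,s_3)$, the value $g(t)$ still lies in $(s_2,s_3)$, so monotonicity of $a_1$ forces $a_1\circ g(t)\in(s_3,s_4)$. The lower bound $a_1\circ g(t)>s_3>f_1(t_0)>t$ then gives $a_1\circ g(t)>t$, and the upper bound $a_1\circ g(t)<s_4=a_0(s_1)<a_0(t)$ uses $s_1<t$ together with monotonicity of $a_0$. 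This yields $a_1\circ g(t)\in(t,a_0(t))$, completing the claim.
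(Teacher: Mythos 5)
Your proof is correct and takes essentially the same route as the paper: use that $\supp g$ is trapped in $(t_0,f_1(t_0))\subseteq(s_2,s_3)$, so $g$ is the identity outside that interval, and then combine the identities $a_1(s_2)=s_3$, $a_1(s_3)=s_4=a_0(s_1)$ with monotonicity for the remaining subcase. The paper packages the middle range into a single inequality chain $a_1^{-1}(t)<s_2<g(t)<s_3<a_1^{-1}a_0(t)$ valid for all $t\in(s_2,s_3)$, whereas you split further into $t\in(t_0,f_1(t_0))$ versus not; this is a cosmetic difference, not a different argument.
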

If $t\not\in(s_2,s_3)$, then $a_1\circ g(t) = a_1(t)$ 
and the claim is obvious. If $t\in(s_2,s_3)$, then
\[
a_1^{-1}(t) < a_1^{-1}(s_3)=s_2<g(t)<s_3=a_1^{-1}(s_4)<a_1^{-1}\circ a_0(t).\]
This proves the claim.

We define 
$u_0=a_1$,
and 
$u_i=a_1 g_i$ for $i=1,\ldots,n$.
We also let
$u^*_0 = u_0^{-1}a_0$, $u^*_{n+1} = a_0^{n} u_{n} a_0^{-n}$
and
\[u^*_i=(a_0^i u_{i}^{-1} a_0^{-i})\cdot
(a_0^{i-1} u_{i-1} a_0^{1-i}),\quad i=1,\ldots,n.\]
Then we have 
\[\yt G= \form{G, a_0,a_1}=\form{u^*_0,\ldots,u^*_{n+1}}.\]
The group $\yt G$ acts minimally on $(0,1)$ since so does $\rho_{\mathrm{GS}}(F)$.

It now suffices to show that the collection $\{u^*_0,u^*_1,\ldots,u^*_{n+1}\}$ is a generating set for an $(n+2)$--chain group;
this is a routine computation of the supports using the above claim, and worked out in \cite[Lemma 4.2]{KKL2017}.

(2) 
Recall we have defined $f_1\in\rho_{\mathrm{GS}}(F)$ in part (1). We put
\[G_1=\form{ G,f_1}=\form{g_1,\ldots,g_n,f_1}\le\yt G.\]
For all distinct $i,j\in\bZ$ we have \[f_1^i(\supp G)\cap f_1^j(\supp G)=\varnothing.\]
Let $H_1(G,\bZ)\cong\bZ^m$ for some $m\le n$.
Possibly after increasing the value of $n$ if necessary, we may require that $\{g_1,\ldots,g_m\}$
generates  $H_1(G,\bZ)$, and that \[\{g_{m+1},\ldots,g_n\}\sse[G,G].\]
we have an embedding
$ G\hookrightarrow [G_1,G_1]$ defined by
\[
\begin{cases}
g_i  \mapsto g_i \cdot f_1^i  g_i ^{-1} f_1^{-i},&\text{ if }i\le m;\\
g_i  \mapsto g_i,&\text{ if }m< i\le n.
\end{cases}
\]
The proof is complete since $[G_1,G_1]\le[\yt G,\yt G]$.
\ep

\begin{rem}\label{rem:chain-smooth}
In the above lemma, put
\[ V:=\{g_1,\ldots,g_n\}\setminus\rho_{\mathrm{GS}}(F).\]
Then the group $\yt G = \form{G,\rho_{\mathrm{GS}}(F)}=\form{V,\rho_{\mathrm{GS}}(F)}$
is a $(|V|+2)$--chain group.
\end{rem}

Let us make a general observation.

\begin{lem}\label{lem:fi}
Let $G$ be an infinite group such that every proper quotient of $G$ is abelian.
Then every finite index subgroup of $G$ contains $[G,G]$.
\end{lem}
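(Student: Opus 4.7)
The plan is to prove this by exploiting the normal core construction. Let $H \leq G$ be a finite index subgroup, and form the normal core
\[ N := \bigcap_{g \in G} gHg^{-1}. \]
This $N$ is a normal subgroup of $G$ contained in $H$. A standard argument (realize $N$ as the kernel of the left multiplication action of $G$ on the coset space $G/H$) shows $[G:N]$ divides $[G:H]!$, so $N$ has finite index in $G$.

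The key step is to invoke the hypothesis that $G$ is infinite. Since $[G:N]$ is finite, $N$ itself must be infinite, and in particular $N \ne \{1\}$. Therefore $G/N$ is a \emph{proper} quotient of $G$ (in the sense of the hypothesis, i.e.\ the kernel is nontrivial), and so by assumption $G/N$ is abelian. This forces $[G,G] \subseteq N \subseteq H$, which is the desired conclusion.

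There is no real obstacle here; the only subtlety is recognizing that the role of the ``$G$ infinite'' hypothesis is precisely to guarantee that the normal core of any finite-index subgroup is nontrivial, so that the abelian-proper-quotient hypothesis can be applied. Without infiniteness, a finite group with all proper quotients abelian (e.g.\ any finite nonabelian simple group, for which all proper quotients are trivial, hence abelian) would furnish a counterexample via $H = \{1\}$.
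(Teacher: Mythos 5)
Your proof is correct and takes essentially the same approach as the paper: the normal core $N$ you construct is precisely the kernel of the left-multiplication action $G \to \Sym(G/H)$ that the paper invokes, and both arguments use the finiteness of the coset space together with the infiniteness of $G$ to conclude that this kernel is nontrivial, so that $G/N$ is a proper (hence abelian) quotient and $[G,G] \le N \le H$. Your write-up is slightly more explicit about why the kernel is nontrivial, but the underlying reasoning is identical.
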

\bp
Let $G_0\le G$ be a finite index subgroup.
Then $G$ acts on the coset space $G/G_0$ by multiplication
and hence there is a representation from 
$G$ to the symmetric group of $G/G_0$.
Since every proper quotient is abelian, we see that $[G,G]$ acts trivially on $G/G_0$.
This implies $[G,G]\le G_0$.\ep

\subsection{Proof of Theorem~\ref{thm:main0}}\label{ss:main0}
We will prove the theorem by establishing several claims.
Let $k$ and $\mu$ be as given in the hypothesis of the theorem.
We denote by \[\phi=\phi_{k,\mu}\co G^\dagger\to\Diff^{k,\mu}_0(I)\]
the representation $\phi$ constructed in the previous section.
We put $T_1:=\phi(G^\dagger)$. 
From now on, we will assume $\supp T_1$ is sufficiently smaller than $I$ whenever necessary.

By the Rank Trick (Lemma~\ref{lem:rank-trick}), we can find 
\[
\phi_0\co G^\dagger\to \Diff^{k,\mu}_0(I)\]
such that the conclusions of Lemma~\ref{lem:rank-trick} hold.
We put $T_2: = \phi_0(G^\dagger)$ so that 
\[H_1(T_2,\bZ)\cong H_1(G^\dagger,\bZ)\cong\bZ^4.\]
We may assume
$\supp T_2\sse I\sse (0,1)$.

\begin{claim}\label{cla:g0}
We have that 
$T_1,T_2\le\Diff^{k,\mu}_0(I)$
and that
\[T_1,T_2\not\in
\bigcup_{0\prec_k\omega\ll\mu}
\GG^{k,\omega}(I)
\cup
\GG^{k,\mathrm{bv}}(I).\]
\end{claim}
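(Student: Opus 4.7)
The plan is to reduce both claims directly to Theorem~\ref{thm:psi} by contrapositive, using the construction of $\phi$ and the properties of the Rank Trick.

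The containment $T_1\le\Diff^{k,\mu}_0(I)$ is immediate from Theorem~\ref{thm:main}, since $\phi=\phi_{k,\mu}$ was constructed to take values in $\Diff^{k,\mu}_0(I)$. For $T_2\le\Diff^{k,\mu}_0(I)$, I would appeal to the construction in Lemma~\ref{lem:rank-trick}: we have $\phi_0(g)=\phi(g)\cdot\alpha(g)$, where $\alpha(g)\in\form{h_1,\dots,h_m}$ is a product of compactly supported $C^\infty$ diffeomorphisms whose supports are pairwise disjoint and disjoint from $\supp\phi$. Thus $\phi_0(g)$ coincides with $\phi(g)\in\Diff^{k,\mu}_0(I)$ on $\supp\phi$ and with a $C^\infty$ diffeomorphism elsewhere, so $\phi_0(g)\in\Diff^{k,\mu}_0(I)$. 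After shrinking $I$ if necessary to accommodate the auxiliary supports, this gives $T_2\le\Diff^{k,\mu}_0(I)$ as required.

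For the non-smoothability statement, fix a concave modulus $\omega$ satisfying $0\prec_k\omega\ll\mu$, or set $\omega=\mathrm{bv}$. Suppose for contradiction that $T_i\in\GG^{k,\omega}(I)$ for some $i\in\{1,2\}$, so there exists an injective homomorphism $\iota\co T_i\to\Diff^{k,\omega}_+(I)$. Composing with $\phi$ (respectively $\phi_0$) yields a representation
\[
\psi=\iota\circ\phi_i\co G^\dagger\to\Diff^{k,\omega}_+(I),
\]
where $\phi_1:=\phi$ and $\phi_2:=\phi_0$. By Theorem~\ref{thm:psi}, there exists
\[
g\in[G^\dagger,G^\dagger]\cap\ker\psi\setminus\ker\phi.
\]

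For $T_1$, we directly have $\phi_1(g)=\phi(g)\ne1$ while $\psi(g)=\iota(\phi(g))=1$, contradicting injectivity of $\iota$. For $T_2$, the element $g$ lies in $[G^\dagger,G^\dagger]$, so Lemma~\ref{lem:rank-trick}(\ref{p:gsgs}) gives $\phi_0(g)=\phi(g)\ne1$; then again $\psi(g)=\iota(\phi_0(g))=1$ contradicts injectivity. The only subtle point of the argument, and the reason I phrased it this way, is that the Rank Trick alters the homomorphism on the abelianization but fixes it on the commutator subgroup; this is exactly why Theorem~\ref{thm:psi} was stated so that the distinguishing kernel element is drawn from $[G^\dagger,G^\dagger]$, making the reduction for $T_2$ as painless as that for $T_1$.
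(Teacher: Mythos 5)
Your proof is correct and follows essentially the same route as the paper: reduce to Theorem~\ref{thm:psi} via the composition $\psi = \iota\circ\phi_i$, then use that the distinguishing element $g$ lies in $[G^\dagger,G^\dagger]$ together with Lemma~\ref{lem:rank-trick}(\ref{p:gsgs}) to conclude $\phi_0(g)=\phi(g)\neq 1$ while $\psi(g)=1$. Your explicit verification that $T_2\le\Diff^{k,\mu}_0(I)$ (from the disjointness of $\supp\alpha(g)$ and $\supp\phi$) is slightly more spelled out than in the paper, which leaves it implicit, but the substance is identical.
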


This claim for $T_1$ follows from Theorem~\ref{thm:psi}.
In order to prove the claim for $T_2$, 
we let $0\prec_k\omega\ll\mu$
or let $\omega=\mathrm{bv}$.
Suppose $\psi\co T_2\to\Diff_+^{k,\omega}(I)$ is a representation. By applying Theorem~\ref{thm:psi} again to the composition
\[
\xymatrix{
G^\dagger\ar[r]^<<<<<{\phi_0} &
T_2\ar[r]^>>>>\psi&
\Diff_+^{k,\omega}(I)}
\]
we see that there exists $g\in [G^\dagger,G^\dagger]\setminus \ker\phi$ such that $\psi\circ\phi_0(g)=1$.
Since $\phi_0(g)=\phi(g)\ne1$ by Lemma~\ref{lem:rank-trick} (\ref{p:gsgs}),
we have $\phi_0(g)\in\ker\psi\setminus\{1\}$. This proves the claim.
%Alternatively, one can apply Proposition~\ref{prop:phi} to $\phi_0$.

We can apply the Chain Group Trick (Lemma~\ref{lem:chain-smooth}) to $T_2$,
and obtain
\[T_3 :=\form{T_2,\rho_{\mathrm{GS}}(F)}\le\Diff^{k,\mu}_0[0,1]\]
acting minimally on $(0,1)$ as a seven--generator chain group. 
From
Claim~\ref{cla:g0} 
and from the fact
 $T_2\hookrightarrow[T_3 ,T_3 ]$,
 we obtain the following and complete the proof of Theorem~\ref{thm:main0} for $M=I$.

\begin{claim}\label{cla:i-simple}
The countable simple group $[T_3,T_3]\le\Diff^{k,\mu}_0[0,1]$ satisfies that
\[[T_3,T_3]\not\in\bigcup_{0\prec_k\omega\ll\mu}\GG^{k,\omega}(I)\cup\GG^{k,\mathrm{bv}}(I).\]
\end{claim}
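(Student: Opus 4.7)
The plan is to derive this claim by combining Claim~\ref{cla:g0} with the Chain Group Trick (Lemma~\ref{lem:chain-smooth}), exploiting the fact that $T_2$ was arranged via the Rank Trick to have a free abelian abelianization.

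First, I would observe that $T_3=\form{T_2,\rho_{\mathrm{GS}}(F)}$ is precisely the output of applying Lemma~\ref{lem:chain-smooth} to the group $T_2$. The hypotheses are easy to verify: $T_2=\phi_0(G^\dagger)$ is finitely generated with support compactly contained in $(0,1)$, as arranged in the discussion preceding Claim~\ref{cla:g0}, and by the Rank Trick one has $H_1(T_2,\bZ)\cong H_1(G^\dagger,\bZ)\cong\bZ^4$, which is free abelian. Therefore Lemma~\ref{lem:chain-smooth}(1) gives that $T_3$ is a chain group acting minimally on $(0,1)$, whence $[T_3,T_3]$ is simple; it is clearly countable, since $T_3$ is finitely generated. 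Moreover, Lemma~\ref{lem:chain-smooth}(2) furnishes an injective homomorphism $\iota\co T_2\hookrightarrow [T_3,T_3]$.

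Next, to establish the non-embeddability, I would argue by contradiction. Suppose $[T_3,T_3]\in \GG^{k,\omega}(I)$ for some concave modulus $\omega$ with $0\prec_k\omega\ll\mu$, or for $\omega=\mathrm{bv}$. Then there is an injective homomorphism $\psi\co [T_3,T_3]\to\Diff_+^{k,\omega}(I)$. The composition $\psi\circ\iota\co T_2\to\Diff_+^{k,\omega}(I)$ is then an injective homomorphism, showing that $T_2\in \GG^{k,\omega}(I)$. This directly contradicts Claim~\ref{cla:g0}, completing the proof.

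The only non-routine point is verifying the hypotheses of Lemma~\ref{lem:chain-smooth}(2) for $T_2$, which reduces to checking that $H_1(T_2,\bZ)$ is free abelian; this is exactly what the Rank Trick was designed to deliver, and it is precisely why one first passes from $T_1=\phi(G^\dagger)$ to $T_2=\phi_0(G^\dagger)$ before invoking the Chain Group Trick. All the genuine dynamical content has already been absorbed into Claim~\ref{cla:g0} (itself an application of Theorem~\ref{thm:psi}), so this final step is essentially an algebraic packaging of the earlier work.
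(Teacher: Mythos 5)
Your proof is correct and matches the paper's argument essentially verbatim: the paper likewise obtains Claim~\ref{cla:i-simple} directly from Claim~\ref{cla:g0} together with the embedding $T_2\hookrightarrow[T_3,T_3]$ supplied by the Chain Group Trick, after arranging via the Rank Trick that $H_1(T_2,\bZ)$ is free abelian. You have simply made explicit the hypothesis-checking and the contradiction structure that the paper leaves implicit.
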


Let us now consider the case $M=S^1$.
After a conjugation, we may assume $\supp T_3\sse I\sse (0,1)$.
As $\BS(1,2)$ embeds into $\Diff^\infty_0(I)$,
we may regard \[T_3 \times \BS(1,2)\le \Diff_+^{k,\mu}(S^1).\]
%such that $\supp B_1\cap \supp T_3 =\varnothing$. So 

\begin{claim}\label{cla:g6}
We have the following:
\[
[T_3 ,T_3 ]\times \BS(1,2)
\in
\GG^{k,\mu}(S^1)
\setminus
\left(
\bigcup_{0\prec_k\omega\ll\mu}\GG^{k,\omega}(S^1)
\cup\GG^{k,\mathrm{bv}}(S^1)\right).\]
\end{claim}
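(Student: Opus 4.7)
The positive inclusion $[T_3,T_3] \times \BS(1,2) \in \GG^{k,\mu}(S^1)$ comes for free from the embedding $T_3 \times \BS(1,2) \le \Diff_+^{k,\mu}(S^1)$ arranged immediately before the claim. My plan is therefore to concentrate on the negative direction, ruling out representations into the smaller classes.

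I would argue by contradiction. Suppose we are given a faithful representation
\[
\psi \co [T_3,T_3] \times \BS(1,2) \to \Diff_+^{k,\omega}(S^1)
\]
for some concave modulus $\omega$ with $0 \prec_k \omega \ll \mu$, or with $\omega = \mathrm{bv}$. In every such case $\Diff_+^{k,\omega}(S^1) \le \Diff_+^1(S^1)$, which puts us in the setting of Lemma~\ref{lem:c1-denjoy}. Since $\psi$ is faithful on $\BS(1,2)$ we have $\psi(\bbe) \ne 1$, so by Lemma~\ref{lem:BMNR} (\ref{p:faithful}) the image $B := \form{\psi(\bba),\psi(\bbe)}$ is an isomorphic copy of $\BS(1,2)$ inside $\Diff_+^1(S^1)$. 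Lemma~\ref{lem:c1-denjoy} (\ref{p:z0fi}) now supplies a finite-index subgroup $Z_0 \le Z^1(B)$ with $\supp Z_0 \cap \supp \psi(\bbe) = \varnothing$.

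Because $[T_3,T_3]$ commutes with $\BS(1,2)$ inside the direct product, $\psi([T_3,T_3]) \le Z^1(B)$, so $\psi([T_3,T_3]) \cap Z_0$ has finite index in $\psi([T_3,T_3])$. On the other hand $\psi([T_3,T_3])$ is an infinite simple group (isomorphic to $[T_3,T_3]$ via the faithful $\psi$), and such a group has no proper finite-index subgroup: the intersection of all conjugates of such a subgroup would be a proper normal subgroup of finite index, forcing the group to be finite. Hence $\psi([T_3,T_3]) \le Z_0$, giving
\[
\supp \psi([T_3,T_3]) \cap \supp \psi(\bbe) = \varnothing.
\]
Choosing any point $p \in \supp \psi(\bbe)$, each $\psi(g)$ with $g \in [T_3,T_3]$ is the identity on an open neighborhood of $p$. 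Cutting $S^1$ at $p$ then yields a faithful representation $[T_3,T_3] \hookrightarrow \Diff_+^{k,\omega}(I)$ (respectively $\Diff_+^{k,\mathrm{bv}}(I)$) whose image consists of diffeomorphisms that are the identity on a neighborhood of $\partial I$, and this directly contradicts Claim~\ref{cla:i-simple}.

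The main obstacle is the Denjoy-style passage from $S^1$ down to $I$ for the $[T_3,T_3]$ factor: absent Lemma~\ref{lem:c1-denjoy}, there is no reason to expect $\psi([T_3,T_3])$ to possess a global fixed point on $S^1$, and without such a fixed point the interval-level Claim~\ref{cla:i-simple} cannot be invoked. The remaining ingredients---promoting $\psi(\BS(1,2))$ to an honest $\BS(1,2)$, upgrading from a finite-index subgroup of the infinite simple group $\psi([T_3,T_3])$ to the whole group, and verifying automatic smoothness at the cut point---are routine once the centralizer geometry from Lemma~\ref{lem:c1-denjoy} is in place.
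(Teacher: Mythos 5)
Your proof is correct and follows essentially the same route as the paper: apply Lemma~\ref{lem:c1-denjoy}~(\ref{p:z0fi}) to $B = \psi(\BS(1,2))$ to get a finite-index $Z_0 \le Z^1(B)$ disjoint from $\supp\psi(\bbe)$, use simplicity of $[T_3,T_3]$ (the paper cites Lemma~\ref{lem:fi}; your ``no proper finite-index subgroups'' argument is the same fact) to land $\psi[T_3,T_3]$ inside $Z_0$, then cut the circle at a point of $\supp\psi(\bbe)$ to get an interval representation and contradict Claim~\ref{cla:i-simple}. Your detour through Lemma~\ref{lem:BMNR}~(\ref{p:faithful}) is unnecessary (faithfulness of $\psi$ already gives an isomorphic copy of $\BS(1,2)$), but harmless.
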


Let $0\prec_k\omega\ll\mu$, or let $\omega=\mathrm{bv}$.
Suppose that
 \[\psi\colon [T_3 ,T_3 ]\times \BS(1,2)\to\Diff^{{k,\omega}}_+(S^1)\] is an injective homomorphism.
By Lemma~\ref{lem:c1-denjoy} (\ref{p:z0fi}), 
a proper compact subset of $S^1$ contains $\supp\psi[T_3 ,T_3]$.
Here, we used Lemma~\ref{lem:fi} for the simple group $[T_3,T_3]$.
By Claim~\ref{cla:i-simple}, the group $\psi[T_3 ,T_3 ]$
admits no nontrivial homomorphisms to $\Diff^{{k,\omega}}_+(I)$.
It follows that $[T_3 ,T_3 ]\le\ker\psi$, a contradiction. 
This proves the claim.

Recall $F$ denotes the Thompson's group acting on $[0,1]$.
We have a natural map
\[\rho\co T_2\ast F\to T_3\le\Diff^{k,\mu}_0(I).\]
We can apply the Rank Trick to  $\rho$, since 
\[H_1(T_2\ast F,\bZ)\cong H_1(T_2,\bZ)\oplus H_1(F,\bZ)\cong \bZ^6.\]
Then we obtain a representation
\[\rho_0\co T_2\ast F\to \form{T_3,\Diff^\infty_+(\bR)}\le\Diff^{k,\mu}_+(\bR).\]
Let $T_4$ be the image of $\rho_0$. We may require that $\supp T_4\sse I\sse (0,1)$
and that $H_1(T_4,\bZ)$ is free abelian. Moreover, we have
$[T_3,T_3]\cong [T_4,T_4]$.

Regard
$T_5:=T_4\times \BS(1,2)\le\Diff^{k,\mu}_0(I)$
so that $\supp T_5 \sse I\sse (0,1)$.
We have 
\[
[T_3 ,T_3 ]\times \BS(1,2)
\cong[T_4,T_4]\times \BS(1,2)
\le T_5 .\]
Claim~\ref{cla:g6} now implies the following.

\begin{claim}\label{cla:g7}
The group $T_5$ is a nine--generator group such that 
\[T_5 \in\GG^{k,\mu}(S^1)\setminus\left(\bigcup_{0\prec_k\omega\ll\mu}\GG^{k,\omega}(S^1)\cup\GG^{k,\mathrm{bv}}(S^1)\right).\]
\end{claim}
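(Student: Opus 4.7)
The plan is to deduce Claim~\ref{cla:g7} directly from Claim~\ref{cla:g6} together with the construction of $T_5$, treating the three assertions (generator count, positive membership, non-membership) separately.

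For the generator count, I would first observe that $T_2 = \phi_0(G^\dagger)$ is generated by the images of the five generators of $G^\dagger$, and that $\rho_{\mathrm{GS}}(F)$ is two--generator by Corollary~\ref{cor:chain-smooth} (or directly from $F = \form{x_0,x_1}$). Thus the domain $T_2 \ast F$ is seven--generator, and since the Rank Trick (Lemma~\ref{lem:rank-trick}) constructs $\rho_0$ via the rule $\rho_0(g)=\rho(g)\alpha(g)$ on generators, $T_4 = \rho_0(T_2 \ast F)$ remains seven--generator. Finally $T_5 = T_4 \times \BS(1,2)$ picks up the two standard generators of $\BS(1,2)$, giving nine generators total.

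For the positive inclusion $T_5 \in \GG^{k,\mu}(S^1)$, I would recall that the construction places $\supp T_5 \sse I \sse (0,1) \sse S^1$; since every $C^{k,\mu}$--diffeomorphism of $I$ extends by the identity to a $C^{k,\mu}$--diffeomorphism of $S^1$, there is a natural inclusion $\Diff_+^{k,\mu}(I) \hookrightarrow \Diff_+^{k,\mu}(S^1)$ under which $T_5$ sits inside $\Diff_+^{k,\mu}(S^1)$.

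For the non-membership, I would exploit the observation, recorded just before the statement, that
\[ [T_3,T_3] \times \BS(1,2) \cong [T_4,T_4] \times \BS(1,2) \le T_4 \times \BS(1,2) = T_5. \]
The class $\GG^{k,\omega}(S^1)$ is closed under passage to countable subgroups, since any isomorphic embedding of a subgroup into $\Diff_+^{k,\omega}(S^1)$ arises from restricting an embedding of the ambient group. Hence if $T_5$ lay in $\GG^{k,\omega}(S^1)$ for some $\omega$ with $0 \prec_k \omega \ll \mu$, or for $\omega = \mathrm{bv}$, then so would the subgroup $[T_3,T_3] \times \BS(1,2)$, directly contradicting Claim~\ref{cla:g6}. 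I do not anticipate any serious obstacle: all the dynamical and analytic work has already been absorbed into Claim~\ref{cla:g6}, and the present step is purely a bookkeeping deduction via the subgroup-closure of $\GG^{k,\omega}(S^1)$ and the explicit decomposition of $T_5$ as a direct product.
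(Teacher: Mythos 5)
Your proposal is correct and follows the same route the paper intends: the paper's proof of this claim is simply the terse remark ``Claim~\ref{cla:g6} now implies the following,'' and you have correctly unfolded what that implication requires --- the generator count $5+2+2=9$ coming through $T_2\ast F$ and then $\times\,\BS(1,2)$, the positive inclusion via $\supp T_5\sse I\sse(0,1)\sse S^1$, and the non-membership via the containment $[T_3,T_3]\times\BS(1,2)\cong[T_4,T_4]\times\BS(1,2)\le T_5$ together with the fact that $\GG^{k,\omega}(S^1)$ is closed under passing to subgroups.
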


Since $H_1(T_5 ,\bZ)\cong H_1(T_4,\bZ)\oplus\bZ$ is free abelian,
we can finally apply the Chain Group Trick to obtain  a minimally acting eleven--chain group
$Q=Q(k,\mu)$ with
\[
T_5 
\hookrightarrow[Q ,Q ]
\le Q \le \Diff^{k,\mu}_0(I)\hookrightarrow\Diff_+^{k,\mu}(S^1).\]

Summarizing, we have the following.

\begin{prop}\label{prop:bound}
Let $k\in\bN$, and let $\mu\gg\omega_1$ be a concave modulus.
Then there exists an eleven--generator group $Q=Q(k,\mu)$  such that the following hold.
\be
\item\label{p:simple} $[Q,Q]$ is simple
and every proper quotient of $Q$ is abelian.
\item\label{p:is1} $Q\le\Diff_0^{k,\mu}(I)$.
\item\label{p:inot} $[Q,Q] \not\in\bigcup_{0\prec_k\omega\ll\mu}\left(
\GG^{k,\omega}(I)\cup\GG^{k,\omega}(S^1)\right)\cup\GG^{k,\mathrm{bv}}(I)\cup\GG^{k,\mathrm{bv}}(S^1)$.
\item\label{p:fi}
Let $0\prec_k\omega\ll\mu$, or let $\omega=\mathrm{bv}$.
Then for an arbitrary finite index subgroup $A$ of $Q$,
and for 
all homomorphism
\[\psi\co A\to\Diff_+^{k,\omega}(M),\]
the image is abelian, whenever $M\in\{I,S^1\}$.
\ee
\end{prop}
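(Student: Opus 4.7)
The plan is to verify properties (\ref{p:simple})--(\ref{p:fi}) for the group $Q=Q(k,\mu)$ obtained at the end of the construction in Section~\ref{ss:main0} by applying the Chain Group Trick (Lemma~\ref{lem:chain-smooth}) to $T_5$. This application realizes $Q$ as a minimally acting eleven--chain group inside $\Diff_0^{k,\mu}(I)$, so (\ref{p:is1}) is immediate and Theorem~\ref{thm:chain-group} yields (\ref{p:simple}): $[Q,Q]$ is simple and every proper quotient of $Q$ is abelian. The hypothesis of the Chain Group Trick that $H_1(T_5,\bZ)$ is free abelian was arranged by the prior applications of the Rank Trick (Lemma~\ref{lem:rank-trick}) that produced $T_2$ and $T_4$.

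For (\ref{p:inot}), Lemma~\ref{lem:chain-smooth}(2) provides an embedding $T_5\hookrightarrow[Q,Q]$, and by construction $T_5$ contains both $[T_3,T_3]$ and $[T_3,T_3]\times\BS(1,2)$ as subgroups. Each class $\GG^{k,\omega}(M)$ is closed under taking subgroups, so Claim~\ref{cla:i-simple} (which handles the $I$--case for $[T_3,T_3]$) and Claim~\ref{cla:g7} (which handles the $S^1$--case for $T_5$) together rule out membership of $[Q,Q]$ in any of $\GG^{k,\omega}(I)$, $\GG^{k,\omega}(S^1)$, $\GG^{k,\mathrm{bv}}(I)$, $\GG^{k,\mathrm{bv}}(S^1)$ for the allowed $\omega$.

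For (\ref{p:fi}), let $A\le Q$ be a finite index subgroup and let $\psi\co A\to\Diff_+^{k,\omega}(M)$ be a homomorphism with $M\in\{I,S^1\}$ and $\omega$ as in the hypothesis. Since $Q$ is infinite (as $T_5\le[Q,Q]\le Q$) and every proper quotient of $Q$ is abelian by (\ref{p:simple}), Lemma~\ref{lem:fi} yields $[Q,Q]\le A$. By (\ref{p:inot}), the restriction $\psi\restriction_{[Q,Q]}$ cannot be injective, so its kernel is a nontrivial normal subgroup of the simple group $[Q,Q]$, which forces $\psi([Q,Q])=1$. Therefore $\psi$ factors through $A/[Q,Q]$, which is a subgroup of the abelianization $Q/[Q,Q]$ and hence abelian; so $\psi(A)$ is abelian, as required.

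The only real bookkeeping obstacle is making sure that along the tower $T_1\to T_2\to T_3\to T_4\to T_5\to Q$ the properties required by the next step are preserved, namely the compact containment of supports (for the Chain Group Trick), free abelianness of $H_1$ (for both tricks), and the persistence of an isomorphic copy of $[T_3,T_3]$ (and, in the $S^1$--case, of $[T_3,T_3]\times\BS(1,2)$) inside $[Q,Q]$. Each of these was explicitly checked in Claims~\ref{cla:g0}--\ref{cla:g7}, so the proof amounts to assembling those verifications in the order above.
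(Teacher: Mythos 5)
Your proof is correct and follows essentially the same approach as the paper: (1) and (2) come from the Chain Group Trick, (3) comes from the embedding chain $[T_3,T_3]\hookrightarrow T_5\hookrightarrow[Q,Q]$ together with Claims~\ref{cla:i-simple} and~\ref{cla:g7}, and (4) combines Lemma~\ref{lem:fi} with the simplicity of $[Q,Q]$. The paper's proof is just a terse four-sentence recapitulation, and you have usefully unpacked what it leaves implicit, but the logical structure is identical.
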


\bp Part (\ref{p:simple}) follows from that $Q$ is a minimally acting chain group (Theorem~\ref{thm:chain-group}).
Part (\ref{p:is1}) is established above. 
We deduce part (\ref{p:inot}) from
\[[T_3,T_3]\hookrightarrow T_5\hookrightarrow [Q,Q].\]
Part (\ref{p:fi}) is a consequence of parts (\ref{p:simple}) and (\ref{p:inot}) along with Lemma~\ref{lem:fi}.
\ep
We have now proved Theorem~\ref{thm:main0}.
For a later use, we record the inclusion relations between the groups appearing above:
\[
[T_1,T_1]\cong[T_2,T_2]\le T_2\hookrightarrow[T_3,T_3]\cong[T_4,T_4]\le T_4\le T_5
\hookrightarrow[Q,Q]\le Q.\]
In the above diagram, the isomorphisms $\cong$ come from the Rank Trick and the embeddings $\hookrightarrow$ come from the Chain Group Trick.

\iffalse
For a later use,
let us formalize the proof as below.

\begin{lem}\label{lem:formal}
Let $0\le\alpha<\beta\le\infty$ be given that $\alpha,\beta\not\in(0,1)$.
Suppose $G$ is a finitely generated group such that $H_1(G,\bZ)$ is free abelian,
and suppose 
\[\phi\co G\to\Diff_+^\alpha(\bR)\]
is a compactly supported representation such that
 for all representation
\[
\psi\co G\to\Diff_+^\beta(I)\]
we have that
$[G,G]\cap\ker\psi\setminus\ker\phi\ne\varnothing$.
Then there exists a finitely generated group
\[
Q\in\left( \GG^\alpha(I)\cap\GG^\alpha(S^1)\right)
\setminus\left(\GG^\beta(I)\cup\GG^\beta(S^1)\right)\]
with the property that $[Q,Q]$ is simple,
that every proper quotient of $Q$ is abelian
and that $\phi[G,G]$ embeds into $[Q,Q]$;
furthermore, we can require that for some finitely generated group $\hat G\le[Q,Q]$, we have that $[\hat G,\hat G]\cong\phi[G,G]$.
\end{lem}
Note that $\hat G=T_2$ in the proof of Theorem~\ref{thm:main0}.
Our actual proof works for a stronger statement that holds for all $\beta$ that is larger than a given $\alpha$, rather than for a specific given pair $(\beta,\alpha)$. Moreover, we have considered a more general $C^{k,\mu}$ type regularity rather than just a $C^\alpha$ type regularity.
\fi

%%!TEX TS-program = pdflatex 
%!TEX root = ./KK2018crit.tex
%%% Begin

\subsection{Continua of groups of the same critical regularity}\label{ss:continua}
Recall a \emph{continuum} means a set that has the cardinality of $\bR$. 
%By the property of minimal chain groups , the following theorem implies the first part of the Main Theorem.
The Main Theorem is an immediate consequence of the following stronger result, combined with Theorem~\ref{thm:chain-group}.

\begin{thm}\label{thm:continua}
For each real number $\alpha\ge1$, 
there exist continua  $X_\alpha, Y_\alpha$ of minimal chain groups acting on $I$ such that the following conditions hold.
\be[(i)]
\item
For each $A\in X_\alpha$, we have that $A\le\Diff_0^\alpha(I)$
and that 
 \[ [A,A]\not\in\bigcup_{\beta>\alpha}\GG^\beta(I)\cup\GG^\beta(S^1).\]
 % \[[G,G]\in \GG^\alpha(M) \setminus \left(\bigcup_{\beta>\alpha}\GG^\beta(M)\right);\]
\item
For each $B\in Y_\alpha$, we have that
$B\le\bigcap_{\beta<\alpha}\Diff_0^\beta(I)$ and that
\[ [B,B]\not\in\GG^\alpha(I)\cup \GG^\alpha(S^1).\]
\item
No two groups in $X_\alpha\cup Y_\alpha$ have isomorphic commutator subgroups.
\ee
\end{thm}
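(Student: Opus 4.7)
The plan is to parameterize both continua by the concave moduli $\omega_z$ of Example~\ref{ex:mod} and feed them into the construction $Q(k,\mu)$ supplied by Proposition~\ref{prop:bound}. Writing $\alpha = k+\tau$ with $k=\lfloor\alpha\rfloor\ge 1$ and $\tau\in[0,1)$, I will exploit Lemma~\ref{lem:order} in the following form: $z<_\bC w$ implies $\omega_z\gg\omega_w$, so varying only the imaginary part of $z$ yields an uncountable totally ordered family of moduli all pinned to a single leading H\"older exponent.

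For $X_\alpha$, I would set $\mu_s := \omega_{z_s^+}$ with $z_s^+ := \tau+s\sqrt{-1}$ when $\tau\in(0,1)$ and $z_s^+ := s\sqrt{-1}$ when $\tau=0$, letting $s$ range over $(0,1)$, and take $A_s := Q(k,\mu_s)$. Since $\operatorname{Re} z_s^+<1$, we have $\mu_s\gg\omega_1$, so Proposition~\ref{prop:bound} delivers a minimally acting chain group $A_s\le\Diff_0^{k,\mu_s}(I)\subseteq\Diff_+^\alpha(I)$. Any $\beta>\alpha$ satisfies $\Diff_+^\beta(I)\subseteq\Diff_+^{k,\omega}(I)$ for some $\omega\in\{\omega_{\beta-k},\omega_1\}$ with $\omega\ll\mu_s$ and $\omega\succ_k 0$; Proposition~\ref{prop:bound}(\ref{p:inot}) then gives $[A_s,A_s]\notin\GG^\beta(I)\cup\GG^\beta(S^1)$.

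For $Y_\alpha$ with $\alpha>1$, I would mirror this on the other side of the $\bC$-order. When $\tau\in(0,1)$, set $\nu_s := \omega_{\tau+s\sqrt{-1}}$ and $B_s := Q(k,\nu_s)$ for $s\in(-1,0)$; when $\tau=0$ (so $k\ge 2$), set $\nu_s := \omega_{1+s\sqrt{-1}}$ and $B_s := Q(k-1,\nu_s)$. A short computation with Lemma~\ref{lem:order} shows $\nu_s\ll\omega_{\tau'}$ for every $\tau'\in[0,\tau)$ (or every $\tau'\in[0,1)$ in the integer case), which places $B_s$ inside $\bigcap_{\beta<\alpha}\Diff_+^\beta(I)$; dually, $\omega_\tau\ll\nu_s$ (respectively $\omega_1\ll\nu_s$) with the required $\succ$-property, so Proposition~\ref{prop:bound}(\ref{p:inot}) forces $[B_s,B_s]\notin\GG^\alpha(I)\cup\GG^\alpha(S^1)$. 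The endpoint $\alpha=1$ falls outside this framework and will be handled separately by invoking the Deroin--Kleptsyn--Navas bi-Lipschitz conjugation theorem \cite{DKN2007}, as in Remark~\ref{rem:not a group}, running the same construction inside $\Diff_+^{\mathrm{Lip}}(I)$.

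The crux of the argument, and the step I expect to be the main obstacle, is establishing that the continuum-many commutator subgroups are pairwise non-isomorphic. The plan here is to pick $s\ne s'$ in a single family and, after relabelling, arrange $\mu_{s'}\ll\mu_s$ with $\mu_{s'}\succ_k 0$ (or $\succ_{k-1}0$ in the integer $Y_\alpha$ case). Then $[Q(k,\mu_{s'}),Q(k,\mu_{s'})]$ sits tautologically inside $\Diff_+^{k,\mu_{s'}}(I)$, while Proposition~\ref{prop:bound}(\ref{p:inot}) applied to $Q(k,\mu_s)$ forbids any embedding of $[Q(k,\mu_s),Q(k,\mu_s)]$ into $\Diff_+^{k,\mu_{s'}}(I)$; hence these two simple groups cannot be isomorphic. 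Groups from $X_\alpha$ are distinguished from those in $Y_\alpha$ by membership of their commutator in $\GG^\alpha$. Restricting $s$ to an uncountable subset on each side then produces the desired continua of minimally acting chain groups.
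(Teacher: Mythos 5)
Your proposal reproduces the paper's construction essentially verbatim for $\alpha>1$: the families $A_s = Q(\form{\alpha+s\sqrt{-1}},\omega_{\alpha+s\sqrt{-1}-\form{\alpha+s\sqrt{-1}}})$ for $s>0$ and $s<0$ are exactly the continua $X_\alpha$, $Y_\alpha$ the paper builds via the map $\kappa$ and Proposition~\ref{prop:bound}, and your distinguishing argument (arrange $\mu_{s'}\ll\mu_s$, invoke sub-tameness to get $\succ_k 0$, then apply Proposition~\ref{prop:bound}(\ref{p:inot})) is what the paper packages as Lemma~\ref{lem:kappa}(\ref{p:emb}). For $\alpha>1$ this is correct.

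However, there is a genuine gap at $\alpha=1$, which you wave at but do not close, and your proposed fallback does not work. Two separate problems arise. First, for $X_1$ your pairwise non-isomorphism argument needs $\omega_{s'\sqrt{-1}}\succ_1 0$, i.e.\ sub-tameness of $\omega_{s'\sqrt{-1}}$ for $s'>0$; but by Lemma~\ref{lem:sup-sub}, $\omega_{s\sqrt{-1}}$ with $s>0$ is only sup-tame, not sub-tame, and the paper explicitly records (in the remark following Lemma~\ref{lem:kappa}) that Lemma~\ref{lem:kappa}(\ref{p:emb}) is unavailable when $z,w$ both have real part exactly $1$. So Proposition~\ref{prop:bound}(\ref{p:inot}) cannot be used to separate $Q(1,\omega_{s\sqrt{-1}})$ from $Q(1,\omega_{s'\sqrt{-1}})$. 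Second, for $Y_1$ the construction $Q(k,\mu)$ cannot even begin: to exclude $\GG^1(I)$ via Proposition~\ref{prop:bound} one would need to take $\omega=0$ (or $\omega=\mathrm{bv}$ at level $k=0$), and the proposition simply does not furnish such an exclusion. Citing the Deroin--Kleptsyn--Navas conjugation result here is a red herring: that theorem identifies $\GG^0$ with $\GG^{\mathrm{Lip}}$, which neither supplies the missing non-isomorphism argument for $X_1$ nor produces any group whose commutator avoids $\GG^1$. The paper instead handles $\alpha=1$ with genuinely different tools: for $Y_1$ it starts from the Kropholler--Thurston perfect group $\langle A,B,C\mid A^2=B^3=C^7=ABC\rangle$, killed in $\Diff_+^1(I)$ by Thurston Stability; and for both $X_1$ and $Y_1$ it replaces the direct modulus comparison by a cardinality argument, embedding the continuum of pairwise non-isomorphic higher-regularity groups $\bar G_\beta$ (resp.\ $\bar H_\beta$) into the commutator subgroups of the chain groups $\Gamma(\beta)$ (resp.\ $\Lambda(\beta)$) via the Rank and Chain Group Tricks, and using the fact that a countable group has only countably many finitely generated subgroups to extract a continuum with pairwise non-isomorphic commutators. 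You need to supply this (or an equivalent) argument to complete the $\alpha=1$ case.
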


In order to prove Theorem~\ref{thm:continua}, we set up some notations. For a complex number $z\in\bC$, we let $\form{z}$ denote the largest integer $m$ such that $m<_\bC z$. For instance, we have 
$\form{k}=\form{k-\sqrt{-1}}=k-1$ 
and
$\form{k+1/2}=\form{k+\sqrt{i}}=k$ for an integer $k$. Let $z>_\bC 1$, written as $z = k+\tau+s\sqrt{-1}$ for $k=\lfloor\operatorname{Re} z\rfloor$ and $\tau,s\in\bR$.
We put
\[
\kappa(z):=(\form{z},\omega_{z-\form{z}}).\]
If $\alpha>1$ is a real number and if $k=\lfloor\alpha\rfloor$, then we see that
\[
\kappa(\alpha)=
\begin{cases}
(k,\omega_{\alpha-k}),&\text{ if }\alpha\ne k,\\
(k-1,\omega_1),&\text{ if }\alpha=k.
\end{cases}\]
Using the notation $Q(k,\mu)$ from Proposition~\ref{prop:bound},
we observe the following.
%If $\alpha>1$ is a real number and if $k=\lfloor\alpha\rfloor$, then we see that \[ Q\circ\kappa(\alpha)= \begin{cases} Q(k,\omega_{\alpha-k})\le\Diff_0^{\alpha}(I),&\text{ if }\alpha\ne k,\\ Q(k-1,\omega_1)\le\Diff_0^{k-1,\mathrm{Lip}}(I),&\text{ if }\alpha=k. \end{cases}\]
\begin{lem}\label{lem:kappa}
The following hold for all complex numbers 
$1<_\bC z <_\bC w$.
\be
\item\label{p:order} We have that $\Diff_+^{\kappa(z)}(M)\ge\Diff_+^{\kappa(w)}(M)$.
\item\label{p:o1} If $z\not\in\bN$, then $\omega_{z-\form{z}}\gg \omega_1$.
\item\label{p:sub}
If $\operatorname{Re} z>1$, then $\omega_{z-\form{z}}$ is sub-tame or $\operatorname{Re} z\ge2$.
\item\label{p:emb}
If $z\not\in\bN$ and $\operatorname{Re} w>1$, then we have that
\[
[Q\circ\kappa(z),Q\circ\kappa(z)]\not\in\GG^{\kappa(w)}(S^1).\]
\ee
\end{lem}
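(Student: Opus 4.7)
The plan is to handle the four assertions sequentially, reducing each to earlier results. Parts (1)--(3) amount to case analysis plus Lemma~\ref{lem:order} and Lemma~\ref{lem:sup-sub}. For part (1) I split on whether $\form{z}=\form{w}$: in the equal case the fractional parts $z-\form{z}<_\bC w-\form{w}$ lie in $(0,1]_\bC$ and Lemma~\ref{lem:order} gives $\omega_{z-\form{z}}\gg\omega_{w-\form{w}}$, yielding the inclusion; in the strict case $\form{z}+1\le\form{w}$, I chain
\[
\Diff_+^{\kappa(w)}(M)\le\Diff_+^{\form{w}}(M)\le\Diff_+^{\form{z}+1}(M)\le\Diff_+^{\form{z},\omega_1}(M)\le\Diff_+^{\kappa(z)}(M),
\]
where the last inclusion uses part~(2) when $z\not\in\bN$ and is an equality when $z\in\bN$. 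Part~(2) follows directly from Lemma~\ref{lem:order} since $z\not\in\bN$ together with $z>_\bC 1$ forces $z-\form{z}\in(0,1)_\bC$ strictly. For part~(3), the only nontrivial case is $1<\operatorname{Re} z<2$, where $\form{z}=1$ and $z-1=\tau+s\sqrt{-1}$ with $\tau\in(0,1)$, so Lemma~\ref{lem:sup-sub}(3) delivers sub-tameness.

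Part (4) carries the real content. My strategy is to invoke Proposition~\ref{prop:bound}(\ref{p:inot}) with $(k,\mu)=\kappa(z)$; the required hypothesis $\mu\gg\omega_1$ is exactly part~(2), since $z\not\in\bN$. It then suffices to produce either a concave modulus $\omega$ with $0\prec_{\form{z}}\omega\ll\omega_{z-\form{z}}$ and $\Diff_+^{\kappa(w)}(S^1)\le\Diff_+^{\form{z},\omega}(S^1)$, or a containment $\Diff_+^{\kappa(w)}(S^1)\le\Diff_+^{\form{z},\mathrm{bv}}(S^1)$. I split on the sign of $\form{w}-\form{z}$: when $\form{w}>\form{z}$, the chain $\Diff_+^{\kappa(w)}(S^1)\le\Diff_+^{\form{z}+1}(S^1)\le\Diff_+^{\form{z},\mathrm{bv}}(S^1)$ reduces matters to the bv-exclusion in Proposition~\ref{prop:bound}(\ref{p:inot}); when $\form{w}=\form{z}=:k$, I take $\omega=\omega_{w-k}$, so that the required inclusion is an equality and $\omega\ll\omega_{z-k}$ is a direct application of Lemma~\ref{lem:order}.

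The main obstacle is to verify $\omega_{w-k}\succ_k 0$ in the case $\form{w}=\form{z}=k$. For $k\ge 2$ the condition is vacuous, so the real work occurs at $k=1$. There $\form{w}=1$ combined with the standing hypothesis $\operatorname{Re} w>1$ gives $\operatorname{Re}(w-1)=:\tau>0$; writing $w-1=\tau+s\sqrt{-1}$ one computes
\[
\frac{\omega_{w-1}(tx)}{\omega_{w-1}(x)}=t^\tau\exp\!\left(-s\!\left[\frac{T+X}{\log(T+X)}-\frac{X}{\log X}\right]\right),\qquad T=\log(1/t),\ X=\log(1/x).
\]
When $s\ge 0$ the exponential factor is at most $1$ and the ratio is dominated by $t^\tau\to 0$. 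When $s<0$ the bracket is maximized on $x\in(0,\delta)$ at $X=\log(1/\delta)$ and grows like $T/\log T$, so the exponential can be large, but it is swamped by $t^\tau=e^{-\tau T}$; uniformly in $x$ the expression becomes $\exp(-\tau T+O(T/\log T))\to 0$, and thus $\omega_{w-1}\succ_1 0$. This is precisely where $\operatorname{Re} w>1$ (rather than $\ge 1$) is essential: the boundary case $\operatorname{Re} w=1$ would place $w-1$ on the imaginary axis, where $\omega_{w-1}$ fails $\succ_1 0$, which is why the lemma's hypothesis excludes it.
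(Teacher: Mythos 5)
Your proof is correct, and it takes a genuinely cleaner route through part~(\ref{p:emb}) than the paper does. The paper reduces $w$ to an intermediate $w'$ with $\operatorname{Re}w'=\operatorname{Re}z$ (splitting on whether $\operatorname{Re}z=1$), arranges $\form{w'}=\form{z}$, and then invokes part~(\ref{p:sub}) together with Proposition~\ref{prop:bound} via the $\omega$-exclusion only. You instead split on whether $\form{w}>\form{z}$: in the strict case you land in the $\mathrm{bv}$-exclusion of Proposition~\ref{prop:bound} (which the paper never uses for this lemma), and in the equal case you take $\omega=\omega_{w-\form{z}}$ outright and verify $\omega\succ_{\form{z}}0$ by hand. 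This sidesteps the paper's intermediate-$w'$ reduction, and your direct computation of sub-tameness for $\omega_{w-1}$ when $\form{w}=1$ and $\operatorname{Re}w>1$ (which duplicates, in one stroke, Lemma~\ref{lem:sup-sub}(2) and (3)) also avoids a small subtlety in the paper: part~(\ref{p:sub}) as literally stated gives ``sub-tame or $\operatorname{Re}z\ge 2$,'' which is weaker than what the paper's proof of part~(\ref{p:sub}) actually establishes (namely $\form{z}\ge 2$) and what is needed to conclude $\succ_{\form{z}}0$. Your verification that the sup over $x$ of the bracket is attained at $X=\log(1/\delta)$ and is $O(T/\log T)$, hence is dominated by $t^\tau=e^{-\tau T}$, is correct.

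One minor slip in part~(\ref{p:o1}): you claim that $z\notin\bN$ and $z>_\bC 1$ force $z-\form{z}\in(0,1)_\bC$. This is false: for $z=2-\sqrt{-1}$ one has $\form{z}=1$ and $z-\form{z}=1-\sqrt{-1}$, which lies in $(0,1]_\bC$ but not in $(0,1)_\bC$. What is true (and all that Lemma~\ref{lem:order} needs) is that $z-\form{z}\in(0,1]_\bC$ and $z-\form{z}\neq 1$, hence $z-\form{z}<_\bC 1$, so $\omega_{z-\form{z}}\gg\omega_1$. The conclusion stands; only the set membership assertion is misstated.
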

Note that $\GG^{\kappa(w)}(I)\sse \GG^{\kappa(w)}(S^1)$ by Theorem~\ref{thm:embeddability}.
\bp[Proof of Lemma~\ref{lem:kappa}]
Parts (1) and (2) are obvious from Lemma~\ref{lem:order}.
For part (3), 
let us write $z= k+\tau+s\sqrt{-1}$ as above.
Suppose $\omega_{z-\form{z}}$ is not sub-tame. By Lemma~\ref{lem:sup-sub}, we have that $z-\form{z}={s\sqrt{-1}}$ for some $s>0$, and that $z=k+s\sqrt{-1}$. It follows  that $k\ge2$.

For part (4), we first assume $\operatorname{Re} z\ne 1$. There exists a real number $t>s$ 
such that $st\ge0$ and such that $z<_\bC w':=k+\tau+t\sqrt{-1}<_\bC w$.
Using part (\ref{p:order}), we may assume $w=w'$.
Part (\ref{p:sub}) implies  $\omega_{w-\form{w}}\succ_k0$.
We have that
\[
\left(\kappa(z),\kappa(w)\right)=
\left((\form{z},\omega_{z-\form{z}}),
(\form{z},\omega_{w-\form{z}})\right).\]
%\begin{cases}
%\left((k-1,\omega_1),(k,\omega_{t\sqrt{-1}})\right),&\text{ if }\tau=s=0\text{ and }t>0;\\
%\left((k-1,\omega_{1+s\sqrt{-1}}),(k-1,\omega_{1+t\sqrt{-1}})\right), &\text{ if }\tau=0\text{ and }s<t<0;\\ \left((k,\omega_{\tau+s\sqrt{-1}}),(k,\omega_{\tau+t\sqrt{-1}})\right), &\text{ otherwise}.\end{cases} \]
The conclusion of (4) follows from Lemma~\ref{lem:order} and Proposition~\ref{prop:bound}.

Let us assume $\Re z=1$, so that $z=s\sqrt{-1}$ for some $s>0$.
We can pick $w'=1+\tau<_\bC w$ for some $\tau\in(0,1)$. Again, we may set $w'=w$ so that $\omega_{w-\form{w}}$ is sub-tame. The desired conclusion follows from the comparison 
\[
\left(\kappa(z),\kappa(w)\right)=
\left((1,\omega_{s\sqrt{-1}}),
(1,\omega_\tau)\right).\qedhere\]
\ep
\begin{rem}
In the case when $z=1+s\sqrt{-1}$ and $w=1+t\sqrt{-1}$ for some $0<s<t$,
we \emph{cannot} conclude that part (\ref{p:emb}) above holds. This is because  $\omega_{w-\form{w}}=\omega_{t\sqrt{-1}}$ may not be sub-tame.
\end{rem}

Let us now prove Theorem~\ref{thm:continua} for the case $\alpha>1$. We define
\begin{align*}
X_\alpha&:=\{Q\circ\kappa(\alpha+s\sqrt{-1})\co s>0\},\\
Y_\alpha&:=\{Q\circ\kappa(\alpha+s\sqrt{-1})\co s<0\}.\end{align*}
Pick a real number $s>0$ and put
$A = Q\circ\kappa\left(\alpha+s\sqrt{-1}\right)\in X_\alpha$.
Note that 
\[
A
\le
\Diff_0^{\kappa\left(\alpha+s\sqrt{-1}\right)}(I)
\le
\Diff_0^{\alpha}(I).\]
Let $\beta>\alpha$ be a non-integer real number.
By Lemma~\ref{lem:kappa}, we have that 
$[A,A]\not\in\GG^{\kappa(\beta)}(S^1)=\GG^{\beta}(S^1)$.
The conclusion (i) of the Theorem is satisfied.

Let us now pick a real number $s<0$
and put 
$
B = Q\circ\kappa\left(\alpha+s\sqrt{-1}\right)\in Y_\alpha$.
Let $\beta<\alpha$ be non-integer real number larger than $1$.
We have that
\[
B
\le
\Diff_0^{\kappa(\alpha+s\sqrt{-1})}(I)
\le
\Diff_0^{\kappa(\beta)}(I)
=
\Diff_0^{\beta}(I).\]
Since $\alpha>_\bC \alpha+s\sqrt{-1}>_\bC 1$,
we see from Lemma~\ref{lem:kappa}  that 
\[[B,B]\not\in\GG^{\kappa(\alpha)}(S^1)\supseteq \GG^\alpha(S^1).\]
This proves the conclusion (ii).

It is obvious from the conclusions (i) and (ii) that 
whenever $A\in X_\alpha$ and $B\in Y_\alpha$, we have
$[A,A]\not\cong[B,B]$.
Suppose we have real numbers $0<s_1<s_2$,
and put $A_i = Q\circ\kappa(\alpha+s_i\sqrt{-1})$.
Using $\alpha>1$
we deduce from
Lemma~\ref{lem:kappa} that 
\[ [A_1,A_1]\not\in\GG^{\kappa(\alpha+s_2\sqrt{-1})}(S^1).\]
In particular, $[A_1,A_1]\not\cong[A_2,A_2]$.
Similarly, no two groups in $Y_\alpha$ have isomorphic commutator subgroups. This proves the conclusion (iii).

Let us now construct a continuum $X_1$.
For each $\beta>1$, we pick $G_\beta\in X_\beta$.
We put $G_1 := Q\circ\kappa(1+\sqrt{-1})$
so that $[G_1,G_1]\not\in\GG^\gamma(S^1)$ for each $\gamma>1$.
By the Rank Trick for the natural surjection 
from a free group onto $G_\beta$ for $\beta\ge1$, we obtain another group
$\bar G_\beta\le \Diff_0^\beta(I)$ whose abelianization is free abelian
such that
$
[G_\beta,G_\beta]\cong[\bar G_\beta,\bar G_\beta]$.
It follows that $\bar G_\beta\not\in \GG^{\gamma}(S^1)$
for all $\gamma>\beta\ge1$.

For each $\beta>1$, we 
can apply the Chain Group Trick to $\bar G_1\times\bar G_\beta$
to obtain a minimally acting chain group $\Gamma(\beta)$
such that 
\[
\bar G_1\times\bar G_\beta\hookrightarrow [\Gamma(\beta),\Gamma(\beta)]\le \Gamma(\beta)\le\Diff_0^1(I).\]
It follows that $[\Gamma(\beta),\Gamma(\beta)]\not\in \GG^{\gamma}(S^1)$ for all $\gamma>1$.
From the consideration of critical regularities, we note that $\bar G_\beta\not\cong\bar G_\gamma$ whenever $1\le \beta<\gamma$.
Note also that $\bar G_\beta\le[\Gamma(\beta),\Gamma(\beta)]$
and that
 a countable group contains at most countably many finitely generated subgroups.
So, there exists a continuum $X^*\sse (1,\infty)$ such that 
for all distinct $\beta,\gamma$ in $X^*$, we have 
\[
[\Gamma(\beta),\Gamma(\beta)]
\not\cong
[\Gamma(\gamma),\Gamma(\gamma)].\]
Then $X_1=\{\Gamma(\beta)\mid \beta\in X^*\}$ is the desired continuum of the theorem.

Finally, let us construct a continuum $Y_1$.
To be consistent with the notations in Section~\ref{ss:main0}, let us set  
\[
T_2=\form{A,B,C\mid A^2=B^3=C^7=ABC}\le\widetilde{\PSL(2,\bR)}\le\Homeo_+(\bR).\]
As we noted in Remark~\ref{rem:not a group},
we have that
$\GG^0(M)=\GG^{\mathrm{Lip}}(M)$. So, it suffices to compare the regularities $C^0$ and $C^1$.
Kropholler and Thurston (see~\cite{Bergman1991PJM}) observed that the group $T_2$ is a finitely generated perfect group, and by Thurston Stability, that every homomorphism from $T_2$ to $\Diff_+^1(I)$ has a trivial image. In particular, $H_1(T_2,\bZ)$ is trivial and $T_2\in\GG^0(I)\setminus\GG^1(I)$. 
We continue as in Section~\ref{ss:main0}, after substituting $(k,\mu)=(0,0)$ and $(k,\omega)=(1,0)$ (and forgetting $k,\mathrm{bv}$).
We obtain groups $T_3, T_4, T_5$ and 
a minimally acting chain group $Q\le\Homeo_+(I)$ such that
\[T_2\hookrightarrow  [Q,Q]\not\in \GG^1(S^1).\]

Let us put $H_1:=Q$.
The construction of $Y_1$ is very similar to that of $X_1$.
For each $\beta>1$, 
we can find a finitely generated group 
$\bar H_\beta\le\bigcap_{\gamma<\beta}\Diff_0^\gamma(I)$
such that $H_1(\bar H_\beta,\bZ)$ is free abelian,
and such that  
$\bar H_\beta\not\in\GG^\beta(S^1)$.
For each $\beta>1$, 
we apply the Chain Group Trick to $\bar H_1\times\bar H_\beta$ and obtain a minimal chain group $\Lambda(\beta)$
such that \[\bar H_1\times\bar H_\beta\hookrightarrow [\Lambda(\beta),\Lambda(\beta)]
\le\Lambda(\beta)\le\Homeo_+(I).\]
As before, there exists a continuum $Y^*\sse(1,\infty)$
such that $Y_1=\{\Lambda(\beta)\mid\beta\in Y^*\}$ is the desired collection.
Note that no two groups in the collection $X_1\cup Y_1$ have isomorphic commutator subgroups.

\begin{rem}
Calegari~\cite{CalegariForcing} exhibited a finitely generated group in $\GG^0(S^1)\setminus\GG^1(S^1)$. 
Lodha and the authors~\cite{KKL2017} gave (continuum many distinct) finitely generated groups inside $\GG^0(I)\setminus\GG^1(I)$ having simple commutator groups, building on~\cite{LM2016GGD}. The last part of the above proof strengthens both of these results. 
\end{rem}

\subsection{Algebraic and topological smoothability}\label{ss:fol}
Theorem~\ref{thm:main} also implies that if $\alpha\geq 1$ is a real number, then there are very few homomorphism $\Diff_+^{\alpha}(S^1)\to\Diff_+^{\beta}(S^1)$ and $\Diff_{c}^{\alpha}(\bR)\to\Diff_{c}^{\beta}(\bR)$ for all $\beta>\alpha$. 

\begin{proof}[Proof of Corollary~\ref{cor:intro-continuous}]
By the Main Theorem, none of the maps in (1) through (3) are injective. The desired conclusion now follows from Theorem~\ref{thm:continuous}.
\end{proof}

Group actions of various regularities on manifolds are closely related to foliation theory (see~\cite{CandelConlonI}, for instance). One of the canonical constructions in foliation theory is the \emph{suspension} of a group action, a version of which we recall here for the convenience of the reader. 
Recall our hypothesis that $M\in\{I,S^1\}$.
Let $B$ be a closed manifold with a universal cover $\tilde{B}\to B$.
Suppose we have a representation 
\[\psi\colon \pi_1(B)\to \Diff_+^{\alpha}(M).\]
The manifold $\tilde{B}\times M$ has a natural product foliation so that each copy of $\tilde{B}$ is a leaf. The group $\pi_1(B)$ has a diagonal action on $\tilde{B}\times M$, given by the deck transformation $\pi_1(B)\to\Homeo(\tilde{B})$ and by the map $\psi$. 
The quotient space 
\[
E(\psi) = \left(\tilde{B}\times M\right) /\pi_1(B)\]
is a $C^\alpha$--foliated bundle. This construction is called the \emph{suspension} of $\psi$; see~\cite{CandelConlonI} for instance. Two representations 
$\psi,\psi'\in\Hom(\pi_1(B), \Diff_+^{\alpha}(M))$
yield homeomorphic suspensions $E(\psi),E(\psi')$ as foliated bundles if and only if $\psi$ and $\psi'$ are topologically conjugate~\cite[Theorem2]{CN1985}.

Let us now consider the case $M=I$
and $B=S_g$, a closed surface of genus $g\ge2$.
Let $k\ge0$ be an integer.
Cantwell--Conlon~\cite{CC1988} and Tsuboi~\cite{Tsuboi1987}
independently proved the existence of a representation $\psi_k\in\Hom(\pi_1(S_g),\Diff_+^k(I))$ such that $\psi_k$ is not topologically conjugate to a representation in $\Hom(\pi_1(S_g),\Diff_+^{k+1}(M))$. 
So, they concluded:

\begin{thm}[{See~\cite{CC1988} and~\cite{Tsuboi1987}}]\label{thm:cctsu}
For each integer $k\ge0$, there exists a $C^k$--foliated bundle structure on $S_2\times I$ which is not homeomorphic to a $C^{k+1}$--foliated bundle.\end{thm}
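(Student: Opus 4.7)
The plan is to realize the sought bundle as the suspension of a carefully chosen representation $\psi_k\colon \pi_1(S_2)\to\Diff_+^k(I)$ and to rule out any $C^{k+1}$ smoothing by arranging that the image of $\psi_k$, as an abstract group, fails to embed into $\Diff_+^{k+1}(I)$. A codimension-one $C^\alpha$ foliated $I$-bundle over $S_2$ is determined up to bundle equivalence by its holonomy, and two such bundles are homeomorphic as foliated manifolds precisely when the holonomy representations are topologically conjugate (cf.\ \cite[Theorem~2]{CN1985}, recalled in the excerpt). So it suffices to construct $\psi_k$ with the property that $\psi_k(\pi_1(S_2))$ is not isomorphic to any subgroup of $\Diff_+^{k+1}(I)$.

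I would first supply a finitely generated group with the required critical regularity. For $k\ge 1$, Theorem~\ref{thm:main0} applied with any concave modulus $\mu\gg\omega_1$ (noting that $\omega_1\succ_k 0$ and that $\Diff_+^{k+1}(I)\le\Diff_+^{k,\omega_1}(I)$) produces a finitely generated $Q\le\Diff_+^{k,\mu}(I)\le\Diff_+^k(I)$ whose commutator subgroup $H:=[Q,Q]$ is simple and satisfies $H\notin\GG^{k+1}(I)$. For $k=0$, I would instead use the Kropholler--Thurston triangle group $T_2=\langle A,B,C\mid A^2=B^3=C^7=ABC\rangle$ recalled in Section~\ref{ss:continua}, which is $2$-generated (since $C$ is forced to equal $B^{-1}A^{-1}$ in the abstract quotient arising in the $\widetilde{\PSL(2,\bR)}$ realization), embeds into $\Homeo_+(I)$, and admits no nontrivial homomorphism into $\Diff_+^1(I)$ by Thurston stability since it is perfect.

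Next, since $\pi_1(S_2)$ surjects onto $F_2$ (collapse one handle so that the surface relator trivializes) and $F_2$ surjects onto every $2$-generator group, it suffices to realize the critical group as a subgroup of a $2$-generator group sitting inside $\Diff_+^k(I)$. This $2$-generator reduction is the main obstacle, since the groups produced by Proposition~\ref{prop:bound} are $11$-generated. I would handle it with a B.~H.~Neumann style embedding executed inside $\Diff_+^k(I)$: position a null sequence of disjoint rescaled $C^k$ copies of the action of $H$ in the interior of $I$, and adjoin two auxiliary $C^\infty$ diffeomorphisms $s,t$ supported on a null sequence so that the iterated conjugates $t^i s t^{-i}$ recover the finitely many generators of $H$ one by one. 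This yields $\widehat H=\langle s,t\rangle\le\Diff_+^k(I)$ with $H\le\widehat H$; since $\GG^{k+1}(I)$ is closed under taking subgroups, we still have $\widehat H\notin\GG^{k+1}(I)$. For $k=0$ this step is unnecessary, as $T_2$ is already $2$-generated.

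Finally, let $\psi_k$ be the composition $\pi_1(S_2)\twoheadrightarrow F_2\twoheadrightarrow\widehat H\hookrightarrow\Diff_+^k(I)$, with the analogous composition through $T_2$ when $k=0$. The suspension $E(\psi_k)$ is a $C^k$ foliated $I$-bundle over $S_2$. Were $E(\psi_k)$ homeomorphic to a $C^{k+1}$ foliated bundle, topological conjugacy would produce $\psi'\colon \pi_1(S_2)\to\Diff_+^{k+1}(I)$ with $\ker\psi'=\ker\psi_k$, whence its image is isomorphic to $\widehat H$ (respectively $T_2$), contradicting $\widehat H\notin\GG^{k+1}(I)$. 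The delicate step in this plan is the Neumann-style construction: one must verify that the auxiliary diffeomorphisms $s,t$ remain $C^k$ across the accumulation of the copies of $H$ and that the enlargement has not inadvertently created a smoothing absent from $H$ itself; this is the principal technical check, in the spirit of the analytic constructions of Section~\ref{sec:analytic}.
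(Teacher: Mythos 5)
This theorem is not proved in the paper at all; it is attributed to Cantwell--Conlon~\cite{CC1988} and Tsuboi~\cite{Tsuboi1987}, who use their own (quite different) constructions, and the paper then proves the separate Corollary~\ref{cor:foliation} by taking genus $g\ge5$ so that $\pi_1(S_g)$ surjects onto the free group of rank $|V^\dagger|=5$ and thence onto the groups appearing in Theorem~\ref{thm:main}. Your overall suspension strategy and the appeal to the holonomy-conjugacy criterion~\cite{CN1985} are the right framework — it is the same one the paper uses for Corollary~\ref{cor:foliation} — and your $k=0$ case via the Kropholler--Thurston group $T_2$ is correct (it is $2$-generated, perfect, lies in $\Homeo_+(I)$, and has no nontrivial image in $\Diff_+^1(I)$ by Thurston Stability).

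The gap is the ``Neumann-style reduction'' to two generators inside $\Diff_+^k(I)$ for $k\ge1$. Your argument needs $H\le\widehat H=\langle s,t\rangle$ so that $\GG^{k+1}(I)$ being subgroup-closed transfers the nonsmoothability. But the construction you sketch — placing rescaled copies of generators $h_1,\dots,h_n$ in a sequence of intervals and forming $a_i=t^{-i}st^i$ — yields a group whose restriction to a fixed reference interval surjects onto $H$ (sending $a_i\mapsto h_i$), while each $a_i$ also has nontrivial support on the other intervals. The resulting $\langle a_1,\dots,a_n\rangle$ is a subdirect product of (cyclically shifted) copies of $H$, and the projection to any one coordinate is generally \emph{not} injective; so there is no reason $\widehat H$ contains $H$ as a subgroup. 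Without that, $\widehat H\notin\GG^{k+1}(I)$ does not follow. (The classical HNN proof of ``every countable group embeds in a $2$-generated group'' uses an HNN extension, which does not obviously act on an interval; the one-dimensional-dynamics realization is precisely what requires genus $g\ge5$ in the paper's Corollary~\ref{cor:foliation}.) Separately, even granting the algebra, the analytic check you flag is a real obstacle: if you affinely rescale a fixed $C^k$ generator onto intervals of length $\ell_i\to0$, its $k$-th derivative scales like $\ell_i^{1-k}$ and blows up for $k\ge2$, so the union of copies is not $C^k$ at the accumulation point; the paper's Section~\ref{sec:analytic} builds fast diffeomorphisms precisely by using tailored bump functions with $C^k$-norm $O(\mu(\ell_i))\to0$, not by rescaling arbitrary $C^k$ maps. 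In short, the statement is true, but the $k\ge1$ case of your argument needs a genuinely new idea for the $2$-generator embedding inside $\Diff_+^k(I)$.
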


We will now prove Corollary~\ref{cor:foliation}, which is the only remaining result in the introduction that needs to be shown.
Assume $\alpha\ge1$ is a real number and $g\ge5$.
Theorem~\ref{thm:main} implies that there exists a representation \[\psi_\alpha\in\Hom(\pi_1(S_g),\Diff_0^\alpha(I))\] such that $\psi_\alpha$ is not topologically conjugate to a representation in \[\Hom(\pi_1(S_g),\bigcup_{\beta>\alpha}\Diff_+^\alpha(I)).\] Hence, we may replace the hypotheses $C^k$ and $C^{k+1}$ in Theorem~\ref{thm:cctsu} by $C^\alpha$ and $\bigcup_{\beta>\alpha}C^\beta$, respectively.

We can further extend this result to more general $3$--manifolds, using the techniques in~\cite{CC1982} described as follows. Every closed $3$--manifold $Y$ with $H_2(Y,\bZ)\ne0$ contains an embedded $2$--sided closed surface $S_g$ for all sufficiently large $g>0$. Goodman used this observation to prove that $Y\setminus\operatorname{Int}(S_g\times I)$ admits a smooth foliation structure, based on Thurston's result; see~\cite[Corollary 3.1]{Goodman1975} and \cite{Thurston1976AM}. By adding in the aforementioned foliated bundle structure of $S_g\times I$ inside $Y$, we complete the proof of Corollary~\ref{cor:foliation}.

\section{Further questions}\label{s:que}
Let $M\in\{I,S^1\}$.
One can ask for a finer distinction at integer regularities.
A difficulty with part (1) below is that there does not exist a concave modulus below $\omega_1$, by definition.

\begin{que}\label{que:reg2}
\be
\item
Let $k\ge1$. Does there exist a finitely generated subgroup 
$G\le\Diff_+^{k,\mathrm{Lip}}(M)$ that does not admit an injective homomorphism into $\Diff_+^{k+1}(M)$?
\item
Does there exist a finitely generated group in the set
\[
\bigcap_{\beta\in\bN}\GG^\beta(M)\setminus\GG^{\infty}(M)?\]
\ee
\end{que}

Many questions also persist about algebraic smoothability of groups. For instance, finite presentability as well as all other higher finiteness properties of the groups we produce are completely opaque at this time. We ask the following, in light of Theorem~\ref{thm:continua}:

\begin{que}
For which choices of $\alpha$ and $\beta$ do there exist finitely presented groups $G\in\GG^{\alpha}(M)\setminus\GG^{\beta}(M)$? What if $\alpha,\beta\in\bN$?
\end{que}

Moreover, the constructions we carry out in this paper are rather involved. It is still quite difficult to prove that a give group does not lie in $\GG^{\beta}(M)$.

\begin{que}
Let $G$ be a finitely generated group. Does there exist an easily verifiable algebraic criterion which precludes $G\in\GG^{\beta}(M)$?
\end{que}

\appendix
\section{Diffeomorphism groups of intermediate regularities}\label{appendix}
Let $M\in\{I,S^1\}$.
We will record some basic properties of $\Diff^{k,\omega}_+(M)$.
Most of these properties are well-known for the case $\omega=0$, but not explicitly stated in the literature for a general concave modulus $\omega$. We will also include brief proofs.

\subsection{Group structure}
Let $k\in\bN$, and let $\omega$ be a concave modulus. 
In~\cite{Mather1}, it is proved that for a smooth manifold $X$,
the set $\Diff_c^{k,\omega}(X)_0$ is actually a group. 
We sketch a proof of this fact for one--manifolds, and also include the case $\omega=\mathrm{bv}$.

The following lemma is useful for inductive arguments on the regularities.

\begin{lem}\label{lem:omega1}
Suppose $\omega$ is a concave modulus, or $\omega\in\{0,\mathrm{bv}\}$.
Let $k\in\bN$, and let 
\[
F,G\co M\to\bR\]
be maps such that
 $F$ is $C^{k-1,\omega}$ and such that $G$ is $C^k$.
Then the following hold.
\be
\item
The multiplication $F\cdot G$ is $C^{k-1,\omega}$.
\item
The composition
$F\circ G$ is $C^{k-1,\omega}$.
\ee
\end{lem}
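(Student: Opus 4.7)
The plan is to reduce both parts to a single book-keeping argument: expand $(FG)^{(k-1)}$ by the Leibniz rule for part (1), and $(F\circ G)^{(k-1)}$ by Faà di Bruno's formula for part (2), and then verify $\omega$-continuity summand by summand. Before expanding, I will record two preparatory facts on the compact manifold $M$. First, every $C^1$ function on $M$ is Lipschitz, and every bounded Lipschitz function is $\omega$-continuous (because $x/\omega(x)$ is bounded on bounded sets by Lemma~\ref{lem:omega}(1)), bv, and continuous. Second, since $F \in C^{k-1}$ with $F^{(k-1)}$ bounded, each intermediate derivative $F^{(j)}$ with $j<k-1$ is $C^1$ on $M$, hence Lipschitz; similarly $G^{(j)}$ is $C^1$ for every $j\le k-1$, since $G$ is $C^k$.

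For part (1), Leibniz gives $(FG)^{(k-1)}=\sum_{j=0}^{k-1}\binom{k-1}{j}F^{(j)}G^{(k-1-j)}$. Every summand with $j<k-1$ is a product of two bounded Lipschitz functions, hence Lipschitz, hence $\omega$-continuous. For the top-order term $F^{(k-1)}\cdot G$, the standard product estimate
\[
|F^{(k-1)}(x)G(x)-F^{(k-1)}(y)G(y)|\le \|F^{(k-1)}\|_\infty|G(x)-G(y)|+\|G\|_\infty|F^{(k-1)}(x)-F^{(k-1)}(y)|,
\]
combined with Lipschitz continuity of $G$ and Lemma~\ref{lem:omega}(1), yields $\omega$-continuity.

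For part (2), Faà di Bruno expresses $(F\circ G)^{(k-1)}$ as a polynomial combination of terms $(F^{(|\pi|)}\circ G)\prod_{B\in\pi}G^{(|B|)}$, indexed by set partitions $\pi$ of $\{1,\ldots,k-1\}$. When $|\pi|<k-1$, the factor $F^{(|\pi|)}$ is $C^1$, so $F^{(|\pi|)}\circ G$ is $C^1$ and hence Lipschitz; combined with the Lipschitz factors $G^{(|B|)}$ (each $|B|\le k-1$) the whole summand is Lipschitz. The only partition with $|\pi|=k-1$ is the partition into singletons, contributing the top-order term $(F^{(k-1)}\circ G)\cdot (G')^{k-1}$. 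The factor $(G')^{k-1}$ is $C^1$ and hence Lipschitz, while Lipschitz continuity of $G$ with some constant $L$ and Lemma~\ref{lem:omega}(2) give
\[
|F^{(k-1)}(G(x))-F^{(k-1)}(G(y))|\le [F^{(k-1)}]_\omega\,\omega(L|x-y|)\le (L+1)[F^{(k-1)}]_\omega\,\omega(|x-y|),
\]
so $F^{(k-1)}\circ G$ is $\omega$-continuous. Reusing the product estimate from part (1) handles the top-order term, and the proof for a concave modulus $\omega$ is complete.

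The cases $\omega=0$ and $\omega=\mathrm{bv}$ will follow from the same expansions: continuity is preserved by products and compositions, and bv is preserved by multiplication with bounded Lipschitz factors. The main point to be careful about is the composition $F^{(k-1)}\circ G$ in the bv case, because a bv function composed with an oscillating $C^1$ function need not be bv in general. The expected way out is that this lemma is invoked only to establish the group structure of $\Diff_+^{k,\omega}(M)$, where $G$ is a diffeomorphism and therefore monotone, so $\operatorname{Var}(F^{(k-1)}\circ G)=\operatorname{Var}(F^{(k-1)}|_{G(M)})<\infty$; this monotonicity input is the one place where something beyond pure book-keeping must be used.
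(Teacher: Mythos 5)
Your proof is correct but follows a genuinely different route from the paper's. For $\omega$ a concave modulus (and for $\omega=0$), the paper simply cites Mather~\cite{Mather1} and only supplies an argument for $\omega=\mathrm{bv}$, which it handles by induction on $k$ via the recursions $(FG)'=F'G+FG'$ and $(F\circ G)'=(F'\circ G)\cdot G'$, together with the two base-case estimates $|FG(x)-FG(y)|\le\|G\|_\infty|F(x)-F(y)|+\|F\|_\infty\|G\|_{1,\infty}|x-y|$ and $\sum_i|F\circ G(x_i)-F\circ G(x_{i-1})|\le\operatorname{Var}(F,M)$. Your Leibniz/Fa\`a di Bruno expansion makes the concave-modulus case self-contained rather than outsourced, and cleanly isolates in the top-order summand the one place where $\omega$-continuity, as opposed to mere Lipschitz continuity, is actually needed; the cost is invoking Fa\`a di Bruno where the paper gets by with two one-line formulas and induction. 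You also correctly flag the only genuinely delicate point: in the $\mathrm{bv}$ case of part~(2), a bounded-variation function precomposed with a merely $C^1$ map need not have bounded variation, and the paper's proof quietly invokes bijectivity of $G$, which is not among the stated hypotheses. Your observation that the lemma is applied only in Proposition~\ref{prop:group}, where $G$ is a diffeomorphism and hence monotone, is exactly the paper's implicit justification, so your reading of the gap is on target.
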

\bp
This lemma is proved in~\cite{Mather1} when $\omega=0$ or when $\omega$ is a concave modulus. So we assume $\omega=\mathrm{bv}$.
We let $\{x_i\}$ be a partition of $M$. 

(1) First consider the case $k=1$. We note
\[
|F\cdot G(x_i)-F\cdot G(x_{i-1})|
\le
|F(x_i)-F(x_{i-1})|\cdot\|G\|_\infty+
\|F\|_\infty \cdot \|G\|_{1,\infty} |x_i-x_{i-1}|.\]
Hence, if $F\cdot G$ is $C^{\mathrm{bv}}$.
If $k>1$, then we use an induction to see that 
\[
(F\cdot G)' = F'\cdot G + F\cdot G'\]
is  $C^{k-2,\mathrm{bv}}$. This proves part (1).

(2) 
The map $F\circ G$ is well-defined for all $x\in M$.
Let us first assume $k=1$, so that $F\in C^\mathrm{bv}$.
Since $G$ is bijective, we see that
\[
\sum_i|F\circ G(x_i)-F\circ G(x_{i-1})|
\le\operatorname{Var}(F,M)<\infty.\]
The induction step follows from
\[(F\circ G)'=(F'\circ G)\cdot G'.\qedhere\]
\ep

\begin{prop}\label{prop:group}
Let $\omega$ be a concave modulus, or let $\omega=\{0,\mathrm{bv}\}$. Then for each $k\in\bN$, the following is a group  where the binary operation is the group composition:
\[\Diff^{k,\omega}_+(M).\]
\end{prop}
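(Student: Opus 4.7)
The plan is to verify the two nontrivial group axioms, namely closure under composition and under inversion; the identity $\Id_M$ and associativity are immediate. I will work the concave-modulus case first and treat $\omega=\mathrm{bv}$ as a parallel track.

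For closure under composition, given $f,g\in\Diff^{k,\omega}_+(M)$ the map $f\circ g$ is plainly a $C^k$ orientation-preserving diffeomorphism, so only the $\omega$-continuity of $(f\circ g)^{(k)}$ needs proof. I plan to apply the Faà di Bruno formula
\begin{equation*}
(f\circ g)^{(k)}=\sum_{\pi}(f^{(|\pi|)}\circ g)\prod_{B\in\pi}g^{(|B|)},
\end{equation*}
summed over set partitions $\pi$ of $\{1,\dots,k\}$, and inspect each summand. The leading term $(f^{(k)}\circ g)(g')^k$ is $\omega$-continuous because $f^{(k)}$ is $\omega$-continuous and $g$ is Lipschitz on the compact $M$ (Lemma~\ref{lem:omega}(2) absorbs the Lipschitz constant), times the Lipschitz factor $(g')^k\in C^{k-1}$. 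In every other summand the only potentially non-Lipschitz ingredient is a single factor $g^{(k)}$, which occurs only when $|\pi|=1$ and is $\omega$-continuous by hypothesis; all remaining factors are $C^1$, hence Lipschitz. Since a product of bounded $\omega$-continuous functions is $\omega$-continuous (by the standard split $FG(x)-FG(y)=F(x)(G(x)-G(y))+(F(x)-F(y))G(y)$), each summand is $\omega$-continuous, and therefore so is their sum.

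For closure under inversion, let $f\in\Diff^{k,\omega}_+(M)$. A standard inverse-function argument shows $f^{-1}\in C^k$, so I only need to prove $(f^{-1})^{(k)}$ is $\omega$-continuous. I will induct on $k$ using $(f^{-1})'=1/(f'\circ f^{-1})$. For $k=1$, compactness of $M$ gives $f'\ge m>0$, hence $f^{-1}$ is $(1/m)$-Lipschitz; then Lemma~\ref{lem:omega}(2) yields the $\omega$-continuity of $f'\circ f^{-1}$, and post-composition with the smooth reciprocal on $[m,\|f'\|_\infty]$ preserves this. For the inductive step, $f^{-1}\in\Diff^{k-1,\omega}_+(M)$ by induction, and differentiating $(f^{-1})'=1/(f'\circ f^{-1})$ a further $k-1$ times (via Faà di Bruno and Leibniz), together with the closure under composition and products at level $k-1$ already established, yields $(f^{-1})'\in C^{k-1,\omega}$, i.e.\ $f^{-1}\in\Diff^{k,\omega}_+(M)$.

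The main technical subtlety is really a bookkeeping matter: every composition $F\circ G$ I invoke must have $G$ at least Lipschitz, because $\omega$-continuity is \emph{not} preserved in general under composition of two merely $\omega$-continuous maps (the function $\omega\circ\omega$ need not be dominated by $\omega$). In our setting this is automatic, since in every such composition both factors are $C^1$ diffeomorphisms of the compact manifold $M$. The case $\omega=\mathrm{bv}$ follows the same blueprint with two standard replacements: a product of bounded BV functions is BV, and the composition of a BV function with a monotone bijection preserves the total variation exactly, which controls each $f^{(k)}\circ g$ appearing in the Faà di Bruno expansion and each reciprocal appearing in the inversion step.
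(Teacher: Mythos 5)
Your proof is correct, and it takes a parallel but genuinely different route from the paper's. The paper reduces both composition and inversion to a single auxiliary statement (Lemma~\ref{lem:omega1}: if $F$ is $C^{k-1,\omega}$ and $G$ is $C^k$, then $F\cdot G$ and $F\circ G$ are $C^{k-1,\omega}$), applying the one-step chain rule identities $(f\circ g)'=(f'\circ g)\cdot g'$ and $(f^{-1})'=r\circ f'\circ f^{-1}$; for concave $\omega$ the lemma itself is not reproved but cited to Mather, and only the $\mathrm{bv}$ case is worked out in the appendix. You instead unroll the recursion explicitly via Fa\`a di Bruno, observing that in every summand at most one factor fails to be Lipschitz (either $f^{(k)}\circ g$ in the $|\pi|=k$ term or $g^{(k)}$ in the $|\pi|=1$ term), and that a bounded Lipschitz factor can be absorbed. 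That buys you a self-contained argument in the concave-modulus case, which the paper outsources. Two small points of care: first, your phrase ``the only potentially non-Lipschitz ingredient is $g^{(k)}$'' should be read alongside your separate treatment of the $|\pi|=k$ term, since $f^{(k)}\circ g$ there is equally non-Lipschitz; second, in the inductive step for inversion you invoke ``closure under composition at level $k-1$,'' but what is actually needed is the more general map-level statement (a $C^{k-1,\omega}$ map post-composed with a $C^k$ diffeomorphism is $C^{k-1,\omega}$), not group closure of $\Diff_+^{k-1,\omega}(M)$ — your Fa\`a di Bruno analysis does give this, but the citation should be to that computation rather than to the group property. The $\mathrm{bv}$ reductions you list (boundedness makes products of BV functions BV; precomposition with a monotone bijection preserves total variation; $1/x$ is Lipschitz on $[m,\infty)$) match the paper's appendix proof exactly.
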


\bp
Let $f,g\in\Diff_+^{k,\omega}(M)$.
It is well-known that $\Diff^k_+(M)$ is a group. 
So, we have $f^{-1},f\circ g\in\Diff_+^{k}(M)$.
It suffices to show that both are $C^{k,\omega}$.

Note that $(f\circ g)'=(f'\circ g)\cdot g'$.
Since $f'$ is $C^{k-1,\omega}$ and $g$ is $C^k$,
Lemma~\ref{lem:omega1} implies that 
$f'\circ g$ is $C^{k-1,\omega}$.
By the same lemma, we see that $(f\circ g)'$ is $C^{k-1,\omega}$.
This proves $f\circ g$ is $C^{k,\omega}$.

We can write 
\[
(f^{-1})'=r\circ f'\circ f^{-1}\]
where $r\co (0,\infty)\to(0,\infty)$ is the
$C^\infty$ diffeomorphism $r(x)=1/x$.
Note that $f'$ stays away from $0$. As $f'$ is $C^{k-1,\omega}$ and $f^{-1}$ is $C^k$,
we again see that $f^{-1}$ is $C^{k,\omega}$.
\ep

\subsection{Groups of compactly supported diffeomorphisms}
We now establish a topological conjugacy between certain diffeomorphism groups.

%the group of  diffeomorphisms that are $C^k$--tangent to the identity at the boundary.
%\Diff_*^{k,\omega}(0,1)&=\{f\in \Diff_*^{k,\omega}(0,1) \mid  \supp f\text{ is contained in a compact subset of }(0,1)\}. \end{align*}

\begin{thm}\label{thm:embeddability}
Let $\omega$ be a concave modulus.
Then for each $k\in\bN$, the group 
$  \Diff_+^{k,\omega}(I)$
 is topologically conjugate to a subgroup of $\Diff_c^{k,\omega}(\bR)$.
\end{thm}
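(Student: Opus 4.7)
The plan is to exhibit an injective group homomorphism $\Phi\co\Diff_+^{k,\omega}(I)\to\Diff_c^{k,\omega}(\bR)$ arising from a topological conjugation. Taking $I=[0,1]$, I would fix a $C^\infty$ diffeomorphism $h\co\bR\to(0,1)$ whose derivatives decay very rapidly (say, super-exponentially) as $|x|\to\infty$, and for each $f\in\Diff_+^{k,\omega}(I)$ define $\tilde f:=h^{-1}\circ f\circ h$ as a self-map of $\bR$.

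First, using the chain rule together with Lemma~\ref{lem:omega1} applied inductively, I would verify that $\tilde f$ is a $C^{k,\omega}$ diffeomorphism of $\bR$; this is a routine computation because $h$ is $C^\infty$ on all of $\bR$. Second, the map $f\mapsto\tilde f$ is manifestly an injective group homomorphism, since conjugation by the bijection $h$ respects composition and preserves the identity. The heart of the argument is the third step: showing that $\tilde f$ has compact support.

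This third step is where the main obstacle lies. With a naive choice of $h$, such as the sigmoid $h(x)=\tfrac12(1+\tanh x)$, one computes $\tilde f(x)-x\to \log f'(0)$ as $x\to-\infty$, so $\tilde f$ is only asymptotic to a translation rather than to the identity. I would resolve this by one of two strategies: (a) choose $h$ whose derivatives vanish sufficiently fast at $\pm\infty$ that the $C^{k,\omega}$ regularity of $f$, together with the $\omega$-modulus bound on $f^{(k)}$ near $\partial I$, forces $\tilde f$ to be \emph{genuinely} equal to the identity outside some compact set; or (b) post-compose $\tilde f$ by a canonical correction $\sigma_f\in\Diff_c^{k,\omega}(\bR)$ encoding the boundary $(k,\omega)$-jets of $f$ at the endpoints of $I$, arranged via a $C^{k,\omega}$ analogue of Borel's extension theorem so that the modified assignment $f\mapsto \sigma_f\cdot\tilde f$ is still a group homomorphism.

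The hard part will be, in either case, the simultaneous control of $C^{k,\omega}$ regularity \emph{and} compact support of $\tilde f$, together with the homomorphism property if strategy (b) is pursued. I expect the correct construction to balance these constraints by exploiting the algebra of boundary jets: the group of boundary $(k,\omega)$-jets of $\Diff_+^{k,\omega}(I)$ at $\partial I$ should embed canonically into $\Diff_c^{k,\omega}(\bR)$ via a family of model compactly supported diffeomorphisms supported away from the image of $I$, ensuring that the splitting needed for (b) exists and is compatible with composition. The routine verifications of $C^{k,\omega}$ regularity through the correction will then reduce to Lemma~\ref{lem:omega1}.
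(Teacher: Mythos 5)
Your proposal has a genuine gap, and it stems from choosing to conjugate the \emph{interior} $(0,1)$ onto $\bR$. The paper does something structurally different: it conjugates $I=[0,1]$ to \emph{itself} by the square of a $C^\infty$ \emph{homeomorphism} (not diffeomorphism) $\phi\co I\to I$ with $\phi(x)=e^{-1/x}$ near $0$ and the symmetric formula near $1$. Lemma~\ref{lem:fmodx}(3) then shows that $\Phi(g)=\phi^{-2}\circ g\circ\phi^2$ is $C^{k,\omega}$ on $[0,1]$ with all derivatives up to order $k$ matching those of the identity at the \emph{finite} endpoints $0,1$; consequently $\Phi(g)$ extends by the identity on $\bR\setminus[0,1]$ to an honest element of $\Diff_c^{k,\omega}(\bR)$. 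The boundary is a point, so ``tangent to the identity to order $k$'' there really does let you glue in the identity.

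In your strategy (a), the boundary is at infinity, and no choice of $h\co\bR\to(0,1)$ will force $\tilde f=h^{-1}fh$ to be \emph{equal} to the identity outside a compact set. Take $f$ with $f(x)=2x$ near $0$. With $h(x)=e^{-e^{-x}}$ (your ``derivatives decay super-exponentially''), a direct computation gives $\tilde f(x)-x=-\log\bigl(1-e^x\log 2\bigr)\sim e^{x}\log 2$ as $x\to-\infty$: the displacement decays as fast as you like, but it is never eventually zero, so $\tilde f\notin\Diff_c^{k,\omega}(\bR)$. Faster decay of $h$ only changes the rate, not the nonvanishing. This is the core obstruction, and it is not one that regularity helps with — you are asking for the conjugated map to stabilize a neighborhood of infinity pointwise, which a generic $f$ will never do under any conjugacy $(0,1)\to\bR$. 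Strategy (b) is not salvageable within the statement's constraints either: the theorem asks for a \emph{topological conjugacy}, i.e., the embedding should literally be $g\mapsto h^{-1}gh$ for a fixed homeomorphism $h$. Introducing a $g$-dependent correction $\sigma_g$ produces a map that is no longer conjugation, and making $g\mapsto\sigma_g\tilde g$ a homomorphism requires the cocycle identity $\sigma_{gg'}=\sigma_g\,(\tilde g\,\sigma_{g'}\,\tilde g^{-1})$, which your boundary-jet idea gives you no mechanism to satisfy.

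What the paper's construction buys you, and what your proposal is missing, is this: the real analytic content is not ``push the support out to infinity'' but ``flatten $g$ near $\partial I$ so that the conjugate dies to order $k$ at $\partial I$.'' The computations in Lemmas~\ref{lem:ckw} and~\ref{lem:fmodx} — showing $g/x$ gains modulus regularity and that $\phi^{-2}g\phi^2$ has vanishing Taylor coefficients at $0$ — are the actual work, and they replace the compactness-at-infinity issue you ran into by tangency at a finite endpoint.
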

%\xymatrix{\Diff_c^{k,\omega}(\bR)\ar@<.5ex>[r]^\cong & \Diff_c^{k,\omega}(0,1)\ar@<.5ex>[l]^{\mathrm{incl.}}\ar@<.5ex>[d]^{\mathrm{incl.}}\\ \Diff_+^{k,\omega}(S^1) & \Diff_0^{k,\omega}(I)\ar[l]_{\mathrm{incl.}}\ar@<.5ex>[u]^{I\hookrightarrow (0,1)} \ar@<.5ex>[r]^{\mathrm{incl.}} &  \Diff_+^{k,\omega}(I)\ar@<.5ex>[l]^\Phi }\]

Muller~\cite{Muller} and Tsuboi~\cite{Tsuboi1984Asterisque} established the above result for the case $\omega=0$.
Our proof follows the same line, but an extra care is needed for a general concave modulus $\omega$ as described in the lemmas below.

When we say a function $f$ is \emph{defined for $x\ge0$}, we implicitly assume to have a small number $A>0$ so that $f$ is defined as
\[
f\co [0,A]\to \bR.\]
We let $k$ and $\omega$ be as in Theorem~\ref{thm:embeddability}.

\begin{lem}\label{lem:ckw}
Suppose $f$ is a $C^{k,\omega}$ map defined for $x\ge0$
such that
\[
f(0)=f'(0)=\cdots=f^{(k)}(0)=0.\]
Then the following hold.
\be
\item
We have that
\[
f(x) = \int_{t_1=0}^x\int_{t_2=0}^{t_1}\cdots \int_{t_k=0}^{t_{k-1}} f^{(k)}(t_k)\;dt_k\cdots dt_1.\]
\item
The map $f/x^k$ extends to a $C^\omega$ map on $x\ge0$.
\item
The map $f/x$ extends to a $C^{k-1,\omega}$ map on $x\ge0$.
\ee
\end{lem}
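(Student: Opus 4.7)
The plan is to establish the three parts in order, each building on the previous, with the iterated integral representation from part (1) serving as the main tool.

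For part (1), the plan is a direct induction on $k$ using the fundamental theorem of calculus. Since $f(0)=0$ and $f$ is $C^1$, we have $f(x)=\int_0^x f'(t_1)\,dt_1$. Since $f'(0)=0$ and $f'$ is $C^1$, we can rewrite $f'(t_1)=\int_0^{t_1}f''(t_2)\,dt_2$, and substitute. Continuing through all $k$ vanishing derivatives $f(0)=f'(0)=\cdots=f^{(k-1)}(0)=0$, we obtain the desired iterated integral. (The vanishing of $f^{(k)}(0)$ is not needed here but will be crucial below.)

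For part (2), the key step is to change variables $t_i=x s_i$ in the iterated integral from part (1) to obtain
\[
f(x)=x^k\int_0^1\!\!\int_0^{s_1}\!\!\cdots\!\int_0^{s_{k-1}} f^{(k)}(x s_k)\,ds_k\cdots ds_1.
\]
Setting $g(x):=f(x)/x^k$ for $x>0$ and $g(0):=0$, this displays $g$ as the integral of $f^{(k)}(x s_k)$ over the standard simplex. For any $x,y\ge 0$, I would then estimate
\[
|g(x)-g(y)|\le\int_0^1\!\!\cdots\!\int_0^{s_{k-1}}[f^{(k)}]_\omega\,\omega(|x-y|s_k)\,ds_k\cdots ds_1\le \frac{[f^{(k)}]_\omega}{k!}\,\omega(|x-y|),
\]
where we use monotonicity of $\omega$ and $s_k\le 1$; the $x=y=0$ boundary case uses $f^{(k)}(0)=0$. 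This bounds $[g]_\omega$ and yields the $\omega$-continuous extension.

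For part (3), the idea is to write $f(x)=x\cdot h(x)$ with $h(x)=\int_0^1 f'(xs)\,ds$, obtained by substituting $t_1=xs$ in $f(x)=\int_0^x f'(t_1)\,dt_1$. Since $f'\in C^{k-1,\omega}$ with $(f')^{(j)}(0)=f^{(j+1)}(0)=0$ for $0\le j\le k-1$, one can iteratively differentiate under the integral sign, the continuity of each integrand in $x$ being standard, to get
\[
h^{(j)}(x)=\int_0^1 s^j f^{(j+1)}(xs)\,ds\qquad(0\le j\le k-1).
\]
The top derivative $h^{(k-1)}(x)=\int_0^1 s^{k-1}f^{(k)}(xs)\,ds$ is then handled exactly as in part (2): for $x,y\ge 0$,
\[
|h^{(k-1)}(x)-h^{(k-1)}(y)|\le[f^{(k)}]_\omega\int_0^1 s^{k-1}\omega(|x-y|s)\,ds\le\frac{[f^{(k)}]_\omega}{k}\,\omega(|x-y|),
\]
proving $h\in C^{k-1,\omega}$.

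The routine but mildly delicate step is the differentiation-under-the-integral in part (3) near $x=0$; this is where one must invoke the continuity of each $f^{(j+1)}$ together with the uniform bound on the integrand provided by compactness of $[0,A]\times[0,1]$. The remaining estimates are essentially one line each once the integral representations are in place.
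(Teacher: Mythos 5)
Your proof is correct, and it takes a genuinely different route from the paper for parts (2) and (3). For part (2), the paper works directly with the quantity $\frac{f(x+h)}{(x+h)^k}-\frac{f(x)}{x^k}$, splitting it into an integral over $[x,x+h]$ plus a term coming from the difference of the $x^{-k}$ and $(x+h)^{-k}$ factors; it then invokes the auxiliary inequalities $1-1/(1+t)^k\le kt$ and the monotonicity of $\omega(s)/s$, landing on the constant $(k+1)[f^{(k)}]_\omega$. Your rescaling $t_i=xs_i$ that pulls the integral onto the fixed unit simplex eliminates both the moving domain and the $x^{-k}$ prefactor in one stroke, so the H\"older bound drops out in a single line with the cleaner constant $[f^{(k)}]_\omega/k!$. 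For part (3), the paper applies the Leibniz formula to obtain $(f/x)^{(k-1)}=\sum_{i=1}^k a_i f^{(k-i)}/x^i$ with integer coefficients $a_i$, and then recycles part (2) term by term (each $f^{(k-i)}$ being $C^{i,\omega}$ with the appropriate vanishing jets at $0$). You instead write $f(x)=x\int_0^1 f'(xs)\,ds$ and differentiate under the integral sign $k-1$ times; this makes part (3) a near-verbatim copy of part (2) rather than an application of it, and it avoids tracking the Leibniz coefficients. Both proofs are valid; yours is more uniform and gives sharper constants, while the paper's is slightly more elementary in that it never needs to justify differentiation of a parameterized integral (though, as you note, that step is standard on the compact set $[0,A]\times[0,1]$). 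One small remark: the vanishing of $f^{(k)}(0)$ is needed only to identify the boundary value $g(0)=f^{(k)}(0)/k!=0$; the H\"older estimate on the simplex would hold for any continuous extension, so your parenthetical about the boundary case is accurate but could be made a touch more precise.
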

We thank Nam-Gyu Kang for suggesting a key idea for the proof below.
\bp[Proof of Lemma~\ref{lem:ckw}]
Part (1) is simply an application of the Fundamental Theorem of Calculus. Let us consider part (2). We note for all small $h>0$ that
\[
\left|
\frac{f(h)}{h^k}\right|
\le
\frac1{h^k}\int_{t_1=0}^h\int_{t_2=0}^{t_1}\cdots \int_{t_k=0}^{t_{k-1}} |f^{(k)}(t_k)-f^{(k)}(0)|
\le[f^{(k)}]_\omega\cdot \omega(h).\]
So, $f/x^k$ is $C^\omega$ at $x=0$. For all small $0<x<x+h$, we see that
\begin{align*}
\left|
\frac{f(x+h)}{(x+h)^k}
-\frac{f(x)}{x^k}\right|
&\le
\left|
\frac1{(x+h)^k}
\int_{t_1=x}^{x+h}\int_{t_2=0}^{t_1}\cdots \int_{t_k=0}^{t_{k-1}}f^{(k)}(t_k)
\right|
\\
&+\left(1-\frac{x^k}{(x+h)^k}\right)
\cdot\frac1{x^k}
\left|
\int_{t_1=0}^x\int_{t_2=0}^{t_1}\cdots \int_{t_k=0}^{t_{k-1}}f^{(k)}(t_k)
\right|\\
&\le
\left(
\frac{h(x+h)^{k-1}\cdot  \omega(x+h)}{(x+h)^k}+
\left(1-\frac{x^k}{(x+h)^k}\right)
\cdot \omega(x)\right)[f^{(k)}]_\omega.
\end{align*}
Using the inequalities
$1-1/(1+t)^k\le kt$ 
and 
$\omega(s)/s\le \omega(t)/t$
for all $0<t<s$, we conclude that 
\[
\left|
\frac{f(x+h)}{(x+h)^k}
-\frac{f(x)}{x^k}\right|
\le 
(k+1)[f^{(k)}]_\omega\cdot \omega(h).\]
This proves part (2).

For (3), we have some $a_i\in\bZ$ such that
\[
({f}/{x})^{(k-1)}=\sum_{i=1}^{k} a_i f^{(k-i)}/x^i.\]
Since $f^{(k-i)}$ is $C^{i,\omega}$, we see from part (2) that 
$(f/x)^{(k-1)}$ is $C^\omega$.
\ep

The rest of the proof for Theorem~\ref{thm:embeddability} closely follows the argument in~\cite{Tsuboi1984Asterisque}, as we summarize below.
Let us fix a map that is defined near $x=0$:
\[\phi(x)=e^{-1/x}.\] 

\begin{lem}\label{lem:fmodx}
For a $C^{k,\omega}$ map $g$ defined for $x\ge0$
satisfying $g(0)=0$ and $g'(0)>0$, the following hold.
\be
\item The map $h=g/x$ is a $C^{k-1,\omega}$ map defined for $x\ge0$.
\item
The map $\psi\circ g\circ\phi$ is a $C^{k,\omega}$ map defined for $x\ge0$.
Moreover, we have
\[\psi\circ g\circ\phi(0)=0,\quad (\psi\circ g\circ\phi)'(0)=1.\]
\item
The map $\Phi(g):=\psi^2\circ g\circ\phi^2$ is a $C^{k,\omega}$ map defined for $x\ge0$,
and moreover, $\Phi(g)^{(i)}(0)=\Id^{(i)}(0)$ for all $0\le i\le k$.
\ee
\end{lem}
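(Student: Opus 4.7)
Part (1) reduces to Lemma~\ref{lem:ckw}(3) via Taylor's theorem. Writing $g(x)=P(x)+R(x)$, where $P$ is the degree-$k$ Taylor polynomial of $g$ at $0$ (note $g(0)=0$) and $R=g-P$ is $C^{k,\omega}$ with $R^{(i)}(0)=0$ for $0\le i\le k$, dividing by $x$ gives $g/x = P/x + R/x$. The first summand is a polynomial of degree $k-1$, and $R/x$ is $C^{k-1,\omega}$ by Lemma~\ref{lem:ckw}(3).

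For part (2), I would compute the composition explicitly. Using (1), write $g(x)=x\,h(x)$ with $h\in C^{k-1,\omega}$ and $h(0)=g'(0)>0$. Since $\phi(x)=e^{-1/x}$ and $\psi=\phi^{-1}$ satisfies $\psi(y)=-1/\log y$, a direct algebraic manipulation yields
\[
\psi\circ g\circ\phi(x) \;=\; \frac{x}{1 - xL(x)}, \qquad L(x):=\log h(\phi(x)),
\]
so that $\psi\circ g\circ\phi(x) - x = x^2 L(x)/(1 - xL(x))$, from which $F(0)=0$ and $F'(0)=1$ are immediate. The remaining analytic task is to show this difference lies in $C^{k,\omega}[0,B]$ for some small $B>0$. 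On $(0,B]$ it follows from the regularity of $h$ together with the smoothness of $\phi$; at $0$ one exploits the $C^\infty$-flatness of $\phi$ so that, via Faà di Bruno, every classical derivative of $L$ of order $\le k-1$ extends continuously to $0$, and the $x^2$ prefactor supplies the extra order of regularity needed to pass from $C^{k-1,\omega}$ to $C^{k,\omega}$.

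For part (3), I would iterate the formula from part (2). A direct computation using $\log\phi^{2}(x)=-1/\phi(x)$ produces
\[
\Phi(g)(x) \;=\; \frac{x}{1 + x\log\!\bigl(1-\phi(x)\log h(\phi^{2}(x))\bigr)}.
\]
The crucial observation is that $\phi(x)\log h(\phi^{2}(x))$ is bounded in absolute value by a constant multiple of $\phi(x)$ near $0$, hence is super-flat at $0$ (decays faster than any polynomial), and the same holds for $\log(1-\phi(x)\log h(\phi^{2}(x)))$ together with all of their classical derivatives computed on $(0,B]$. Consequently every derivative of $\Phi(g)-\Id$ of order at most $k$ vanishes at $x=0$, giving $\Phi(g)^{(i)}(0)=\Id^{(i)}(0)$, while the $C^{k,\omega}$ regularity on $(0,B]$ follows by the same reasoning as in part (2).

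The main obstacle is the near-zero analytic estimate in part (2). Since $h$ is only $C^{k-1,\omega}$, the function $L=\log h\circ\phi$ cannot be classically differentiated $k$ times, so the proof that $x^{2}L/(1-xL)\in C^{k,\omega}[0,B]$ must quantitatively balance the flatness of $\phi$ (every $\phi^{(m)}(x)$ decays faster than any polynomial as $x\to 0^{+}$) against the limited regularity of $h$, with the $x^{2}$ factor absorbing the one missing order of smoothness. Once this bookkeeping is carried out, parts (1) and (3) are essentially immediate consequences of (2).
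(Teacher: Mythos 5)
Your part (1) is exactly the paper's argument (subtract the degree-$k$ Taylor polynomial and apply Lemma~\ref{lem:ckw}(3)), and your part (3) derives the same closed form for $\Phi(g)$ and uses a super-flatness observation in place of the paper's L'Hospital computation; both are fine modulo part (2).

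The gap is in part (2). You correctly derive $G(x)=\psi\circ g\circ\phi(x)=x/(1-xL(x))$ with $L=\log h\circ\phi$ and correctly flag the upgrade from $C^{k-1,\omega}$ to $C^{k,\omega}$ as the crux, but the mechanism you offer --- ``the $x^2$ prefactor supplies the extra order of regularity,'' aided by Fa\`a di Bruno and the flatness of $\phi$ --- is not an argument. Multiplying a $C^{k-1,\omega}$ function by $x^2$ does not generically produce a $C^{k,\omega}$ function, and here $L$ is smooth on $(0,B]$ but you have no a priori bound on how fast $L^{(k)}(x)$ may blow up as $x\to 0^{+}$: the flatness of $\phi$ controls $\phi^{(j)}$, but differentiating $\log h\circ\phi$ brings in $h^{(j)}\circ\phi$ with $\phi$-factors in the \emph{denominator} of $\phi'=\phi/x^2$, and that competition is exactly what needs to be estimated. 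What actually closes the gap is a specific algebraic identity you never invoke: since $g=xh$, one has $xh'=g'-h$, so
\[
\phi\cdot(h'\circ\phi)=g'\circ\phi-h\circ\phi\in C^{k-1,\omega}.
\]
Differentiating the closed form once gives
\[
G'(x)=\frac{1+x^{2}L'(x)}{\bigl(1-xL(x)\bigr)^{2}},
\qquad\text{with}\qquad
x^{2}L'(x)=\frac{\phi\cdot(h'\circ\phi)}{h\circ\phi},
\]
so $G'$ is a quotient of $C^{k-1,\omega}$ functions whose denominator is bounded away from zero, hence $C^{k-1,\omega}$, hence $G\in C^{k,\omega}$. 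This is the paper's proof, and it avoids Fa\`a di Bruno bookkeeping entirely: the only ``gain'' the $x^2$ factor provides is exactly the one expressed by $x^2L'=\phi\cdot(h'\circ\phi)/(h\circ\phi)$. Without that identity (or an equivalent substitute), your part (2) remains a plan rather than a proof, and the regularity assertion in your part (3) inherits the same gap --- though for part (3) you could also simply apply part (2) twice, first to $g$ and then to $\psi\circ g\circ\phi$, instead of re-estimating the iterated composition from scratch.
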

\bp
(1)
If $T_kg(x)$ denotes the $k$--th degree Taylor polynomial for $g$,
then $f = g-T_kg$ satisfies the condition of Lemma~\ref{lem:ckw}.
The conclusion follows since $g/x - f/x = T_kg/x$ is a polynomial.

(2) 
Put $G = \psi\circ g\circ \phi$, so that
\[
G(x) = \frac{-1}{\log(g\circ\phi)}=\frac{-1}{-1/x+\log( (g\circ\phi)/\phi)}=\frac{x}{1-x\log h\circ\phi}.\]
By part (1), the map $h$ is $C^{k-1,\omega}$ for $x\ge0$.
As $x$ approaches to $0$, the denominator of the above expression for $G$ stays away from $0$ because
\[
\lim_{x\to0}1-x\log h\circ\phi=1-0\cdot\log g'(0)=1.\]
It follows that $G$ is $C^{k-1,\omega}$ for $x\ge0$. Moreover, $G$ is $C^{k,\omega}$ for $x>0$.

We compute the following:
\begin{align*}
G'(0) &= \lim_{x\to0} 1/(1- x\log h\circ\phi(x))=1,\\
G'(x) &=
\frac{1+ \phi \cdot (h'\circ\phi) / (h\circ\phi)}{(1-x\log h\circ\phi)^2}.\end{align*}
From $xh' = g' - h$, we see that $\phi\cdot (h'\circ\phi)$ is $C^{k-1,\omega}$ and that $\lim_{x\to0}G'(x)=1$.
We conclude that $G'$ exists for $x\ge0$ (even when $k=1$),
and is $C^{k-1,\omega}$. 
It follows that $G$ is $C^{k,\omega}$. 

(3) We only need  to compute $\Phi(g)^{(i)}(0)$.
By setting $y=\phi^2(x)$, 
we have that 
\[
\frac{\Phi(g)(x)-x}{\phi}
=\frac{\psi^2 g(y)-\psi^2(y)}{\psi(y)}
=(-\log y)\left(
\frac1{\log(-\log g)}-\frac1{\log(-\log y)}
\right).\]
%&= (-\log y)\left( \frac{\log(-\log y)-\log(-\log g)}{\log(-\log g)\cdot \log(-\log y)}\right)\\ &=\frac{-\log y}{(\log(-\log y))^2} \log\left(\frac{\log y}{\log g}\right) \cdot \frac{\log(-\log y)}{\log(-\log g)}. \end{align*} Note that $\lim_{t\to1} \log(t)/(t-1)=1$. 
It is a simple
exercise on L'Hospital's Rule to see that
\[
\lim_{x\to0}\frac{\Phi(g)(x)-x}{\phi}
=
\lim_{y\to0}\frac{-\log y}{(\log(-\log y))^2}
\left(\frac{\log y}{\log g}-1\right)=0.\]
For all $0\le i\le k$, we have that 
\[
\lim_{x\to0} \frac{\Phi(g)(x)-x}{x^i}
\lim_{x\to0} \frac{\Phi(g)(x)-x}{\phi(x)}\cdot \frac{\phi(x)}{x^i}=0.\]
By L'Hospital's Rule again, we have $(\Phi(g)-\Id)^{(i)}=0$ for all $0\le i\le k$.\ep

\bp[Proof of Theorem~\ref{thm:embeddability}]
Consider a $C^\infty$--\emph{homeomorphism} $\phi\co I\to I$ such that $\phi(x)=e^{-1/x}$ near $x=0$,
and such that $\phi(x)=1-e^{-1/(1-x)}$ near $x=1$. We put $\psi=\phi^{-1}$.
For each $g\in \Diff_+^{k,\omega}(I)$, we define $\Phi(g) = \psi^2\circ g\circ \phi^2$.
Then Lemma~\ref{lem:fmodx} (3) (after using the symmetry at $x=0$ and $x=1$) implies that $\Phi(g)\in \Diff_c^{k,\omega}(\bR)$.
\ep

\subsection{Simplicity}
Let us use the following terminology from~\cite{KKL2017}.
Let $X$ be a topological space, and let $H\le\Homeo(X)$.
We say $H$ acts \emph{CO-transitively} (or, \emph{compact--open-transitively}) if for each proper compact subset $A\sse X$ and for each nonempty open subset $B\sse X$, there is $u\in H$ such that $u(A)\sse B$. 
Lemma~\ref{lem:higman} is a variation of a result commonly known as Higman's Theorem.

\begin{lem}[{\cite[Lemma 2.5]{KKL2017}}]\label{lem:higman}
Let $X$ be a non--compact Hausdorff space, and let $\Homeo_+(X)$ denote the group of compactly supported homeomorphisms of $X$.
If $H\le\Homeo_+(X)$ is CO-transitive,
then $[H,H]$ is simple.
\end{lem}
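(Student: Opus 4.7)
The plan is to show that any nontrivial normal subgroup $N \triangleleft [H, H]$ coincides with $[H, H]$, using the non-compact Hausdorff structure to extract a local model and CO-transitivity to globalize. Pick $1 \neq n \in N$ and $x_0 \in X$ with $n(x_0) \neq x_0$; Hausdorffness provides an open set $V \ni x_0$ with $V \cap n(V) = \varnothing$. For any $\gamma \in [H, H]$ supported in $V$, normality of $N$ in $[H, H]$ gives $\eta(\gamma) := [n, \gamma] = n \cdot (\gamma n^{-1} \gamma^{-1}) \in N$, and the disjointness $V \cap n(V) = \varnothing$ lets us decompose $\eta(\gamma) = \gamma^{-1} \cdot (n \gamma n^{-1})$ as a commuting product. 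A short algebraic manipulation, repeatedly using that $V$-supported and $n(V)$-supported elements commute, should yield the identity
\[
\eta(\gamma)\,\eta(\delta)\,\eta(\gamma\delta)^{-1} = [\gamma^{-1}, \delta^{-1}],
\]
so every commutator of $[H,H]$-elements with support inside $V$ already lies in $N$.

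Next, I would upgrade this local statement by showing $[H, H]$ is itself CO-transitive. Given compact $A$ and open $B$, CO-transitivity of $H$ furnishes $u_0 \in H$ with $u_0(A) \subset B$; choosing $w \in H$ with $w(\supp u_0) \cap u_0(A) = \varnothing$ (again by CO-transitivity, since $u_0(A)$ is compact), the element $\xi := w u_0^{-1} w^{-1}$ has support disjoint from $u_0(A)$ and represents the same class as $u_0^{-1}$ in the abelianization of $H$, so $u := \xi u_0$ lies in $[H, H]$ and still satisfies $u(A) = \xi(u_0(A)) = u_0(A) \subset B$. Applying this to $A := \supp\alpha \cup \supp\beta$ for $\alpha, \beta \in [H, H]$ produces $u \in [H, H]$ sending $A$ into $V$, so that $u [\alpha, \beta] u^{-1} = [u\alpha u^{-1}, u\beta u^{-1}]$ lies in $N$ by the previous step, and normality of $N$ in $[H, H]$ then yields $[\alpha, \beta] \in N$. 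Thus $N \supseteq [[H, H], [H, H]]$.

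To close the argument I would prove that $[H, H]$ is perfect. Given $f, g \in H$ with $K := \supp f \cup \supp g$, CO-transitivity combined with non-compactness provides $u, v \in H$ making $K$, $u(K)$, $v(K)$ pairwise disjoint. The elements $f_1 := f \cdot u f^{-1} u^{-1}$ and $g_1 := g \cdot v g^{-1} v^{-1}$ lie in $[H, H]$, since each is a product of an element with a conjugate of its inverse; computing the action of $[f_1, g_1]$ on the three disjoint regions $K, u(K), v(K)$ separately shows it equals $[f, g]$ on $K$ and the identity elsewhere. So $[f, g] = [f_1, g_1] \in [[H, H], [H, H]]$, proving $[H, H] = [[H, H], [H, H]]$. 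Combined with the previous paragraph this gives $N \supseteq [H, H]$, hence $N = [H, H]$.

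The main obstacle is that $N$ is normal only in $[H, H]$ and not in the ambient group $H$, so CO-transitivity of $H$ alone does not suffice to conjugate commutators at will while preserving membership in $N$; the fix is the promotion of CO-transitivity from $H$ to $[H, H]$ via the $\xi u_0$ construction. Once this is in place, both the localization to $V$ and the fragmentation into commuting supports fit together cleanly, and what remains is routine verification of the explicit identities $\eta(\gamma)\,\eta(\delta)\,\eta(\gamma\delta)^{-1} = [\gamma^{-1}, \delta^{-1}]$ and $[f_1, g_1] = [f, g]$ by tracking actions on disjoint regions.
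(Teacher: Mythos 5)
Your proof is correct and carries out the standard Higman--Epstein commutator argument: the identity $\eta(\gamma)\,\eta(\delta)\,\eta(\gamma\delta)^{-1}=[\gamma^{-1},\delta^{-1}]$ to capture commutators of $V$-supported elements of $[H,H]$ inside $N$, the observation that CO-transitivity descends from $H$ to $[H,H]$ via $u=[w,u_0^{-1}]$, and perfectness of $[H,H]$ by moving supports onto three pairwise disjoint copies. This is essentially the proof of the cited result \cite[Lemma 2.5]{KKL2017}; the paper itself does not reprove it.
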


Let $X$ be a topological space.
We say $H\le\Homeo(X)$ has the \emph{fragmentation property}
for an open cover $\UU$ of $X$,
if each element $h\in H$
can be written as \[
h = h_1\cdots h_\ell\]
such that the support of $h_i$ is contained in some element of $\UU$. The following lemma is very useful when proving simplicity of homeomorphism groups. This lemma is originally due to Epstein~\cite{Epstein1970}; let us state a generalization by Ling~\cite{Ling1984}.

\begin{lem}[\cite{Epstein1970,Ling1984}]\label{lem:ling}
Let $X$ be a paracompact Hausdorff space with a basis $\BB$,
and let $H\le\Homeo(X)$.
Assume the following.
\be[(i)]
\item $H$ has the fragmentation property
for each subcover $\UU$ of $\BB$;
\item for each $U,V\in\BB$ there exists some $h\in H$ such that $h(U)\sse V$.
\ee
Then $[H,H]$ is simple.
\end{lem}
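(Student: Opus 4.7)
We follow the classical Anderson--Epstein--Ling argument. Suppose $N$ is a nontrivial normal subgroup of $[H, H]$; the goal is to show $N = [H, H]$.

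\emph{Step 1: Produce a displaced basis set.} Pick $1 \neq f \in N$. There exists $x_0 \in X$ with $f(x_0) \neq x_0$. Using the Hausdorff property and continuity of $f$, find disjoint open neighborhoods of $x_0$ and $f(x_0)$; shrinking if necessary and using that $\BB$ is a basis, extract $U_0 \in \BB$ with $x_0 \in U_0$ and $f(U_0) \cap U_0 = \varnothing$.

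\emph{Step 2: Anderson's commutator trick.} Write $H_U := \{h \in H : \supp h \subset U\}$. For $g \in H_{U_0}$, the disjointness $f(U_0) \cap U_0 = \varnothing$ lets one compute
\[ [f, g] = g^{-1} \cdot (f g f^{-1}), \]
a product of commuting homeomorphisms with disjoint supports contained in $U_0$ and $f(U_0)$ respectively. Since $[f, g] \in N$ by normality and $f \in [H, H]$ acts on $N$ by conjugation, applying this construction to a product $g_2 g_1$ and comparing with $[f, g_1] \cdot [f, g_2]$ via standard commutator identities yields
\[ f [g_1, g_2] f^{-1} \in N, \quad \text{hence} \quad [g_1, g_2] \in N, \]
for all $g_1, g_2 \in H_{U_0}$. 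That is, $[H_{U_0}, H_{U_0}] \subset N$.

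\emph{Step 3: Globalize via condition (ii).} For each $V \in \BB$, apply (ii) to obtain $h_V \in H$ with $h_V(V) \subset U_0$, so $h_V H_V h_V^{-1} \subset H_{U_0}$. The naive conjugation argument only gives $h_V [H_V, H_V] h_V^{-1} \subset N$ and breaks down because $h_V$ may not lie in $[H, H]$. To repair this, re-run the Anderson argument at $V$ itself: using $f$ and an appropriately chosen element of $H_V$, construct a new displacer $f_V \in N$ (intrinsically, as a commutator in $[H,H]$ rather than by conjugation) satisfying $f_V(V') \cap V' = \varnothing$ for some $V' \in \BB$ with $V' \subset V$; then the argument of Step 2 gives $[H_{V'}, H_{V'}] \subset N$. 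Covering $V$ by such $V'$ and using the fragmentation property (i) within $V$ upgrades this to $[H_V, H_V] \subset N$ for every $V \in \BB$.

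\emph{Step 4: Conclude by fragmentation.} Any $\gamma \in [H, H]$ is a finite product of commutators $[\alpha_i, \beta_i]$. Apply (i) to decompose each $\alpha_i, \beta_i$ into pieces supported in basis elements, then expand using identities such as $[ab, c] = a [b, c] a^{-1} \cdot [a, c]$; pieces with disjoint basis-supports commute, so the result collapses to a product of commutators $[g, g']$ with $g, g'$ both supported in a common $V \in \BB$. By Step 3 each such factor is in $N$, so $\gamma \in N$, whence $N = [H, H]$.

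The technical crux is Step 3: promoting a displacement statement at the single set $U_0$ to every basis element $V$ without the luxury of $H$-normality of $N$. The standard workaround is to build the displacer $f_V$ intrinsically as a commutator involving $f$ and elements of $H_V$, which keeps everything inside $[H,H]$ and thus inside a subgroup where $N$ is normal.
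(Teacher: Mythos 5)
Your Step~2 contains a genuine gap, and it is exactly the obstruction you flag --- but you flag it only for Step~3. The claim ``$[f,g]\in N$ by normality'' is unjustified: writing $[f,g]=f\cdot\bigl(gf^{-1}g^{-1}\bigr)$ with $f\in N$, one needs $gf^{-1}g^{-1}\in N$, i.e.\ one needs $g$ to normalize $N$. Since $N$ is only assumed normal in $[H,H]$, this holds when $g\in[H,H]$, but an arbitrary $g\in H_{U_0}$ need not lie in $[H,H]$. Thus $[H_{U_0},H_{U_0}]\subseteq N$ is not established, and the repair you sketch in Step~3 (taking the displacer $f_V$ to be a commutator) does not touch this problem, because the troublesome conjugators are the elements $g,g_1,g_2\in H_{U_0}$, not the displacer.

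The Epstein--Ling argument handles this by a bootstrap that is absent from your proposal. Under (i) and (ii) one first proves the weaker statement that any nontrivial subgroup normal in $H$ (not merely in $[H,H]$) contains $[H,H]$ --- for such an $N$ your Steps~1--4 do work, since $N$ is then normalized by every $g\in H$. One then shows that $[H,H]$ itself satisfies hypotheses analogous to (i) and (ii), and that $[H,H]$ is perfect; applying the weaker statement with $[H,H]$ in place of $H$ gives that any nontrivial $N\trianglelefteq[H,H]$ contains $[[H,H],[H,H]]=[H,H]$, which is simplicity. This bootstrap --- establishing the fragmentation/shrinking axioms for the commutator subgroup and its perfectness --- is the real content of the proof and is what makes the passage from normality in $H$ to normality in $[H,H]$ legitimate; an ``intrinsic commutator displacer'' alone does not achieve it. (For reference, the paper does not give its own proof of this lemma; it cites it directly to Epstein and Ling.)
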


The following lemma is known for $\omega=0$~\cite{PS1970}, detailed proofs of which can be found in~\cite{Banyaga1997,Mann2016NYJM}. The proof for a concave modulus $\omega$ is the same almost in verbatim.

\begin{lem}\label{lem:frag}
Let $k\in\bN$, and let $\omega$ be a concave modulus.
Then for a smooth manifold $X$ without boundary,
 the group $\Diff_c^{k,\omega}(X)_0$ has the fragmentation property for an arbitrary open cover of $X$.\end{lem}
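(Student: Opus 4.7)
The plan is to follow the classical Palis--Smale/Banyaga fragmentation argument, checking that $C^{k,\omega}$-regularity survives each step. Given the cover $\mathcal{U}$ and $f\in\Diff_c^{k,\omega}(X)_0$, the first move is to reduce to the case where $f$ is $C^1$-close to $\Id$. Pick a compactly supported $C^{k,\omega}$-isotopy $\{f_t\}_{t\in[0,1]}$ from $\Id$ to $f$, and a compact neighborhood $K$ of $\bigcup_t\supp f_t$. By uniform continuity of $t\mapsto f_t$ in the $C^1$-topology, choose $0=t_0<t_1<\cdots<t_m=1$ finely enough that each $g_i:=f_{t_i}\circ f_{t_{i-1}}^{-1}$ is arbitrarily $C^1$-close to $\Id$ on $K$. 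Proposition~\ref{prop:group} shows that each $g_i$ still belongs to $\Diff_c^{k,\omega}(X)_0$, and since $f=g_m\circ\cdots\circ g_1$, it suffices to fragment a single $f$ that is $C^1$-close to $\Id$.

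For the near-identity step, fix an auxiliary smooth Riemannian metric on $X$, extract a finite subcover $U_1,\ldots,U_n\in\mathcal{U}$ of $K$, and pick a $C^\infty$ partition of unity $\{\phi_j\}_{j=1}^n$ subordinate to it with $\supp\phi_j\Subset U_j$ and $\sum_j\phi_j\equiv 1$ on a neighborhood of $K$. Set $\psi_j:=\phi_1+\cdots+\phi_j$, so that $\psi_0\equiv 0$ and $\psi_n\equiv 1$ near $K$. For $f$ sufficiently $C^0$-close to $\Id$ there is a unique compactly supported $C^{k,\omega}$ vector field $v$ on $X$ with $f(x)=\exp_x v(x)$. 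Define
\[F_j(x):=\exp_x(\psi_j(x)v(x)),\quad 0\le j\le n,\]
so that $F_0=\Id$ and $F_n=f$. Provided the previous reduction was fine enough relative to the $C^1$-norms of the $\psi_j$, each $F_j$ is a genuine diffeomorphism, and setting $h_j:=F_j\circ F_{j-1}^{-1}$ yields $f=h_n\circ h_{n-1}\circ\cdots\circ h_1$.

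The support of $h_j$ is controlled because $\psi_j=\psi_{j-1}$ off $\supp\phi_j$, so that $F_j$ and $F_{j-1}$ coincide there, and hence $\supp h_j\subseteq F_{j-1}(\supp\phi_j)\subseteq U_j$---the last containment holding once $F_{j-1}$ is $C^0$-close enough to $\Id$ that it does not push the compact set $\supp\phi_j$ outside of the open set $U_j$. For the regularity of each $h_j$: the product $\psi_j v$ is $C^{k,\omega}$ by an iterated application of Lemma~\ref{lem:omega1} (product of a $C^{k,\omega}$ map with a $C^\infty$ map), composing with the smooth map $(x,w)\mapsto\exp_x w$ preserves $C^{k,\omega}$-regularity, and Proposition~\ref{prop:group} handles the composition $F_j\circ F_{j-1}^{-1}$. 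The only real obstacle is bookkeeping: threading the several ``$\epsilon$-close enough'' thresholds (mesh of $\{t_i\}$, $C^1$-sizes of the $\psi_j$, the containment $\supp\phi_j\Subset U_j$) in a consistent order. Because all the analytic inputs (partition of unity, exponential map) are $C^\infty$ and thus $\omega$-insensitive, the argument goes through identically to the classical $C^k$ case, which is what the paper's remark ``almost in verbatim'' refers to.
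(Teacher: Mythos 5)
Your proof is correct and is exactly the classical Palis--Smale/Banyaga fragmentation argument (isotopy subdivision to reduce to the near-identity case, then a partition of unity plus the exponential map), which is what the paper invokes by citing \cite{PS1970,Banyaga1997,Mann2016NYJM} and remarking that the $C^{k,\omega}$ case goes through ``almost in verbatim.'' The only small slip is that the group closure of $\Diff_c^{k,\omega}(X)_0$ for a general manifold $X$ should be attributed to \cite{Mather1} rather than Proposition~\ref{prop:group}, which as stated covers only $M\in\{I,S^1\}$.
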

 
From now on, we let $X\in\{S^1,\bR\}$.
We let $C_c^\omega(X,\bR)$ denote the set of real-valued compactly supported $\omega$-continuous maps $X\to\bR$. 
For each $f\in C_c(X,\bR)=C_c^0(X,\bR)$, we define the \emph{optimal modulus function} of $f$ as
\[
\mu^f(t):=\sup\{|fx-fy|\co x,y\in X\text{ and }|x-y|\le t\}.\]
It is trivial that for all $x,y\in X$ we have
$|fx-fy|\le \mu^f(|x-y|)$.

\begin{lem}\label{lem:conc-mod}
For $X\in\{S^1,\bR\}$
and for $f\in C_c(X,\bR)$, the following hold.
\be
\item
The optimal modulus function $\mu^f\co [0,\infty)\to[0,\infty)$ is continuous, monotone increasing and subadditive.
\item
For all $s,t>0$, we have that $\mu^f(t)\le(1+t/s)\mu^f(s)$.
\item
There exists a concave modulus $\mu$ such that 
%the following hold: \be[(i)] \item\label{p:conc-mod} 
$f\in C_c^\mu(X,\bR)$
and such that 
%\item\label{p:conc-mod-incl} 
\[
C_c^{\mu}(X,\bR)=\bigcap \{C_c^\omega(X,\bR)\mid \omega\text{ is a concave modulus and }f\in C_c^\omega(X,\bR)\}.\]
\ee\end{lem}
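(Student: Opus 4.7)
For part (1), monotonicity is immediate from the definition, since increasing $t$ enlarges the set over which the supremum is taken. Subadditivity $\mu^f(s+t) \leq \mu^f(s) + \mu^f(t)$ follows by inserting an intermediate point: for $|x-y| \leq s+t$, pick $z$ on the segment from $x$ to $y$ (lifting to the universal cover if $X=S^1$) with $|x-z| \leq s$ and $|z-y| \leq t$, then use the triangle inequality on $f$. Continuity at $0$ is uniform continuity of $f$ (which holds since $f$ is continuous and compactly supported), and continuity at $t_0 > 0$ follows from subadditivity via the estimate $|\mu^f(t_0) - \mu^f(t_0 \pm h)| \leq \mu^f(|h|) \to 0$. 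For part (2), I would iterate subadditivity: writing $t = ns + r$ with $0 \leq r < s$ and applying subadditivity $n+1$ times yields $\mu^f(t) \leq (n+1)\mu^f(s) \leq (1 + t/s)\mu^f(s)$, using $\mu^f(r) \leq \mu^f(s)$ and $n \leq t/s$.

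The plan for part (3) is to take (a modification of) the least concave majorant of $\mu^f$. Define
\[\nu(t) := \inf\{at + b : a,b \geq 0,\ ar + b \geq \mu^f(r)\text{ for all }r \geq 0\}.\]
As an infimum of nonnegative nondecreasing affine functions, $\nu$ is concave, nondecreasing, and nonnegative; $\nu(0) = 0$ follows because affine majorants $L_{t_0}(s) = (1+s/t_0)\mu^f(t_0)$ (see below) have $L_{t_0}(0) = \mu^f(t_0) \to 0$ as $t_0 \to 0$. Then set
\[\mu(t) := \nu(t) + t,\]
which is concave (sum of concave functions), strictly increasing, vanishes at $0$, and tends to $\infty$, hence a bona fide concave modulus.

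The decisive estimate is $\nu \leq 2\mu^f$. This uses part (2): the affine function $L_t(s) := (1 + s/t)\mu^f(t)$ has nonnegative coefficients, dominates $\mu^f$ globally by (2), and evaluates to $2\mu^f(t)$ at $s = t$, so $\nu(t) \leq L_t(t) = 2\mu^f(t)$. Thus $f \in C_c^\mu$ since $\mu^f \leq \nu \leq \mu$. For the inclusion $C_c^\mu \subseteq C_c^\omega$ whenever $f \in C_c^\omega$: we have $\mu^f \leq [f]_\omega \omega$, and since $[f]_\omega \omega$ is itself concave and majorizes $\mu^f$, the defining infimum gives $\nu \leq [f]_\omega \omega$. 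For the extra linear term, Lemma~\ref{lem:omega}(1) yields $t \leq \omega(t)/\omega(1)$ for $t \leq 1$, so $\mu(t) \leq C\omega(t)$ for small $t$; invoking Lemma~\ref{lem:omega-cont} (applicable since elements of $C_c^\mu$ are bounded) then shows $g \in C_c^\omega$ for every $g \in C_c^\mu$. The reverse inclusion $\bigcap C_c^\omega \subseteq C_c^\mu$ is automatic because $\mu$ itself occurs among the $\omega$'s in the intersection.

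The main subtlety will be that the concave majorant $\nu$ is necessarily bounded (since $\mu^f \leq 2\|f\|_\infty$), so $\nu$ alone cannot be a homeomorphism onto $[0,\infty)$. The additive correction $+\,t$ repairs this while only altering $\mu$ by an amount controlled locally by any competing modulus $\omega$ via Lemma~\ref{lem:omega}(1). Because $\omega$-continuity of bounded (in particular, compactly supported continuous) functions depends only on germs of $\omega$ near $0$, the correction does not affect the identification of classes, and the whole scheme goes through.
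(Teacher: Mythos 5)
Your proposal is correct and follows essentially the same route as the paper: in both, the key step is to take the least concave increasing majorant $\nu$ of $\mu^f$ (the paper calls it $\mu_1$, built as the infimum over the family $\FF$ of concave increasing majorants rather than over affine ones, but these infima coincide), add $\Id$ to make it a homeomorphism of $[0,\infty)$, and then use part~(2) in the form $\nu(t)\le L_t(t)=2\mu^f(t)$ together with $\mu^f\le[f]_\omega\omega$ plus the monotonicity of $t/\omega(t)$ to absorb the added linear term. The only cosmetic difference is that the paper estimates $[g]_\omega$ directly by splitting at $T=\diam\supp f$ instead of invoking Lemma~\ref{lem:omega-cont} with $T=1$; both are correct.
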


\bp
Part (1) is a consequence of the convexity of $X$ and the uniform continuity of $f$.
Part (2) is obvious when $t\le s$. 
If $t>s$, then part (2) follows from
%For part (2), it suffices to consider the case $t>s$. The conclusion follows from
 \[\mu^f(t)\le\mu^f(t-s\lfloor t/s\rfloor)+\lfloor t/s\rfloor \mu^f(s)
\le
(1+t/s)\mu^f(s).\]

For part (3), we will use the idea described in~\cite[p.194]{BL1976book}. Let  $\FF$ be the family of continuous, monotone increasing, concave functions $h\co [0,\infty)\to[0,\infty)$ such that $\mu^f(t)\le h(t)$ for all $t\ge0$. 
For instance, part (2) implies that the line
\[
h_s(t)=(1+t/s)\mu^f(s)\]
belongs to $\FF$ for each $s>0$.
Define
\[
\mu_1(t):=\inf_{h\in\FF}h(t)\le h_t(t)=2\mu^f(t).\]
Then 
$\mu_1$ is continuous, monotone increasing and concave.
Put $\mu:=\mu_1+\Id$,  so that
\[\mu^f\le\mu_1\le \mu\le 2\mu^f+\Id.\]
We see that $\mu$ is a concave modulus such that 
$f\in C_c^\mu(X,\bR)$. 

Put $T:=\diam \supp f\ge0$.
Suppose $f\in C_c^\omega(X,\bR)$ for some concave modulus $\omega$.
It only remains to show that
$C_c^\mu(X,\bR)\sse C_c^\omega(X,\bR)$.
For each $t>0$, we have 
\[
\mu^f(t)
=\sup_{|x-y|\le t} |fx-fy|
\le
[f]_\omega \cdot \omega(t).\]
For all $0<t\le T$, we see that
\[
\mu(t)
\le2\mu^f(t)+t
\le \left(2[f]_\omega+ T/\omega(T)\right)\cdot \omega(t).\]
There exists a constant $K$ such that 
for each $g\in C^{\mu}_c(X,\bR)$, we have
\[[g]_\omega\le
\sup_{|x- y|\le T}\frac{|gx-gy|}{\omega(|x-y|)}
+\sup_{|x- y|\ge T}\frac{|gx-gy|}{\omega(|x-y|)}
\le
K[g]_\mu+\frac{2\|g\|_\infty}{\omega(T)}<\infty.
\]
It follows that $g\in C^\omega_c(X,\bR)$ and the lemma is proved.
%\sse C^\omega(M,\bR)$.
\ep

We are now ready to prove the simplicity of certain diffeomorphism groups.

\begin{thm}[Theorem~\ref{thm:continuous}]\label{thm:continuous2}
For each $X\in\{S^1,\bR\}$, the following hold.
\be
\item
If $\alpha\ge1$ is a real number, then 
every proper quotient 
of $\Diff_c^\alpha(X)_0$
is abelian.
If, furthermore, $\alpha\ne2$, then $\Diff_c^\alpha(X)_0$ is simple.
\item
If $\alpha>1$ is a real number, then every proper quotient 
of $\bigcap_{\beta<\alpha}\Diff_c^\beta(X)_0$
is abelian.
If, furthermore, $\alpha>3$, then $\bigcap_{\beta<\alpha}\Diff_c^\beta(X)_0$ is simple.
\ee
\end{thm}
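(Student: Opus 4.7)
The plan is to reduce both parts to three assertions about the ambient group $G$ (taken as $\Diff_c^\alpha(X)_0$ in part (1), and as $\bigcap_{\beta<\alpha}\Diff_c^\beta(X)_0$ in part (2)): (a) $[G,G]$ is simple; (b) $Z_G([G,G])=\{1\}$; and (c) in the generic regularities ($\alpha\ne 2$ in part (1), $\alpha>3$ in part (2)), $G$ is perfect. From (a) and (b) alone one deduces that every nontrivial normal subgroup $N\triangleleft G$ contains $[G,G]$: indeed $N\cap[G,G]$ is a normal subgroup of the simple group $[G,G]$, and the alternative $N\cap[G,G]=\{1\}$ would force $[N,[G,G]]\le N\cap[G,G]=\{1\}$, hence $N\le Z_G([G,G])=\{1\}$, contradicting nontriviality. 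Consequently $G/N$ is a quotient of the abelian group $G/[G,G]$, so every proper quotient of $G$ is abelian. Adjoining (c) forces the only such $N$ to be $G$ itself, yielding simplicity.

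For (a), I would apply Ling's lemma (Lemma~\ref{lem:ling}) to $G$ acting on $X$ with its standard basis of bounded open intervals. The fragmentation hypothesis follows from Lemma~\ref{lem:frag}; for the intersection in part (2) one uses the mild observation that Epstein's fragmentation is carried out with a single smooth partition of unity and smooth cut-offs, so the fragments inherit the regularity of the original, and fragmentation in each $\Diff_c^\beta(X)_0$ descends to the intersection. The basis-translation hypothesis (ii) of Ling's lemma is immediate because any $C^\infty$ compactly supported diffeomorphism isotopic to the identity lies in every regularity class and can map any small interval into any other.

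For (b), I would argue by dynamical displacement. Given $1\ne f\in G$, pick $x\in X$ with $f(x)\ne x$ and a small open interval $J\ni x$ with $f(J)\cap J=\varnothing$, which exists by continuity of $f$. The subgroup of $C^\infty$ diffeomorphisms of $X$ supported in $J$ lies in $G$ and contains nontrivial commutators (e.g.\ two smooth bumps with distinct but overlapping supports inside $J$), so $[G,G]$ contains a nontrivial element $h$ with $\supp h\sse J$. Then $\supp(fhf^{-1})=f(\supp h)\sse f(J)$ is disjoint from $J\supseteq\supp h$, so $fhf^{-1}\ne h$ and $[f,h]\ne 1$. Hence $f\notin Z_G([G,G])$, proving (b).

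The genuinely delicate point is (c). In part (1) with $\alpha\ne 2$, this is Mather's theorem: the classical integer statement for $\alpha\in\bN\setminus\{2\}$, and Corollary~\ref{cor:mather} in the H\"older case (where $\mathrm{Re}(\alpha)\ne 2$ is automatic for non-integer $\alpha$). In part (2) with $\alpha>3$, one reprises Mather's infinite-product trick: given $f\in G$, choose a $C^\infty$ contracting diffeomorphism $g$ near the support of $f$, with contraction rate $\lambda<1$ at its fixed endpoint, and set $h=\prod_{i\ge 0}g^i f g^{-i}$, so that $f=[h,g]$. The constraint $\alpha>3$ is what permits a single choice of $g$ to make the telescoping $C^\beta$ estimates on the partial products converge \emph{uniformly in $\beta$} on every compact sub-interval of $[1,\alpha)$, placing $h$ in the intersection $G$ rather than in some individual $\Diff_c^\beta(X)_0$. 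This uniform-in-$\beta$ convergence, as opposed to the single-regularity convergence that suffices inside each $\Diff_c^\beta(X)_0$, is the principal technical hurdle; the remaining steps are standard assemblies of fragmentation and dynamical arguments.
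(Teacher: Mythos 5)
Your overall framework — reduce to (a) simplicity of $[G,G]$, (b) triviality of $Z_G([G,G])$, (c) perfectness of $G$ in the good range — is sound, and the deduction that (a) and (b) give abelian proper quotients while (c) upgrades this to simplicity is correct. Your displacement argument for (b) is clean and works for both $X=\bR$ and $X=S^1$; it is essentially what the paper intends when it invokes Kopell and Denjoy. For (a), your appeal to Ling's criterion (Lemma~\ref{lem:ling}) via fragmentation and $\Diff_c^\infty$ is exactly the paper's route, and your observation that the fragmentation construction is regularity-blind (so fragments of an element of $\bigcap_{\beta<\alpha}\Diff_c^\beta(X)_0$ remain in the intersection) is a reasonable way to handle the intersection group, though it is stated only informally.

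The genuine gap is in (c) for part (2): you propose to run Mather's infinite-product trick directly on the intersection group and obtain convergence of the telescoping products in all $C^\beta$, $\beta<\alpha$, simultaneously, and you explicitly flag this uniform-in-$\beta$ convergence as "the principal technical hurdle" without resolving it. The paper avoids the issue entirely by a structural lemma you are missing. Lemma~\ref{lem:conc-mod} produces, for each compactly supported continuous function $g$, an \emph{optimal} concave modulus $\mu$ with $g\in C_c^\mu$ and $C_c^\mu\subseteq C_c^\omega$ whenever $g\in C_c^\omega$. Applying this to $f^{(\form{\alpha})}$ shows (Claim~\ref{cla:cap}) that
\[
\bigcap_{\beta<\alpha}\Diff_c^\beta(X)_0=\bigcup_{\mu\in\FF(\alpha)}\Diff_c^{\form{\alpha},\mu}(X)_0,
\]
a union of concrete $C^{k,\mu}$--diffeomorphism groups with $k=\form{\alpha}$. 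For $\alpha>3$ one has $\form{\alpha}\ge 3>n+1=2$, so Mather's Theorem (Theorem~\ref{thm:mather}) gives perfectness of each $\Diff_c^{\form{\alpha},\mu}(X)_0$ with no sup-/sub-tame hypothesis, and a union of perfect groups is perfect. The same decomposition also gives fragmentation for the intersection via Lemma~\ref{lem:frag}, replacing your informal "regularity-blind" remark with a clean citation. In short, the missing ingredient is not a convergence estimate uniform in $\beta$ but the realization that every element of the intersection already lies in a single $C^{\form{\alpha},\mu}$ class that is itself contained in the intersection; once you have that, Mather's theorem applies piecewise and the uniform-in-$\beta$ problem never arises.
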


\bp
We prove the theorem through a series of claims.

\begin{claim}\label{cla:r-com}
The following groups have simple commutator groups:
\begin{itemize}
\item $\Diff_c^\alpha(\bR)$ for $\alpha\ge1$;
\item $\bigcap_{\beta<\alpha}\Diff_c^\beta(\bR)$  for $\alpha>1$.\end{itemize}
\end{claim}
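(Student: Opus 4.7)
The plan is to apply Lemma~\ref{lem:ling} to each of the two groups, taking $X=\bR$ (a paracompact Hausdorff space) and $\BB$ the basis of bounded open intervals. Any two bounded open intervals $U,V\sse\bR$ admit a compactly supported $C^\infty$ diffeomorphism $h$ with $h(U)\sse V$, and such an $h$ lies in every group considered, so hypothesis~(ii) of Lemma~\ref{lem:ling} is automatic in both cases. The entire task therefore reduces to establishing the fragmentation property~(i) for an arbitrary subcover of $\BB$.

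For $H=\Diff_c^\alpha(\bR)$ with $\alpha\ge1$, write $\alpha=k+\tau$ with $k=\lfloor\alpha\rfloor$ and $\tau\in[0,1)$, so that $H=\Diff_c^{k,\omega_\tau}(\bR)_0$ under the convention $\omega_0=0$. The fragmentation property is precisely the content of Lemma~\ref{lem:frag}, and the simplicity of $[H,H]$ follows.

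For $H=\bigcap_{\beta<\alpha}\Diff_c^\beta(\bR)$ with $\alpha>1$, the subtlety is that a single fragmentation of a given $f\in H$ must yield pieces that are \emph{simultaneously} $C^\beta$ for every $\beta<\alpha$. Let $k=\lceil\alpha\rceil-1$, so that $f\in C^k$ and $f^{(k)}$ is $\omega_\sigma$-continuous for every $\sigma>0$ with $k+\sigma<\alpha$. Apply Lemma~\ref{lem:conc-mod}(3) to $f^{(k)}$ to obtain a concave modulus $\mu=\mu_f$ such that $f\in\Diff_c^{k,\mu}(\bR)$ and such that $C_c^\mu(\bR,\bR)\sse C_c^{\omega_\sigma}(\bR,\bR)$ for each such $\sigma$. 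Apply Lemma~\ref{lem:frag} inside $\Diff_c^{k,\mu}(\bR)_0$ to produce a fragmentation $f=f_1\cdots f_\ell$ subordinate to the chosen cover with each $f_i\in\Diff_c^{k,\mu}(\bR)$; by the choice of $\mu$ (together with Lemma~\ref{lem:omega-cont} to pass from the modulus inequality to the compactly supported regularity class) each $f_i$ lies in $\Diff_c^\beta(\bR)$ for every $\beta<\alpha$, so the fragmentation takes place inside $H$.

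The main obstacle is the second case. A naive approach would attempt to fragment $f$ separately for each $\beta<\alpha$, which does not produce a single decomposition valid for all regularity classes at once. The device of the optimal concave modulus from Lemma~\ref{lem:conc-mod} resolves this by packaging the full regularity of each individual $f$ into one concave modulus $\mu_f$, reducing an uncountable intersection of conditions to the single condition of being $C^{k,\mu_f}$, which the standard fragmentation of Lemma~\ref{lem:frag} then preserves.
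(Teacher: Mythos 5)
Your proof is correct, but it follows a genuinely different and substantially longer route than the paper's. The paper's own proof is a two-line argument via Higman's theorem: since $\Diff_c^\infty(\bR)$ is contained in each of the two groups and acts CO-transitively on the non-compact space $\bR$, each group is itself CO-transitive inside the compactly supported homeomorphisms, and Lemma~\ref{lem:higman} immediately gives the simplicity of the commutator subgroup. (The paper's citation of Lemma~\ref{lem:ling} at this point appears to be a slip; the argument as written matches the hypotheses of Lemma~\ref{lem:higman}, not the fragmentation-based Lemma~\ref{lem:ling}.) You instead reach for Ling's criterion and verify fragmentation, which for the intersection group requires the optimal-modulus device of Lemma~\ref{lem:conc-mod}(3) to reduce the uncountable intersection $\bigcap_{\beta<\alpha}\Diff_c^\beta(\bR)$ to a single $C^{k,\mu_f}$ class per element. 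That reduction is exactly what the paper later packages as Claim~\ref{cla:cap} and invokes only where it is actually needed, namely for $S^1$ (Claim~\ref{cla:diff-comm-s1}), since Higman's theorem requires non-compactness. So the trade-off is: the paper exploits non-compactness of $\bR$ to get a very short proof; your argument is more robust (it would also dispatch the $S^1$ case) but duplicates machinery the paper defers. Two small imprecisions in your write-up, neither fatal: the appeal to Lemma~\ref{lem:omega-cont} is superfluous, since the containment $C_c^{\mu_f}(\bR,\bR)\sse C_c^{\omega_\sigma}(\bR,\bR)$ is supplied directly by Lemma~\ref{lem:conc-mod}(3); and for integer $\alpha$ you should note explicitly that $\Diff_c^\alpha(\bR)=\Diff_c^{k,0}(\bR)$ with $k=\alpha$, for which the fragmentation property is the classical $\omega=0$ case of Lemma~\ref{lem:frag} cited to~\cite{PS1970}.
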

Both of the above groups contain $\Diff_c^\infty(\bR)$.
Since $\Diff_c^\infty(\bR)$ acts CO-transitively on $\bR$,
the claim follows from Lemma~\ref{lem:ling}.

\begin{claim}\label{cla:s1-com}
For each $\alpha\ge1$, the commutator group of $\Diff_+^{\alpha}(S^1)$ is simple.\end{claim}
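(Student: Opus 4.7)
The plan is to apply the Epstein--Ling simplicity criterion (Lemma~\ref{lem:ling}) to $H:=\Diff_+^{\alpha}(S^1)$, taking as basis $\BB$ the collection of all proper open arcs of $S^1$. Every subcollection $\UU\sse\BB$ that covers $S^1$ is an open cover of the smooth manifold $S^1$, so condition (i) of Lemma~\ref{lem:ling} reduces immediately to Lemma~\ref{lem:frag}: writing $\alpha=k+\tau$ with $k\ge1$ and $\tau\in[0,1)$, we identify $\Diff_+^{\alpha}(S^1)$ with $\Diff_c^{k,\omega_\tau}(S^1)_0$ (the group is connected since $\Diff_+(S^1)$ deformation retracts onto the rotations), and fragmentation holds for any concave modulus $\omega_\tau$ with $\tau>0$, while the case $\tau=0$ is covered by the classical Palis--Smale type result cited just before Lemma~\ref{lem:frag}.

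For condition (ii), the plan is as follows. Given proper open arcs $U,V\sse S^1$, first fix a point $p\in S^1\setminus\overline{V}$, then choose a rotation $r\in\Diff_+^{\infty}(S^1)$ so that $r(\overline{U})\sse S^1\setminus\{p\}$; such an $r$ exists because the complement of any proper closed arc has nonempty interior. Identifying $S^1\setminus\{p\}$ with $\bR$, both $r(U)$ and $V$ become bounded open intervals in $\bR$, so a standard construction (for instance the time--one map of a compactly supported smooth vector field whose flow carries the first interval into the second) yields $g\in\Diff_c^{\infty}(\bR)$ with $g(r(U))\sse V$. Extending $g$ by the identity across $p$ produces $g\in\Diff_+^{\infty}(S^1)\le H$, and then $h:=g\circ r\in H$ satisfies $h(U)\sse V$, establishing (ii).

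With both hypotheses of Lemma~\ref{lem:ling} in hand, we conclude that $[\Diff_+^{\alpha}(S^1),\Diff_+^{\alpha}(S^1)]$ is simple. No serious obstacle is expected, as the argument is a direct reduction to the Epstein--Ling machinery and the fragmentation lemma already developed in the appendix; the only point requiring mild care is the compatibility of $U,V$ with a common punctured arc in condition (ii), and this is handled by a rotation as above. The essential reason the $S^1$ case does not follow from Claim~\ref{cla:r-com} via Higman's Lemma (Lemma~\ref{lem:higman}) is that $S^1$ is compact, which is precisely why we substitute the Epstein--Ling criterion in its place.
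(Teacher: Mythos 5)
Your argument is correct and follows the same route as the paper: both verify the two hypotheses of the Epstein--Ling criterion (Lemma~\ref{lem:ling}), citing Lemma~\ref{lem:frag} for fragmentation and the inclusion $\Diff_+^\infty(S^1)\le\Diff_+^{\alpha}(S^1)$ for the transitivity condition (ii). You simply spell out in more detail how to produce the smooth diffeomorphism carrying one arc into another, which the paper leaves implicit.
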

By Lemma~\ref{lem:frag}, the group $\Diff_+^{\alpha}(S^1)$  satisfies the condition (i) of Lemma~\ref{lem:ling}. The condition (ii) follows from $\Diff_+^\infty(S^1)\le\Diff_+^{\alpha}(S^1)$.

\begin{claim}\label{cla:prop-abel1}
If $\alpha\ge1$, then every proper quotient of $\Diff_c^\alpha(X)_0$ is abelian.
\end{claim}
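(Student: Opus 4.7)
The plan is to exploit the simplicity of $[G, G]$, where $G := \Diff_c^\alpha(X)_0$, which has been established in Claims \ref{cla:r-com} and \ref{cla:s1-com}. Given a nontrivial normal subgroup $N \trianglelefteq G$, I aim to show that $[G, G] \subseteq N$, since then $G/N$ is a quotient of the abelian group $G/[G, G]$.

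First, I would apply the standard commutator trick: because both $N$ and $[G, G]$ are normal in $G$, one has $[N, [G, G]] \subseteq N \cap [G, G]$, and this intersection is a normal subgroup of the simple group $[G, G]$. Hence $N \cap [G, G]$ is either all of $[G, G]$, in which case the claim follows, or trivial. So it suffices to derive a contradiction from the assumption that $N \cap [G, G] = \{1\}$; in that situation every $n \in N$ centralizes $[G, G]$.

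Second, I would enlarge the set with which $n$ must commute. By the classical theorem of Mather and Thurston (and of Herman when $X = S^1$), the subgroup $\Diff_c^\infty(X)_0 \subseteq G$ is perfect, so \[ \Diff_c^\infty(X)_0 = [\Diff_c^\infty(X)_0, \Diff_c^\infty(X)_0] \subseteq [G, G]. \] Thus any $n \in N$ commutes with every compactly supported smooth diffeomorphism of $X$ isotopic to the identity. The contradiction is then extracted by a bump function argument: if $n \ne 1$, pick $x_0 \in X$ with $y_0 := n(x_0) \ne x_0$, and then pick a $C^\infty$ diffeomorphism $f$ supported in a small neighborhood of $y_0$ disjoint from $\{x_0\}$ with $f(y_0) \ne y_0$. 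Then $nf(x_0) = n(x_0) = y_0$ while $fn(x_0) = f(y_0) \ne y_0$, contradicting $[n, f] = 1$.

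The only nontrivial external input is perfectness of $\Diff_c^\infty(X)_0$, which is classical; everything else reduces to the simplicity of $[G, G]$ and the abundance of smooth bump diffeomorphisms. No single step is really an obstacle---once the simplicity of $[G, G]$ is in hand, the argument is essentially formal.
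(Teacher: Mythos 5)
Your proof is correct and shares the paper's basic skeleton: use simplicity of $[G,G]$ to reduce to the case $N\cap[G,G]=\{1\}$, and then extract a contradiction. Where you diverge is in that final step. From the trivial intersection you only deduce $[N,[G,G]]=\{1\}$, so to make an element of $N$ commute with something concrete you invoke Mather--Thurston perfectness of $\Diff_c^\infty(X)_0$ to place it inside $[G,G]$, and then run a bump-function argument. The paper takes a shorter route: for $n\in N$ and \emph{any} $g\in G$ one has $[n,g]\in N$ (normality) and $[n,g]\in[G,G]$ trivially, so $[n,g]\in N\cap[G,G]=\{1\}$; hence $N\subseteq Z(G)$, and the paper then quotes that $Z\bigl(\Diff_c^\alpha(X)_0\bigr)$ is trivial (attributed to Kopell/Denjoy, though your bump-function argument equally establishes this). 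Your detour through $\Diff_c^\infty(X)_0$ is logically correct but unnecessary; the stronger inclusion $[N,G]\subseteq N\cap[G,G]$ dispenses with perfectness of $\Diff_c^\infty$ entirely and goes straight to the center.
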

By an easy application of Kopell's Lemma and Denjoy's Theorem~\cite{Denjoy1932}, we see that $\Diff_c^\alpha(X)_0$ has trivial center. Combined with Claims~\ref{cla:r-com} and~\ref{cla:s1-com}, this implies the assertion.

Recall from Section~\ref{ss:continua} that we defined the notation $\form{z}$ for $z\in \bC$.
\begin{claim}\label{cla:cap}
If $\alpha>1$, then
%and let $m$ be the largest integer that is strictly less than $\alpha$. Then 
there exists a collection of concave moduli $\FF(\alpha)$ such that 
\[
\bigcap_{\beta<\alpha}\Diff_c^\beta(X)_0=\bigcup_{\mu\in\FF(\alpha)}\Diff_c^{\form{\alpha},\mu}(X)_0.\]
\end{claim}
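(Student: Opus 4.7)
My approach will be to assign to each candidate diffeomorphism its own concave modulus, using the optimality result of Lemma~\ref{lem:conc-mod}. I first set $k := \form{\alpha}$ and let $T(\alpha)$ denote the set of $\tau\in(0,1)$ with $k+\tau<\alpha$; since $\alpha>1$, this set is nonempty. Unpacking definitions and using that $\Diff_c^{k+\tau}(X)_0 = \Diff_c^{k,\omega_\tau}(X)_0 \subseteq \Diff_c^k(X)_0$, the left-hand side rewrites as
\[ \bigcap_{\beta<\alpha}\Diff_c^\beta(X)_0 = \bigcap_{\tau\in T(\alpha)}\Diff_c^{k,\omega_\tau}(X)_0, \]
which simply says that $f$ is a $C^k$-diffeomorphism whose $k$-th derivative $f^{(k)}$ is $\omega_\tau$-continuous for every $\tau\in T(\alpha)$.

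I will then apply Lemma~\ref{lem:conc-mod}(3) to the compactly supported continuous function $f^{(k)}$ to obtain a concave modulus $\mu_f$ satisfying $f^{(k)}\in C_c^{\mu_f}(X,\bR)$ and the optimality property
\[ C_c^{\mu_f}(X,\bR)=\bigcap\{C_c^\omega(X,\bR)\mid \omega\text{ a concave modulus with }f^{(k)}\in C_c^\omega(X,\bR)\}. \]
The collection $\FF(\alpha):=\{\mu_f : f\in\bigcap_{\beta<\alpha}\Diff_c^\beta(X)_0\}$ will be the candidate.

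The containment $\subseteq$ is then immediate from the construction of $\mu_f$. For the reverse containment, I would take $g\in\Diff_c^{k,\mu_f}(X)_0$ for some $\mu_f\in\FF(\alpha)$ and fix $\tau\in T(\alpha)$. Since the corresponding $f$ lies in $\Diff_c^{k+\tau}(X)_0$, we have $f^{(k)}\in C_c^{\omega_\tau}(X,\bR)$, so the optimality property forces $C_c^{\mu_f}\subseteq C_c^{\omega_\tau}$. Thus $g^{(k)}\in C_c^{\omega_\tau}$ and $g\in\Diff_c^{k,\omega_\tau}(X)_0=\Diff_c^{k+\tau}(X)_0$, giving the desired containment.

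The most delicate point will be to verify that the isotopy condition encoded in the subscript ``$_0$'' transfers across regularity classes, but this should be painless: a compactly supported $C^{k,\mu_f}$-isotopy connecting $g$ to the identity is, by the same optimality inclusion $C_c^{\mu_f}\subseteq C_c^{\omega_\tau}$ applied frame by frame, automatically a $C^{k,\omega_\tau}$-isotopy, hence a $C^{k+\tau}$-isotopy, for every $\tau\in T(\alpha)$.
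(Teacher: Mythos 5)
Your argument is correct and follows the same route as the paper: you form the optimal modulus $\mu_f$ from Lemma~\ref{lem:conc-mod}(3) applied to $f^{(k)}$ for each $f$ in the left-hand side, take $\FF(\alpha)$ to be the collection of all such $\mu_f$, and use the optimality inclusion $C_c^{\mu_f}\sse C_c^{\omega_\tau}$ to establish both containments. The only presentational differences are that you set $k=\form{\alpha}$ from the outset, which unifies the paper's two cases (integer versus non-integer $\alpha$), and that you explicitly worry about the subscript ``$_0$'' at the end; the latter could be dispensed with entirely by recalling the paper's identification $\Diff_c^{k,\omega}(\bR)_0=\Diff_c^{k,\omega}(\bR)$ and $\Diff_c^{k,\omega}(S^1)_0=\Diff_+^{k,\omega}(S^1)$, which makes the isotopy condition vacuous for $X\in\{S^1,\bR\}$.
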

Put $k=\lfloor\alpha\rfloor$.
Assume first  $\alpha\ne k$, so that $k=\form{\alpha}$.
Suppose  we have \[f\in \bigcap_{\beta<\alpha}\Diff_c^\beta(X)_0\le\Diff_c^k(X)_0.\]
Let $\mu$ be a concave modulus
as in Lemma~\ref{lem:conc-mod} for the map $f^{(k)}\in C_c(X,\bR)$.
Whenever $k<\beta<\alpha$, we have $f^{(k)}\in C^{\beta-k}(X,\bR)$. The same lemma implies that
\[C_c^\mu(X,\bR)\sse C_c^{\beta-k}(X,\bR).\]
So, we have $f\in \Diff_c^{k,\mu}(X)_0\sse \bigcap_{\beta<\alpha}\Diff_c^\beta(X)_0$ and completes the proof when $\alpha\ne k$. 
The proof of the case that $\alpha=k=\form{\alpha}+1$ is almost identical.
%, after noticing that\[\bigcap_{\beta<\alpha}\Diff_c^\beta(X)_0\le\Diff_c^{k-1}(X)_0\] and that $-\lfloor-\alpha\rfloor-1=k-1\ge1$.  The claim is proved.

\begin{claim}\label{cla:diff-comm-s1}
For each $\alpha>1$,
every proper quotient of $\bigcap_{\beta<\alpha}\Diff_c^\beta(X)_0$
is abelian.
\end{claim}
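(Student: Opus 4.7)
The plan is to write $G := \bigcap_{\beta<\alpha}\Diff_c^\beta(X)_0$ and deduce the claim from two separate statements about $G$: \textbf{(a)} the commutator subgroup $[G,G]$ is simple, and \textbf{(b)} $G$ has trivial center. Once these are in hand, a standard argument finishes the proof. Given a nontrivial normal subgroup $N\triangleleft G$, the intersection $N\cap [G,G]$ is normal in the simple group $[G,G]$, so it is either trivial or all of $[G,G]$. In the first case $[N,G]\le N\cap[G,G]=1$, which forces $N$ to lie in the center of $G$, contradicting (b). In the second case $[G,G]\le N$, whence $G/N$ is abelian, as required.

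For (a) I would verify the two hypotheses of Lemma~\ref{lem:ling} applied to $G$ acting on $X$, taking as basis $\BB$ the collection of nonempty bounded open intervals (for $X=\bR$) or nonempty open arcs (for $X=S^1$). Condition (ii) (any basic open set can be moved inside any other by an element of $G$) is immediate from the inclusion $\Diff_c^\infty(X)_0\le G$. The substantive point is the fragmentation property. Given $f\in G$ and an open cover $\UU$ of $X$, Claim~\ref{cla:cap} produces a concave modulus $\mu\in\FF(\alpha)$ with $f\in \Diff_c^{\form{\alpha},\mu}(X)_0$; Lemma~\ref{lem:frag} then factors $f=h_1\cdots h_\ell$ with each $h_i$ supported inside an element of $\UU$ and each $h_i\in \Diff_c^{\form{\alpha},\mu}(X)_0\subseteq G$. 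Thus $G$ has the fragmentation property for every open cover, and Lemma~\ref{lem:ling} yields the simplicity of $[G,G]$.

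For (b) the argument mirrors the one already invoked in Claim~\ref{cla:prop-abel1}: since $G\supseteq \Diff_c^\infty(X)_0$, any element of the center of $G$ commutes in particular with every smooth compactly supported diffeomorphism, and a direct point-pushing argument (combined with Denjoy's Theorem when $X=S^1$ and Kopell's Lemma near the endpoints when $X=\bR$) forces it to be the identity. The main subtlety in the whole proof is ensuring that the fragmentation produced by Lemma~\ref{lem:frag} yields pieces inside the \emph{intersection} $G$ rather than merely inside some $\Diff_c^\beta(X)_0$ with $\beta<\alpha$; this is precisely what Claim~\ref{cla:cap} delivers, by consolidating the regularity of a given $f\in G$ into a single $C^{\form{\alpha},\mu}$ class whose fragmentation pieces inherit the same regularity and hence lie in every $\Diff_c^\beta(X)_0$ for $\beta<\alpha$.
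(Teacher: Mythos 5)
Your proof is correct and takes essentially the same approach as the paper: fragmentation via Claim~\ref{cla:cap} and Lemma~\ref{lem:frag}, Ling's Lemma (Lemma~\ref{lem:ling}) to get simplicity of $[G,G]$, and then the standard ``simple commutator subgroup plus trivial center'' reduction to conclude that every proper quotient is abelian. The only cosmetic differences from the paper are that you spell out the trivial-center step explicitly (the paper invokes it implicitly, having set the template in Claim~\ref{cla:prop-abel1}), and that you apply Ling's Lemma uniformly to both $X=\bR$ and $X=S^1$ whereas the paper routes the $\bR$ case through Claim~\ref{cla:r-com} (and thus Higman's Lemma); both routes are valid.
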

The case $X=\bR$ follows from Claim~\ref{cla:r-com},
so we may only consider the group
\[
G = \bigcap_{\beta<\alpha}\Diff_+^\beta(S^1).\]
By Lemma~\ref{lem:frag} and Claim~\ref{cla:cap},
the group $G$ has the fragmentation property for an arbitrary cover. Since $\Diff_+^\infty(S^1)\le G$, we can deduce  Claim~\ref{cla:diff-comm-s1} from Lemma~\ref{lem:ling}.

Coming back to the proof of the theorem,
we only need to prove the latter parts of (1) and (2).
The latter part of (1) is a special case of Corollary~\ref{cor:mather}. 
For the latter part of (2), assume $\alpha>3$. 
We see from Mather's Theorem and from Claim~\ref{cla:cap} that the group $\bigcap_{\beta<\alpha}\Diff_c^\beta(X)_0$ is  a union of perfect groups. The conclusion follows.
\ep

\section*{Acknowledgements}
The authors thank J. Bowden, M. Brin, D. Calegari, \'E. Ghys, C. Gordon, N. Kang, M. Kapovich, K. Mann, Y. Minsky, M. Mj, M. Pengitore, R. Schwartz, S. Taylor, and T. Tsuboi for helpful discussions. 
%We are particularly thankful to A. Navas for detailed comments on an earlier version of this paper, and for pointing out the reference~\cite{BMNR2017MZ}; his comments allowed us to greatly strengthen the Main Theorem, from the bound $\alpha\ge2$ to $\alpha\ge1$. 
We are particularly thankful to A. Navas for detailed comments on an earlier version of this paper, and for pointing out the reference~\cite{BMNR2017MZ}, which allowed us to greatly strengthen our original result. The authors thank an anonymous referee.
The first author is supported by Samsung Science and Technology Foundation (SSTF-BA1301-06 and SSTF-BA1301-51).
The first author is partially supported by a KIAS Individual Grant (MG073601) at Korea Institute for Advanced Study.
The second author is partially supported by an Alfred P. Sloan Foundation Research Fellowship and NSF Grant DMS-1711488.

%%%%%%%%%%%%%%%%%%%%%%%%%%%
% END of body
%%%%%%%%%%%%%%%%%%%%%%%%%%%

%\bibliographystyle{amsplain}
%\bibliography{ref}

\begin{thebibliography}{10}

\bibitem{BFGM2007AM}
Uri Bader, Alex Furman, Tsachik Gelander, and Nicolas Monod, \emph{Property
  ({T}) and rigidity for actions on {B}anach spaces}, Acta Math. \textbf{198}
  (2007), no.~1, 57--105. \MR{2316269}

\bibitem{BKK2016}
Hyungryul Baik, {Sang-hyun} Kim, and Thomas Koberda, \emph{{Unsmoothable group
  actions on compact one-manifolds}}, J. Eur. Math. Soc. (JEMS) \textbf{21} (2019), no. 8, 2333--2353.
\MR{4035847}

\bibitem{BKK2014}
\bysame, \emph{Right-angled {A}rtin groups in the {$C^\infty$} diffeomorphism
  group of the real line}, Israel J. Math. \textbf{213} (2016), no.~1,
  175--182. \MR{3509472}

\bibitem{Banyaga1997}
Augustin Banyaga, \emph{The structure of classical diffeomorphism groups},
  Mathematics and its Applications, vol. 400, Kluwer Academic Publishers Group,
  Dordrecht, 1997. \MR{1445290 (98h:22024)}

\bibitem{BL1976book}
J{\"o}ran Bergh and J{\"o}rgen L{\"o}fstr{\"o}m, \emph{Interpolation spaces.
  {A}n introduction}, Springer-Verlag, Berlin-New York, 1976, Grundlehren der
  Mathematischen Wissenschaften, No. 223. \MR{0482275}

\bibitem{Bergman1991PJM}
George~M. Bergman, \emph{Right orderable groups that are not locally
  indicable}, Pacific J. Math. \textbf{147} (1991), no.~2, 243--248.
  \MR{1084707}

\bibitem{BLT2017}
C.~{Bonatti}, Y.~{Lodha}, and M.~{Triestino}, \emph{{Hyperbolicity as an
  obstruction to smoothability for one-dimensional actions}}, Geom. Topol. \textbf{23} (2019), no. 4, 1841--1876.
  \MR{3988090}

\bibitem{BMNR2017MZ}
C.~{Bonatti}, I.~Monteverde, A.~Navas, and C.~Rivas, \emph{Rigidity for {$C^1$}
  actions on the interval arising from hyperbolicity {I}: solvable groups},
  Math. Z. \textbf{286} (2017), no.~3-4, 919--949. \MR{3671566}

\bibitem{BowditchSakuma}
Brian~H. Bowditch and Makoto Sakuma, \emph{The action of the mapping class
  group on the space of geodesic rays of a punctured hyperbolic surface},
 Groups Geom. Dyn. \textbf{12} (2018), no. 2, 703--719.
 \MR{3813207}

\bibitem{Brin2004GD}
Matthew~G. Brin, \emph{Higher dimensional {T}hompson groups}, Geom. Dedicata
  \textbf{108} (2004), 163--192. \MR{2112673}

\bibitem{BS1985}
Matthew~G. Brin and Craig~C. Squier, \emph{{Groups of piecewise linear
  homeomorphisms of the real line}}, Inventiones Mathematicae \textbf{79}
  (1985), no.~3, 485--498.

\bibitem{BM1999}
Marc Burger and Nicolas Monod, \emph{Bounded cohomology of lattices in higher
  rank {L}ie groups}, J. Eur. Math. Soc. (JEMS) \textbf{1} (1999), no.~2,
  199--235. \MR{1694584 (2000g:57058a)}

\bibitem{BM1997}
Marc Burger and Shahar Mozes, \emph{Finitely presented simple groups and
  products of trees}, C. R. Acad. Sci. Paris S{\'e}r. I Math. \textbf{324}
  (1997), no.~7, 747--752. \MR{1446574 (98g:20041)}

\bibitem{BurilloBook}
Jos{\'e} Burillo, \emph{Thompson's group {F}}, preprint, 2016.

\bibitem{CalegariForcing}
Danny Calegari, \emph{Dynamical forcing of circular groups}, Trans. Amer. Math.
  Soc. \textbf{358} (2006), no.~8, 3473--3491 (electronic). \MR{2218985}

\bibitem{Calegari2008AGT}
\bysame, \emph{Nonsmoothable, locally indicable group actions on the interval},
  Algebr. Geom. Topol. \textbf{8} (2008), no.~1, 609--613. \MR{2443241
  (2009m:37109)}

\bibitem{CN1985}
C\'esar Camacho and Alcides Lins~Neto, \emph{Geometric theory of foliations},
  Birkh\"auser Boston, Inc., Boston, MA, 1985, Translated from the Portuguese
  by Sue E. Goodman. \MR{824240}

\bibitem{CandelConlonI}
Alberto Candel and Lawrence Conlon, \emph{Foliations. {I}}, Graduate Studies in
  Mathematics, vol.~23, American Mathematical Society, Providence, RI, 2000.
  \MR{1732868}

\bibitem{CFP1996}
J.~W. Cannon, W.~J. Floyd, and W.~R. Parry, \emph{Introductory notes on
  {R}ichard {T}hompson's groups}, Enseign. Math. (2) \textbf{42} (1996),
  no.~3-4, 215--256. \MR{1426438}

\bibitem{CC1982}
John Cantwell and Lawrence Conlon, \emph{Nonexponential leaves at finite
  level}, Trans. Amer. Math. Soc. \textbf{269} (1982), no.~2, 637--661.
  \MR{637715}

\bibitem{CC1988}
\bysame, \emph{Smoothability of proper foliations}, Ann. Inst. Fourier
  (Grenoble) \textbf{38} (1988), no.~3, 219--244. \MR{976690}

\bibitem{CJN2014}
Gonzalo Castro, Eduardo Jorquera, and Andr{\'e}s Navas, \emph{Sharp regularity
  for certain nilpotent group actions on the interval}, Math. Ann. \textbf{359}
  (2014), no.~1-2, 101--152. \MR{3201895}

\bibitem{Denjoy1932}
Arnaud Denjoy, \emph{Sur la continuit{\'e} des fonctions analytiques
  singuli{\`e}res}, Bull. Soc. Math. France \textbf{60} (1932), 27--105.
  \MR{1504984}

\bibitem{DKN2007}
Bertrand Deroin, Victor Kleptsyn, and Andr{\'e}s Navas, \emph{Sur la dynamique
  unidimensionnelle en r{\'e}gularit{\'e} interm{\'e}diaire}, Acta Math.
  \textbf{199} (2007), no.~2, 199--262. \MR{2358052}

\bibitem{Epstein1970}
D.~B.~A. Epstein, \emph{The simplicity of certain groups of homeomorphisms},
  Compositio Math. \textbf{22} (1970), 165--173. \MR{0267589}

\bibitem{Evans-GSM}
Lawrence~C. Evans, \emph{Partial differential equations}, second ed., Graduate
  Studies in Mathematics, vol.~19, American Mathematical Society, Providence,
  RI, 2010. \MR{2597943}

\bibitem{FF2001}
Benson Farb and John Franks, \emph{{Groups of homeomorphisms of one-manifolds,
  I: actions of nonlinear groups}}, ArXiv Mathematics e-prints (2001).

\bibitem{FF2003}
\bysame, \emph{Groups of homeomorphisms of one-manifolds. {III}. {N}ilpotent
  subgroups}, Ergodic Theory Dynam. Systems \textbf{23} (2003), no.~5,
  1467--1484. \MR{2018608 (2004k:58013)}

\bibitem{Ghys1999}
\'Etienne Ghys, \emph{Actions de r\'eseaux sur le cercle}, Inventiones Math.
  \textbf{137} (1999), no.~1, 199--231. \MR{1703323 (2000j:22014)}

\bibitem{GS1987}
\'Etienne Ghys and Vlad Sergiescu, \emph{Sur un groupe remarquable de
  diff\'eomorphismes du cercle}, Comment. Math. Helv. \textbf{62} (1987),
  no.~2, 185--239. \MR{896095 (90c:57035)}

\bibitem{Goodman1975}
Sue~E. Goodman, \emph{Closed leaves in foliations of codimension one}, Comment.
  Math. Helv. \textbf{50} (1975), no.~3, 383--388. \MR{0423371}

\bibitem{GL2011}
Nancy Guelman and Isabelle Liousse, \emph{{$C^1$}-actions of
  {B}aumslag-{S}olitar groups on {$S^1$}}, Algebr. Geom. Topol. \textbf{11}
  (2011), no.~3, 1701--1707. \MR{2821437}

\bibitem{HT1985}
Michael Handel and William~P. Thurston, \emph{New proofs of some results of
  {N}ielsen}, Adv. in Math. \textbf{56} (1985), no.~2, 173--191. \MR{788938
  (87e:57015)}

\bibitem{Holder1996}
Otto H\"older, \emph{The axioms of quantity and the theory of measurement}, J.
  Math. Psych. \textbf{40} (1996), no.~3, 235--252, Translated from the 1901
  German original and with notes by Joel Michell and Catherine Ernst, With an
  introduction by Michell. \MR{1423724}

\bibitem{HurtadoGT15}
Sebastian Hurtado, \emph{Continuity of discrete homomorphisms of diffeomorphism
  groups}, Geom. Topol. \textbf{19} (2015), no.~4, 2117--2154. \MR{3375524}

\bibitem{Jorquera}
Eduardo Jorquera, \emph{A universal nilpotent group of {$C^1$} diffeomorphisms
  of the interval}, Topology Appl. \textbf{159} (2012), no.~8, 2115--2126.
  \MR{2902746}

\bibitem{JNR2018}
Eduardo Jorquera, Andr{\'e}s Navas, and Crist{\'o}bal Rivas, \emph{On the sharp
  regularity for arbitrary actions of nilpotent groups on the interval: the
  case of {$N_4$}}, Ergodic Theory Dynam. Systems \textbf{38} (2018), no.~1,
  180--194. \MR{3742542}

\bibitem{JM2013}
Kate Juschenko and Nicolas Monod, \emph{Cantor systems, piecewise translations
  and simple amenable groups}, Ann. of Math. (2) \textbf{178} (2013), no.~2,
  775--787. \MR{3071509}

\bibitem{KK2015GT}
{Sang-hyun} Kim and Thomas Koberda, \emph{Anti-trees and right-angled {A}rtin
  subgroups of braid groups}, Geom. Topol. \textbf{19} (2015), no.~6,
  3289--3306. \MR{3447104}

\bibitem{KKFreeProd2017}
\bysame, \emph{Free products and the algebraic structure of diffeomorphism
  groups},  J. Topol. \textbf{11} (2018), no. 4, 1054--1076.
  \MR{3989437}

\bibitem{KKL2017}
{Sang-hyun} Kim, Thomas Koberda, and Yash Lodha, \emph{Chain groups of
  homeomorphisms of the interval}, Ann. Sci. \'Ec. Norm. Sup\'er. (4) \textbf{52} (2019), no. 4, 797--820.
  \MR{4038452}

\bibitem{Koberda2012}
Thomas Koberda, \emph{Right-angled {A}rtin groups and a generalized isomorphism
  problem for finitely generated subgroups of mapping class groups}, Geom.
  Funct. Anal. \textbf{22} (2012), no.~6, 1541--1590. \MR{3000498}

\bibitem{KL2017}
Thomas Koberda and Yash Lodha, \emph{{Two-chains and square roots of Thompson's
  group $F$}}, To appear in Ergodic Theory Dynam. Systems.

\bibitem{Kopell1970}
Nancy Kopell, \emph{Commuting diffeomorphisms}, Global {A}nalysis ({P}roc.
  {S}ympos. {P}ure {M}ath., {V}ol. {XIV}, {B}erkeley, {C}alif., 1968), Amer.
  Math. Soc., Providence, R.I., 1970, pp.~165--184. \MR{0270396 (42 \#5285)}

\bibitem{Ling1984}
Wensor Ling, \emph{Factorizable groups of homeomorphisms}, Compositio Math.
  \textbf{51} (1984), no.~1, 41--50. \MR{734783}

\bibitem{LM2016GGD}
Yash Lodha and Justin~Tatch Moore, \emph{A nonamenable finitely presented group
  of piecewise projective homeomorphisms}, Groups Geom. Dyn. \textbf{10}
  (2016), no.~1, 177--200. \MR{3460335}

\bibitem{Mann2015}
Kathryn Mann, \emph{Homomorphisms between diffeomorphism groups}, Ergodic
  Theory Dynam. Systems \textbf{35} (2015), no.~1, 192--214. \MR{3294298}

\bibitem{MannETDS15}
\bysame, \emph{Homomorphisms between diffeomorphism groups}, Ergodic Theory
  Dynam. Systems \textbf{35} (2015), no.~1, 192--214. \MR{3294298}

\bibitem{Mann2016GT}
\bysame, \emph{Automatic continuity for homeomorphism groups and applications},
  Geom. Topol. \textbf{20} (2016), no.~5, 3033--3056, With an appendix by
  Fr\'ed\'eric Le Roux and Mann. \MR{3556355}

\bibitem{Mann2016NYJM}
\bysame, \emph{A short proof that {${\rm Diff}_c(M)$} is perfect}, New York J.
  Math. \textbf{22} (2016), 49--55. \MR{3484676}

\bibitem{MargulisBook1991}
G.~A. Margulis, \emph{Discrete subgroups of semisimple {L}ie groups},
  Ergebnisse der Mathematik und ihrer Grenzgebiete (3) [Results in Mathematics
  and Related Areas (3)], vol.~17, Springer-Verlag, Berlin, 1991. \MR{1090825
  (92h:22021)}

\bibitem{Mather1}
John~N. Mather, \emph{Commutators of diffeomorphisms}, Comment. Math. Helv.
  \textbf{49} (1974), 512--528. \MR{0356129}

\bibitem{Mather2}
\bysame, \emph{Commutators of diffeomorphisms. {II}}, Comment. Math. Helv.
  \textbf{50} (1975), 33--40. \MR{0375382}

\bibitem{Moser1958}
Leo Moser, \emph{On the series, {$\sum 1/p$}}, Amer. Math. Monthly \textbf{65}
  (1958), 104--105. \MR{0101421}

\bibitem{Muller}
M.~P. Muller, \emph{Sur l'approximation et l'instabilit\'e des feuilletages},
  unpublished.

\bibitem{Navas2002ASENS}
Andr{\'e}s Navas, \emph{Actions de groupes de {K}azhdan sur le cercle}, Ann.
  Sci. {\'E}cole Norm. Sup. (4) \textbf{35} (2002), no.~5, 749--758.
  \MR{1951442 (2003j:58013)}

\bibitem{Navas2008GAFA}
\bysame, \emph{Growth of groups and diffeomorphisms of the interval}, Geom.
  Funct. Anal. \textbf{18} (2008), no.~3, 988--1028. \MR{2439001}

\bibitem{Navas2010}
\bysame, \emph{A finitely generated, locally indicable group with no faithful
  action by {$C^1$} diffeomorphisms of the interval}, Geom. Topol. \textbf{14}
  (2010), no.~1, 573--584. \MR{2602845 (2011d:37045)}

\bibitem{Navas2011}
\bysame, \emph{Groups of circle diffeomorphisms}, {S}panish ed., Chicago
  Lectures in Mathematics, University of Chicago Press, Chicago, IL, 2011.
  \MR{2809110}

\bibitem{Nielsen1927}
Jakob Nielsen, \emph{Untersuchungen zur {T}opologie der geschlossenen
  zweiseitigen {F}l{\"a}chen}, Acta Math. \textbf{50} (1927), no.~1, 189--358.
  \MR{1555256}

\bibitem{PS1970}
J.~Palis and S.~Smale, \emph{Structural stability theorems}, Global {A}nalysis
  ({P}roc. {S}ympos. {P}ure {M}ath., {V}ol. {XIV}, {B}erkeley, {C}alif., 1968),
  Amer. Math. Soc., Providence, R.I., 1970, pp.~223--231. \MR{0267603}

\bibitem{Parwani2008}
Kamlesh Parwani, \emph{{$C^1$} actions on the mapping class groups on the
  circle}, Algebr. Geom. Topol. \textbf{8} (2008), no.~2, 935--944. \MR{2443102
  (2010c:37061)}

\bibitem{PT1976}
J.~F. Plante and W.~P. Thurston, \emph{Polynomial growth in holonomy groups of
  foliations}, Comment. Math. Helv. \textbf{51} (1976), no.~4, 567--584.
  \MR{0436167 (55 \#9117)}

\bibitem{Raghunathan1972}
M.~S. Raghunathan, \emph{Discrete subgroups of {L}ie groups}, Springer-Verlag,
  New York-Heidelberg, 1972, Ergebnisse der Mathematik und ihrer Grenzgebiete,
  Band 68. \MR{0507234}

\bibitem{Salat1964MZ}
Tibor {\v S}al{\'a}t, \emph{On subseries}, Math. Z. \textbf{85} (1964),
  209--225. \MR{0179507}

\bibitem{Thurston1974BAMS}
William~P. Thurston, \emph{Foliations and groups of diffeomorphisms}, Bull.
  Amer. Math. Soc. \textbf{80} (1974), 304--307. \MR{0339267 (49 \#4027)}

\bibitem{Thurston1974Top}
\bysame, \emph{A generalization of the {R}eeb stability theorem}, Topology
  \textbf{13} (1974), 347--352. \MR{0356087 (50 \#8558)}

\bibitem{Thurston1976AM}
\bysame, \emph{Existence of codimension-one foliations}, Ann. of Math. (2)
  \textbf{104} (1976), no.~2, 249--268. \MR{0425985 (54 \#13934)}

\bibitem{Tsuboi1984Asterisque}
Takashi Tsuboi, \emph{{$\Gamma _{1}$}-structures avec une seule feuille},
  Ast{\'e}risque (1984), no.~116, 222--234, Transversal structure of foliations
  (Toulouse, 1982). \MR{755173}

\bibitem{Tsuboi1987}
\bysame, \emph{Examples of nonsmoothable actions on the interval}, J. Fac. Sci.
  Univ. Tokyo Sect. IA Math. \textbf{34} (1987), no.~2, 271--274. \MR{914022}

\bibitem{Witte1994}
Dave Witte~Morris, \emph{Arithmetic groups of higher {${\bf Q}$}-rank cannot
  act on {$1$}-manifolds}, Proc. Amer. Math. Soc. \textbf{122} (1994), no.~2,
  333--340. \MR{1198459 (95a:22014)}

\end{thebibliography}
\def\cprime{$'$} \def\soft#1{\leavevmode\setbox0=\hbox{h}\dimen7=\ht0\advance
  \dimen7 by-1ex\relax\if t#1\relax\rlap{\raise.6\dimen7
  \hbox{\kern.3ex\char'47}}#1\relax\else\if T#1\relax
  \rlap{\raise.5\dimen7\hbox{\kern1.3ex\char'47}}#1\relax \else\if
  d#1\relax\rlap{\raise.5\dimen7\hbox{\kern.9ex \char'47}}#1\relax\else\if
  D#1\relax\rlap{\raise.5\dimen7 \hbox{\kern1.4ex\char'47}}#1\relax\else\if
  l#1\relax \rlap{\raise.5\dimen7\hbox{\kern.4ex\char'47}}#1\relax \else\if
  L#1\relax\rlap{\raise.5\dimen7\hbox{\kern.7ex
  \char'47}}#1\relax\else\message{accent \string\soft \space #1 not
  defined!}#1\relax\fi\fi\fi\fi\fi\fi}
\providecommand{\bysame}{\leavevmode\hbox to3em{\hrulefill}\thinspace}
\providecommand{\MR}{\relax\ifhmode\unskip\space\fi MR }
% \MRhref is called by the amsart/book/proc definition of \MR.
\providecommand{\MRhref}[2]{%
  \href{http://www.ams.org/mathscinet-getitem?mr=#1}{#2}
}
\providecommand{\href}[2]{#2}

\end{document}